\documentclass[reqno]{amsart}

\usepackage{packages_and_commands}
\title{Global bifurcation for steady viscous roll waves on an incline}

\author{Daniel Abraham}
\address{Department of Mathematical Sciences, University of Bath, Bath BA2 7AY, United Kingdom}
\email{da979@bath.ac.uk}

\author{Miles H.~Wheeler}
\address{Department of Mathematical Sciences, University of Bath, Bath BA2 7AY, United Kingdom}
\email{mw2319@bath.ac.uk}

\begin{document}

\begin{abstract}
    We construct a branch of travelling periodic `roll wave' solutions to the free-boundary incompressible Navier--Stokes equations on an inclined plane in two dimensions. These solutions bifurcate from a parallel shear flow, under natural assumptions on the related Orr--Sommerfeld equation. Using techniques from analytic global bifurcation theory, we extend the local branch to a global curve of solutions. A key step of the proof is reformulating the problem, including the unknown free boundary, as an elliptic system in the sense of Agmon--Douglis--Nirenberg. Finally, we verify the hypotheses on the Orr--Sommerfeld equation for two regimes: small wavenumber and low Reynolds number.
\end{abstract}

\maketitle

\tableofcontents

\section{Introduction}
The progression of surface waves down an incline is an intriguing physical phenomenon, with nontrivial profiles preserved by a delicate balancing between the dissipative force due to viscosity and the gravitational force acting down the slope. Periodic disturbances of this form can be called \emph{roll waves}. While roll waves are often turbulent~\cite{jeffreys1925, dressler1949mathematical, balmforth2004dynamics}, there are everyday occurrences with laminar profiles, such as heavy rainfall on window panes or flow down street gutters. This travelling laminar viscous flow is modelled by the free-boundary Navier--Stokes equations, which are governed at a linear level by the `Orr--Sommerfeld' equation~\cite{orr1907stability1, orr1907stability2, sommerfeld1909beitrag}, with long-wave stability results for inclined flow by Benjamin~\cite{benjamin1957wave} and Yih~\cite{yih1963stability}. 

Until recently, due to the challenges in nonlinear analysis of the Navier--Stokes equations, research in steady free-surface flow has largely advanced by using model equations with shallow-water approximations or by studying the inviscid model, known as the travelling `water wave' problem. The literature for steady water waves is very well established, with global constructions of solutions under a variety of scenarios; see for instance the survey~\cite{haziot2022traveling}. Rigorous work on the full steady free-boundary Navier--Stokes problem, on the other hand, has only appeared in the last few years~\cite{leoni2023traveling, stevenson2021traveling, stevenson2023well, stevenson2024well, koganemaru2023traveling, koganemaru2024traveling, stevenson2025gravity, banihashemi2026large}, beginning with the construction by Leoni and Tice~\cite{leoni2023traveling} of small-amplitude waves under generic external bulk and stress forcing. This literature however leaves open the global existence of solutions, and only~\cite{stevenson2025gravity} constructs solutions with gravity as the sole external force, finding travelling bore wave solutions in a shallow water regime.

In this paper, we give the first `large-amplitude' construction for the full steady free-boundary Navier--Stokes problem, finding a global curve of solutions bifurcating from shear flow. These solutions are two-dimensional, laminar, periodic roll waves which are subject only to gravity as an external force and may have overturning profiles. Our hypotheses depend solely on the associated Orr--Sommerfeld system. Furthermore, we verify these hypotheses in parameter regimes where the wavenumber or the Reynolds number are small, consistent with the earlier formal work by Benjamin and Yih.

Our proof involves an analytic global bifurcation argument. Reformulating the problem as a system for the stream function, pressure and a conformal map which flattens the domain, we show this system is elliptic in the sense of Agmon--Douglis--Nirenberg~\cite{agmon1964estimates}. Because the problem does not possess a reflection symmetry as is common in water wave scenarios, we use a local bifurcation argument due to Hale~\cite{hale1978bifurcation} rather than the more standard result by Crandall--Rabinowitz~\cite{crandall1971bifurcation}. The global continuation of this local curve of solutions uses theory developed by Dancer, Buffoni and Toland~\cite{dancer1973bifurcation, buffoni2003analytic}.

\subsection{The inclined travelling viscous surface wave problem}
Let us define our problem more precisely. We consider a viscous incompressible fluid in a two-dimensional inclined domain $\Omega$, with a flat lower bed $\Bed$ at an angle $0 < \theta \leq \pi/2$ to the horizontal and a free upper surface $\Surf$. Here $\Surf$ is not required to be the graph of a function, allowing for overhanging waves. In fact, we will see $\Omega$ as the image of a rectangular strip $\Real \times (0,1)$ under a conformal map 
\begin{equation}\label{eq: f_intro}
    f \colon \Real \times (0,1) \to \Omega,
\end{equation}
satisfying the Cauchy--Riemann equations and mapping the upper and lower boundaries of the strip to $\Surf$ and $\Bed$ respectively. This conformal map is detailed in Section~\ref{section: prelims} below.

Our interest is in travelling wave solutions, i.e.~solutions whose velocity field is stationary with respect to time in a reference frame moving at a constant speed $\gamma > 0$ down the incline. We will work in Cartesian coordinates $(X,Y)$ for physical space, with $X$ parallel to the inclined plane and $Y$ perpendicular to it. The origin of the $Y$-axis is set so that the bed $\Bed$ is given by the line $Y=0$. See Figure~\ref{fig:fluid_domain} for an illustration.
\begin{figure}[ht]\label{fig:fluid_domain}
    \centering
    \includegraphics{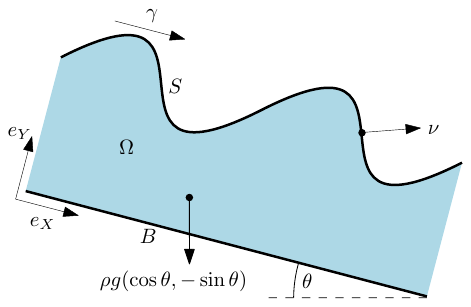}
    \caption{A sample portion of the fluid domain.}
\end{figure}

\begin{subequations}\label{eq:dimensional}
    We consider such a fluid travelling with a constant density $\rho > 0$ and viscosity $\mu > 0$, subject to a uniform vertical gravitational field $\rho g (\cos \theta, -\sin \theta)$, where $g > 0$ is the magnitude of gravitational acceleration. This gravitational force is the only bulk force acting on the fluid. Let $V \map{\Omega}{\Real^2}$ denote the velocity of the fluid in the travelling frame, and let $P \map{\Omega}{\Real}$ denote its gauge pressure, i.e.~the fluid pressure minus the constant atmospheric pressure. The flow in the bulk is governed by the steady incompressible Navier--Stokes equations,
    \begin{alignat}{2}
        \rho (V \cdot \nabla) V - \mu \Delta V + \nabla P &= \rho g (\cos \theta e_X - \sin \theta e_Y)     \qquad &\tn{in } \Omega,\\
        \nabla \cdot V &= 0      &\tn{in } \Omega, \label{eq: incomp}
    \end{alignat}
    which represent momentum balance and conservation of mass for the fluid.

    The only surface force considered is surface tension $\sigma H$, where $\sigma \geq 0$ is the coefficient of surface tension and $H$ is the mean curvature of $\Surf$. Let $\nu \in \Real^2$ denote the outward unit normal to the surface $\Surf$ and $\D V \in \Real_{\mathrm{sym}}^{2 \cross 2}$ denote twice the symmetrized gradient of $V$,
    \begin{equation*}
        \D V := D V + {(D V)}^T.
    \end{equation*}
    The equations on $\Surf$ are then given by a kinematic boundary condition, representing no penetration across the surface,
    \begin{equation}
        V \cdot \nu = 0   \quad \tn{on } \Surf, \label{eq: kinematic}
    \end{equation}
    and a dynamic boundary condition, which represents the force balance on the surface,
    \begin{equation}
        \left[(P + \sigma H)I - \mu \D V\right] \nu = 0     \quad \tn{on } \Surf.
    \end{equation}
    We assume a standard no-slip condition on $\Bed$, i.e.~that the fluid velocity vanishes at the lower boundary in the lab frame. In our travelling reference frame, this condition is 
    \begin{equation}
        V = - \gamma e_X \quad \tn{on } \Bed. \label{eq: noslip}
    \end{equation}
\end{subequations}
This flow has relative mass flux $m \in \Real$, defined by 
\begin{equation}\label{eq: mass_flux}
    m := \rho \int_{\Omega \cap \{X = X_0\}} V \cdot e_X \, dY,
\end{equation}
which is a constant independent of $X_0 \in \Real$, by conservation of mass~\eqref{eq: incomp} and the no-penetration boundary conditions~\eqref{eq: kinematic} and~\eqref{eq: noslip}. The problem for inclined travelling viscous surface waves is then to find $V$, $P$ and $\Omega$ satisfying the system of equations~\eqref{eq:dimensional} above. 

For any wavespeed $\gamma > 0$, density $\rho > 0$, viscosity $\mu > 0$, gravitational constant $g > 0$, mass flux $m \in \Real$ and incline angle $0 < \theta \leq \pi/2$, the problem~\eqref{eq:dimensional} admits a unique shear solution $V = \VNusselt e_X$, $P = P_0$ at a certain height $h$,
\begin{equation}\label{eq: dim_shear}
    \VNusselt(Y) := - \gamma + \frac{\rho g \sin \theta}{2 \mu}\left(2hY-Y^2\right), \quad P_0(Y) := \rho g \cos \theta (2h - Y), \quad \Omega_0 = \Real \times (0, h).
\end{equation}
This shear profile is referred to in the literature as the \emph{Nusselt solution}~\cite{nusselt1916oberflachenkondensation}. Note that the height $h$ is uniquely determined by computing~\eqref{eq: mass_flux} for the Nusselt profile,
\begin{equation}\label{eq: flux_nusselt}
    m = \rho \left(- \gamma h + \frac{\rho g h^2 \sin \theta}{3 \mu} \right).
\end{equation}

To non-dimensionalise~\eqref{eq:dimensional}, we use the unique Nusselt solution~\eqref{eq: dim_shear} of the same mass flux to identify characteristic scales for our flow. The Nusselt height $h$ defined by~\eqref{eq: flux_nusselt} will serve as a characteristic length scale for our problem, and we use as characteristic velocity scale the surface velocity $\mathcal{V}$ of this Nusselt flow in the lab frame, namely 
\begin{equation*}
    \mathcal{V} := \VNusselt(h) + \gamma = \frac{\rho g h^2 \sin \theta}{2 \mu}.
\end{equation*}
We define a Reynolds number $\Rey \geq 0$ by 
\begin{equation*}
    \Rey := \frac{\rho h \mathcal{V}}{\mu},
\end{equation*}
and fix units for mass, length and velocity, so that $\rho = 1$, $h=1$ and $\mathcal{V} = 1$. In these units, the mass flux of the fluid $m = \frac{2}{3} - \gamma$ and~\eqref{eq:dimensional} becomes 
\begin{equation} \label{eq:dimensionless}
\begin{cases}
  \Rey (V \cdot \nabla) V - \Delta V + \nabla P = 2 e_X - 2 \cot \theta e_Y &\tn{in } \Omega, \\
  \nabla \cdot V = 0 &\tn{in } \Omega, \\
  V \cdot \nu = 0 &\tn{on } \Surf, \\
  [(P + \sigma H)I - \D V] \nu = 0   &\tn{on } \Surf, \\
  V = - \gamma e_X         &\tn{on } \Bed,
\end{cases}
\end{equation}
while the Nusselt shear solution~\eqref{eq: dim_shear} to~\eqref{eq:dimensionless} becomes
\begin{equation} \label{eq: shear_sol}
    \VNusselt(Y) = - \gamma + 2Y-Y^2, \qquad P_0(Y) = 2 \cot \theta (1 - Y), \qquad \Omega_0 = \Real \times (0,1).
\end{equation}

\subsection{Orr--Sommerfeld equation}\label{section: os_intro}
Linearising~\eqref{eq:dimensionless} about~\eqref{eq: shear_sol} and separating variables for periodic disturbances leads to the celebrated \emph{Orr--Sommerfeld equation}~\cite{orr1907stability1, orr1907stability2, sommerfeld1909beitrag},
\begin{subequations}\label{eq: os_system}
    \begin{equation}\label{eq: os_int}
        {\left(\frac{d^2}{dY^2} - k^2\right)}^2 \hat{\varphi} - ik \Rey \VNusselt \left(\frac{d^2}{dY^2} - k^2\right) \hat{\varphi} + ik \Rey \VNusselt''\hat{\varphi} = 0 \qquad \text{for}~Y \in [0,1], 
    \end{equation}
    with boundary conditions for our free-surface flow given by 
    \begin{alignat}{2}
        \hat{\varphi}'' + \Big(k^2 +  \frac{2}{\VNusselt}\Big) \hat{\varphi} = 0& \qquad & \text{at}~Y=1, \\
        \hat{\varphi}''' - \left(3  k^2 + ik \Rey \VNusselt\right) \hat{\varphi}' + \frac{ik}{\VNusselt} \big(2\cot \theta + \sigma k^2 \big) \hat{\varphi} = 0& \qquad & \text{at}~Y=1, \\
        \hat{\varphi} = 0& \qquad & \text{at}~Y=0, \\
        \hat{\varphi}' = 0& \qquad & \text{at}~Y=0.
    \end{alignat}
\end{subequations}
Here $k \geq 0$ is the wavenumber of the disturbance and the function $\hat{\varphi} \colon [0,1] \to \Complex$ is related to the perturbative stream function for the flow. For our velocity profile $\VNusselt$,~\eqref{eq: os_int} is a variable coefficient ODE which is analytically intractable for general parameter values, with no exact solution in terms of standard functions. Instead, the Orr--Sommerfeld equation is usually studied via numerical methods~\cite{orszag1971accurate} or asymptotic expansions in parameter regimes~\cite{benjamin1957wave, yih1963stability}; see for example~\cite{chang2002complex, schmid2012stability} for a review of the theory.

Let us express~\eqref{eq: os_system} as $\OS \hat{\varphi} = 0$, where the operator $\OS \map{\Xos}{\Yos}$ and spaces $\Xos$, $\Yos$ will be carefully defined in Section~\ref{section: os} below. We write $\OS = \OS(k, \Rey, \theta, \gamma)$, making the dependence on the parameters $k$, $\Rey$, $\theta$ and $\gamma$ explicit. Throughout our analysis we will fix $\sigma \geq 0$.

\subsection{Statement of main result}
Our first result constructs roll wave solutions to~\eqref{eq:dimensionless} close to the Nusselt solution~\eqref{eq: shear_sol} by techniques from local bifurcation theory. The assumptions for this result are stated solely in terms of the Orr--Sommerfeld operator $\OS$ introduced above. A more detailed version of this result is given in Theorem~\ref{thm: gen_local} below. Here, we fix once and for all a H\"older exponent $\alpha \in (0,1)$.
\begin{theorem}[Local bifurcation]\label{thm: general_local}
    Fix $k > 0$ and $\Rey \geq 0$, and suppose that for some $\theta_* \in (0,\pi/2]$, $\gamma_* \in (0,\infty)\setminus\{2\}$, the following hold.
    \begin{enumerate}[label=\rm(\Roman*)]
        \item\label{assumption: os_kernel}\textup{(Orr--Sommerfeld kernel)} There exists $\hat{\varphi}_* \in \Xos$ such that 
        \begin{align}
            \ker \OS(k, \Rey, \theta_*, \gamma_*) &= \operatorname{span} \{\hat{\varphi}_*\}, \label{eq: os_1dker}\\
            \ker \OS(nk, \Rey, \theta_*, \gamma_*) &= \{0\} \quad \text{for } n = 2, 3, \ldots \label{eq: os_nonres}
        \end{align} 
        \item\label{assumption: os_transv}\textup{(Orr--Sommerfeld transversality)} For all nonzero $(\dot{\theta}, \dot{\gamma}) \in \Real^2$, we have
        \begin{equation*}
            \big(\d_\theta \OS(k,\Rey,\theta_*,\gamma_*) \dot{\theta}\big) \hat{\varphi}_* + \big(\d_\gamma \OS(k,\Rey,\theta_*,\gamma_*) \dot{\gamma}\big) \hat{\varphi}_* \notin \operatorname{ran} \OS(k,\Rey,\theta_*,\gamma_*).
        \end{equation*}
    \end{enumerate}
    Then there is a local nontrivial analytic curve of solutions to~\eqref{eq:dimensionless},
    \begin{equation*}
        \mathscr{C}_{\mathrm{loc}} := \left\{\left( V(s), P(s), \Omega(s), \theta(s), \gamma(s) \right) : 0 \leq \lvert s \rvert < \epsilon \right\},
    \end{equation*}
    bifurcating from the Nusselt solution~\eqref{eq: shear_sol} with $\theta = \theta_*$ and $\gamma = \gamma_*$ at $s=0$. The solutions are periodic in $X$, with H\"older regularity $V \in C^{3,\alpha}(\Omega)$ and $P \in C^{2,\alpha}(\Omega)$ with $\Omega \in C^{4,\alpha}$.
\end{theorem}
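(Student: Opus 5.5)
We would prove Theorem~\ref{thm: general_local} by recasting~\eqref{eq:dimensionless} as a single analytic equation $\mathcal{F}(w,\theta,\gamma)=0$ on a fixed periodic strip, and then applying Hale's two-parameter bifurcation theorem~\cite{hale1978bifurcation}.

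\textbf{Reformulation.} Write $\Omega=f(\Real\times(0,1))$ with $f$ conformal, as in~\eqref{eq: f_intro}. Take $f$ to be $2\pi/k$-periodic up to translation, normalised by $f(x,0)\in\Bed$, and encoded by a single periodic harmonic function, e.g.\ the perturbation $\eta$ of the height on the upper boundary. The velocity is represented through a stream function $\psi$, with $V=\nabla^\perp\psi$ pulled back by $f$. The no-slip, kinematic and mass-flux conditions~\eqref{eq: mass_flux} then become Dirichlet and Neumann data for $\psi$ on $y=0$ and $y=1$. Together with the pressure $P$, we write the momentum equation, incompressibility and the dynamic condition in the flat variables. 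The unknown is $w=(\psi-\psi_0,\,P-P_0,\,\eta)$ in spaces $\mathcal{X}=C^{4,\alpha}_{\mathrm{per}}\times C^{2,\alpha}_{\mathrm{per}}\times C^{4,\alpha}_{\mathrm{per}}$, modulo translations. We fix the phase by requiring $\eta$ to be orthogonal to $\sin(kx)$, or equivalently by imposing a normalisation on the kernel. The target space $\mathcal{Y}$ consists of interior data in $C^{0,\alpha}$ and boundary data of the matching orders. Since the conformal factor $|f'|^2$ and the normal are polynomial in derivatives of $\eta$, $\mathcal{F}$ is real-analytic in $(w,\theta,\gamma)$. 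The Nusselt solution~\eqref{eq: shear_sol} gives $\mathcal{F}(0,\theta,\gamma)=0$ for every $\theta$ and $\gamma$. The condition $\gamma\neq 2$ ensures $\VNusselt(1)\ne 0$, which is needed for the kinematic condition to determine $\eta$ and is visible in the $1/\VNusselt$ coefficients of~\eqref{eq: os_system}.

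\textbf{Fredholm property and kernel.} Next we show that $L=D_w\mathcal{F}(0,\theta_*,\gamma_*)$ is Fredholm of index zero. We treat the system as ADN-elliptic~\cite{agmon1964estimates} and check the Lopatinskii condition. This gives Schauder estimates, hence $L$ is semi-Fredholm, and index zero follows by homotopy to a decoupled Stokes--Laplace problem. Expanding in Fourier modes $e^{inkx}$, and using that $L$ has $x$-independent coefficients, we eliminate $P$ and $\eta$ in each mode $n\ne0$. This reduces $L$ to the Orr--Sommerfeld operator $\OS(nk,\Rey,\theta_*,\gamma_*)$ acting on $\hat\varphi$. Separately, the $n=0$ mode reduces to a trivially invertible ODE, thanks to the flux normalisation. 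By~\eqref{eq: os_1dker} and~\eqref{eq: os_nonres}, $\ker L$ is spanned by the real and imaginary parts of $\hat\varphi_*e^{ikx}$. This kernel is two-dimensional, and translation acts on it by rotation. After fixing the phase, $\ker L$ is one-dimensional and the cokernel is correspondingly two-dimensional in the real sense.

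\textbf{Hale's theorem.} With the phase fixed, Lyapunov--Schmidt reduction yields a bifurcation equation with two real equations, in $s$ (the kernel amplitude) and the two parameters $(\theta,\gamma)$. By translation equivariance the reduced map has the form $s\,G(s,\theta,\gamma)$ with $G$ taking values in $\Complex\cong\Real^2$. Hale's result requires that $D_{(\theta,\gamma)}G(0,\theta_*,\gamma_*)$ be invertible as a map $\Real^2\to\Real^2$. This derivative is the projection of $\partial_\theta L\,\hat\varphi_*\dot\theta+\partial_\gamma L\,\hat\varphi_*\dot\gamma$ onto the complex cokernel. After the Fourier reduction, that projection is exactly the expression in assumption~\ref{assumption: os_transv}. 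Transversality therefore says this $\Real$-linear map has trivial kernel, so it is an isomorphism. The implicit function theorem then gives analytic functions $(\theta(s),\gamma(s))$ solving $G=0$, and hence the curve $\mathscr{C}_{\mathrm{loc}}$. The regularity of $V$, $P$ and $\Omega$ follows from the function spaces.

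\textbf{Main obstacle.} The hardest step is the ellipticity and Fredholm analysis. We must choose ADN weights, for example $s=(0,-2,0)$ and $t=(4,2,4)$ adapted to $\psi$, $P$ and $\eta$, in a way that handles the coupling of $\eta$ through the conformal map and the surface tension term. We then have to verify the complementing condition at the free surface, with and without $\sigma$. A second delicate point is checking that the Fourier reduction identifies the cokernel with $\operatorname{ran}\OS$ precisely, so that assumption~\ref{assumption: os_transv} matches Hale's transversality condition.
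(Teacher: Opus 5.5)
Your overall route is the paper's: conformal flattening to $(\psi,p,\eta)$ on the strip, ADN ellipticity plus a homotopy to a decoupled Stokes/Laplace problem for the index, Fourier reduction of the kernel and the transversality condition to $\OS$, and Hale's theorem with two parameters. However, the kernel step contains a concrete error.

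You say $\gamma_*\neq 2$ ensures $\VNusselt(1)\neq 0$, and that the $n=0$ mode is trivially invertible thanks to the flux normalisation. In fact $\VNusselt(1)=1-\gamma$, so $\VNusselt(1)\neq0$ is the condition $\gamma\neq 1$. That condition is implicit in hypothesis (I), since $\OS$ contains $1/\VNusselt(1)$. The zero mode is \emph{not} invertible for every $\gamma$. In the variables $(\varphi,q,\zeta)=\mathcal{Q}^{-1}(\psi,p,\eta)$, the $x$-independent part of $\ker\mathscr{L}$ satisfies $\hat\varphi_0'''=0$ with $\hat\varphi_0(0)=\hat\varphi_0'(0)=0$, so $\hat\varphi_0=ay^2$, while $\hat\zeta_0=cy$. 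The tangential stress condition gives $a=c$, and the kinematic condition gives $a+(1-\gamma)c=0$. Hence $(2-\gamma)a=0$, and there is a zero-mode kernel exactly at $\gamma=2$.

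This kernel element is the tangent to the family of Nusselt films of depth $h$ at fixed $(\theta,\gamma)$. Their relative flux $-\gamma h+\tfrac23 h^3$ is stationary at $h=1$ precisely when $\gamma=2$, so the flux normalisation is exactly what fails to pin down the depth there. The hypothesis $\gamma_*\neq2$ is needed at this point; the paper uses it in Lemma~\ref{Kernels_equivalence} to show $\ker\OS(0,\Rey,\theta,\gamma)=\{0\}$. As written, your argument does not show that $\ker\mathscr{L}$ is one-dimensional, which Hale's theorem requires. Invoking $\gamma_*\neq 2$ at this step repairs it.

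Some smaller points:
\begin{itemize}
\item If you keep the momentum equations (third order in $\psi$, first order in $p$) with $\psi\in C^{4,\alpha}$, the interior data must lie in $C^{1,\alpha}$ rather than $C^{0,\alpha}$. Otherwise the range has infinite codimension and the linearisation is not Fredholm. The paper uses Douglis--Nirenberg numbers $t=(3,1,3)$, $s=(0,0,-1)$.
\item With the phase condition already built into your space, the index is $-1$, not $0$. This matches your later count of a one-dimensional kernel and two-dimensional cokernel.
\item For transversality, the lift $\xi_0=\mathcal{Q}\mathscr{E}\hat\varphi_*$ depends on $(\theta,\gamma)$. So $\partial_\lambda\OS\,\hat\varphi_*$ and the reduction of $\partial_\lambda\mathscr{L}\,\xi_0$ differ by terms of the form $\mathscr{L}(\cdot)$. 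The paper's Lemma~\ref{transv_proj} shows these terms reduce into $\operatorname{ran}\OS$, which is what justifies your identification of the two conditions.
\end{itemize}
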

\begin{remark}\label{solution_expansions}
    The solutions in Theorem~\ref{thm: general_local} have explicit asymptotic expansions for small $s$. Roughly speaking, the physical wavenumber is approximated by $k$ up to order $s$, and the free surface $\Surf$ is given by $\Surf = \{(X,\mathcal{Y}(X,s)) : X \in \Real \}$, where
    \begin{equation}\label{eq: Y_exp}
        \mathcal{Y}(X; s) = 1 + s \cos kX + O(s^2),
    \end{equation}
    while the velocity and pressure have expansions
    \begin{equation}\label{eq: VP_exp}
        \begin{aligned}
            V(X,Y; s) &= V_0(Y)\begin{pmatrix}
                1 \\ 0
            \end{pmatrix} + s \Re \bigg\{ \begin{pmatrix} \hat{\varphi}'_*(Y) \\ -iK \hat{\varphi}_*(Y) \end{pmatrix} e^{ikX} \bigg\} + O(s^2), \\
            P(X,Y; s) &= P_0(Y) + s \Re \big\{ \hat{q}_*(Y) e^{ikX} \big\} + O(s^2),            
        \end{aligned}
    \end{equation}
    where $\hat{q}_*$ is a function determined by $\hat{\varphi}_*$. See the discussion after Theorem~\ref{thm: gen_local} for more detail. 
\end{remark}
\begin{remark}
  The hypotheses of Theorem~\ref{thm: general_local} avoid the `zero frequency' $k=0$ degeneracy at $\gamma = 2$, i.e.~when the travelling wavespeed is twice the Nusselt surface velocity in the lab frame. This degenerancy is discussed in more detail in the proof of Lemma~\ref{Kernels_equivalence} and is, roughly speaking, the source of the local bifurcation for small $k$ in Theorem~\ref{thm: smallk} below.
\end{remark}
The hypotheses~\ref{assumption: os_kernel}--\ref{assumption: os_transv} are natural assumptions on the Orr--Sommerfeld operator at the bifurcation point. The kernel hypothesis~\ref{assumption: os_kernel} corresponds to the linearised PDE operator having a one-dimensional kernel, and the transversality hypothesis~\ref{assumption: os_transv} is a non-degeneracy condition. However, these hypotheses are difficult to assess in general, due to the challenges in rigorously analysing the spectrum of the Orr--Sommerfeld equation (see, for instance~\cite{drazin2004hydrodynamic}).

Note that our curve has two parameters. For simplicity, we choose the angle $\theta$ and wavespeed $\gamma$ as bifurcation parameters, however analogues of Theorem~\ref{thm: general_local} hold with any pair of parameters from the list $k, \Rey, \theta, \gamma, \sigma$. 

Our second result extends the local curve found in Theorem~\ref{thm: general_local}. Recall from~\eqref{eq: f_intro} the conformal map $f$ from the rectangular strip to $\Omega$, and denote by $\Top := \Real \times \{1\}$ the upper boundary of the strip. This result is written in more detail in Theorem~\ref{thm: gen_global}.
\begin{theorem}[Global continuation]\label{thm: general_global}
    In the setting of Theorem~\ref{thm: general_local}, the local curve $\mathscr{C}_{\mathrm{loc}}$ can be extended to a global curve $\mathscr{C}$ of periodic solutions, parameterised by $s \in \Real$. Along this curve, one of the following alternatives occur:
    \begin{enumerate}[label=\rm(\alph*)]
        \item \textup{(Blow-up)} If $\sigma = 0$, then as $\lvert s \rvert \to \infty$,
        \begin{equation*} 
            \sup_{\substack{z_1, z_2 \in \Top \\ z_1 \neq z_2}} \frac{\lvert z_1 - z_2 \rvert}{\lvert f(z_1) - f(z_2) \rvert} + \sup_\Top \lvert Df \rvert + \sup_\Top \lvert D^2 f \rvert + \sup_\Omega \lvert V(s) \rvert + \sup_\Surf\lvert DV(s)\rvert + \frac{1}{\inf_\Surf \lvert V(s)\rvert} + \lvert \cot \theta(s)\rvert \longrightarrow \infty,
        \end{equation*}
        and if $\sigma > 0$, then as $\lvert s \rvert \to \infty$,
        \begin{equation*} 
            \sup_{\substack{z_1, z_2 \in \Top \\ z_1 \neq z_2}} \frac{\lvert z_1 - z_2 \rvert}{\lvert f(z_1) - f(z_2) \rvert} + \sup_\Top \lvert Df \rvert + \sup_\Top \lvert D^2 f \rvert + \sup_\Omega \lvert V(s) \rvert + \sup_\Surf\lvert DV(s)\rvert + \lvert \cot \theta(s)\rvert \longrightarrow \infty.
        \end{equation*}\label{blowup_alternative}
        \item \textup{(Closed loop)} At some point the solution curve $\mathscr{C}$ returns back to the Nusselt profile~\eqref{eq: shear_sol} with angle $\theta_*$ and wavespeed $\gamma_*$.\label{loop_alternative}
    \end{enumerate}
\end{theorem}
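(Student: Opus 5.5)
The plan is to apply the analytic global bifurcation theorem of Dancer and Buffoni--Toland~\cite{dancer1973bifurcation, buffoni2003analytic} to the flattened formulation. I would write the problem on the strip $\Real \times (0,1)$ with unknowns $w = (\psi, q, f)$: stream function, pressure and conformal map, all $2\pi/k$-periodic in the horizontal variable. I would also make the $X$-translation invariance disappear by imposing a phase condition, for instance evenness-breaking normalisations on the Fourier mode of $\Im f$. This gives an equation $\mathcal{F}(w,\theta,\gamma) = 0$. Here $\mathcal{F}$ is real-analytic from an open set $\mathcal{U} \subset X \times \Real^2$ into $Y$, with $X$, $Y$ Hölder spaces matching the regularity in Theorem~\ref{thm: general_local}. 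The set $\mathcal{U}$ encodes the open conditions $\inf_\Top |f_x| > 0$, injectivity of $f$ on $\Top$, $\theta \in (0,\pi/2]$ (via $\cot\theta \in [0,\infty)$, treated as an open real parameter) and, when $\sigma = 0$, $\inf_\Surf |V| > 0$.

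The main structural input is that $\mathcal{F}_w$ is Fredholm of index $0$ on $\mathcal{U}$. I would get this from the Agmon--Douglis--Nirenberg ellipticity of the flattened system together with the complementing condition on $\Top$ and on the bottom. Index zero follows by deforming to the Nusselt solution, where the operator decouples into Fourier modes governed by $\OS(nk,\ldots)$, and the index is homotopy invariant. Where surface tension is absent, I expect the complementing condition at $\Top$ to degenerate when $V$ vanishes on $\Surf$, since the kinematic condition loses its leading-order control of $f$. This is why $1/\inf_\Surf|V|$ appears only for $\sigma=0$.

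Next I need compactness. Bounded closed subsets of $\mathcal{F}^{-1}(0)$ must be compact, in the strong sense that they stay away from $\partial\mathcal{U}$. So I would show that on any set where the blow-up quantity in~\ref{blowup_alternative} is bounded, Schauder estimates for the ADN system bound $(\psi,q,f)$ in the stronger norm $C^{4,\alpha}$. Arzelà--Ascoli then gives compactness in $X$. For the stronger norm, I would bootstrap from $C^{2}$ control of $f$ on $\Top$ and $C^1$ control of $V$ on $\Surf$: harmonic extension controls $f$, interior and boundary estimates control $\psi$ and $q$, and the chord-arc quantity keeps $f$ a diffeomorphism. I expect this step to be the main obstacle, because the quantities listed in the blow-up alternative are weak and must be upgraded to full Hölder bounds. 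I would first try to derive $C^{1,\alpha}$ bounds for the free surface through the dynamic condition, treated as an elliptic equation for curvature when $\sigma>0$ or via the kinematic/Bernoulli-type structure when $\sigma=0$, before applying the linear Schauder theory.

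With these ingredients, the local analytic curve $\mathscr{C}_{\mathrm{loc}}$ from Theorem~\ref{thm: gen_local} (obtained via Hale's method with the two-parameter transversality~\ref{assumption: os_transv}) satisfies the hypotheses of the global theorem. It therefore extends to a locally analytic global curve $\mathscr{C}$, parametrised by $s\in\Real$. Along it, either $\|w(s)\|_X + |\cot\theta(s)| + 1/\operatorname{dist}((w(s),\theta(s),\gamma(s)),\partial\mathcal{U}) \to \infty$, or the curve is a closed loop. The last step is to translate this into the listed quantities. If none of them blows up along a sequence, the compactness step yields a convergent subsequence inside $\mathcal{U}$, which contradicts the first alternative. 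One technical check is that the Hale reduction produces a curve with $\mathcal{F}_w$ having one-dimensional kernel along the analytic set, as the Buffoni--Toland framework requires. Another is that a return to a trivial point can only happen at $(\theta_*,\gamma_*)$ in the loop alternative.
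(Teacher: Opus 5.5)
Your architecture (ADN ellipticity, Schauder-based compactness, analytic global bifurcation) matches the paper's, but two load-bearing steps fail as stated.

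The first is the Fredholm index. Once the phase condition is imposed, $\mathcal{F}_w$ has index $-1$, not $0$. At the bifurcation point it has a one-dimensional kernel and range of codimension two, which is exactly what the two-parameter Hale theorem you invoke requires; index $0$ holds only on the space without the phase constraint. So Buffoni--Toland, a one-parameter theorem that needs index zero, cannot be applied to $\mathcal{F}_w$ directly. The missing idea is to absorb one parameter into the unknown, so that $D_{(w,\theta)}\mathcal{F}$ has index zero. You must then show, via the Lyapunov--Schmidt reduction, that along the local Hale curve this augmented derivative is invertible for small $s\neq 0$. This happens exactly when $\Gamma'(s)\neq 0$. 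The check you propose, a one-dimensional kernel of $\mathcal{F}_w$ along the curve, is not what the global theory needs. The paper also establishes only semi-Fredholmness on $\mathcal{U}$. It replaces the compactness hypothesis with a loss-of-compactness alternative, which it then rules out.

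The second is the a priori estimate. The ADN estimates carry a lower-order term $\lVert u\rVert_{L^1}$. For $\psi$ and $\eta$ this is controlled by the listed quantities, but nothing in the blow-up quantity controls the pressure. Integrating $\nabla p$ from the surface would need third derivatives of $\psi$. The paper closes this with an energy identity. Testing the lab-frame equations against $U=V+\gamma e_X$, the convective term vanishes by the kinematic and no-slip conditions, and the surface-tension term integrates to zero by periodicity. This gives an $H^1(\Omega)$ bound on $V$, hence an $H^2$ bound on $\psi$, hence an $L^2$ bound on $p+2\eta\cot\theta$ from a Poisson problem. That bound then feeds a $W^{3,r}$ estimate and a Schauder bootstrap.

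Your proposed detour through $C^{1,\alpha}$ bounds on the surface, via the dynamic condition or a Bernoulli structure, does not address this. The surface is already $C^{1,1}$-controlled by $\sup\lvert D^2 f\rvert$ on the top boundary, and there is no Bernoulli law to exploit in the viscous problem. Finally, a $C^{4,\alpha}$ bound does not by itself give compactness in $X$ via Arzel\`a--Ascoli. The paper bootstraps one derivative higher, to $C^{5,\alpha}\times C^{3,\alpha}\times C^{5,\alpha}$, before extracting a convergent subsequence.
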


Divergence of the first term in alternative~\ref{blowup_alternative} represents loss of injectivity of the conformal map or $C^1$ blow-up of its inverse on $\Top$. Loss of injectivity could correspond to the wave profile self-intersecting, while blow-up of the inverse of the conformal map in $C^1$ could be due to the formation of inward-pointing corners or the surface touching the bed. Blow-up of the gradient of the conformal map on $\Top$ would be consistent with the formation of outward-pointing corners, while blow-up of the Hessian of $f$ would be consistent with surface points of large curvature. Divergence of the fourth term would imply large velocity in the moving frame. Blow-up of the gradient of the velocity on the surface would be consistent with blow-up of the stress tensor or vorticity. In the case of zero surface tension, vanishing of the (travelling) velocity on the surface corresponds to approaching a stagnation point on the surface, consistent with a surface singularity. However, the presence of surface tension removes this term in the alternative, consistent with the smoothing effect of surface tension. The term in $\theta$ represents possible vanishing of the angle of inclination, which could correspond to full stagnation, i.e.~$V \equiv \gamma e_1$. 
 
Regarding the possibility of a closed loop, alternative~\ref{loop_alternative}, in global bifurcation results for water wave problems, maximum principle arguments and reflection symmetry can often be used to rule out such a loop. However,~\eqref{eq:dimensionless} does not possess a reflection symmetry, and it is not clear that the problem admits a scalar elliptic maximum principle. Indeed, it seems plausible that a loop could occur for some values of $k, \Rey$, and $\sigma$. 

\subsection{Results in special parameter regimes}
As mentioned above, the hypotheses~\ref{assumption: os_kernel}--\ref{assumption: os_transv} in Theorem~\ref{thm: general_local} are difficult to verify in general. However, in two asymptotic parameter regimes, we can find angles and wavespeeds that satisfy these hypotheses. 

The first such regime is when the conformal wavenumber $k$ (and consequently the physical wavenumber $K$, at least locally along the bifurcation branch) is small, corresponding to long waves or thin films. 
\begin{theorem}[Bifurcation at small wavenumber]\label{thm: smallk}
    Fix $\Rey \geq 0$. Then, there exist $\varepsilon > 0$ and real analytic functions $\tilde{\theta} \colon (-\varepsilon, \varepsilon) \to \Real$ and $\tilde{\gamma} \colon (-\varepsilon, \varepsilon) \to \Real$ with the following properties.
    \begin{enumerate}[label=\rm(\alph*)]
        \item Up to order $k^2$, $\tilde{\theta}$ and $\tilde{\gamma}$ are given by
        \begin{equation}\label{eq: intro_smallk_param_exp}
            \cot \tilde{\theta}(k) = \frac{4}{5} \Rey + O(k^2), \qquad
            \tilde{\gamma}(k) = 2 + O(k^2),
        \end{equation}
        with Orr--Sommerfeld kernel spanned by $\tilde{\varphi}(y; k)$, where
        \begin{equation}\label{eq: intro_smallk_exp}
            \tilde{\varphi}(y; k) = \frac{y^2}{2} + ik \Rey \left(\frac{y^{5}}{60} - \frac{y^{4}}{12} + \frac{2 y^{3}}{15}\right) + O(k^2).
        \end{equation}
        \item For $k \in (0, \varepsilon) $ outside a countable set, $\theta_* = \tilde{\theta}(k)$ and $\gamma_* = \tilde{\gamma}(k)$ satisfy the hypotheses of Theorem~\ref{thm: general_local}. Thus, each such $(\theta_*, \gamma_*)$ corresponds to a bifurcation point for a global curve of solutions to~\eqref{eq:dimensionless} described in Theorem~\ref{thm: general_global}.
    \end{enumerate}
\end{theorem}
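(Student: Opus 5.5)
The plan is to treat the Orr--Sommerfeld system~\eqref{eq: os_system} as a boundary-value problem depending analytically on $(k, \Rey, \theta, \gamma)$ and to encode the existence of a nontrivial kernel as the vanishing of an analytic function. Fix $\Rey$ and consider the fundamental system of~\eqref{eq: os_int} normalised at $Y=0$. The two solutions $\hat\varphi_1,\hat\varphi_2$ satisfying the bed conditions $\hat\varphi(0)=\hat\varphi'(0)=0$ (with $(\hat\varphi''(0),\hat\varphi'''(0))$ equal to $(1,0)$ and $(0,1)$) are entire in $k$. Substituting them into the two surface conditions at $Y=1$ gives a $2\times 2$ matrix $M(k,\cot\theta,\gamma)$, analytic for $\gamma\neq 0$. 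Then $\ker\OS\neq\{0\}$ if and only if $\det M=0$, and the kernel is one-dimensional whenever $M\neq 0$.

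Near $k=0$ the determinant degenerates: at $k=0$ the equation is $\hat\varphi''''=0$ and the conditions decouple, and the degeneracy sits exactly at $\gamma=2$, where $\VNusselt(1)=1-\gamma=-1$ and the first surface condition becomes $\hat\varphi''-2\hat\varphi=0$ at leading order. I will expand $\hat\varphi_j$ in powers of $ik\Rey$ and $k^2$ by iterated integration. One can check that $y^2/2$ solves the $k=0$ problem precisely when $\gamma=2$. Dividing the appropriate row (the third-derivative condition, whose $\hat\varphi$ term carries a factor $k$) by $k$ gives a rescaled function $F(k,c,\gamma)=\det\tilde M$, analytic in $(k,c,\gamma)$ with $c=\cot\theta$. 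Its zero set near $(0,\tfrac45\Rey,2)$ should be a complex equation, that is, two real equations in the two real unknowns $(c,\gamma)$.

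The next step is to compute the $O(1)$ and $O(k)$ parts of $F$. This should reduce $F(0,c,\gamma)=0$ to $\gamma=2$ together with an $O(k)$ balance. That balance comes from the $ik\Rey$ correction to $\tilde\varphi$ in~\eqref{eq: intro_smallk_exp}, obtained by solving $\hat\varphi''''=ik\Rey(\VNusselt\hat\varphi''-\VNusselt''\hat\varphi)$ with $\hat\varphi_0=y^2/2$, and it forces $2c-\tfrac85\Rey=0$, i.e.\ $c=\tfrac45\Rey$. To reach this I will split $F$ into real and imaginary parts after factoring out powers of $k$, so that one of them is $\gamma-2+O(k^2)$ and the other is $c-\tfrac45\Rey+O(k^2)$. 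I expect the $k\mapsto -k$ symmetry, which acts on the equation by complex conjugation, to make the rescaled real system even in $k$; this is what gives the $O(k^2)$ remainders. The implicit function theorem, in its analytic version, then produces $\tilde\theta,\tilde\gamma$ with the expansions~\eqref{eq: intro_smallk_param_exp}, together with the kernel vector~\eqref{eq: intro_smallk_exp}. It also shows that the $2\times2$ real Jacobian with respect to $(c,\gamma)$ is invertible, which is exactly the transversality hypothesis~\ref{assumption: os_transv}. This follows once one identifies the range of $\OS$ with the codimension-one (complex) condition given by the left null vector of $M$, so that the Jacobian of $(\Re,\Im)\det$ measures the failure to lie in the range.

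For part (b), I still need the non-resonance condition~\eqref{eq: os_nonres}. For $n\ge 2$ and small $nk$, the expansion shows $\det\tilde M(nk,\tilde c(k),\tilde\gamma(k))\neq 0$ for $k$ small, since at $\gamma=2$ the leading term at wavenumber $nk$ involves $(n^2-1)k^2$-type corrections. The main obstacle is large $n$, where $nk$ is not small. There I will use analyticity: for fixed $n$, the map $k\mapsto \det M(nk,\tilde c(k),\tilde\gamma(k))$ is real analytic on $(0,\varepsilon)$. If it is not identically zero, its zeros are discrete, and the union over $n$ is countable. To rule out identical vanishing, I will extend analytically in $k$ to a small complex neighbourhood of $0$ and use the small-$k$ asymptotics there. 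I also need a uniform-in-$n$ argument that $\OS(nk)$ is injective for $nk$ large. This should come from an energy estimate on~\eqref{eq: os_system}: multiply by $\bar{\hat\varphi}$ and integrate, so that the $(nk)^4$ terms dominate the Reynolds terms for $nk\gg \Rey$ with $\Rey$ fixed. With that, only finitely many $n$ are relevant on each compact subinterval, and excluding the countable exceptional set yields the hypotheses of Theorem~\ref{thm: general_local}, with Theorem~\ref{thm: general_global} then applying directly.
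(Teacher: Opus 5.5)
Your route is genuinely different from the paper's, and in substance it works, but several steps need repair.

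\textbf{Comparison with the paper.} The paper obtains $(\tilde\theta,\tilde\gamma,\tilde\varphi)$ by a Crandall--Rabinowitz argument for the normalised map $G=(\OS\hat\varphi,\,g(\hat\varphi)-1)$, using $\theta$ as bifurcation parameter off the line of $k=0$ solutions. It then runs a second such argument to get an analytic kernel of the adjoint $\OS^*$. Finally it checks~\ref{assumption: os_transv} by expanding the pairing matrix of Corollary~\ref{cor: transv_det}.

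You instead apply the analytic implicit function theorem to the Evans-type function $D=\det M$. Your identification of transversality with invertibility of the real Jacobian of $(\operatorname{Re}D,\operatorname{Im}D)$ in $(c,\gamma)$ is correct. Modulo $\operatorname{ran}\OS$ one has $(\partial_\lambda\OS\,\dot\lambda)\hat\varphi_*\equiv(0,(\partial_\lambda M\dot\lambda)r)$. Also $\partial_\lambda D\,\dot\lambda$ is a nonzero multiple of $\ell^{T}(\partial_\lambda M\dot\lambda)r$, since $\operatorname{adj}M\propto r\ell^{T}$ when $\operatorname{rank}M=1$.

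What your approach buys:
\begin{enumerate}
\item It removes the adjoint kernel and its expansion entirely.
\item Your symmetry $M(-k)=\overline{M(k)}$ proves that $\tilde c,\tilde\gamma$ are even. The paper instead leaves the $O(k^2)$ remainders to omitted computations.
\item Your $O(k)$ balance $2c=\frac85\Rey$ and the kernel vector normalised by $\hat\varphi''(0)=1$ reproduce~\eqref{eq: intro_smallk_exp} exactly.
\end{enumerate}
The paper's route, in exchange, delivers $\tilde\varphi$ directly as an $\Xos$-valued analytic curve. It also gives a local uniqueness statement for the zero set of $G$, which it reuses for non-resonance.

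\textbf{Repairs needed.}
\begin{enumerate}
\item The rescaling you describe fails. In your basis, $\hat\varphi_1=y^2/2$ and $\hat\varphi_2=y^3/6$ at $k=0$, so the third-derivative row of $M(0,c,\gamma)$ is $(0,1)$. Dividing that row by $k$ destroys analyticity. What is true is that $\det M(0,c,\gamma)=(2-\gamma)/(1-\gamma)$ is real, i.e.\ $\operatorname{Im}D$ is odd in $k$. The correct system is therefore $(\operatorname{Re}D,\,k^{-1}\operatorname{Im}D)=0$. At $(0,\frac45\Rey,2)$ it has $\partial_c\operatorname{Re}D=0$, $\partial_\gamma\operatorname{Re}D=1$ and $\partial_c(k^{-1}\operatorname{Im}D)=\frac23$, so its Jacobian is invertible.
\item That rescaled invertibility at $k=0$ is not itself transversality. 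For $k\neq0$ the unrescaled Jacobian is $\operatorname{diag}(1,k)$ times the rescaled one. Its determinant is therefore $k$ times a quantity close to $-\frac23$, which is nonzero for small $k\neq0$. This is what actually gives~\ref{assumption: os_transv}, and you should say so explicitly.
\item Theorem~\ref{thm: general_local} also requires $\gamma_*\neq2$. You never check this, and it does not follow from $\tilde\gamma=2+O(k^2)$. You need the next coefficient, $\tilde\gamma(k)=2-2k^2+O(k^4)$, which is independent of $\Rey$ and $\sigma$. The same coefficient is what your ``$(n^2-1)k^2$'' remark needs. If $D(nk,\tilde c(k),\tilde\gamma(k))\equiv0$, local uniqueness of the implicit-function zero set forces $\tilde\gamma(nk)=\tilde\gamma(k)$. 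This contradicts $\tilde\gamma(nk)-\tilde\gamma(k)=-2(n^2-1)k^2+O(k^4)$.
\item The energy estimate for large $nk$ is unnecessary, and it is not obvious given the non-self-adjoint surface terms. Each $\Gamma_n(k)=D(nk,\tilde c(k),\tilde\gamma(k))$ is real analytic on the whole interval, because the fundamental solutions are entire in the wavenumber. Once $\Gamma_n\not\equiv0$ it has countably many zeros, and a countable union over $n$ is countable. This is exactly the paper's argument.
\item Minor: $M$ is analytic for $\gamma\neq1$ (since $\VNusselt(1)=1-\gamma$), not for $\gamma\neq0$.
\end{enumerate}
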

We prove this result by finding a curve of parameters $\big(\tilde{\theta}, \tilde{\gamma}\big)$ along which the Orr--Sommerfeld operator $\OS$ satisfies the first part~\eqref{eq: os_1dker} of the kernel assumption~\ref{assumption: os_kernel} via an auxiliary bifurcation argument at $k=0$. This corresponds to a `neutral stability' curve found in~\cite{benjamin1957wave}, with values $\tilde{\gamma}(0) = 2$ and $\cot \tilde{\theta}(0) = \frac{4}{5} \Rey$ at its branch point. The technical requirement that the wavenumber lies outside a countable set is to avoid potential resonances in the kernel of the linearised operator, that is, to ensure~\eqref{eq: os_nonres} in~\ref{assumption: os_kernel}. Verifying the transversality condition uses the rigorous asymptotic expansions~\eqref{eq: intro_smallk_param_exp} and~\eqref{eq: intro_smallk_exp} for $\tilde{\theta}(k)$, $\tilde{\gamma}(k)$ and $\tilde{\varphi}(k)$, as well as for the kernel of a formal adjoint operator $\OS^*$. 

We are also able to verify the hypotheses of Theorem~\ref{thm: general_local} for the case of low Reynolds number, i.e.~highly viscous flow.
\begin{theorem}[Bifurcation at low Reynolds number]\label{thm: lowR}
    Set $\sigma = 0$, and fix $k > 0$. Then, there exists $\varepsilon > 0$ and analytic functions $\tilde{\theta} \colon (-\varepsilon, \varepsilon) \to \Real$ and $\tilde{\gamma} \colon (-\varepsilon, \varepsilon) \to \Real$ with the following properties.
    \begin{enumerate}[label=\rm(\alph*)]
        \item $\displaystyle{\tilde{\theta}(0) = \frac{\pi}{2}}$ and $\displaystyle{\tilde{\gamma}(0) = 1 + \frac{1}{k^2 + \cosh^2 k}}$.
        \item For any $\Rey \in [0, \varepsilon)$, the conclusion of Theorem~\ref{thm: general_local} holds with $\theta_* = \tilde{\theta}(R)$ and $\gamma_* = \tilde{\gamma}(R)$. Thus, each of these $(\theta_*, \gamma_*)$ corresponds to a bifurcation point for a global curve of solutions to~\eqref{eq:dimensionless} described in Theorem~\ref{thm: general_global}.
    \end{enumerate}
\end{theorem}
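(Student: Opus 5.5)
Our plan is to perturb from the explicit Stokes-flow case $\Rey = 0$, $\theta = \pi/2$ (so $\cot\theta = 0$) with $\sigma = 0$. At $\Rey = 0$ the Orr--Sommerfeld equation \eqref{eq: os_int} reduces to $(\partial_Y^2 - k^2)^2\hat\varphi = 0$. Its general solution is a combination of $\cosh kY$, $\sinh kY$, $Y\cosh kY$ and $Y\sinh kY$. We impose $\hat\varphi(0) = \hat\varphi'(0) = 0$, which leaves a two-parameter family. The two surface conditions at $Y=1$ then involve $V_0(1) = 1 - \gamma$, and with $\cot\theta = 0$, $\sigma = 0$ the term $\tfrac{ik}{V_0}(2\cot\theta + \sigma k^2)\hat\varphi$ drops out. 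So the kernel is nontrivial precisely when a $2\times 2$ determinant $D(k,\gamma)$ vanishes. We compute $D$ explicitly and check that its zero in $\gamma$ is $\gamma_0 = 1 + 1/(k^2 + \cosh^2 k)$. We then verify that this kernel is one-dimensional, spanned by an explicit real function $\hat\varphi_0$.

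Next we set up the auxiliary problem in the parameters $(\Rey, \theta, \gamma)$. Consider the map
\[
(\Rey, \theta, \gamma) \mapsto \det \mathcal{M}(k, \Rey, \theta, \gamma),
\]
where $\mathcal{M}$ is the $2\times 2$ matrix obtained from the boundary conditions at $Y=1$ applied to a fundamental pair of solutions vanishing to first order at $Y=0$. This map is complex-valued and analytic, so it gives two real equations. We solve them for $(\theta, \gamma)$ as functions of $\Rey$ by the analytic implicit function theorem at $(0, \pi/2, \gamma_0)$. This requires the real $2\times 2$ Jacobian with respect to $(\theta, \gamma)$ to be invertible. At $\Rey = 0$ the $\gamma$-derivative of the determinant is real, coming from the $2/V_0$ term. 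The $\theta$-derivative enters only through $\tfrac{ik}{V_0}\,2\partial_\theta(\cot\theta)\,\hat\varphi(1)$, which is purely imaginary relative to it, provided $\hat\varphi_0(1) \neq 0$. Hence the Jacobian is invertible exactly when $\partial_\gamma D \neq 0$ and $\hat\varphi_0(1) \neq 0$, which we check explicitly. This produces the analytic functions $\tilde\theta$ and $\tilde\gamma$ of part (a). Since $\tilde\gamma(0) < 2$, we have $\tilde\gamma(\Rey) \neq 2$ for small $\Rey$. Note that $\theta > \pi/2$ could occur for $\Rey < 0$, but we only use $\Rey \ge 0$ and will check that $\tilde\theta \le \pi/2$ there, or otherwise handle this by the sign of the derivative. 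One-dimensionality of the kernel persists by continuity, since the rank of $\mathcal M$ at $\Rey=0$ is $1$.

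Finally we verify the remaining hypotheses of Theorem~\ref{thm: general_local}. These are open conditions, so it suffices to check them at $\Rey = 0$ and extend to small $\Rey$ by continuity. For nonresonance \eqref{eq: os_nonres} we must show $D(nk, \gamma_0(k)) \neq 0$ for all $n \ge 2$, i.e.\ $\gamma_0(nk) \neq \gamma_0(k)$. This holds because $k \mapsto k^2 + \cosh^2 k$ is strictly increasing on $(0,\infty)$. To pass to small $\Rey$ uniformly in $n$, we need a large-$n$ bound showing that the determinant stays away from zero as $nk \to \infty$. We expect this uniform-in-$n$ step to be the main technical obstacle, and we would handle it by a rescaled asymptotic analysis of the Orr--Sommerfeld solutions for large wavenumber. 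Transversality \ref{assumption: os_transv} is, via the standard finite-dimensional reduction, equivalent to the same real $2\times 2$ Jacobian being nonsingular. That nonsingularity was already established, so transversality holds at $\Rey = 0$ and for small $\Rey$ by continuity. The global curve then follows from Theorem~\ref{thm: general_global}.
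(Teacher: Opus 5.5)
Most of your plan is sound, and one part is more economical than the paper. Solving the complex equation $\det\mathcal{M}(k,\Rey,\theta,\gamma)=0$ for $(\theta,\gamma)$ by the implicit function theorem works: with a real fundamental pair at $\Rey=0$, $\theta=\pi/2$, the $\gamma$-derivative is real and the $\theta$-derivative is a nonzero real multiple of $i(2k-\sinh 2k)$. Your claim that \ref{assumption: os_transv} is equivalent to nonsingularity of this real Jacobian is also correct. At a rank-one point $\operatorname{adj}\mathcal{M}=c\,a\ell^{T}$ with $c\neq0$, where $a,\ell$ are right and left null vectors, so $\partial\det\mathcal{M}=c\,\ell^{T}(\partial\mathcal{M})a$. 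This is exactly the solvability functional for $\operatorname{ran}\OS$ evaluated on $(\partial\OS)\hat\varphi$. It bypasses the paper's explicit computation of $\ker\OS^*$. Strictly, the $\theta$-entry needs more than $\hat\varphi_0(1)\neq0$: it also needs $\ell_2\neq0$, equivalently $\hat\varphi_0^*(1)=(\sinh 2k-2k)/2\neq0$, which your explicit determinant does show.

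The gap is the non-resonance condition \eqref{eq: os_nonres} for $\Rey>0$. At $\Rey=0$ your monotonicity argument is right. But ``open conditions extend by continuity'' only handles finitely many $n$ at a time, and you leave the uniform-in-$n$ step as an unexecuted large-wavenumber asymptotic analysis, so part (b) is not actually proved.

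No asymptotics are needed if you argue with the full linearised operator $\mathscr{L}(\theta,\gamma;\Rey)\colon\X\to\Y$ rather than mode by mode:
\begin{enumerate}
\item $\mathscr{L}$ depends analytically on $(\theta,\gamma,\Rey)$ in operator norm and is Fredholm (Proposition~\ref{Index_zero}).
\item At $(\pi/2,\gamma_0,0)$ its kernel is one-dimensional by Lemma~\ref{Kernels_equivalence}, using exactly your $\Rey=0$ non-resonance and $\gamma_0\neq 2$.
\item Upper semicontinuity of the nullity of Fredholm operators then gives $\dim\ker\mathscr{L}\le 1$ at all nearby parameters.
\item A nonzero element of $\ker\OS(nk,\Rey,\theta,\gamma)$ with $n\neq1$ would yield, via $\mathcal{Q}\mathscr{E}(nk,\ldots)$, an element of $\ker\mathscr{L}$ independent of the mode-$k$ one. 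So every resonance is excluded at once.
\end{enumerate}

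This is what the paper does in Lemma~\ref{lemma: extra_param}. It applies the implicit function theorem to $(\xi,\lambda)\mapsto(\F_u(0,\lambda;\Rey)\xi,\,l\xi-1)$ at the PDE level. That single step produces $\tilde\lambda(\Rey)$, the one-dimensional kernel, the codimension-two range and, being an open condition, transversality. Only the $\Rey=0$ computations of Lemma~\ref{lemma: zeroR} are ever carried out explicitly.

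Finally, you do not need to check $\tilde\theta(\Rey)\le\pi/2$. The argument of Theorem~\ref{thm: gen_local} works for any $\theta\in(0,\pi)$.
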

We prove this result by perturbation of the $\Rey = 0$ limit, which is referred to as `Stokes flow'. At zero Reynolds number and without surface tension, the Orr--Sommerfeld kernel assumption~\ref{assumption: os_kernel} holds at a vertical incline, as observed in~\cite{yih1963stability}. The transversality condition~\ref{assumption: os_transv} is verified by explicit computation. Since the vanishing Reynolds number limit is non-singular, by a soft argument Hale's local bifurcation result can be perturbed with an additional parameter, and we obtain bifurcations at low Reynolds numbers.

Note that since surface tension is not permitted in Theorem~\ref{thm: lowR}, the corresponding global bifurcation statement includes the possibility of stagnation in the blow-up alternative~\ref{blowup_alternative} of Theorem~\ref{thm: general_global}. However, if one selected $\sigma$ as one of the bifurcation parameters and could verify the transversality condition for the new choice of parameters, one would obtain a similar result with nonzero surface tension along the curve.

A final remark on these results is that despite the curve of solutions bifurcating with fixed $k$ and $\Rey$, along the global curve, the amplitude of solutions could grow large enough to no longer lie in a thin-film regime for the curve in Theorem~\ref{thm: smallk} or a highly viscous regime in Theorem~\ref{thm: lowR}.

\subsection{Review of previous work}
The mathematical study of surface waves has a long history, originating with the development of theory for inviscid, irrotational potential flow in the 18th and 19th century (see, for instance~\cite{craik2004origins, darrigol2003spirited}) with later incorporation of viscosity and free-boundary constraints. Given the breadth of the literature on surface waves, we restrict our discussion below to roll waves and the rigorous theory for travelling incompressible inviscid and viscous waves.

Although our work focuses on laminar roll waves, there is an older literature for roll waves in turbulent open channels~\cite{cornish1907progressive, cornish1934ocean}, historically motivated by engineering applications in hydraulics. There is a vast array of experimental, theoretical and numerical results for turbulent roll waves; see~\cite{jeffreys1925, thomas1940propagation, dressler1949mathematical, brock1969development, brock1970periodic, needham1984roll, kranenburg1992evolution, balmforth2004dynamics, barker2017stability} and the references therein. Models studied include the Saint-Venant equations with Ch\'ezy drag, viscous Saint-Venant equations and more recent models such as~\cite{richard2012new, richard2024roll}. We emphasize that while the term `roll wave' has historically been used (for example in~\cite{dressler1949mathematical}) to refer to this turbulent case, more recently it has broadened to encompass any periodic open channel flow down an incline, including in mud flow~\cite{liu1994roll, ng1994roll, balmforth2004roll}, granular flow~\cite{forterre2003long} and laminar flow~\cite{julien1986formation}.

Laminar roll waves were first experimentally observed by Kapitza and Kapitza~\cite{kapitza1949wave} for vertical flow, and these waves are sometimes referred to eponymously as `Kapitza' waves. Seminal theoretical contributions came from the work of Benjamin~\cite{benjamin1957wave} and Yih~\cite{yih1963stability}. Benjamin derived a critical Reynolds number for wave instability for the linearised Navier--Stokes system by computing formal asymptotic expansions for the eigenvalue problem for Orr--Sommerfeld in a long wave regime. Yih added an analysis at low Reynolds number, finding a neutral stability curve at zero Reynolds number without surface tension. For a more complete review of hydrodynamic stability theory, we refer the reader to textbooks such as~\cite{drazin2004hydrodynamic, craster2009dynamics, schmid2012stability}. Due to the challenges in analysing the Navier--Stokes equations and the related Orr--Sommerfeld system, subsequent work focussed on deriving and analysing nonlinear model equations: see~\cite{mei1966nonlinear, benney1966long, shkadov1967wave, gjevik1970occurrence, nakaya1975long, topper1978approximate, alekseenko1985wave, julien1986formation, ruyer1998modeling, ruyer2000improved} and the discussion of these in textbooks for falling liquid films~\cite{chang2002complex, kalliadasis2011falling}.

On the other hand, there is a largely separate literature on rigorous surface wave theory. The viscous time-dependent problem first received rigorous treatment by Beale who established a well-posedness result for the initial value problem~\cite{beale1981initial, beale1984large}, with subsequent extensions and decay results reviewed in, for example~\cite{zadrzynska2004free, shibata2007free} and the introduction of~\cite{leoni2023traveling}. Of closest relevance to our work is the literature for inclined periodic flow, with local and global existence results~\cite{teramoto1985initial, ninomiya1992surface, teramoto1992navier, nishida1993navier} and stability results~\cite{sun1997stability, nishida2004global, padula2013stability, tice2018asymptotic}.

The steady inviscid case, known as the travelling `water wave' problem, has been comprehensively studied and is reviewed in~\cite{toland1996stokes, groves2004steady, strauss2010steady, haziot2022traveling}. Here we mention a few aspects which are motivating for our work. The use of conformal maps is a classical technique in this community, used for instance to reformulate the irrotational Stokes wave problem into an integral equation by Nekrasov~\cite{nekrasov1921steady} and Levi-Civita~\cite{levi1925determination}, and later to the remarkable `Babenko' equation~\cite{babenko1987some, toland2002pseudo}. More recently it has enabled constructions for the rotational problem of waves which are allowed to overturn, by Constantin, Strauss and Varvaruca~\cite{constantin2016global}; see also~\cite{hur2022overhanging, haziot2023large, wahlen2024large, davila2026overhanging}. Global bifurcation techniques were first employed for water waves by Keady and Norbury~\cite{keady1978existence}, who improved Krasovskii's result~\cite{krasovskii1961theory} by applying Rabinowitz's topological global bifurcation theory~\cite{rabinowitz1971some} to the irrotational Stokes wave problem. Dancer strengthened Rabinowitz's global result in the special case of analytic operators~\cite{dancer1973bifurcation, dancer1973global}, and this was refined by Buffoni and Toland~\cite{buffoni2003analytic}, producing a powerful tool which is now standard for water waves problems. Haziot and Wheeler~\cite{haziot2023large} were the first to employ theory for local elliptic systems rather than scalar boundary-value problems (see also~\cite{doak2026large}).

In contrast, rigorous work on the viscous travelling wave problem is much more recent, starting from the result of Leoni and Tice~\cite{leoni2023traveling} who constructed travelling viscous waves under small external forcing. This has been generalised to multilayer~\cite{stevenson2021traveling}, compressible~\cite{stevenson2023well} and stationary~\cite{stevenson2024well} flow by Stevenson and Tice, with inclination and periodicity~\cite{koganemaru2023traveling} and Navier slip conditions at the bed~\cite{koganemaru2024traveling} incorporated by Koganemaru and Tice. Banihashemi and Nguyen~\cite{banihashemi2026large} constructed solutions to the Stokes and Navier--Stokes system close to a special class of large waves produced by surface stress and proved asymptotic stability for the dynamic Stokes problem. Most recently, Choi and Tice~\cite{choi2026traveling} obtained solutions for a generalised Navier--Stokes--Fourier system. These works mentioned above employ perturbative arguments based on the implicit function theorem or for~\cite{stevenson2023well} a Nash--Moser inverse function theorem, avoiding potential bifurcation points. The solutions are produced by generic bulk, stress or heat forcing. As mentioned already, Stevenson and Tice have also constructed shallow-water bore wave solutions to the inclined free-boundary Navier--Stokes equations~\cite{stevenson2025gravity}, alongside related work for viscous shallow water models~\cite{stevenson2026traveling, stevenson2025periodic, stevenson2025stationary}. We also mention the recent results for Darcy flow in a porous medium~\cite{nguyen2024traveling, nguyen2026largecapillary, brownfield2024slowly, nguyen2026large} of which~\cite{nguyen2026largecapillary, nguyen2026large} produce large-amplitude solutions.

Our result is the first global construction for the travelling free-boundary Navier--Stokes equations and also the first rigorous construction of roll wave solutions to this Navier--Stokes system. We draw heavily from techniques used in the inviscid water wave literature, in particular conformal mappings, analytic global bifurcation and theory for elliptic systems. However, the substantial challenges of the viscous case appear in many aspects of our analysis. The linearised problem is not reducible to a scalar elliptic boundary value problem, truly requiring elliptic systems theory rather than scalar elliptic theory. Related to this is the lack of a maximum principle, which as mentioned in the discussion of Thorem~\ref{thm: general_global} above would simplify the bifurcation alternatives. Since there is not a reflection symmetry, the linearised operator has Fredholm index $-1$ around the Nusselt solution, motivating our use of Hale's result~\cite{hale1978bifurcation} for local bifurcation rather than the more classical result by Crandall--Rabinowitz~\cite{crandall1971bifurcation}. Additionally, the linearised operator does not have an explicit kernel and dispersion relation, so we require the general hypotheses~\ref{assumption: os_kernel}--\ref{assumption: os_transv} in Theorem~\ref{thm: general_local}, but our results in the regimes of small wavenumber and low Reynolds number are consistent with the neutral stability curves found in~\cite{benjamin1957wave, yih1963stability}.

\subsection{Outline of the paper}
In Section~\ref{section: prelims} we reformulate the problem in terms of the stream function and in a rectangular domain by a conformally mapping. We then define the function spaces we will work in and recall some abstract bifurcation results. Section~\ref{section: elliptic} establishes that the linearisation of the reformulated system is elliptic in the sense of~\cite{agmon1964estimates} and has Fredholm index $-1$ when linearised at the Nusselt solution. In Section~\ref{section: os}, we reduce the kernel and transversality hypotheses of the bifurcation results from Section~\ref{section: prelims} to analogous conditions on the Orr--Sommerfeld operator. Some a priori uniform estimates are proved for solutions to the nonlinear problem in Section~\ref{section: uniform}. We bring these results together to prove Theorems~\ref{thm: general_local} and~\ref{thm: general_global} in Section~\ref{section: proof_main}. Finally, we conduct perturbative analyses to verify the hypotheses on the kernel and transversality condition for the Orr--Sommerfeld operator in the low wavenumber regime and in the low Reynolds number regime. In Appendix~\ref{section: extra_abstract_theory}, we provide proofs for some of the abstract bifurcation theory for completeness, and we briefly recall relevant theory for elliptic systems in Appendix~\ref{section: ADN_theory} for the reader's convenience.

\section{Preliminaries}\label{section: prelims}
This section details our reformulation of~\eqref{eq:dimensionless}, replacing the velocity with the stream function and using the conformal map $f$ mentioned in the introduction to rewrite the problem on a flattened rectangular domain. We also introduce some functional notation for the problem, and state convenient versions of the bifurcation results from Hale~\cite{hale1978bifurcation} and Buffoni--Toland~\cite{buffoni2003analytic} that we will use in Section~\ref{section: proof_main}.

\subsection{Stream function reformulation}
The incompressibility condition~\eqref{eq: incomp} implies the existence of a scalar potential, known as the \emph{stream function} $\Psi$ of the flow, such that
\begin{equation}\label{eq:stream_function}
    \Psi_X = - V_2, \quad \Psi_Y = V_1,
\end{equation}
where non-numerical subscripts denote partial derivatives, i.e.~$\Psi_X = \d_X \Psi$ and so on. Reformulating~\eqref{eq:dimensionless} with the velocity $V$ replaced by the stream function $\Psi$ reduces the number of unknowns for the problem by one and incorporates~\eqref{eq: incomp} into the system automatically, reducing the number of interior equations by one. 

Level sets of the stream function are tangent to the velocity field, and are called \emph{streamlines}. The kinematic boundary condition~\eqref{eq: kinematic} is therefore equivalent to the free surface being a level set of the stream function. Similarly, from the first component of~\eqref{eq: noslip}, $\Psi$ is constant on $\Bed$ as well, and we normalise so that $\Psi=0$ on $\Bed$. Then the value of the stream function on $\Surf$ is equal to the (relative) mass flux $m$, which in our non-dimensional units is given by $\frac{2}{3} - \gamma$, so~\eqref{eq: kinematic} can be replaced by
\begin{equation*}
    \Psi = \frac{2}{3}-\gamma\qquad \text{on } \Surf.
\end{equation*}
Note that the other component of~\eqref{eq: noslip} becomes $\Psi_Y = - \gamma$ on $\Bed$.

Denoting by $\mathcal{D} \Psi \in \Real_{\mathrm{sym}}^{2\cross2}$ the symmetrised gradient for the velocity $\D V$ in terms of the stream function,
\begin{equation*}
    \mathcal{D} \Psi := \begin{pmatrix}
        2 \Psi_{XY} & \Psi_{YY} - \Psi_{XX} \\
        \Psi_{YY} - \Psi_{XX} & -2 \Psi_{XY}           \\
    \end{pmatrix},
\end{equation*}
the system~\eqref{eq:dimensionless} becomes
\begin{equation}\label{eq: sf_formulation}
    \begin{cases}
        - \Delta \Psi_Y + \Rey \nabla \Psi \cdot \nabla^{\perp} \Psi_Y + P_X = 2 & \text{in } \Omega, \\
        \Delta \Psi_X + \Rey \nabla^\perp \Psi \cdot \nabla \Psi_X + P_Y = - 2 \cot \theta & \text{in } \Omega, \\
        \Psi = \frac{2}{3}-\gamma& \text{on } \Surf, \\
        [(P + \sigma H)I - \mathcal{D} \Psi ] \nu = 0 & \text{on } \Surf, \\
        \Psi = 0 & \text{on } \Bed, \\
        \Psi_Y = - \gamma & \text{on } \Bed.
    \end{cases}
\end{equation}
The stream function $\PsiNusselt$ corresponding to the Nusselt solution~\eqref{eq: shear_sol} is given by
\begin{equation}\label{eq: Psi}
    \PsiNusselt(Y ; \gamma) := -\gamma Y + Y^2 - \tfrac{1}{3} Y^3.
\end{equation}

\subsection{Reformulation in conformal variables}
As mentioned in the introduction, we write the fluid domain as the image of a rectangular strip under a conformal map $f$. Since we are restricting to periodic solutions, we work with a periodic rectangular domain $\Rect$ of unit height and wavenumber $k > 0$,
\begin{equation}
    \Rect := \Torus_k \times (0,1), \quad \text{ where } \Torus_k := \Real \big/ \tfrac{2 \pi}{k} \mathbb{Z},
\end{equation}
denoting the upper boundary of $\Rect$ by $\Top := \Torus_k \times \{1\}$ and the lower boundary by $\Bottom := \Torus_k \times \{0\}$. We now introduce $f \colon \overline{\Rect} \to \overline{\Omega}$, identifying 
\begin{equation}\label{eq: image_of_f}
    \Omega = f(\Rect), \quad \Surf = f(\Top), \quad \Bed = f(\Bottom).
\end{equation}
Denote by $\xi$ and $\eta$ the components of $f$, i.e.~for every $(X,Y) \in \Omega$, there exists $(x,y) \in \Rect$ such that
\begin{equation*}
    (X,Y) = (\xi(x,y), \eta(x,y)) = f(x,y).
\end{equation*}

For this change of coordinates to be well-behaved, that is, for the domain $\Omega$ to be well-defined with $\Surf$ not self-intersecting or intersecting the bed $\Bed$, we require the following conditions on $f$. First, we suppose that $\xi$ and $\eta$ are continuous in $\overline{\Rect}$, continuously differentiable in $\Rect$ and satisfy the Cauchy--Riemann equations,
\begin{equation}\label{eq: cauchy_riemann}
    \xi_x = \eta_y, \quad \xi_y = - \eta_x \qquad \text{in }\Rect.
\end{equation}
Let $f$ also be injective on $\Top$,
\begin{equation}\label{eq: f_injective}
    \inf_{\substack{x_1, x_2 \in \Torus_k \\ x_1 \neq x_2}} \frac{\lvert f(x_1, 1) - f(x_2, 1) \rvert}{\lvert x_1 - x_2 \rvert} > 0,
\end{equation}
and finally suppose $\eta$ is strictly positive on $\Top$,
\begin{equation}\label{eq: not_touching_bed}
    \eta(x,1) > 0 \quad \text{for all} ~ x \in \Torus_k.
\end{equation}
Then the following lemma confirms that $\overline{\Omega}$ is well-defined as the image of $\overline{\Rect}$ under $f$.
\begin{lemma}\label{lemma: conformal}
    Let $f= (\xi, \eta) \in C^1(\Omega) \cap C(\overline{\Omega})$ satisfy~\eqref{eq: cauchy_riemann}--\eqref{eq: not_touching_bed}. Then $f$ is bijective $\overline{\Rect} \to \overline{\Omega}$.
\end{lemma}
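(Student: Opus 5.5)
The plan is a degree (argument-principle) argument in one period cell, using harmonicity of $\eta$ and the maximum principle to control the bottom boundary. I will assume that $f$ maps $\Bottom$ into $\Bed=\{Y=0\}$, as required by~\eqref{eq: image_of_f}. I will also assume that $f$ is quasi-periodic, $f(x+2\pi/k,y)=f(x,y)+(L,0)$ for some period $L>0$, so that $\Omega$ and $\Surf$ are $L$-periodic in $X$. The domain $\Omega$ is understood as the region between $\Bed$ and the curve $\Surf=f(\Top)$, and the goal is to show $f\colon\overline{\Rect}\to\overline\Omega$ is a bijection.

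\emph{Step 1 (boundary behaviour).} By~\eqref{eq: cauchy_riemann}, $\eta$ is harmonic in $\Rect$, with $\eta=0$ on $\Bottom$ and $\eta>0$ on $\Top$ by~\eqref{eq: not_touching_bed}. The strong maximum principle then gives $\eta>0$ in $\Rect$. Next I use injectivity of $f$ on $\Top$,~\eqref{eq: f_injective}, and pass to the quotient by the translation $(L,0)$. There $f(\Top)$ is a simple closed curve on the cylinder $\Real/L\mathbb Z\times(0,\infty)$ which winds once around it and stays strictly above $\{Y=0\}$. This makes the region $\Omega$ between $\Bed$ and $\Surf$ well defined.

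On $\Bottom$, I would show that $x\mapsto\xi(x,0)$ is a monotone degree-one map of $\Torus_k$ onto $\Real/L\mathbb Z$. If $f$ is $C^1$ up to $\Bottom$, the Hopf lemma gives $\eta_y>0$ there, hence $\xi_x>0$. Otherwise, since $\eta=0$ on $\Bottom$, I would Schwarz-reflect $f$ across $y=0$ to obtain analyticity near $\Bottom$, and then apply Hopf together with the open mapping theorem.

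\emph{Step 2 (counting preimages).} Fix $w\in\Omega$, and let $Q_\delta:=(0,2\pi/k)\times(\delta,1-\delta)$ with $\delta>0$ small. I compute the winding number of $f(\partial Q_\delta)$ about $w$, chosen in a fundamental period with $w$ away from the seam. The two vertical sides cancel, because their images differ by the translation $(L,0)$ and are traversed in opposite directions. As $\delta\to0$, the bottom edge is a curve near the bed travelling once in the positive $X$ direction, by Step 1. The top edge is a curve near $\Surf$ travelling once in the negative direction. Hence the winding number is $1$ for $w\in\Omega$ and $0$ for $w$ below the bed or above $\Surf$. The argument principle applied to the holomorphic function $f-w$ then shows that $w$ has exactly one preimage in the cell, and hence exactly one in $\Rect$ modulo periodicity. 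The same count shows that no interior point of $\Rect$ maps outside $\overline\Omega$, since by the open mapping theorem such an image would have a neighbourhood of points with winding number $0$ but at least one preimage. On the boundary, bijectivity $\Top\to\Surf$ and $\Bottom\to\Bed$ follows from Step 1. Interior points cannot map to $\partial\Omega$ by the open mapping theorem, since $f(\Rect)\subset\overline\Omega$.

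\emph{Main obstacle.} The main difficulty is justifying the limit $\delta\to0$ when $f$ is only continuous on $\overline{\Rect}$. The winding numbers of $f(\partial Q_\delta)$ must converge to those of the boundary curve $f(\partial Q_0)$. To get this I will use uniform continuity of $f$ on the compact closed cell: for $w$ at positive distance from $\Surf\cup\Bed$, the curves $f(\partial Q_\delta)$ and $f(\partial Q_0)$ are homotopic in $\Complex\setminus\{w\}$ once $\delta$ is small. So the winding number is the Jordan-curve index of $w$ with respect to $f(\partial Q_0)$, which is $1$ inside $\Omega$ and $0$ outside. This also gives the conclusion for continuous boundary data. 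Combining Steps 1 and 2 yields that $f$ is bijective $\overline{\Rect}\to\overline\Omega$.
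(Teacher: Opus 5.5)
Your proof is correct in substance, but it is more hands-on than the paper's. The paper cites the Darboux--Picard theorem and checks its hypotheses in one line. It says injectivity on $\Bottom$ is clear, gets injectivity on $\Top$ from~\eqref{eq: f_injective}, and gets disjointness of the two boundary images from~\eqref{eq: not_touching_bed}. You instead reprove the boundary-correspondence principle by an argument-principle count adapted to the periodic cell. This is longer, but it makes explicit two inputs the citation leaves implicit.

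First, you actually prove injectivity on $\Bottom$. The minimum principle, Schwarz reflection and Hopf give $\xi_x=\eta_y>0$ there, which also shows the quasi-period $L=\int_0^{2\pi/k}\xi_x(x,0)\,dx$ is positive, so you need not assume it. Second, you use that $f(\Bottom)$ and $f(\Top)$ wind once \emph{in the same direction} around the cylinder, because they share the quasi-period $L$. This orientation input is genuinely needed, since the period cell is doubly connected. For instance, $z+0.4/z$ on $\{1/2<|z|<1\}$ maps both boundary circles injectively onto disjoint nested ellipses of opposite orientation, yet vanishes twice in the annulus.

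One step needs repair. In the plane, the two vertical sides of $f(\partial Q_\delta)$ do not cancel in the winding number about a fixed $w$: one is seen from $w$, its translate effectively from $w-L$. Likewise, $f(\partial Q_0)$ is not known a priori to be a Jordan curve bounding a fundamental piece of $\Omega$, since nothing is known about the image of $\{0\}\times[0,1]$. So ``winding number $1$ for $w\in\Omega$'' is not justified as written.

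The cancellation becomes correct once you genuinely work in the quotient. Set $\zeta=e^{ikz}$ and $G(\zeta)=\exp(2\pi i f(z)/L)$. This is well defined and holomorphic on $\{e^{-k}<|\zeta|<1\}$ and continuous up to the boundary. Apply the argument principle there. The outer circle maps once counterclockwise onto the unit circle. The inner circle maps onto a Jordan curve inside it, winding once counterclockwise about $0$. Hence points between the two curves have exactly one preimage and all other points none, and this lifts back to $f$ using $f(z+2\pi/k)=f(z)+L$. Equivalently, sum your planar counts over the translates $w-nL$, $n\in\mathbb{Z}$, so the vertical-side contributions telescope. Your $\delta\to0$ limit then involves only the two boundary circles and goes through as you describe.
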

\begin{proof}
    The claim follows from the Darboux--Picard theorem; see for instance,~\cite[Theorem 9.16]{burckel2021classical}. The only hypothesis to check is injectivity on the boundary of $\Rect$. Clearly $f$ is injective on $\Bottom$, mapping to $\Bed$, and~\eqref{eq: f_injective} implies injectivity on $\Top$.
    Finally,~\eqref{eq: not_touching_bed} ensures that $f(\Top)$ does not touch $f(\Bottom)$, so the images of the components of $\d \Rect$ are pairwise disjoint. 
\end{proof}

Conversely, given periodic and smooth $\Omega$, there exist a wavenumber $k$ and a conformal map $f$ from $\Rect$ to $\Omega$ which admits an extension as a homeomorphism between $\overline{\Rect}$ and $\overline{\Omega}$, with the same H\"older regularity as $\Omega$. For more details see for instance,~\cite[Lemma 2.1]{wahlen2024large}.
 
We now write~\eqref{eq: sf_formulation} as a problem for $\psi$, $p$ and $\eta$ on $\Rect$, where $\psi$ and $p$ are the stream function and pressure as functions of the conformal variables,
\begin{equation}\label{eq: sf_p_cov}
    \psi(x,y) := \Psi(f(x,y)), \quad p(x,y) := P(f(x,y)).
\end{equation}
Denote by $\tilde{\d}_i$ the transformations of the respective derivatives $\d_i$ in $\Rect$, i.e.~if $G(X,Y)=g(x,y)$, then $\tilde{\d}_X g = \d_X G$, $\tilde{\d}_Y g = \d_Y G$, or explicitly,
\begin{equation*}
    \tilde{\d}_X := \frac{1}{\lvert \nabla \eta \rvert^2} \left( \eta_y \d_x - \eta_x \d_y \right), \quad    
    \tilde{\d}_Y := \frac{1}{\lvert \nabla \eta \rvert^2} \left( \eta_x \d_x + \eta_y \d_y \right).
\end{equation*}
Let us also define $\tilde{\nabla} := (\tilde{\d}_{X}, \tilde{\d}_{Y})$ and write the symmetrised velocity gradient in conformal variables as $\mathcal{D} \Psi(X,Y) =: \tilde{\mathcal{D}} \psi(x,y)$.
The unit normal vector $\nu$ and mean curvature $H$ of the surface $\Surf$ become
\begin{equation*}
    \nu(\eta) := \frac{1}{\lvert \nabla \eta \rvert^2} \begin{pmatrix} -\eta_x \\ \eta_y \end{pmatrix} \quad \text{and} \quad H(\eta) := \frac{\eta_y \eta_{xx} - \eta_x \eta_{xy}}{\lvert \nabla \eta \rvert^3}. 
\end{equation*}
We also add a new interior equation $\Delta \eta = 0$ in $\Rect$, recovering $\xi$ up to an additive constant as the harmonic conjugate of $\eta$ using~\eqref{eq: cauchy_riemann}. With this notation,~\eqref{eq: sf_formulation} becomes the problem
\begin{subequations}\label{eq:flattened}
    \begin{align}
        - \tilde{\d}_Y \left(\lvert \nabla \eta \rvert^{-2} \Delta \psi\right) + \Rey \tilde{\d}_Y \psi\tilde{\d}_{XY} \psi - \Rey \tilde{\d}_X \psi \tilde{\d}_{YY} \psi + \tilde{\d}_X p &= 2 && \text{in } \Rect, \label{eq: flattened1}\\
        \tilde{\d}_X \left(\lvert \nabla \eta \rvert^{-2}\Delta \psi\right) - \Rey \tilde{\d}_Y \psi \tilde{\d}_{XX} \psi + \Rey \tilde{\d}_X \psi \tilde{\d}_{XY} \psi + \tilde{\d}_Y p &= - 2 \cot \theta  && \text{in } \Rect, \label{eq: flattened2}\\
        \Delta \eta &= 0   && \text{in } \Rect, \label{eq: flattened3}\\
        \psi &= \frac{2}{3} - \gamma  && \text{on } \Top, \label{eq: flattened_kin}\\
        \big[\big(p + \sigma H(\eta)\big)I - \tilde{\mathcal{D}} \psi \big] \left( - \eta_x, \eta_y\right)^T &= 0  && \text{on } \Top, \label{eq: flattened_dyn} \\
        \psi &= 0  && \text{on } \Bottom, \label{eq: flattened_noslip1}\\
        \tilde{\d}_Y \psi &= - \gamma    && \text{on } \Bottom, \label{eq: flattened_noslip2}\\
        \eta &= 0  && \text{on } \Bottom, \label{eq: flattened_bed}
    \end{align}
\end{subequations}
in the rectangular domain $\Rect$, where~\eqref{eq: flattened1} and~\eqref{eq: flattened2} have been simplified using~\eqref{eq: flattened3}.

\subsection{Functional framework}
We will work in the Banach spaces
\begin{equation}\label{eq: spaces}
    \begin{aligned}
        \X &:= \left\{(\psi, p, \eta) \in C^{4, \alpha}(\Rect) \times C^{2, \alpha}(\Rect) \times C^{4, \alpha}(\Rect) : \Delta \eta = 0, ~ \psi|_{\Bottom} = \eta \rvert_{\Bottom} = 0, ~ \eta_x(0,1) = 0 \right\}, \\
        \Y &:= C^{1, \alpha}(\Rect) \times C^{1, \alpha}(\Rect) \times C^{4, \alpha} \left(\Top \right)\times \left(C^{2, \alpha}\left(\Top \right)\right)^2 \times C^{3, \alpha}\left(\Bottom\right).
    \end{aligned}
\end{equation}
The inclusion of the homogeneous linear equations $\Delta \eta = 0$ and $\psi|_{\Bottom} = \eta \rvert_{\Bottom} = 0$ in $\X$ is for convenience, and the constraint $\eta_x(0,1) = 0$ is introduced to eliminate the translation symmetry of solutions. For small amplitudes, it corresponds to shifting the wave peak or trough to $x=0$.

Define an open set $\U \subset \X \times \Real^2$ by
\begin{equation}\label{eq: open_set}
    \U = \left\{(\psi, p, \eta, \theta, \gamma) \in \X \times (0,\pi) \times (0, \infty) ~:~ \inf_{\substack{z_1, z_2 \in \Top \\ z_1 \neq z_2}} \frac{\lvert f(z_1) - f(z_2) \rvert}{\lvert z_1 - z_2 \rvert} > 0, ~\inf_\Top \eta > 0, ~\inf_\Top \lvert \nabla \psi \rvert + \sigma > 0 \right\}.
\end{equation}
The first two conditions are~\eqref{eq: f_injective} and~\eqref{eq: not_touching_bed}, where $f=(\xi, \eta)$ with $\xi$ being recovered by~\eqref{eq: cauchy_riemann}. By Lemma~\ref{lemma: conformal}, these conditions ensure $f$ is bijective. The additional condition $\inf_\Top \lvert \nabla \psi \rvert + \sigma > 0$ is necessary for ellipticity to hold, but has a physical meaning in that $\lvert \nabla \psi \rvert$ vanishing on $\Top$ would represent a stagnation point on the surface.

Now we define a function $\F \colon \U \to \Y$ such that the tuple $(u, \theta, \gamma) \in \U \subset \X \times \Real^2$ solving~\eqref{eq:flattened} is equivalent to $\F(u, \theta, \gamma) = 0$, namely
\begin{equation}\label{eq: F}
    \F (\psi, p, \eta; \theta, \gamma) = \begin{pmatrix}
        - \tilde{\d}_Y \left(\lvert \nabla \eta \rvert^{-2} \Delta \psi \right) - \Rey \tilde{\nabla}^{\perp} \psi \cdot \tilde{\nabla} \tilde{\d}_Y \psi + \tilde{\d}_X p - 2 \\
        \tilde{\d}_X \left(\lvert \nabla \eta \rvert^{-2} \Delta \psi \right) + \Rey \tilde{\nabla}^{\perp} \psi \cdot \tilde{\nabla} \tilde{\d}_X \psi + \tilde{\d}_Y p + 2 \cot \theta \\
        \left[\psi - \frac{2}{3} + \gamma \right]\big\vert_\Top \\
        \big[\big( p + \sigma H(\eta)\big)I - \tilde{\mathcal{D}} \psi \big] \left( - \eta_x, \eta_y\right)^T \big|_\Top \\
        \big[\tilde{\d}_Y \psi + \gamma\big] \big|_\Bottom
    \end{pmatrix}.
\end{equation}
It is standard to check that $\F$ is real analytic, since it is the composition of rational functions and bounded linear maps, that is, derivatives and traces.

We denote the Nusselt shear solution by 
\begin{equation}\label{eq: u0}
    u_0(\theta, \gamma) := (\PsiNusselt(y; \gamma), P_0(y; \theta), y),
\end{equation}
where $\PsiNusselt$ is defined in~\eqref{eq: Psi} and $P_0$ is defined in~\eqref{eq: shear_sol}. Note that for any $\theta \in (0, \pi)$ and  $\gamma > 0$, $(u_0(\theta, \gamma), \theta, \gamma) \in \U$, as the conformal map $f_0$ for the Nusselt profile is the identity $f_0(x,y) = (x,y)$, and so the left-hand sides of~\eqref{eq: f_injective} and~\eqref{eq: not_touching_bed} are both $1$. 

We denote by $\mathscr{L}(\theta, \gamma) := \F_u(u_0(\theta, \gamma), \theta, \gamma) \colon \X \to \Y$ the linearised operator around the Nusselt solution. This is found by straightforward calculation to be
\begin{equation} \label{eq: triv_lin}
    \mathscr{L}(\theta, \gamma) \begin{pmatrix} \dot{\psi} \\ \dot{p} \\ \dot{\eta} \end{pmatrix}
    = \begin{pmatrix}
        - \Delta \dot{\psi}_y + \dot{p}_x + \Rey\big( \PsiNusselt' (\dot{\psi}_{xy}- \PsiNusselt' \dot{\eta}_{xy}\big) - \PsiNusselt'' \dot{\psi}_x ) - P_0' \dot{\eta}_x - 6 \dot{\eta}_y - 2 \PsiNusselt'' \dot{\eta}_{xx}  \\
        \Delta \dot{\psi}_x + \dot{p}_y - \Rey \PsiNusselt' (\dot{\psi}_{xx} -  \PsiNusselt' \dot{\eta}_{xx}) - P_0' \dot{\eta}_y - \PsiNusselt''' \dot{\eta}_x - 2 \PsiNusselt'' \dot{\eta}_{xy} \\
        \dot{\psi} \vert_T\\
        \big(\dot{\psi}_{xx} - \dot{\psi}_{yy}\big)- \PsiNusselt' \left(\dot{\eta}_{xx} - \dot{\eta}_{yy}\right) \vert_\Top \\
        2 \dot{\psi}_{xy} + \dot{p} - 2 \PsiNusselt' \dot{\eta}_{xy} + \sigma \dot{\eta}_{xx} \vert_\Top \\
        \dot{\psi}_y - \PsiNusselt' \dot{\eta}_y \vert_\Bottom
    \end{pmatrix}.
\end{equation}
It will be convenient here and in Section~\ref{section: os} in the reduction to the Orr--Sommerfeld operator $\OS$ to compose $\mathscr{L}$ with an invertible change of basis map, $\mathcal{Q}(\theta, \gamma) \colon \X \to \X$,
\begin{equation}\label{eq: cov_matrix}
    \mathcal{Q}(y ; \theta, \gamma) := \begin{pmatrix}
        1 & 0 & \PsiNusselt'(y) \\ 0 & 1 & P_0'(y) \\ 0 & 0 & 1
    \end{pmatrix}.
\end{equation}
This change of variables is motivated by the operation of the chain rule in linearising $\F$, and is sometimes referred to as the \emph{T-isomorphism}; see~\cite{ehrnstrom2012steady, varholm2020global, wahlen2024large}. The composition $\mathscr{L} \mathcal{Q}$ is then explicitly
\begin{equation}\label{eq:lin_op}
    \mathscr{L} \mathcal{Q} \begin{pmatrix} \varphi \\ q \\ \zeta \end{pmatrix}
    = \begin{pmatrix}
        \left(-  \Delta \d_y + \Rey \PsiNusselt' \d_{xy} - \Rey \PsiNusselt'' \d_x \right) \varphi + q_x \\
        \left( \Delta \d_x - \Rey \PsiNusselt' \d_{xx}\right) \varphi + q_y \\
        \varphi + \PsiNusselt' \zeta \big\vert_{\Top} \\
        \left(\d_{xx} - \d_{yy}\right)\varphi - \PsiNusselt''' \zeta \big\vert_{\Top} \\
        2  \varphi_{xy} + q + \left(P_0' + \sigma \d_{xx} \right) \zeta \big\vert_{\Top} \\
        \varphi_y \vert_\Bottom
    \end{pmatrix}.
\end{equation}
The advantage of this operator over $\mathscr{L}$ in~\eqref{eq: triv_lin} is that $\zeta$ only appears in the components evaluated on the top boundary $\Top$.

\subsection{Abstract multi-parameter bifurcation theory}
This last preliminary subsection recalls some abstract theory which will be used to prove our bifurcation results Theorems~\ref{thm: general_local} and~\ref{thm: general_global}.

Firstly, we recall a result by Hale~\cite{hale1978bifurcation} which generalises the Crandall--Rabinowitz theorem~\cite{crandall1971bifurcation} to linearised operators having a one-dimensional kernel and range with codimension $n \in \mathbb{N}$. This result reduces to Crandall and Rabinowitz's when $n=1$; we will apply it with $n=2$. We state an analytic version (see also~\cite[Theorem 1.1]{lopez1989multiparameter}) in convenient notation.

\begin{theorem}[\cite{hale1978bifurcation}]\label{bif_lemma}
    Let $\F \map{\U}{\Y}$ be an analytic mapping, where $\U \subset \X \times \Real^n$ is open, $n \in \mathbb{N}$ and $\X$ and $\Y$ are Banach spaces. Suppose the following hold:
    \begin{enumerate}[label=\rm(\roman*)]
        \item\label{H1} $\F(0, \lambda) = 0$ for all $\lambda$;
        \item\label{H2} For some $\lambda_0 \in \Real^n$, the operator $\F_u(0, \lambda_0)$ is Fredholm and there exists some $\xi_0 \in \X$ such that $\ker \F_u(0, \lambda_0) = \operatorname{span} \{\xi_0\}$ and $\operatorname{codim} \operatorname{ran} \F_u(0, \lambda_0) = n$;
        \item\label{H3} For all nonzero $\mu \in \Real^n$, 
        \begin{equation*}
            D_{u \lambda}\F(0, \lambda_0) [\xi_0, \mu] \notin \operatorname{ran} \F_u(0, \lambda_0).
        \end{equation*}
    \end{enumerate}
    Then there exist $\eps > 0$ and a nontrivial curve of solutions,
    \begin{equation*}
        \mathscr{C}_{\mathrm{loc}} := \{\left(U(s), \Lambda(s)\right) : s \in (-\eps, \eps) \} \subset \F^{-1}(0),
    \end{equation*}
    parameterised by analytic functions $U \colon (-\eps, \eps) \to \U$ and $\Lambda \colon (-\eps, \eps) \to \Real^n$ satisfying $U(0) = 0$, $U'(0) = \xi_0$ and $\Lambda(0) = \lambda_0$. Furthermore, there exists an open neighbourhood $\mathcal{N} \subset \U$ such that $(0, \lambda_0) \in \mathcal{N}$ and
    \begin{equation}\label{eq: bif_uniqueness}
        \mathscr{C}_{\mathrm{loc}} = \{(u,\lambda) \in \mathcal{N} : \F(u,\lambda) = 0, u \neq 0 \}.
    \end{equation}
\end{theorem}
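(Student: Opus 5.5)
The plan is a Lyapunov--Schmidt reduction, following the Crandall--Rabinowitz template but with an $n$-dimensional cokernel absorbed by the $n$ parameters. Write $L := \F_u(0,\lambda_0)$. Since $L$ is Fredholm, choose a closed complement $\X_1$ with $\X = \operatorname{span}\{\xi_0\} \oplus \X_1$, and an $n$-dimensional complement $Z$ with $\Y = \operatorname{ran} L \oplus Z$. Let $\Pi \colon \Y \to Z$ be the projection along $\operatorname{ran} L$. We look for solutions in the form
\[
u = s(\xi_0 + w), \qquad w \in \X_1 .
\]
Because $\F(0,\lambda) = 0$ for all $\lambda$ and $\F$ is analytic, we can factor
\[
\F(s(\xi_0+w),\lambda) = s\, G(s,w,\lambda), \qquad G(s,w,\lambda) := \int_0^1 \F_u\big(ts(\xi_0+w),\lambda\big)(\xi_0+w)\,dt .
\]
The map $G$ is analytic near $(0,0,\lambda_0)$ and satisfies $G(0,0,\lambda_0) = L\xi_0 = 0$.

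The key step is to apply the analytic implicit function theorem to $G(s,w,\lambda) = 0$, solving for $(w,\lambda) \in \X_1 \times \Real^n$ as functions of $s$. This requires showing that the partial derivative
\[
D_{(w,\lambda)} G(0,0,\lambda_0)[\dot w, \mu] = L \dot w + D_{u\lambda}\F(0,\lambda_0)[\xi_0,\mu]
\]
is an isomorphism $\X_1 \times \Real^n \to \Y$.

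Injectivity follows from the hypotheses. Suppose the expression vanishes. Then $D_{u\lambda}\F(0,\lambda_0)[\xi_0,\mu] \in \operatorname{ran} L$, which by (iii) forces $\mu = 0$. We are left with $L\dot w = 0$, and since $\dot w \in \X_1$ and $\ker L = \operatorname{span}\{\xi_0\}$, we get $\dot w = 0$.

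For surjectivity, $L|_{\X_1}$ is an isomorphism onto $\operatorname{ran} L$, which is closed because $L$ is Fredholm. Hypothesis (iii) says the linear map $\mu \mapsto \Pi D_{u\lambda}\F[\xi_0,\mu]$ from $\Real^n$ to $Z$ is injective, and since $\dim Z = n$ it is bijective. Hence every $y \in \Y$ is attained: first match $\Pi y$ by choosing $\mu$, then solve for the remainder in $\operatorname{ran} L$ using $\dot w$. The bounded inverse theorem then gives an isomorphism.

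The implicit function theorem now yields analytic maps $w(s)$ and $\lambda(s)$ with $w(0)=0$ and $\lambda(0)=\lambda_0$. Setting $U(s) = s(\xi_0 + w(s))$ and $\Lambda(s) = \lambda(s)$ gives $U(0) = 0$ and $U'(0) = \xi_0$.

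The main obstacle is the local uniqueness statement \eqref{eq: bif_uniqueness}: we must show that every nontrivial zero near $(0,\lambda_0)$ lies on the curve. Given $u \neq 0$ small, decompose it as $u = s\xi_0 + v$ with $v \in \X_1$. Applying $(I-\Pi)$ to $\F(u,\lambda) = 0$ and using invertibility of $L$ on $\X_1$, one gets $\|v\| \le C\big(|s| + |\lambda-\lambda_0|\big)\|u\|$. Since also $\|u\| \le C(|s| + \|v\|)$, absorbing for $u$ and $\lambda$ small gives $\|v\| = o(|s|)$. In particular $s \neq 0$, so we may write $v = s w$ with $w$ small. Dividing the equation by $s$ shows that $(s,w,\lambda)$ is a zero of $G$ near $(0,0,\lambda_0)$. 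By the uniqueness part of the implicit function theorem, $(w,\lambda) = (w(s),\lambda(s))$, and this defines the neighbourhood $\mathcal N$. Alternatively, one can simply cite \cite{hale1978bifurcation} and \cite[Theorem 1.1]{lopez1989multiparameter} for this statement.
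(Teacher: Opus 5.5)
Your proof is correct, but it follows a different route from the paper. The paper does not prove this theorem at all: it cites Hale and L\'opez-G\'omez, and only checks that (iii) is equivalent to Hale's third ``simple eigenvalue'' condition, namely that the $n$ vectors $D_{u\lambda_i}\F(0,\lambda_0)\xi_0$ are independent modulo the range.

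The machinery the paper does set up, in Appendix~\ref{section: extra_abstract_theory}, is a Lyapunov--Schmidt reduction. One solves the range component of $\F=0$ for the non-kernel part $\phi(x_1,\lambda)$, which leaves a bifurcation function $f(x_1,\lambda)$ with values in an $n$-dimensional complement $\Y_0$. The local theorem then follows in three steps:
\begin{enumerate}
\item write $f(s\xi_0,\lambda)=s\,g(s,\lambda)$;
\item observe that (iii) says precisely that $g_\lambda(0,\lambda_0)\mu=\Pi_{\Y_0}D_{u\lambda}\F(0,\lambda_0)[\xi_0,\mu]$ is invertible $\Real^n\to\Y_0$;
\item apply the finite-dimensional implicit function theorem to obtain $\Lambda(s)$.
\end{enumerate}

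You instead use Crandall and Rabinowitz's original blow-up $u=s(\xi_0+w)$ and solve for $(w,\lambda)$ with a single analytic implicit function theorem. Your isomorphism check is exactly where (iii) and $\dim Z=n$ enter, and analyticity and $U'(0)=\xi_0$ come for free.

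The price is the local uniqueness step. In the reduction it is automatic: any nearby zero has non-kernel part $\phi(x_1,\lambda)$, and then either $x_1=0$, forcing $u=0$, or $g(s,\lambda)=0$. In your route you need the extra a priori estimate $\|v\|=o(|s|)$, which you do supply.

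What the reduction buys the paper is the function $f$ itself. It is reused in Lemma~\ref{lemma: pre_gbt} to show that $D_{(u,\lambda_1,\ldots,\lambda_{n-1})}\F$ is invertible along the local curve if and only if $\Lambda_n'(s)\neq0$. That is the input needed to start the Buffoni--Toland continuation, and your route would need a separate argument for it.

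Two minor points:
\begin{itemize}
\item The remainder bound in your uniqueness step is naturally $\|v\|\le C(|\lambda-\lambda_0|+\|u\|)\|u\|$ rather than with $|s|$. The extra $\|v\|\,\|u\|$ term is absorbed, so $\|v\|=o(|s|)$ still follows.
\item As written, \eqref{eq: bif_uniqueness} should exclude the point $s=0$ of the curve, which has $u=0$. You should also shrink $\eps$ so that the curve lies in $\mathcal N$.
\end{itemize}
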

Here we have written the transversality-type condition~\ref{H3} in a form most analogous to the transversality condition of~\cite{crandall1971bifurcation}. This hypothesis is equivalent to the third condition in the definition of a `simple eigenvalue' in~\cite{hale1978bifurcation}. Indeed,~\ref{H3} implies that there are $n$ linearly independent vectors $\F_{u \lambda_i}(0, \lambda_0) \xi_0$ in the complement of the range of $\F_u(0, \lambda_0)$ in $\Y$, which by~\ref{H2} has dimension $n$, implying Hale's third condition. 

We note that it would also be possible to use Kielh\"ofer's multiparameter result~\cite[Theorem I.19.6]{kielhofer2012bifurcation}, by considering our operator $\F_u(u_0, \theta, \gamma)$ without the phase constraint as an index zero operator with a two-dimensional kernel and range of codimension two. 

Lastly, we state a multiparameter adaptation of Buffoni and Toland's analytic global bifurcation result~\cite[Theorem 9.1.1]{buffoni2003analytic}. Following~\cite{haziot2023large}, we require only that the linearised operator is \emph{semi-Fredholm} with closed range and finite-dimensional kernel (this is equivalent to the condition of local properness). The additional alternative of loss of compactness replaces the hypothesis that all bounded closed subsets of solutions in $\U$ are compact (see also~\cite{chen2018existence}). We sketch a proof in Appendix~\ref{section: extra_abstract_theory}.

\begin{theorem}[\cite{buffoni2003analytic}]\label{theorem: gbt}
    In the setting of Theorem~\ref{bif_lemma}, suppose that in addition, for all $(u,\lambda) \in \U$, the operator $\F_u(u,\lambda)$ is semi-Fredholm with closed range and finite-dimensional kernel. Then there exists a curve $\mathscr{C}$ which extends $\mathscr{C}_{\mathrm{loc}}$, parameterised as 
    \begin{equation*}
        \mathscr{C} = \{(U(s), \Lambda(s)) \colon s \in \Real \} \subset \F^{-1}(0),
    \end{equation*}
    where $(U, \Lambda) \colon \Real \to \X \times \Real^n$ is continuous. Near each point along $\mathscr{C}$, we can reparameterise $\mathscr{C}$ so that $s \mapsto (U(s), \Lambda(s))$ is real analytic. Furthermore, one of the following alternatives holds.
    \begin{enumerate}[label=\rm(\alph*)]
        \item Along each directed branch as $s \to \pm \infty$, either
        \begin{enumerate}[label=\rm(\roman*)]
            \item\label{abstract_blowup_alt} \textup{(Blow-up)} The quantity
            \begin{equation}\label{eq: blowup_quantity}
                N(s) := \lVert U(s) \rVert_\X + \lvert \Lambda(s) \rvert + \frac{1}{\operatorname{dist}\left((U(s), \Lambda(s)), \d \U\right)} \longrightarrow \infty; ~\text{or}
            \end{equation} 
            \item\label{abstract_comp_alt} \textup{(Loss of compactness)} There exists a sequence $s_n \to \pm \infty$ such that $\sup_n N(s_n) < \infty$ but $\{U(s_n)\}$ has no subsequences converging in $\U$. 
        \end{enumerate}\label{blowup_comp_alt}
        \item\label{abstract_loop_alt} \textup{(Closed loop)} There exists some $T > 0$ such that $(U(s+T), \Lambda(s+T)) = (U(s), \Lambda(s))$ for all $s \in \Real$. 
    \end{enumerate}
\end{theorem}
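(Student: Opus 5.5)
The plan is to follow the Dancer / Buffoni--Toland strategy for analytic global bifurcation, adapted to $n$ parameters. The proof proceeds in four steps.

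\textbf{Step 1: local structure near regular points.} Call a solution $(u,\lambda)\in\F^{-1}(0)$ \emph{regular} if $D_{(u,\lambda)}\F(u,\lambda)\colon \X\times\Real^n\to\Y$ is surjective with one-dimensional kernel. Hypotheses~\ref{H2}--\ref{H3} give exactly this at every point of $\mathscr{C}_{\mathrm{loc}}$ with $s\neq0$ small. Indeed, $\F_u$ has index $1-n$, so $D_{(u,\lambda)}\F$ has index $1$. Transversality~\ref{H3} shows that, for small $s\neq0$, the $\lambda$-directions fill the complement of $\operatorname{ran}\F_u$ up to an analytic perturbation. This can be checked through a Lyapunov--Schmidt reduction, which is the same reduction used in Hale's proof of Theorem~\ref{bif_lemma}. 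Hence all but finitely many points near $s=0$ are regular.

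\textbf{Step 2: extension and the distinguished arc.} The semi-Fredholm hypothesis makes $D_{(u,\lambda)}\F$ Fredholm of index $1$ at every solution, since the index is locally constant along connected sets of solutions. So the whole solution set is locally an analytic variety with finite-dimensional Lyapunov--Schmidt reductions. Starting from a regular point of $\mathscr{C}_{\mathrm{loc}}$, I continue the distinguished arc through the implicit function theorem at regular points. At each non-regular point I pass through, I use the Dancer route selection: the reduced equation is an analytic map $\Real^{m+1}\to\Real^m$, whose zero set near the point is a finite union of analytic arcs by the structure theory of one-dimensional real analytic varieties. An arc is chosen so that the reparameterisation remains analytic. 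This is precisely the content of Buffoni--Toland's construction and carries over verbatim, because Fredholm index $1$ is the only input.

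\textbf{Step 3: the alternatives.} Parametrise by $s\in\Real$. Suppose neither~\ref{abstract_blowup_alt} nor~\ref{abstract_comp_alt} holds along a branch, say as $s\to+\infty$. Then there is a sequence $s_j\to\infty$ with $N(s_j)$ bounded and $U(s_j)\to u_\infty$ in $\U$, and closedness of $\F^{-1}(0)$ gives $\F(u_\infty,\lambda_\infty)=0$. By local properness of $\F$, there is a neighbourhood of $(u_\infty,\lambda_\infty)$ in which the solution set is a finite union of analytic arcs. The curve enters this neighbourhood infinitely often, so it must retrace a distinguished arc. Uniqueness of the continuation from regular points then forces periodicity, giving the closed loop~\ref{abstract_loop_alt}. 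Local uniqueness~\eqref{eq: bif_uniqueness} near $(0,\lambda_0)$ ensures that returning to the trivial line can only happen at $(0,\lambda_0)$, and there the curve must continue as $\mathscr{C}_{\mathrm{loc}}$.

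\textbf{Step 4: the main obstacle.} The delicate step is Step 3. In the original theorem, compactness of bounded closed sets yields the accumulation point directly. Here that hypothesis is replaced by the alternative~\ref{abstract_comp_alt}, so one must check carefully that the finite-arc structure and the route-selection argument remain valid under mere local properness, which is implied by semi-Fredholmness. I would handle this as in~\cite{haziot2023large, chen2018existence}: prove that the distinguished arc meets each compact neighbourhood in finitely many pieces, and derive a contradiction with non-periodicity otherwise.
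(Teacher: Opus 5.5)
Your proposal takes a different route from the paper in how it defines regular points, and that route is viable once one claim is corrected (second paragraph).

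The paper absorbs $\lambda_1,\dots,\lambda_{n-1}$ into the unknown, so that $D_{(u,\lambda_1,\ldots,\lambda_{n-1})}\F$ is Fredholm of index zero on $\X\times\Real^{n-1}$. It keeps $\lambda_n$ as the bifurcation parameter and calls a solution non-singular when this operator is invertible. Its only new ingredient is Lemma~\ref{lemma: pre_gbt}: a Lyapunov--Schmidt computation showing that $(U(s),\Lambda(s))$, $0<|s|<\eps$, is non-singular if and only if $\Lambda_n'(s)\neq 0$. After that, Buffoni--Toland's one-parameter, index-zero proof applies literally.

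You instead use the parameter-free condition that $D_{(u,\lambda)}\F$ is onto with one-dimensional kernel (index one). Your Step~1 is correct and in fact gives more than you claim. Since $f(0,\lambda)\equiv0$, the reduced map at $(0,\lambda_0)$ factors as $f(t,\lambda)=t\,g(t,\lambda)$, and hypothesis~\ref{H3} says exactly that $g_\lambda(0,\lambda_0)$ is invertible. On the nontrivial branch $Df=[\,t g_t,\ t g_\lambda\,]$, which is onto for \emph{every} $0<|s|<\eps$. This needs nothing like $\Lambda_n'\neq0$, which holds on a punctured interval only when $\Lambda_n$ is non-constant.

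The price is that Buffoni--Toland is written for $\partial_x F$ of index zero with $\lambda$ as the local parameter, so it does not carry over ``verbatim''. The analytic parametrisation of regular arcs, the half-branch structure at singular points, and Dancer's pairing of half-branches must all be restated for reduced maps $\Real^{m+1}\to\Real^m$. This is routine, but it is exactly the work the paper's reduction avoids.

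Two statements need correcting.

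First, in Steps~2 and~3 you claim that the zero set of the reduced map, i.e.\ the solution set near a point, is a finite union of analytic arcs. This is false in general. For $n\ge2$, near every trivial solution $(0,\lambda)$ the solution set contains the $n$-dimensional sheet $\{0\}\times\Real^n$; this includes $(0,\lambda_0)$, where your route starts. More generally, an analytic map $\Real^{m+1}\to\Real^m$ can have zero sets of any dimension. What Buffoni--Toland prove, and what your route selection and accumulation argument actually need, is the statement for the regular set: the closure of $\{f=0,\ \operatorname{rank}Df=m\}$ is a one-dimensional semianalytic germ, hence locally a finite union of analytic half-branches. Consequently only finitely many distinguished arcs meet a small neighbourhood of any solution, and that is what drives the periodicity argument in Step~3. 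This local structure comes from Fredholmness and analyticity via Lyapunov--Schmidt, not from local properness.

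Second, \eqref{eq: bif_uniqueness} is purely local. It does not stop the global curve from meeting the trivial family at some other $(0,\lambda_1)$ where $\F_u(0,\lambda_1)$ is singular. So your remark that the curve can return to the trivial solutions only at $(0,\lambda_0)$ is unjustified, though it is also not needed for the statement.
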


\section{Elliptic theory}\label{section: elliptic}
In this section, we prove that the operator for the linearisation around a general solution can be extended to an elliptic operator in the sense of Agmon--Douglis--Nirenberg~\cite{agmon1964estimates}, with a corresponding Schauder estimate. Consequently, the operator is semi-Fredholm with closed range and finite-dimensional kernel, as required by Theorem~\ref{theorem: gbt}. A homotopy argument then allows us to compute the Fredholm index of the operator linearised around the Nusselt solution to be $-1$, as required by Theorem~\ref{bif_lemma}.

\subsection{Schauder estimate}
Our objective in this subsection is to prove a global Schauder estimate for the operator $\F_u(u,\theta, \gamma)$ linearised around any $(u,\theta, \gamma) \in \U$, for $\U$ defined in~\eqref{eq: open_set}. As an immediate corollary, $\F_u(u,\theta, \gamma)$ is therefore \emph{locally proper}, that is, the intersection of the pre-image of any compact set with any closed and bounded set in $\U$ is compact. Equivalently, $\F_u(u, \theta, \gamma)$ is semi-Fredholm with closed range and finite-dimensional kernel. 

To this end, we study a natural extension of $\F_u(u,\theta, \gamma)$, which we write in the form $\left(\mathcal{A}, \mathcal{B}_1, \mathcal{B}_0 \right)$, where operators $\mathcal{A},~\mathcal{B}_1$ and $\mathcal{B}_0$ are operators on the interior, top boundary component, and lower boundary component respectively. We verify that on each portion of the boundary, the corresponding operator $(\mathcal{A}, \mathcal{B}_i)$ satisfies the ``Shapiro--Lopatinsky'' condition, also known as the ``covering'' or ``complementing'' condition. This is an algebraic condition on the Fourier coefficients of the symbol of the operator, and implies local Schauder estimates near each boundary. Together with interior Schauder estimates from interior ellipticity, this yields a global Schauder estimate for $\left(\mathcal{A}, \mathcal{B}_1, \mathcal{B}_0 \right)$ and thus for $\F_u(u,\theta, \gamma)$.  In Appendix~\ref{section: ADN_theory}, we provide a terse review of the relevant theory for elliptic systems. See also~\cite[Chapters 9 and 10]{wloka1995boundary} and~\cite[Chapter 2]{volpert2011elliptic} for a more detailed introduction. 

\begin{prop}\label{Schauder}
    Let $k > 0$, $\Rey \geq 0$, and $(u, \theta, \gamma) = (\psi, p, \eta, \theta, \gamma) \in \U$. Suppose $\delta > 0$ is such that
    \begin{equation}\label{eq: hypothesis_for_schauder}
        \inf_\Top \lvert \nabla \eta \rvert > \delta, \quad \sup_\Top \lvert \nabla \eta \rvert < \dfrac{1}{\delta}, \quad \inf_\Top \lvert \psi_y \rvert + \sigma > \delta.
    \end{equation}
    Then the operator $\F_u(u, \theta, \gamma) \colon \X \to \Y$ enjoys the following a priori estimate:
    \begin{equation}\label{eq: system_schauder}
        \lVert \dot{u} \rVert_{\X} \leq C \big( \lVert \F_u(u, \theta, \gamma) \dot{u} \rVert_\Y + \lVert \dot{u} \rVert_{L^1(\Rect) \times L^1(\Rect) \times L^1(\Rect)} \big),
    \end{equation}
    where $C > 0$ depends only on $\delta$ and $\lVert u \rVert_{\X}$.
\end{prop}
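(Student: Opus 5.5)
We will realise $\F_u(u,\theta,\gamma)$, extended by dropping the constraints built into $\X$, as a boundary value problem $(\mathcal{A},\mathcal{B}_1,\mathcal{B}_0)$ on $\Rect$ for the three unknowns $(\dot\psi,\dot p,\dot\eta)$, where $\mathcal{A}$ consists of the two linearised momentum equations together with $\Delta\dot\eta$. We then check that this system is elliptic in the sense of Agmon--Douglis--Nirenberg with suitable Douglis--Nirenberg weights, and that the complementing condition holds on $\Top$ and on $\Bottom$. The ADN Schauder estimate (Appendix~\ref{section: ADN_theory}) then gives a bound of the form \eqref{eq: system_schauder} with a $C^0$ lower-order term. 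That lower-order term can be reduced to an $L^1$ norm by a standard interpolation and compactness argument, since $\Rect$ is compact. The constants depend only on the ellipticity constants and on the $C^{k,\alpha}$ norms of the coefficients. Both are controlled by $\delta$ and $\lVert u\rVert_\X$, because the coefficients are rational in $\nabla\eta$ and $D^j\psi$ with denominators that are powers of $\lvert\nabla\eta\rvert$. Interior positivity of $\lvert\nabla\eta\rvert$ follows from the bound on $\Top$, since $\log\lvert\nabla f\rvert$ is harmonic and $f$ is the identity-like map on $\Bottom$; alternatively we could include this bound as part of the dependence on $\delta$.

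For the weights, we take $s=(0,1,0)$ for the three rows and $t=(3,1,2)$ for the unknowns $(\dot\psi,\dot p,\dot\eta)$, so that $\dot\psi$ has order $3$, $\dot p$ order $1$, and $\dot\eta$ order $2$. Freezing coefficients at a point and keeping only the principal parts, the chain-rule terms coming from varying $\eta$ inside $\tilde\d$ produce $\dot\eta$-contributions of total order $3$ in the momentum rows. To remove these we first apply the T-isomorphism locally, setting $\varphi=\dot\psi-(\tilde\nabla\psi\cdot\cdot)\dot\eta$-type combinations as in \eqref{eq: cov_matrix}. After this change the principal interior symbol is block triangular: the $(\varphi,q)$ block is the Stokes-type symbol $\begin{pmatrix}\lvert\xi\rvert^2\xi_2 & i\xi_1\\ -\lvert\xi\rvert^2\xi_1 & i\xi_2\end{pmatrix}$ up to the metric factor $\lvert\nabla\eta\rvert^{-2}$, whose determinant is a nonzero multiple of $\lvert\xi\rvert^4$, and the $\dot\eta$ block is $-\lvert\xi\rvert^2$. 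This gives proper (uniform) ellipticity with total order $6$, so three boundary conditions are needed on each boundary component. On $\Bottom$ these are $\dot\psi=0$, $\dot\eta=0$ and the linearised no-slip condition. On $\Top$ they are the kinematic condition and the two components of the dynamic condition. This matches the count, with the harmonicity of $\dot\eta$ kept as an interior row.

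The complementing condition is where the real work lies, and on $\Top$ it is the main obstacle. We freeze coefficients at a boundary point, rotate to the tangential frequency $\xi_1=1$, and write the bounded solutions of the ODE system in the normal variable. These are spanned by $e^{-t}$ and $te^{-t}$ for $\varphi$, which together determine $q$, and by $e^{-t}$ for $\dot\eta$. We then need the $3\times3$ matrix of principal boundary symbols applied to this basis to be nonsingular.

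On $\Bottom$ this is routine: $\dot\eta=0$ kills the $\dot\eta$ mode, and the Dirichlet pair $\varphi=\varphi_y=0$ for the biharmonic-type operator is complementing.

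On $\Top$ the $\dot\eta$ mode enters the kinematic condition through the principal term $\tilde\nabla\psi\cdot\nabla\dot\eta$-type coefficient, which is essentially $\psi_y$ after the T-isomorphism; this is where the order $2$ weight for $\dot\eta$ and order $3$ for $\dot\psi$ interact. It also enters the normal-stress condition through $\sigma\,\dot\eta_{xx}$ at principal order. We would compute the determinant explicitly. We expect it to be, up to a nonzero factor, a linear combination like $c_1\psi_y^{\,}+c_2\sigma$ with $c_1,c_2$ of the same sign, which is nonvanishing exactly under $\inf_\Top\lvert\psi_y\rvert+\sigma>\delta$ from \eqref{eq: hypothesis_for_schauder}. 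The difficulty is choosing the weights and the T-isomorphism so that the $\sigma$ and $\psi_y$ contributions both appear at principal level without cancellation. If the first choice makes $\sigma$ lower order, we would fall back to a case split: weight $\dot\eta$ so the kinematic term dominates when $\sigma=0$, and so the curvature term dominates when $\sigma>0$.

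Once the determinant is bounded below in terms of $\delta$, we combine the interior and boundary local estimates with a partition of unity, using the periodic structure of $\Rect$, to obtain the global estimate. We then pass back from $(\varphi,q,\zeta)$ to $\dot u$ via the bounded invertible T-isomorphism, which concludes the proof of \eqref{eq: system_schauder}.
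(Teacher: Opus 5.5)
Your overall architecture matches the paper's: append the row $\Delta\dot\eta$, check Douglis--Nirenberg ellipticity and the complementing condition on $\Top$ and $\Bottom$, then glue. The concrete set-up you propose, however, does not go through.

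\emph{The weights.} The choice $s=(0,1,0)$, $t=(3,1,2)$ is inconsistent. No entry of the second momentum row has order $s_2+t_j$, so its principal part vanishes, and $\sum_i(s_i+t_i)=7\neq 6$. Even with $s=(0,0,0)$, giving $\dot\eta$ weight $2$ breaks the top boundary in the original unknowns. Each dynamic condition contains $\dot\psi$ to order $2$, $\dot p$ to order $0$ and $\dot\eta$ to order $2$, so $m_2=m_3=0$ and their principal parts involve only $\dot\eta$. The decaying space for $\dot\eta$ is one-dimensional, so the two decaying Stokes modes are constrained by the single row $\dot\psi|_\Top$, and the complementing condition fails. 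The estimate would also control $\dot\eta$ only in $C^{l+2,\alpha}$ against $\dot\psi\in C^{l+3,\alpha}$, which does not match $\X$.

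\emph{The T-isomorphism} is both unnecessary and unusable here.
\begin{itemize}
\item It is unnecessary because the $\dot\eta$-terms in the linearised momentum rows are only of second order. They come from varying $\lvert\nabla\eta\rvert^{-2}$ and the coefficients of $\tilde\partial_X,\tilde\partial_Y$. With the paper's numbers $t=(3,1,3)$, $s=(0,0,-1)$ they are lower order, and $\pi\mathscr{A}$ is already block diagonal with $\det\pi\mathscr{A}=\lvert\nabla\eta\rvert^{-4}(k^2+l^2)^3$.
\item It is unusable because, at a general $u\in\X$, the substitution $\dot\psi=\varphi+a\dot\eta$, $\dot p=q+b\dot\eta$ has $a$ built from $\nabla\psi\in C^{3,\alpha}$ and $b$ from $\nabla p\in C^{1,\alpha}$. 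It therefore loses one derivative in each of the first two components and is not a bounded isomorphism of $\X$. Your final step of transferring the estimate back therefore fails. The paper uses $\mathcal{Q}$ only at the Nusselt state, where its coefficients are polynomials.
\end{itemize}

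\emph{The main gap} is the complementing condition on $\Top$, the only place where $\inf_\Top\lvert\psi_y\rvert+\sigma>\delta$ is used. You leave it as an expectation, and the expected form is wrong. In the original unknowns $\dot\eta$ does not appear in the kinematic row at all. It enters both dynamic rows through $\dot\eta_{xx}$ and $\dot\eta_{xy}$, once with coefficients proportional to $\sigma$ (from $H$) and once with coefficients proportional to $\psi_y$ (from $\tilde{\mathcal{D}}\psi$).

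A determinant of the form $c_1\psi_y+c_2\sigma$ with real same-sign $c_i$ would not prove the statement. The reason is that $\psi_y$ on $\Top$ is typically negative, e.g.\ $\PsiNusselt'(1)=1-\gamma<0$ for $\gamma>1$, which is exactly the regime of Theorems~\ref{thm: smallk} and~\ref{thm: lowR}. Such a combination can therefore vanish when $\sigma>0$.

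What actually happens is that the two contributions add in quadrature. Take $t=(3,1,3)$, rotate $\pi\mathscr{B}_1$ by an $l$-independent matrix to get $\tilde{\mathscr{B}}_1$, and set $M_1=\tilde{\mathscr{B}}_1\operatorname{adj}(\pi\mathscr{A})$. Then the single minor $\det M_1''(ik)$ already satisfies $\lvert\det M_1''(ik)\rvert^2=65536\,k^{16}\lvert\nabla\eta\rvert^{-14}\bigl(\sigma^2\lvert\nabla\eta\rvert^2+4\psi_y^2\bigr)$. The $\dot\eta$-column there carries the factor $\sigma\lvert\nabla\eta\rvert-2i\psi_y$. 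This quantity is bounded below in terms of $\delta$ under the stated hypothesis, with no case split needed.
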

\begin{proof}
    First note that~\eqref{eq: hypothesis_for_schauder} implies lower and upper bounds for $\lvert \nabla \eta \rvert$ over $\overline{\Rect}$ in terms only of $\delta$, 
    \begin{equation}\label{eq: infsupRect}
        \inf_{\overline{\Rect}} \lvert \nabla \eta \rvert > C(\delta), \quad \sup_{\overline{\Rect}} \lvert \nabla \eta \rvert < C(\delta^{-1}),
    \end{equation}
    by standard maximum principle arguments.

    We compute the linearisation of $\F$ around $(u, \theta, \gamma) \in \U$, making use of the homogeneous conditions $\Delta \eta = 0$ and $\psi \vert_\Bottom = \eta \vert_\Bottom = 0$ in the definition~\eqref{eq: spaces} of $\X$,
    \begin{equation}\label{eq: gen_lin}
        \begin{aligned}
            \F_{1u} (u, \theta, \gamma) (\dot{\psi}, \dot{p}, \dot{\eta}) &= - \lvert \nabla \eta \rvert^{-4} \nabla \eta \cdot \nabla \Delta \dot{\psi} + D_1 \dot{\psi} + \lvert \nabla \eta \rvert^{-2} \nabla \eta \cdot \nabla^{\perp} \dot{p} + D_2 \dot{\eta}, \\
            \F_{2u} (u, \theta, \gamma) (\dot{\psi}, \dot{p}, \dot{\eta}) &= \lvert \nabla \eta \rvert^{-4} \nabla \eta \cdot \nabla^{\perp} \Delta \dot{\psi} + D_3 \dot{\psi} + \lvert \nabla \eta \rvert^{-2} \nabla \eta \cdot \nabla \dot{p} + D_4 \dot{\eta}, \\
            \F_{3u} (u, \theta, \gamma) (\dot{\psi}, \dot{p}, \dot{\eta}) &= \dot{\psi} \vert_\Top, \\
            \F_{4u} (u, \theta, \gamma) (\dot{\psi}, \dot{p}, \dot{\eta}) &= \dfrac{1}{\lvert \nabla \eta \rvert^2} \eta_y (\dot{\psi}_{xx} - \dot{\psi}_{yy}) - 2 \eta_x \dot{\psi}_{xy} + D_5 \dot{\psi} - \eta_x \dot{p} + \dfrac{\sigma \eta_x}{\lvert \nabla \eta \rvert^3} \left(\eta_x \dot{\eta}_{xy} - \eta_y \dot{\eta}_{xx}\right) \\ & \quad + \dfrac{2 \psi_y}{\lvert \nabla \eta \rvert^{4}} \left((\eta_x^2-\eta_y^2) \dot{\eta}_{xx} + 2 \eta_x \eta_y \dot{\eta}_{xy}\right) + D_6 \dot{\eta} \vert_\Top, \\
            \F_{5u} (u, \theta, \gamma) (\dot{\psi}, \dot{p}, \dot{\eta}) &= \dfrac{1}{\lvert \nabla \eta \rvert^2} \eta_x (\dot{\psi}_{xx} - \dot{\psi}_{yy}) + 2 \eta_y \dot{\psi}_{xy} + D_7 \dot{\psi} + \eta_y \dot{p} + \dfrac{\sigma \eta_y}{\lvert \nabla \eta \rvert^3} \left(\eta_y \dot{\eta}_{xx} - \eta_x \dot{\eta}_{xy}\right) \\ & \quad + \dfrac{2 \psi_y}{\lvert \nabla \eta \rvert^{4}} \left((\eta_x^2-\eta_y^2) \dot{\eta}_{xy} - 2 \eta_x \eta_y \dot{\eta}_{xx}\right) + D_8 \dot{\eta} \vert_\Top, \\
            \F_{6u} (u, \theta, \gamma) (\dot{\psi}, \dot{p}, \dot{\eta}) &= \eta_y \dot{\psi}_y + \psi_y \dot{\eta}_y \vert_\Bottom.
        \end{aligned}
    \end{equation}
    Here the operators $D_1, \ldots, D_4$ are second-order differential operators with $C^{1,\alpha}(\Rect)$ coefficients and $D_5, \ldots, D_8$ are first-order differential operators with $C^{2,\alpha}(\Top)$ coefficients, all of whose norms are controlled by $(\inf_{\overline{\Rect}} \lvert \nabla \eta \rvert)^{-1}$ and $\lVert u \rVert_\X$, and their exact form will not be required for our calculations.

    Let us define a natural extension of the general linearised operator $\F_u(u, \theta, \gamma) \colon \X \longrightarrow \Y$,
    \begin{equation}\label{eq: AB1B0}
        \begin{aligned}
            (\mathcal{A}, \mathcal{B}_1, \mathcal{B}_0) \colon C^{4, \alpha}\left(\Rect\right)\times C^{2, \alpha}\left(\Rect\right) \times C^{4, \alpha}\left(\Rect\right) \longrightarrow ~&C^{1, \alpha}\left(\Rect\right)\times C^{1, \alpha}(\Rect) \times C^{2, \alpha}\left(\Rect\right) \times \\ 
            &C^{4, \alpha} \left(\Top\right)\times C^{2, \alpha}\left(\Top\right)\times C^{2, \alpha}\left(\Top\right) \times \\
            &C^{4, \alpha} \left(\Bottom\right)\times C^{3, \alpha}\left(\Bottom\right)\times C^{4, \alpha}\left(\Bottom\right)
        \end{aligned}
    \end{equation}
    by
    \begin{equation*}
        \mathcal{A} := \begin{pmatrix}
            - \lvert \nabla \eta \rvert^{-4} \nabla \eta \cdot \nabla \Delta + D_1 & \lvert \nabla \eta \rvert^{-2} \nabla \eta \cdot \nabla^{\perp} & D_2 \\
            \lvert \nabla \eta \rvert^{-4} \nabla \eta \cdot \nabla^{\perp} \Delta + D_3 & \lvert \nabla \eta \rvert^{-2} \nabla \eta \cdot \nabla & D_4 \\
            0 & 0 & \Delta
        \end{pmatrix},
    \end{equation*}
    \begin{multline*}
        \mathcal{B}_1 := \begin{pmatrix} 
            1 & 0 & 0 \\
            \dfrac{\eta_y (\d_{xx} - \d_{yy}) - 2 \eta_x \d_{xy}}{\lvert \nabla \eta \rvert^2} & - \eta_x & \dfrac{\sigma \eta_x}{\lvert \nabla \eta \rvert^3} \left(\eta_x \d_{xy} - \eta_y \d_{xx}\right) + \dfrac{2 \psi_y}{\lvert \nabla \eta \rvert^{4}} \left((\eta_x^2-\eta_y^2) \d_{xx} + 2 \eta_x \eta_y \d_{xy}\right)\\
            \dfrac{\eta_x (\d_{xx} - \d_{yy}) + 2 \eta_y \d_{xy}}{\lvert \nabla \eta \rvert^2} & \eta_y & \dfrac{\sigma \eta_y}{\lvert \nabla \eta \rvert^3} \left(\eta_y \d_{xx} - \eta_x \d_{xy}\right) + \dfrac{2 \psi_y}{ \lvert \nabla \eta \rvert^{4}} \left((\eta_x^2-\eta_y^2) \d_{xy} - 2 \eta_x \eta_y \d_{xx}\right)
    \end{pmatrix} \\ + \begin{pmatrix}
        0 & 0 & 0 \\ 
        D_5 & 0 & D_6 \\
        D_7 & 0 & D_8
    \end{pmatrix},
    \end{multline*}
    and
    \begin{equation*}
        \mathcal{B}_0 := \begin{pmatrix}
            1 & 0 & 0 \\
            \dfrac{1}{\eta_y} \d_y & 0 & \psi_y \d_y \\
            0 & 0 & 1
        \end{pmatrix}.
    \end{equation*}
    The choices of the integer orders of the spaces in~\eqref{eq: AB1B0} will be motivated below (see~\eqref{eq: DN_ordersA} and~\eqref{eq: DN_ordersB}). This defines a natural extension of $\F_u(u, \theta, \gamma)$ in the sense that, for $(\dot{\psi}, \dot{p}, \dot{\eta}) \in \tilde{\X}$ and $(f_1, f_2, h_1, h_2, h_3, h_4) \in \Y$, if
    \begin{equation*}
        \F_u(u, \theta, \gamma) (\dot{\psi}, \dot{p}, \dot{\eta})^T = (f_1, f_2, h_1, h_2, h_3, h_4)^T,
    \end{equation*}
    then
    \begin{equation*}
        (\mathcal{A}, \mathcal{B}_1, \mathcal{B}_0) (\dot{\psi}, \dot{p}, \dot{\eta})^T = (f_1, f_2, 0, h_1, h_2, h_3, 0, h_4, 0)^T.
    \end{equation*}
    Therefore, it suffices to establish a Schauder estimate on $\overline{\Rect}$ for the extended operator $(\mathcal{A}, \mathcal{B}_1, \mathcal{B}_0)$. We do this by following the standard recipe from~\cite{agmon1964estimates} for elliptic systems.

    The first step is to formally replace $\d_x, \d_y$ with $ik, il$ for some Fourier variables $k,l \in \Real$, to obtain matrix polynomials $\mathscr{A}(k,l), \mathscr{B}_1(k,l)$ and $\mathscr{B}_0(k,l)$ corresponding to $\mathcal{A}, \mathcal{B}_1$ and $\mathcal{B}_0$. Suppressing dependence on $(x,y)$ for readability, these are given by
    \begin{equation*}
        \mathscr{A}(k,l) = 
        \begin{pmatrix}
            i \lvert \nabla \eta \rvert^{-4} \left(k^2 + l^2\right) \left(\eta_x k + \eta_y l\right) + \tilde{D}_1 & i \lvert \nabla \eta \rvert^{-2} \left(- \eta_x l + \eta_y k\right) & \tilde{D}_2 \\
            i \lvert \nabla \eta \rvert^{-4} \left(k^2 + l^2\right) \left(\eta_x l - \eta_y k\right) + \tilde{D}_3 & i \lvert \nabla \eta \rvert^{-2} \left(\eta_x k + \eta_y l\right) & \tilde{D}_4 \\
            0 & 0 & - (k^2 + l^2) 
        \end{pmatrix},
    \end{equation*}
    \begin{multline*}
        \mathscr{B}_1(k,l) =
        \begin{pmatrix}
            1 & 0 & 0 \\
            \dfrac{\left(2 \eta_x k l - \eta_y k^2 + \eta_y l^2\right)}{\lvert \nabla \eta \rvert^2} +  & - \eta_x & \dfrac{\eta_x k \sigma \left(- \eta_x l + \eta_y k\right)}{\lvert \nabla \eta \rvert^3} + \dfrac{2 k \psi_y \left(- \eta_x^2 k - 2 \eta_x \eta_y l + \eta_y^2 k\right)}{\lvert \nabla \eta \rvert^4} \\
            \dfrac{\left(- \eta_x k^2 + \eta_x l^2 - 2 \eta_y k l\right)}{\lvert \nabla \eta \rvert^2} & \eta_y & \dfrac{\eta_y k \sigma \left(\eta_x l - \eta_y k\right)}{\lvert \nabla \eta \rvert^3} + \dfrac{2 k \psi_y \left(- \eta_x^2 l + 2 \eta_x \eta_y k + \eta_y^2 l\right)}{\lvert \nabla \eta \rvert^4}
        \end{pmatrix} \\ + \begin{pmatrix}
            0 & 0 & 0 \\ 
            \tilde{D}_9 & 0 & \tilde{D}_{10} \\
            \tilde{D}_{11} & 0 & \title{D}_{12}
        \end{pmatrix},
    \end{multline*}
    and
    \begin{equation*}
        \mathscr{B}_0(k,l) = 
        \begin{pmatrix}
            1 & 0 & 0\\
            \dfrac{il}{\eta_y} & 0 & i\psi_y l\\
            0 & 0 & 1
        \end{pmatrix},
    \end{equation*}
    where $\tilde{D}_i(k,l)$ are polynomials in $k,l$ of lower degree. 
    
    We verify that $\mathcal{A}$ is \emph{properly elliptic on} $\overline{\Rect}$  in the sense of Douglis--Nirenberg. Consider $\mathscr{A}$ evaluated at an interior point $(x,y) \in \overline{\Rect}$. As polynomials in $k,l$, the entries of $\mathscr{A}$ have degree
    \begin{equation}\label{eq: A_matrix_orders}
        (\alpha_{ij}) = \begin{pmatrix}
            3 & 1 & 3 \\
            3 & 1 & 3 \\
            -\infty & -\infty & 2
        \end{pmatrix},
    \end{equation}
    and we select \emph{Douglis--Nirenberg numbers} (defined in~\eqref{eq: DN_numbers}) for $(\alpha_{ij})$, choosing $(t_1, t_2, t_3)= (3, 1, 3)$ and $(s_1, s_2, s_3) = (0, 0, -1)$, so that the matrix
    \begin{equation}\label{eq: DN_ordersA}
        (s_i + t_j) = \begin{pmatrix}
            3 & 1 & 3 \\
            3 & 1 & 3 \\
            2 & 0 & 2
        \end{pmatrix}
    \end{equation}
    has entries greater than or equal to the respective entries of~\eqref{eq: A_matrix_orders}. Thus, $\mathcal{A}$ is a differential operator of the form
    \begin{equation*}
        \mathcal{A}_{ij}(x, \d) = \sum_{\lvert \rho \rvert \leq s_i + t_j} a_{ij, \rho}(x) \d^\rho \colon C^{1+t_j, \alpha}(\Rect) \to C^{1-s_i, \alpha}(\Rect),
    \end{equation*}
    with coefficients $a_{ij, \rho} \in C^{1-s_i, \alpha}(\overline{\Rect})$ whose norms are controlled uniformly from above by $(\inf_{\overline{\Rect}} \lvert \nabla \eta \rvert)^{-1}$ and $\lVert u \rVert_\X$.
    
    From this set of Douglis--Nirenberg numbers we obtain a \emph{principal part} $\pi \mathscr{A}$ of $\mathscr{A}$, that is, the terms that are of order $s_i + t_j$, 
    \begin{equation}\label{eq: principalA}
        \pi \mathscr{A}(k, l) = \begin{pmatrix}
            i \lvert \nabla \eta \rvert^{-4} \left(k^2 + l^2\right) \left(\eta_x k + \eta_y l\right) & i \lvert \nabla \eta \rvert^{-2} \left(- \eta_x l + \eta_y k\right) & 0\\
            i \lvert \nabla \eta \rvert^{-4} \left(k^2 + l^2\right) \left(\eta_x l - \eta_y k\right) & i \lvert \nabla \eta \rvert^{-2} \left(\eta_x k + \eta_y l\right) & 0\\
            0 & 0 & - (k^2 + l^2) 
        \end{pmatrix}.
    \end{equation}
    The determinant of this principal part gives the \emph{characteristic polynomial} for $\mathscr{A}$, 
    \begin{equation}\label{eq: char_poly}
        \det \pi \mathscr{A}(k,l) = \lvert \nabla \eta \rvert^{-4} (k^2+l^2)^3.
    \end{equation}
    Given $(k,l) \in \Real^2$ such that $k^2 + l^2 = 1$,~\eqref{eq: char_poly} is bounded below in $\overline{\Rect}$ by a constant depending only on $\sup_{\overline{\Rect}} \lvert \nabla \eta \rvert$ and bounded above by a constant depending on $\inf_{\overline{\Rect}} \lvert \nabla \eta \rvert$. Thus, by~\eqref{eq: infsupRect}, $\mathcal{A}$ is uniformly elliptic in $\Rect$ with an ellipticity constant depending only on $\delta$. This ellipticity is also proper, since for fixed $k \geq 0$, $\det \pi \mathscr{A}(k,l)$ has the same number of roots in the upper half-plane $\{\operatorname{Im} l > 0 \}$ as in the lower half-plane. Define $\rho_+(l) := (l-ik)^3$, the factor of $\det \pi \mathscr{A}(k,l)$ which contains all the roots inside the positive upper half-plane.

    We next verify the \emph{Shapiro--Lopatinsky} condition at each component of the boundary of $\Rect$. Here, we fix $k > 0$ and view $\mathscr{B}_i$ and $\mathscr{A}$ as functions of $l$. At each boundary component, this condition is equivalent to the matrix polynomial $\pi \mathscr{B}_i(l) \adj(\pi \mathscr{A})(l)$ being full rank modulo $\rho_+(l)$, where $\adj M = \det M (M)^{-1}$ denotes the adjugate of a matrix $M$. 
    
    Considering the top boundary operator at a point $(x,1) \in T$, the matrix of orders of coefficients of $\mathscr{B}_1$ is given by
    \begin{equation*}
        (\beta_{ij}) = \begin{pmatrix}
            0 & -\infty & -\infty \\
            2 & -\infty & 2 \\
            2 & 0 & 2
        \end{pmatrix}.
    \end{equation*}
    Defining $m_i = \max_j (\beta_{ij} - t_j)$, we have $(m_1, m_2, m_3) = (-3, -1, -1)$, and similarly to~\eqref{eq: DN_ordersA}, the matrix
    \begin{equation}\label{eq: DN_ordersB}
        (m_i + t_j) = \begin{pmatrix}
            0 & -2 & 0 \\
            2 & 0 & 2 \\
            2 & 0 & 2
        \end{pmatrix}
    \end{equation}
    determines the principal part of $\mathscr{B}_1$,
    \begin{equation*}
        \pi \mathscr{B}_1(l) = \begin{pmatrix}
            \dfrac{\left(2 \eta_x k l - \eta_y k^2 + \eta_y l^2\right)}{\lvert \nabla \eta \rvert^2} & - \eta_x & \dfrac{\eta_x k \sigma \left(- \eta_x l + \eta_y k\right)}{\lvert \nabla \eta \rvert^3} + \dfrac{2 k \psi_y \left(- \eta_x^2 k - 2 \eta_x \eta_y l + \eta_y^2 k\right)}{\lvert \nabla \eta \rvert^4}\\
            \dfrac{\left(- \eta_x k^2 + \eta_x l^2 - 2 \eta_y k l\right)}{\lvert \nabla \eta \rvert^2} & \eta_y & \dfrac{\eta_y k \sigma \left(\eta_x l - \eta_y k\right)}{\lvert \nabla \eta \rvert^3} + \dfrac{2 k \psi_y \left(- \eta_x^2 l + 2 \eta_x \eta_y k + \eta_y^2 l\right)}{\lvert \nabla \eta \rvert^4}
        \end{pmatrix}.
    \end{equation*}
    Here, we are seeing $\mathcal{B}_1$ as an operator 
    \begin{equation*}
        (\mathcal{B}_1)_{kj}((x,1), \d) = \sum_{\lvert \sigma \rvert \leq m_k + t_j} b_{kj, \sigma}(x,1) \d^\sigma \colon C^{1+ t_j, \alpha}(\Top) \to C^{1-m_k, \alpha}(\Top),
    \end{equation*}
    with coefficients $b_{kj, \sigma} \in C^{1-m_k, \alpha}(\Top)$ whose norms are controlled uniformly from above by $(\inf_{\Top} \lvert \nabla \eta \rvert)^{-1}$ and $\lVert u \rVert_\X$.

    It is convenient to multiply $\pi \mathscr{B}_1$ by an invertible matrix which is independent of $l$, defining a new matrix $\tilde{\mathscr{B}}_1$ by
    \begin{equation}\label{eq: principal_B1}
        \tilde{\mathscr{B}}_1 =
        \begin{pmatrix}
            1 & 0 & 0 \\
            0 & \eta_y & \eta_x \\
            0 & - \eta_x & \eta_y
        \end{pmatrix} \; \pi \mathscr{B}_1 = 
        \begin{pmatrix}
            1 & 0 & 0\\
            \left(l^2 - k^2\right) & 0 & \dfrac{2 k \psi_y}{\lvert \nabla \eta \rvert^2} \left(\eta_y k - \eta_x l\right)\\
            - 2 k l & \lvert \nabla \eta \rvert^2 & \dfrac{2 k \psi_y}{\lvert \nabla \eta \rvert^2} \left(\eta_x k + \eta_y l\right) + \dfrac{\sigma k}{\lvert \nabla \eta|} \left(\eta_x l - \eta_y k\right)
        \end{pmatrix}.
    \end{equation}
    We compute the adjugate of $\pi \mathscr{A}$ at $(x,1) \in T$,
    \begin{equation*}
        \adj(\pi \mathscr{A})(l) =
        \begin{pmatrix}
            - \dfrac{i \left(k^2 + l^2\right) \left(\eta_x k + \eta_y l\right)}{\lvert \nabla \eta \rvert^2} & \dfrac{i \left(k^2 + l^2\right) \left(- \eta_x l + \eta_y k\right)}{\lvert \nabla \eta \rvert^2} & 0\\
            - \dfrac{i \left(k^2 + l^2\right)^2 \left(- \eta_x l + \eta_y k\right)}{\lvert \nabla \eta \rvert^4} & - \dfrac{i \left(k^2 + l^2\right)^2 \left(\eta_x k + \eta_y l\right)}{\lvert \nabla \eta \rvert^4} & 0\\
            0 & 0 & - \dfrac{\left(k^2 + l^2\right)^2}{\lvert \nabla \eta \rvert^4}
        \end{pmatrix},
    \end{equation*}
    so that
    \begin{multline*}
        M_1(l) := \tilde{\mathscr{B}}_1 \adj(\pi \mathscr{A})(l) 
        = \dfrac{i}{\lvert \nabla \eta \rvert^2} \left(k^2 + l^2\right) \times \\
        \begin{pmatrix}
            - \left(\eta_x k + \eta_y l\right) & \eta_y k - \eta_x l & 0\\
            \left(k^2 - l^2\right) \left(\eta_x k + \eta_y l\right) & \left(k^2 - l^2\right) \left(\eta_x l - \eta_y k\right) & \dfrac{2 ik \psi_y \left(k^2 + l^2\right) \left(\eta_y k - \eta_x l\right)}{\lvert \nabla \eta \rvert^4}\\
            \eta_x l (3k^2 + l^2) - \eta_y k(k^2 - l^2) & \eta_x k (l^2 - k^2) - \eta_y l (3 k^2 + l^2) & ik \left(k^2 + l^2\right) \left(\dfrac{2 \psi_y (\eta_x k + \eta_y l)}{\lvert \nabla \eta \rvert^4} - \dfrac{\sigma (\eta_y k - \eta_x l)}{\lvert \nabla \eta \rvert^3}\right)
        \end{pmatrix}.
    \end{multline*}
    
    Now the rows of $M_1(l)$ are linearly independent modulo $\rho_+(l) = (l-ik)^3$ if and only if the block matrix $\begin{pmatrix} M_1(ik) & M_1'(ik) & M_1''(ik) \end{pmatrix}$ has full rank. To see this, consider the Taylor expansion of $M_1(l)$ around the point $l=ik$,
    \begin{equation*}
        M_1(l) = \sum_{j=0}^{3} \dfrac{1}{j!}M_1^{(j)}(ik)(l - ik)^j.
    \end{equation*}
    We then can write the remainder modulo $(l - ik)^3$ of $M_1$ as
    \begin{equation*}
        M_1(l) \equiv \sum_{j=0}^{2} \dfrac{1}{j!}M_1^{(j)}(ik)(l - ik)^j \mod (l - ik)^3.
    \end{equation*}
    Consequently, the rows of $M_1(l)$ are linearly dependent modulo $\rho_+(l)$ if and only if each of the $3 \times 3$ minor determinants of $\begin{pmatrix} M_1(ik) & M_1'(ik) & M_1''(ik) \end{pmatrix}$ vanish.

    In fact, we need only consider
    \begin{align*}
        M_1''(ik) = \begin{pmatrix}
            - \dfrac{2 i k \left(\eta_x + 3 i \eta_y\right)}{\lvert \nabla \eta \rvert^2} & \dfrac{2 k \left(3 \eta_x + i \eta_y\right)}{\lvert \nabla \eta \rvert^2} & 0\\
            \dfrac{4 i k^{3} \left(3 \eta_x + 5 i \eta_y\right)}{\lvert \nabla \eta \rvert^2} & - \dfrac{4 k^{3} \left(5 \eta_x + 3 i \eta_y\right)}{\lvert \nabla \eta \rvert^2} & - \dfrac{16 i k^{4} \psi_y \left(\eta_x + i \eta_y\right)}{\lvert \nabla \eta \rvert^6}\\
            - \dfrac{4 k^{3} \left(\eta_x + 3 i \eta_y\right)}{\lvert \nabla \eta \rvert^2} & - \dfrac{4 i k^{3} \left(3 \eta_x + i \eta_y\right)}{\lvert \nabla \eta \rvert^2} & \dfrac{8 i k^{4} \left(\eta_x + i \eta_y\right) \left(\sigma \lvert \nabla \eta \rvert- 2 i \psi_y \right)}{\lvert \nabla \eta \rvert^3}
        \end{pmatrix},
    \end{align*}
    which has
    \begin{equation}\label{eq: det_top}
        \lvert \det M_1''(ik) \rvert^2 = 65536 k^{16} \lvert \nabla \eta \rvert^{-14} \left(\sigma^2 \lvert \nabla \eta \rvert^2 + 4 \lvert \psi_y \rvert^2\right).
    \end{equation}
    The infimum of the right-hand side of~\eqref{eq: det_top} over $\Top$ is bounded away from zero uniformly by a constant dependent on $\sup_\Top \lvert \nabla \eta \rvert$ and $\inf_\Top \lvert \psi_y \rvert + \sigma$. Therefore, by~\eqref{eq: hypothesis_for_schauder} the Shapiro--Lopatinsky condition holds for $(\mathcal{A}, \mathcal{B}_1)$ uniformly on $\Top$ with a constant depending on $\delta$, and by Theorem~\ref{adn_schauder}, we have a Schauder estimate up to the boundary at the top.

    The computation on the lower boundary $\Bottom$ is similar but less complicated than on $\Top$. We consider a point $(x,0) \in \Bottom$ and again fix $k > 0$, but since $e_y$ is inward-pointing at the bed, we evaluate all polynomials in $l$ at $-l$ instead. We pick Douglis--Nirenberg numbers $(m_1, m_2, m_3) = (-3, -2, -3)$ for $\mathcal{B}_0$, and find principal part
    \begin{equation}\label{eq: principal_B0}
        \pi \mathscr{B}_0(-l) = \begin{pmatrix}
            1 & 0 & 0\\
            \dfrac{-il}{\eta_y} & 0 & -i\psi_y l\\
            0 & 0 & 1
        \end{pmatrix},
    \end{equation}
    seeing $\mathcal{B}_0$ as an operator
    \begin{equation*}
        (\mathcal{B}_0)_{kj}((x,1), \d) = \sum_{\lvert \sigma \rvert \leq m_k + t_j} b_{kj, \sigma}(x,1) \d^\sigma \colon C^{1+ t_j, \alpha}(\Top) \to C^{1-m_k, \alpha}(\Top),
    \end{equation*}
    with coefficients $b_{kj, \sigma} \in C^{1-m_k, \alpha}(\Top)$ whose norms are controlled uniformly from above by $(\inf_{\Bottom} \lvert \nabla \eta \rvert)^{-1}$ and $\lVert u \rVert_\X$.

    We then calculate the matrix
    \begin{equation*}
        M_0(l) := \pi \mathscr{B}_0(-l) \adj(\pi \mathscr{A})(-l) =
        \begin{pmatrix}
            \dfrac{i l \left(k^2 + l^2\right)}{\eta_y} & \dfrac{i k \left(k^2 + l^2\right)}{\eta_y} & 0\\
            \dfrac{l^2 \left(k^2 + l^2\right)}{\eta_y^2} & \dfrac{k l \left(k^2 + l^2\right)}{\eta_y^2} & -\dfrac{i l \psi_y \left(k^2 + l^2\right)^2}{\eta_y^{4}}\\
            0 & 0 & - \dfrac{\left(k^2 + l^2\right)^2}{\eta_y^{4}}
        \end{pmatrix},
    \end{equation*}
    and consider $M_0''(ik)$, 
    \begin{equation*}
        M_0''(ik) = 
        \begin{pmatrix}
            -\dfrac{6 k}{\eta_{y}} & \dfrac{2 i k}{\eta_{y}} & 0\\
            - \dfrac{10 k^2}{\eta_{y}^2} & \dfrac{6 i k^2}{\eta_{y}^2} & -\dfrac{8 k^{3} \psi_{y}}{\eta_{y}^{4}}\\
            0 & 0 & \dfrac{8 k^2}{\eta_{y}^{4}}
        \end{pmatrix},
    \end{equation*}
    which has
    \begin{equation*}
        \lvert \det M_0''(ik) \rvert = 128 k^5 \lvert \eta_y \rvert^{-7},
    \end{equation*}
    again bounded away from zero by a constant depending only on $\sup_{\Bottom} \lvert \nabla \eta \rvert$. Thus again by~\eqref{eq: infsupRect}, $(\mathcal{A}, \mathcal{B}_0)$ satisfies the Shapiro--Lopatinsky condition uniformly on the bed with dependence only on $\delta$ and Theorem~\ref{adn_schauder} gives a Schauder estimate on the bed.

    Therefore, we have local a priori Schauder estimates up to the boundary for each corresponding operator $(\mathcal{A}, \mathcal{B}_i)$, as well as interior estimates from the ellipticity of $\mathcal{A}$. The constants in these estimates depend on the norms of the coefficients, the ellipticity constant and the Shapiro--Lopatinsky constants, which are all controlled by $\delta$ using~\eqref{eq: hypothesis_for_schauder} and~\eqref{eq: infsupRect}. Gluing these estimates together in the usual way, we obtain the global Schauder estimate~\eqref{eq: system_schauder}.
\end{proof}

\subsection{Fredholm index of linearisation around the Nusselt solution}
We can furthermore find the index of the linearisation $\mathscr{L} = \F_u(u_0(\theta, \gamma),\theta, \gamma)$ (see~\eqref{eq: triv_lin}) around the Nusselt solution $u_0(\theta, \gamma)$ defined in~\eqref{eq: u0} by homotoping to a simpler operator, using the continuity of the index under continuous deformation. Note that we only need to compute the index for the linearisation around the Nusselt solution, as the Fredholm property of the linearised operator preserves its index along the bifurcation curve as it is continued globally; see Theorem~\ref{theorem: gbt}.

\begin{prop}\label{Index_zero}
    The operator $\mathscr{L} \colon \X \to \Y$ defined in~\eqref{eq: triv_lin} has Fredholm index $-1$.
\end{prop}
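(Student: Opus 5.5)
The plan is to combine Proposition~\ref{Schauder} with a homotopy argument, so that the index computation is reduced to counting the deficiency of an explicitly solvable model problem.

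First, Proposition~\ref{Schauder} applied along a continuous family shows that every operator in the homotopy is semi-Fredholm with a locally uniform constant. Since the index is locally constant along continuous paths of semi-Fredholm operators, it suffices to compute it at one endpoint. It is also convenient to replace $\mathscr{L}$ by $\mathscr{L}\mathcal{Q}$ from~\eqref{eq:lin_op}, since $\mathcal{Q}$ is an isomorphism of $\X$ and so does not change the index. One caveat is that the constraint $\eta_x(0,1)=0$ in $\X$ is not preserved by $\mathcal{Q}$ if $\PsiNusselt'$ is not constant. So I would instead work on the space $\tilde{\X}$ without the phase constraint and recall that imposing one scalar linear constraint lowers the index by exactly one. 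The claim then becomes that $\mathscr{L}\mathcal{Q}\colon \tilde{\X}\to\Y$ has index $0$.

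For the homotopy, I would scale $\Rey\mapsto t\Rey$ to remove the advection terms. I would also deform the lower-order coefficients $\PsiNusselt''', P_0'$ and $\sigma$ in the boundary rows linearly to $0$, while keeping $\PsiNusselt'(1)=\tfrac23-\gamma+\ldots$; more precisely, the coefficient $\PsiNusselt'|_\Top = 1-\gamma$ must stay away from zero along the path. I expect this to be the main obstacle. Ellipticity at $\Top$ requires $\sigma+|\psi_y|>\delta$, and in~\eqref{eq:lin_op} the coefficient $-\PsiNusselt'''=2$ multiplying $\zeta$ in row four plays that role. Hence the deformation must keep the Shapiro--Lopatinsky determinant of the form~\eqref{eq: det_top} nonzero throughout. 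I would therefore keep the top-order structure fixed, with $\psi_y=1-\gamma$ frozen at a nonzero value (homotoping $\gamma$ away from $1$ if necessary, since the Lopatinsky condition only involves principal symbols), and only deform the genuinely lower-order terms.

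At the endpoint the problem decouples into a Stokes-type system for $(\varphi,q)$, namely $-\Delta\varphi_y+q_x=f_1$ and $\Delta\varphi_x+q_y=f_2$, with conditions $\varphi_y=0$ and $\varphi=0$ on $\Bottom$, together with the three top conditions. I would then Fourier expand in $x$ with modes $e^{inkx}$. For $n\neq0$ each mode gives a regular ODE boundary value problem that is uniquely solvable, apart from finitely many exceptional modes, which do not affect the index. The $n=0$ mode needs explicit care. There the $\zeta$ unknown becomes the zero mode of a harmonic function vanishing at $y=0$, so $\zeta=cy$. The compatibility of $f_1$ with the $x$-average gives one condition on the data, and $\zeta$ supplies one free constant, so the count balances to index $0$ on $\tilde{\X}$. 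Subtracting the phase constraint then yields index $-1$ for $\mathscr{L}$ on $\X$.
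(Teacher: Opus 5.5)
Your route is genuinely different from the paper's and can be made to work, but several of your justifications are wrong. The two homotopies differ as follows.

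The paper also deforms principal parts. It ends at a fully decoupled problem: a biharmonic problem for $\varphi$ with $\varphi,\Delta\varphi$ prescribed on $\Top$, a pressure with no boundary condition, and $(1+\sigma)(1-\partial_{xx})\zeta$ on $\Top$. There, index $0$ comes from a one-dimensional kernel (constant pressure) and one compatibility condition. Its Lopatinsky minor $-128k^9\big(2\Psi_0'(1)(t-1)^2+i(\sigma+t)\big)$ is nonzero for every $t>0$, so no care about $\gamma$ is needed.

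You instead keep the kinematic row $\varphi+\Psi_0'(1)\zeta$ and the pressure in the normal-stress row, and only switch off $\Rey$, $\Psi_0'''$, $P_0'$ and $\sigma$. Your endpoint is then the Stokes problem for $u=(\varphi_y,-\varphi_x)$ with traction-free top and no-slip bottom, with $\zeta$ slaved to $\varphi$ through the kinematic row. That endpoint is in fact an isomorphism $\tilde{\X}\to\Y$. A homogeneous solution satisfies $\int_{\Rect}|\mathbb{D}u|^2=0$, so $u=0$, and then $q=0$ and $\zeta=0$. Each Fourier mode is a square problem, so uniqueness gives existence mode by mode. This buys a shorter, more physical path. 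The cost is that you need $\Psi_0'(1)=1-\gamma\neq 0$ once $\sigma$ is switched off, which your detour for $\gamma=1$, $\sigma>0$ handles.

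The following points need correcting.

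(i) $\mathcal{Q}$ does preserve $\X$. It leaves the third component unchanged, so $\eta_x(0,1)=0$ is untouched, and $\psi|_{\Bottom}=\varphi|_{\Bottom}$ because $\zeta|_{\Bottom}=0$.

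(ii) You misidentify which coefficient controls the Lopatinsky condition. With Douglis--Nirenberg numbers $t=(3,1,3)$ and $m=(-3,-1,-1)$ on $\Top$, the term $-\Psi_0'''\zeta$ in row four of \eqref{eq:lin_op} has order $0<m_2+t_3=2$. It is therefore lower order and plays no role in the Lopatinsky condition. After the change of variables $\mathcal{Q}$, the role of $\psi_y$ is played by $\Psi_0'(1)$ in the kinematic row (order $0=m_1+t_3$), and the relevant minor is a multiple of $2\Psi_0'(1)+i\sigma$. Conversely, $\sigma\partial_{xx}\zeta$ is principal, so switching $\sigma$ off is not a lower-order deformation; it is allowed only because $\Psi_0'(1)\neq0$. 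Also, Proposition~\ref{Schauder} does not literally cover your intermediate operators, since they are not linearisations of $\F$. You need the ADN estimate directly from these symbol computations, as the paper does.

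(iii) Your zero-mode count is wrong: at your endpoint there is no compatibility condition and no free constant. The $x$-average of row one is the ODE $-\hat\varphi_0'''=\hat f_{1,0}$. It is uniquely solvable with $\hat\varphi_0(0)=0$, $\hat\varphi_0'(0)=\hat g_{4,0}$ and $\hat\varphi_0''(1)=-\hat g_{2,0}$. The constant $c$ in $\zeta=cy$ is then fixed by the kinematic row, since $\Psi_0'(1)\neq0$. Finally $\hat q_0$ is fixed by $\hat q_0'=\hat f_{2,0}$ and $\hat q_0(1)=\hat g_{3,0}$. The ``compatibility plus free constant'' picture belongs to the paper's endpoint, where the pressure has lost its boundary condition. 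Your two errors cancel, so the index is still $0$, but the count as written does not prove it.

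There are also no exceptional modes $n\neq0$, since the energy argument covers all of them. Passing from mode-wise solvability to solvability in the H\"older spaces needs the usual approximation-plus-Schauder-estimate argument.
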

\begin{proof}
    It is enough to show that $L := \mathscr{L} \mathcal{Q}$ given by~\eqref{eq:lin_op} has index $-1$, since $\mathcal{Q} \colon \X \to \X$ is invertible. Extending $L$ as in~\eqref{eq: AB1B0}, we define
    \begin{equation*}
        \tilde{\mathcal{A}} =   \begin{pmatrix}
            - \Delta \d_y + \Rey \PsiNusselt' \d_{xy} - \Rey \PsiNusselt'' \d_x & \d_x &  0 \\
            \Delta \d_x - \Rey \PsiNusselt' \d_{xx} & \d_y & 0 \\
            0  & 0 & \Delta
        \end{pmatrix}, 
    \end{equation*}
    \begin{equation*}
        \tilde{\mathcal{B}}_1 = \begin{pmatrix}
            1 & 0  & \PsiNusselt' \\
            \d_{xx}-\d_{yy} & 0  & -\PsiNusselt''' \\
            2 \d_{xy} & 1 & - 2 \cot \theta + \sigma \d_{xx}
        \end{pmatrix}, \qquad 
        \tilde{\mathcal{B}}_0 = \begin{pmatrix}
            1 & 0 & 0 \\
            \d_y & 0 & 0 \\
            0 & 0 & 1
        \end{pmatrix}.
    \end{equation*}
    Now, we extend the domain of $L$ to
    \begin{equation*}
        \tilde{\X} = \left\{(\dot{\psi}, \dot{p}, \dot{\eta}) \in C^{4, \alpha}(\Rect) \times C^{2, \alpha}(\Rect) \times C^{4, \alpha}(\Rect) ~:~ \Delta \eta = 0, ~ \psi|_\Bottom = \eta|_\Bottom = 0 \right\} \supset \X,
    \end{equation*}
    so that given $(\varphi, q, \zeta) \in \tilde{\X}$ and $(f_1, f_2, h_1, h_2, h_3, h_4) \in \Y$, the problem 
    \begin{equation*}
        L (\varphi, q, \zeta)^T = (f_1, f_2, h_1, h_2, h_3, h_4)^T
    \end{equation*}
    is equivalent to 
    \begin{equation*}
        \big(\tilde{\mathcal{A}}, \tilde{\mathcal{B}}_1, \tilde{\mathcal{B}}_0 \big) (\varphi, q, \zeta)^T = (f_1, f_2, 0, h_1, h_2, h_3, 0, h_4, 0)^T.
    \end{equation*}
    Consequently, the extended operator $L \colon \tilde{\X} \to \Y$ has the same index as $\big(\tilde{\mathcal{A}}, \tilde{\mathcal{B}}_1, \tilde{\mathcal{B}}_0 \big) \colon \tilde{\X} \to \tilde{\Y}$, where
    \begin{equation*}
        \tilde{\Y} = \{(f_1, f_2, 0, h_1, h_2, h_3, 0, h_4, 0) ~:~ (f_1, f_2, h_1, h_2, h_3, h_4) \in \Y \}.
    \end{equation*}

    Using that $\Delta \zeta = 0$ to simplify $\tilde{\mathcal{B}}_1$, these operators $\tilde{\mathcal{A}}$, $\tilde{\mathcal{B}}_1$, and $\tilde{\mathcal{B}}_0$ have matrix polynomials with Douglis--Nirenberg principal parts given by
    \begin{equation*}
        \pi \tilde{\mathscr{A}}(k,l) = \begin{pmatrix}
            i l (k^2 + l^2) & ik &  0 \\
            -i k (k^2 + l^2) & il & 0 \\
            0  & 0 & -(k^2 + l^2)
        \end{pmatrix}, 
    \end{equation*}
    \begin{equation*}
        \pi \tilde{\mathscr{B}}_1(l) = \begin{pmatrix}
            1 & 0  & \PsiNusselt' \\
            l^2-k^2 & 0  & 0 \\
            -2 kl & 1 & - \sigma k^2
        \end{pmatrix}, \qquad 
        \pi \tilde{\mathscr{B}}_0(l) = \begin{pmatrix}
            1 & 0 & 0 \\
            il & 0 & 0 \\
            0 & 0 & 1
        \end{pmatrix},
    \end{equation*}
    using the same Douglis--Nirenberg numbers as $\mathscr{A}, \mathscr{B}_1$ and $\mathscr{B}_0$ in Proposition~\ref{Schauder}.
    We now consider a continuous family of operators $\left(\mathcal{A}(t), \mathcal{B}_1(t), \mathcal{B}_0(t)\right)$ parameterised by $t \in [0,1]$, with matrix polynomials
    \begin{equation*}
        \mathscr{A}(l ; t) =   \begin{pmatrix}
            - i l (k^2 + l^2) + k\Rey (t-1)(\PsiNusselt' l + i \PsiNusselt'') & ik &  0 \\
            i k (k^2 + l^2) + \Rey \PsiNusselt' k^2 (1-t) & il & 0 \\
            0  & 0 & -(k^2 + l^2)
        \end{pmatrix}, 
    \end{equation*}
    \begin{equation*}
        \mathscr{B}_1 (l ; t) =   \begin{pmatrix}
            1 & 0  & \PsiNusselt' (1-t) \\
            l^2-(1-2t)k^2 & 0  & \PsiNusselt'''(t-1) \\
            -2 kl(1-t) & 1-t & 2\cot \theta (t-1) - (\sigma + t)(t + k^2)
        \end{pmatrix}, \qquad 
        \mathscr{B}_0 (l ; t) =   \begin{pmatrix}
            1 & 0 & 0 \\
            il & 0 & 0 \\
            0 & 0 & 1
        \end{pmatrix}.
    \end{equation*}
    Here, $\left(\mathcal{A}(0), \mathcal{B}_1(0), \mathcal{B}_0(0)\right) = \big( \tilde{\mathcal{A}}, \tilde{\mathcal{B}}_1, \tilde{\mathcal{B}}_0 \big)$, while at $t=1$,
    \begin{equation*}
        \tilde{\mathcal{A}}(1) =   \begin{pmatrix}
            - \Delta \d_y & \d_x &  0 \\
            \Delta \d_x & \d_y & 0 \\
            0  & 0 & \Delta
        \end{pmatrix}, 
    \end{equation*}
    \begin{equation*}
        \tilde{\mathcal{B}}_1(1) = \begin{pmatrix}
            1 & 0  & 0 \\
            \Delta & 0  & 0 \\
            0 & 0 & (1 + \sigma) (1 - \d_{xx})
        \end{pmatrix}, \qquad 
        \tilde{\mathcal{B}}_0(1) = \begin{pmatrix}
            1 & 0 & 0 \\
            \d_y & 0 & 0 \\
            0 & 0 & 1
        \end{pmatrix}.
    \end{equation*}
    We have constructed this family of operators $\left(\mathcal{A}(t), \mathcal{B}_1(t), \mathcal{B}_0(t)\right)$ so that the index of $\left(\mathcal{A}(1), \mathcal{B}_1(1), \mathcal{B}_0(1)\right)$ will be easy to calculate below.

    Now we verify for all $t \in [0,1]$, the operators $\left(\mathcal{A}(t), \mathcal{B}_i(t)\right)$ on each boundary satisfy the Shapiro--Lopatinsky condition. Setting $M_i(l;t) =  (\pi \mathscr{B}_i(l;t)) \pi \mathscr{A}(l;t)$, we have
    \begin{equation*}
        M_1(l ; t) = \begin{pmatrix}
            - i l \left(k^2 + l^2\right) & i k \left(k^2 + l^2\right) & \PsiNusselt' \left(k^2 + l^2\right)^2 \left(t - 1\right)\\
            - i l \left(k^2 + l^2\right) \left(k^2 (2t - 1) + l^2\right) & i k \left(k^2 + l^2\right) \left(k^2 (2t - 1) + l^2\right) & 0\\
            i k \left(k^2 - l^2\right) \left(k^2 + l^2\right) \left(t - 1\right) & i l \left(k^2 + l^2\right) \left(3 k^2 + l^2\right) \left(t - 1\right) & k^2 (\sigma + t) \left(k^2 + l^2\right)^2
        \end{pmatrix},
    \end{equation*}
    and
    \begin{equation*}
        M_0(l ; t) = \begin{pmatrix}
            - i l \left(k^{2} + l^{2}\right) & i k \left(k^{2} + l^{2}\right) & 0\\
            l^{2} \left(k^{2} + l^{2}\right) & - k l \left(k^{2} + l^{2}\right) & 0\\
            0 & 0 & - \left(k^{2} + l^{2}\right)^{2}
        \end{pmatrix}.
    \end{equation*}
    From these we compute 
    \begin{equation*}
        \begin{pmatrix} M_1'(ik ; t) & M_1''(ik ; t) \end{pmatrix} = \begin{pmatrix}
            2 i k^2 & - 2 k^2 & 0 & 6 k & 2 i k & - 8 \PsiNusselt' \left(t - 1\right)\\
            4 i k^{4} \left(t - 1\right) & - 4 k^{4} \left(t - 1\right) & 0 & 4 k^{3} \left(3 t - 5\right) & 4 i k^{3} \left(t - 3\right) & 0\\
            -4 k^{4} \left(t - 1\right) & - 4 i k^{4} \left(t - 1\right) & 0 & 12 i k^{3} \left(t - 1\right) & - 4 k^{3} \left(t - 1\right) & - 8 k^{4} (\sigma + t)
        \end{pmatrix},
    \end{equation*}
    and
    \begin{equation*}
        \begin{pmatrix} M_0'(ik ; t) & M_0''(ik ; t) \end{pmatrix} = \begin{pmatrix}
            2 i k^{2} & - 2 k^{2} & 0 & 6 k & 2 i k & 0\\
            - 2 i k^{3} & 2 k^{3} & 0 & - 10 k^{2} & - 6 i k^{2} & 0\\
            0 & 0 & 0 & 0 & 0 & 8 k^{2}
        \end{pmatrix}
    \end{equation*}
    We find nonzero $3 \times 3$ minors of $M_1$ and $M_0$, for instance
    \begin{equation*}
        \det \begin{pmatrix}
            - 2 k^2 & 2 i k & 8 \PsiNusselt' k^2 (1-t)\\
            4 k^{4} (1-t) & 4 i k^{3} \left(t - 3\right) & 0\\
            4 i k^{4} (1-t) & 4 k^{3} (1-t) & - 8 k^{4} (\sigma + t)
        \end{pmatrix} = -128 k^{9} \big(2\PsiNusselt'(1) (t-1)^2 + i (\sigma + t)\big),
    \end{equation*}
    and
    \begin{equation*}
        \det \begin{pmatrix}
            - 2 k^{2} & 2 i k & 0\\
            2 k^{3} & - 6 i k^{2} & 0\\
            0 & 0 & 8 k^{2}
        \end{pmatrix} = 64 i k^{6},
    \end{equation*}
    which establish the Shapiro--Lopatinsky condition on each boundary for all $t \in [0,1]$. 
    
    Then, as in the proof of Proposition~\ref{Schauder}, we combine the Schauder estimates and obtain that $\left(\mathcal{A}, \mathcal{B}_1, \mathcal{B}_0\right)(t)$ is a semi-Fredholm operator for $t \in [0,1]$. Now, since the index of semi-Fredholm operators is preserved under continuous deformation (see for example~\cite[Chapter IV Theorem 5.22]{kato1966perturbation}), the index of $\left(\mathcal{A}, \mathcal{B}_1, \mathcal{B}_0\right)(t)$ is constant along $t \in [0,1]$. Thus, 
    \begin{equation}\label{eq: ind_L_AB1B0}
        \operatorname{ind} \left(L \colon \tilde{\X} \to \Y \right) = \operatorname{ind} \left(\big( \tilde{\mathcal{A}}(1), \tilde{\mathcal{B}}_1(1), \tilde{\mathcal{B}}_0(1) \big) \colon \tilde{\X} \to \tilde{\Y}\right),
    \end{equation}
    and so we evaluate the solvability of the following system for $\big(\dot{\psi}, \dot{p}, \dot{\eta}\big) \in \tilde{\X} \subset C^{4,\alpha}(\Rect) \times C^{2,\alpha}(\Rect) \times C^{4,\alpha}(\Rect)$ and $(f_1, f_2, g_1, g_2, g_3, g_4) \in \Y$:
    \begin{equation}\label{eq: indzero_system}
        \begin{cases}
            - \Delta \dot{\psi}_y + \dot{p}_x = f_1 \qquad &\text{in}~\Rect,\\
            \Delta \dot{\psi}_x + \dot{p}_y = f_2 &\text{in}~\Rect, \\
            \Delta \dot{\eta} = 0 &\text{in}~\Rect, \\
            \dot{\psi} = g_1 &\text{on}~\Top, \\
            - \Delta \dot{\psi} = g_2 &\text{on}~\Top, \\
            (1 - \d_{xx}) \dot{\eta} = \dfrac{g_3}{1+\sigma} &\text{on}~\Top, \\
            \dot{\psi} = 0 &\text{on}~\Bottom, \\
            \dot{\psi}_y = g_4 &\text{on}~\Bottom, \\
            \dot{\eta} = 0 &\text{on}~\Bottom.
        \end{cases}
    \end{equation}
    Taking the curl of the first two equations of~\eqref{eq: indzero_system}, we obtain a biharmonic boundary value problem for $\dot{\psi}$,
    \begin{align}
        \begin{cases} \label{eq: biharmonic}
            \Delta^2 \dot{\psi} = \d_x f_2 - \d_y f_1,  &\text{in}~\Rect,\\
            \dot{\psi} = g_1,  &\text{on}~\Top,\\
            \Delta \dot{\psi} = -g_2,  &\text{on}~\Top,\\
            \dot{\psi} = 0,  &\text{on}~\Bottom,\\
            \dot{\psi}_y = g_4,  &\text{on}~\Bottom,
        \end{cases}
    \end{align}
    a first-order system for $\dot{p}$,
    \begin{equation}
        \begin{cases} \label{eq: p_problem}
            \dot{p}_x = f_1 + \Delta \dot{\psi}_y, \\ 
            \dot{p}_y = f_2 - \Delta \dot{\psi}_x,
        \end{cases} \quad \text{in}~\Rect,
    \end{equation}
    and a harmonic boundary problem for $\dot{\eta}$,
    \begin{equation}
        \begin{cases}\label{eq: eta_poisson}
            \Delta \dot{\eta} = 0,  &\text{in}~\Rect,\\
            \left(1 - \d_{xx} \right) \dot{\eta} = \dfrac{g_3}{1+\sigma}, &\text{on}~\Top,\\
            \dot{\eta} = 0,  &\text{on}~\Bottom.
        \end{cases}
    \end{equation}
    It is easy to check that~\eqref{eq: indzero_system} is equivalent to~\eqref{eq: biharmonic}--\eqref{eq: eta_poisson}. 
    
    Now it is a straighforward exercise to check that~\eqref{eq: biharmonic} has a unique solution, by standard Lax--Milgram and elliptic regularity arguments.  
    Given~\eqref{eq: biharmonic}, the problem~\eqref{eq: p_problem} for $\dot{p}$ has a solution which is uniquely determined up to a constant, provided the solvability condition 
    \begin{equation*}
        \int_{\Torus_k} f_1 + \Delta \dot{\psi}_y \, dx = 0,
    \end{equation*}
    arising from periodicity, holds.  Finally, the harmonic problem~\eqref{eq: eta_poisson} for $\dot{\eta}$ with mixed boundary conditions is also uniquely solvable by classical results, since the operator $(1 - \d_{xx}) \colon C^{4,\alpha}(\Top) \to C^{2,\alpha}(\Top)$ is invertible. In total, the operator $\big( \tilde{\mathcal{A}}(1), \tilde{\mathcal{B}}_1(1), \tilde{\mathcal{B}}_0(1) \big)$ corresponding to~\eqref{eq: indzero_system} has a one-dimensional kernel and range of codimension $1$, and so it has index zero. By~\eqref{eq: ind_L_AB1B0}, the index of $L \colon \tilde{\X} \to \Y$ is also zero. Finally, since $\X$ is a codimension $1$ subspace of $\tilde{\X}$, the index of $L \colon \X \to \Y$ is $-1$. 
\end{proof}

\section{Linear theory around Nusselt shear flow}\label{section: os}
In this section, we show how the Orr--Sommerfeld operator $\OS$ defined in Section~\ref{section: os_intro} relates naturally to the linearised operator $\mathscr{L} = \F_u \big(u_0(\theta, \gamma), \theta, \gamma \big)$ defined in~\eqref{eq: triv_lin}. This allows us to reduce the kernel and transversality hypotheses in Theorem~\ref{bif_lemma} to conditions on the Orr--Sommerfeld operator alone, by completely soft arguments. Then in Section~\ref{section: adjoint}, we provide a convenient test for the Orr--Sommerfeld transversality condition, by introducing an explicit formal adjoint for $\OS$ with respect to a suitable bilinear map and establishing a Fredholm alternative.

\subsection{Reduction to Orr--Sommerfeld}\label{section: reduction}
As briefly mentioned in Section~\ref{section: os_intro}, we define the Orr--Sommerfeld operator $\OS(k, \Rey, \theta, \gamma)\map{\Xos}{\Yos}$ by the left-hand side of the system~\eqref{eq: os_system}, that is,
\begin{equation}\label{eq:os_op}
    \OS(k, \Rey, \theta, \gamma)\hat{\varphi} :=
    \begin{pmatrix}
        \hat{\varphi}^{(4)} - \big( 2  k^2 + ik \Rey \PsiNusselt'\big) \hat{\varphi}'' +  \big( ik \Rey \PsiNusselt''' +  k^4 + ik^3 \Rey \PsiNusselt'\big) \hat{\varphi} \\
        \left[ \hat{\varphi}'' + \big(k^2 + 2(\PsiNusselt')^{-1}\big) \hat{\varphi}  \right] \big\vert_{y=1} \\
        \left[ \hat{\varphi}''' - \big( 3  k^2 + ik \Rey \PsiNusselt'\big) \hat{\varphi}' + ik (\PsiNusselt')^{-1} \left(2  \cot \theta + \sigma k^2 \right) \hat{\varphi} \right] \big\vert_{y=1}
    \end{pmatrix},
\end{equation}
between spaces $\Xos$ and $\Yos$ defined as follows,
\begin{align}
    \Xos &:= \left\{ \hat{\varphi} \in C^{4, \alpha} \left([0,1]; \Complex\right) : \hat{\varphi}(0)=\hat{\varphi}'(0)=0, ~ \operatorname{Im}\hat{\varphi}(1) = 0 \right\}, \label{eq: os_domain} \\
    \Yos &:= C^{\alpha}([0,1];\Complex) \times \Complex^2.  \label{eq: os_codomain}
\end{align}
We view these as Banach spaces over $\Real$ with the expected real norms. The first two conditions $\hat{\varphi}(0)=\hat{\varphi}'(0)=0$ in the definition of $\Xos$ arise from the no-slip condition at the bed, while the condition $\operatorname{Im}\hat{\varphi}(1) = 0$ comes from the phase constraint $\eta_x(0,1)=0$ in the definition of $\X$.

Given $u \in \X$ and $f \in \Y$, denote by $\hat{u}$ and $\hat{f}$ their first Fourier coefficients with respect to $x$, i.e.
\begin{equation*}
    \hat{u}(y) := \frac{k}{2 \pi} \int_{\Torus_k} u(\tilde{x},y) e^{-ik \tilde{x}} d\tilde{x},
\end{equation*}
and similarly for $\hat{f}$. Let us denote by $\mathcal{P}_{\X}$ and $\mathcal{P}_{\Y}$ the projections onto the first Fourier modes of $\X$ and $\Y$ respectively, i.e.~for all $u \in \X$, $f \in \Y$,
\begin{equation*}
    \mathcal{P}_{\X} u = \Re \big\{ \hat{u}(y) e^{ikx} \big\}, \quad
    \mathcal{P}_{\Y} f = \Re \big\{\hat{f} e^{ikx} \big\}.
\end{equation*}
Observe that $\mathcal{P}_{\Y} \mathscr{L}  = \mathscr{L} \mathcal{P}_{\X}$. 

We define a map $\mathscr{E}(k, \Rey, \theta, \gamma) \colon \Xos \to \mathcal{P}_\X \X$ as follows. Given $\hat{\varphi} \in \Xos$, let $\hat{\zeta}$ be the unique solution in $C^{4,\alpha}([0,1])$ of the ODE boundary value problem
\begin{equation}\label{eq: eta_ode_system}
    \begin{cases}
        \hat{\zeta}''(y) = k^2 \hat{\zeta}(y), \\
        \hat{\zeta}(1) = -\dfrac{1}{\PsiNusselt'(1)} \hat{\varphi}(1), \\
        \hat{\zeta}(0) = 0,            
    \end{cases}
\end{equation}
and $\hat{q} \in C^{2,\alpha}([0,1])$ the unique solution of the first-order ODE initial value problem
\begin{equation}\label{eq: p_system}
    \begin{cases}
        \hat{q}' = k^2 \left( ik  - \Rey \Psi_0'\right)\hat{\varphi} - ik \hat{\varphi}'', \\
        \hat{q}(1) =  -2ik  \hat{\varphi}'(1) - \dfrac{1}{\Psi_0'(1)} (-P_0' + \sigma k^2) \hat{\varphi}(1).
    \end{cases}
\end{equation}
Then set
\begin{equation}\label{eq: os_extension}
    \mathscr{E} \hat{\varphi} := \Re \left\{\big(\hat{\varphi}, \hat{q}, \hat{\zeta} \big)^T e^{ikx}\right\}.
\end{equation}
This map is motivated by considering the Fourier transform of $\mathscr{L} \mathcal{Q} (\varphi, q, \zeta)^T$ for $(\varphi, q, \zeta) \in \X$, setting its second, third, fifth and sixth components equal to zero, and solving for $\hat{q}$ and $\hat{\zeta}$ in terms of $\hat{\varphi}$. Note that $\mathscr{E}$ is a bounded linear operator with trivial kernel. The following lemma expresses $\OS$ as a composition involving $\mathscr{L}$ and $\mathscr{E}$.

\begin{lemma}\label{OS_product}
    There is a decomposition $\mathcal{P}_{\Y} \Y = \Y_1 \times \Y_2$, with projections $\Pi_1$ and $\Pi_2$ onto $\Y_1$ and $\Y_2$, and an invertible linear operator $T \colon \Y_2 \to \Yos$, such that the operators $\OS$, $\mathscr{L}$, $\mathcal{Q}$ and $\mathscr{E}$ defined by~\eqref{eq:os_op},~\eqref{eq: triv_lin},~\eqref{eq: cov_matrix}, and~\eqref{eq: eta_ode_system}--\eqref{eq: os_extension} respectively satisfy
    \begin{equation} \label{eq: os_comp}
        \OS = T \Pi_2 \mathscr{L} \mathcal{Q} \mathscr{E}, \qquad \Xos \xrightarrow{\mathscr{E}} \mathcal{P}_{\X} \X \xrightarrow{\mathscr{L} \mathcal{Q}} \mathcal{P}_{\Y} \Y \xrightarrow{\Pi_2} \Y_2 \xrightarrow{T} \Yos.
    \end{equation}
    Furthermore,
    \begin{equation}\label{eq: range_E}
        \operatorname{ran} \mathscr{E} = \ker \Pi_1 \mathscr{L} \mathcal{Q} \cap \mathcal{P}_{\X} \X.
    \end{equation}
\end{lemma}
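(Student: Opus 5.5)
Everything happens at the level of first Fourier modes. For $(\varphi,q,\zeta)\in\mathcal{P}_\X\X$ we write $(\varphi,q,\zeta)=\Re\{(\hat\varphi,\hat q,\hat\zeta)e^{ikx}\}$ and replace $\partial_x$ by $ik$ in~\eqref{eq:lin_op}. The map $\mathscr{L}\mathcal{Q}$ then becomes a map into six Fourier coefficients:
\begin{itemize}
\item two interior coefficient functions on $[0,1]$,
\item three complex numbers at $y=1$,
\item one complex number at $y=0$.
\end{itemize}
We would split $\mathcal{P}_\Y\Y$ according to which components are used to define $\hat q,\hat\zeta$ in $\mathscr{E}$.

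We take $\Y_1$ to be the components (second interior, third, fifth, sixth) that are set to zero in the construction~\eqref{eq: eta_ode_system}--\eqref{eq: p_system}. We take $\Y_2$ to be the remaining components (first interior, fourth). Since~\eqref{eq: p_system} and~\eqref{eq: eta_ode_system} are combinations of the rows, it may be cleaner to build $\Y_1,\Y_2$ via a $y$-dependent triangular change of basis. Concretely, we would use the first-row combination ``$ik\cdot$(row 2) $-$ $\partial_y$(row 1 solved for $\hat q'$)'', i.e.\ the curl, so that $\Pi_2$ of the interior part is the curl $\partial_x(\text{row }2)-\partial_y(\text{row }1)$. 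This eliminates $q$ and produces exactly the fourth-order Orr--Sommerfeld operator.

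Plan:
\begin{enumerate}
\item \textbf{Define $\Pi_1$.} Set $\Pi_1(\mathscr{L}\mathcal{Q}u)=$(row 2, row 3, row 5, row 6). Here row 3 at the top reads $\hat\varphi+\Psi_0'\hat\zeta=0$, and row 6 reads $\hat\varphi'(0)=0$, which already holds in $\Xos$. So row 6 should instead be placed into $\Y_2$, or absorbed by noting it vanishes identically.
\item \textbf{Prove~\eqref{eq: range_E}.} Check directly that $\mathscr{E}\hat\varphi$ satisfies these equations.
\begin{itemize}
\item Row 3 follows from $\hat\zeta(1)=-\hat\varphi(1)/\Psi_0'(1)$, together with harmonicity of $\zeta$, which gives $\hat\zeta''=k^2\hat\zeta$ and $\hat\zeta(0)=0$.
\item Row 2 is $ik(\hat\varphi''-k^2\hat\varphi)\cdot ik-\Rey\Psi_0'(ik)^2\hat\varphi+\hat q'=0$, which is~\eqref{eq: p_system}.
\item Row 5 is the initial condition in~\eqref{eq: p_system}, using $P_0'=-2\cot\theta$ and $\partial_{xx}\to -k^2$.
\end{itemize}
Conversely, if $\Pi_1\mathscr{L}\mathcal{Q}u=0$, then $\hat\zeta,\hat q$ are forced by uniqueness of the ODE problems, so $u=\mathscr{E}\hat\varphi$. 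We also check that $\operatorname{Im}\hat\varphi(1)=0$ corresponds to $\eta_x(0,1)=0$; this follows via $\hat\zeta(1)\propto\hat\varphi(1)$, since $\Psi_0'(1)$ is real and nonzero.
\item \textbf{Define $\Pi_2$ and $T$.} Let $\Y_2$ consist of row 1, with row 2's $y$-derivative used to eliminate $\hat q'$, together with row 4. Then $T$ maps these to the three components of $\OS$.
\begin{itemize}
\item Interior component: substituting $\hat q'$ from row 2 into row 1 and using row 2 to eliminate $\hat q$ gives $-ik^{-1}$ times the Orr--Sommerfeld ODE. It is cleaner to differentiate; equivalently, take $\Pi_2$ of the interior pair to be $ik\,\text{row}_1-\partial_y\,\text{row}_2$ with a suitable normalisation.
\item Boundary component from row 4: $\hat\varphi''+k^2\hat\varphi+\Psi_0'''\hat\zeta(1)$, with $\Psi_0'''=-2$ and $\hat\zeta(1)=-\hat\varphi(1)/\Psi_0'$. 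This gives the second OS component, up to sign.
\item Remaining boundary component: the third OS component comes from row 1 evaluated at $y=1$, with $\hat q(1)$ substituted from row 5. This is why $T$ must act on a trace of row 1 at $y=1$ as well.
\end{itemize}
So we define $\Y_2$ to consist of the curl-type interior combination, row 4, and the trace of row 1 at the top; $\Y_1$ is a complement containing rows 2, 3, 5, 6. Invertibility of $T$ amounts to nonzero scalar factors ($ik$, $\Psi_0'(1)\neq0$), which holds since $k>0$ and $\gamma\neq 1$.
\end{enumerate}

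The main obstacle is making the decomposition $\mathcal{P}_\Y\Y=\Y_1\times\Y_2$ genuinely a direct sum with bounded projections while matching regularities. Row 1 lies in $C^{1,\alpha}$, and the curl combination loses a derivative relative to $C^\alpha$ in $\Yos$. We would handle this by defining $\Pi_2$ through an ODE-integration isomorphism on the interior component rather than by differentiation, and by keeping careful track of the Hölder indices.
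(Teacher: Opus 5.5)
Your proposal is essentially correct, and most of it coincides with the paper. It uses the same $\Y_1$ (rows 2, 3, 5, 6) and the same two-sided verification of~\eqref{eq: range_E}, including obtaining $\operatorname{Im}\hat\varphi(1)=0$ from $\zeta_x(0,1)=0$ via $\hat\zeta(1)=-\hat\varphi(1)/\Psi_0'(1)$. Each Orr--Sommerfeld component also comes from the same place: the $y$-derivative of row 1 with $\hat q'$ replaced using~\eqref{eq: p_system}, row 4, and the trace of row 1 at $y=1$ with $\hat q(1)$ taken from row 5.

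The difference is where the differentiation lives. You put it into $\Pi_2$ through the curl $ik\hat a_2-\partial_y\hat a_1$, which mixes rows 1 and 2 and forces a non-coordinate complement. You also split off $\hat a_1(1)$ as a third component so that $T$ is just a sign matrix. The paper keeps $\Pi_1a=(a_2,a_3,a_5,a_6)$ and $\Pi_2a=(a_1,a_4)$ as plain coordinate projections. It puts the work into $T(a_1,a_4)=(-\partial_y\hat a_1,\,-\hat a_4,\,-\hat a_1(1))$, whose inverse is $\hat a_1(y)=\int_y^1 f\,dy-h_2$. The curl is unnecessary: row 2 already vanishes on $\operatorname{ran}\mathscr{E}$, so differentiating row 1 alone and substituting~\eqref{eq: p_system} already gives the Orr--Sommerfeld ODE. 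Your version is also valid, since $a\mapsto(\Pi_1a,\Pi_2a)$ is an isomorphism ($\hat a_1$ is recovered from the curl, $\hat a_2$ and $\hat a_1(1)$ by integrating from $y=1$). It only adds bookkeeping.

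A few points need correcting.
\begin{itemize}
\item The regularity obstacle in your final paragraph does not exist. The map $\hat a\mapsto(\partial_y\hat a,\hat a(1))$ is an isomorphism $C^{1,\alpha}([0,1])\to C^{\alpha}([0,1])\times\Complex$, which matches the shape of $\Yos$. The curl of two $C^{1,\alpha}$ functions likewise lies in $C^\alpha$. The integration-based fix you propose is precisely the paper's $T^{-1}$.
\item In step 3, the combination $ik\,\text{row}_1-\partial_y\,\text{row}_2$ is wrong because it does not eliminate $\hat q$. It must be $\partial_y\,\text{row}_1-ik\,\text{row}_2$, as in your earlier curl.
\item Your row 2 formula carries a spurious extra factor $ik$ on $\hat\varphi''-k^2\hat\varphi$.
\item Row 6 must stay in $\Y_1$, as you eventually decide. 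It is exactly what forces $\hat\varphi'(0)=0$ in the inclusion $\ker\Pi_1\mathscr{L}\mathcal{Q}\cap\mathcal{P}_\X\X\subset\operatorname{ran}\mathscr{E}$. Moving it to $\Y_2$ would make that inclusion fail.
\end{itemize}
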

\begin{proof}
    Recall the operator $L := \mathscr{L} \mathcal{Q}$ given by~\eqref{eq:lin_op}. Note that, given $(\varphi, q, \zeta) \in \X$,~\eqref{eq: eta_ode_system} consists of the Fourier transforms in $x$ of $\Delta \zeta = 0$, the third component of $L (\varphi, q, \zeta)^T = 0$ and $\zeta \vert_\Bottom = 0$, and~\eqref{eq: p_system} is composed of the Fourier transforms of the second and fifth components of $L (\varphi, q, \zeta)^T = 0$. Therefore, for any $\hat{\varphi} \in \Xos$, by the definition~\eqref{eq: eta_ode_system}--\eqref{eq: os_extension} of $\mathscr{E}$,
    \begin{equation*}
        L \mathscr{E} \hat{\varphi} =     \begin{pmatrix}
            \Re \big\{\big[\big(  k^2 + ik \Rey \Psi_0' \big)\hat{\varphi}' -  \hat{\varphi}''' - ik \Rey \Psi_0'' \hat{\varphi} + ik\hat{q} \big] e^{ikx} \big\} \\
            0 \\
            0 \\
            \Re \big\{-\big[\hat{\varphi}''(1) + k^2 \hat{\varphi}(1) + \Psi_0''' \hat{\zeta}(1)\big] e^{ikx} \big\} \\
            0 \\
            0
        \end{pmatrix}.
    \end{equation*}

    Then we decompose $\mathcal{P}_{\Y} \Y = \Y_1 \times \Y_2$, with projections $\Pi_1$ and $\Pi_2$ onto $\Y_1$ and $\Y_2$ respectively, defined by
    \begin{equation*}
        \Pi_1 a = (a_2, a_3, a_5, a_6), \quad \Pi_2 a = (a_1, a_4), \qquad \text{for all}~a=(a_1, \ldots, a_6) \in \mathcal{P}_{\Y} \Y,
    \end{equation*}
    with 
    \begin{align*}
        \Y_1 &= \Pi_1 \mathcal{P}_{\Y} \Y = C^{1, \alpha}(\Rect) \times C^{4, \alpha} \left(\Top\right)\times C^{2, \alpha}\left(\Top\right) \times C^{3, \alpha}\left(\Bottom\right), \\
        \Y_2 &= \Pi_2 \mathcal{P}_{\Y} \Y = C^{1, \alpha}(\Rect) \times C^{2, \alpha}(\Top).
    \end{align*} 

    Define $T \colon \Y_2 \to \Yos$ by 
    \begin{equation}\label{eq: invertible_T}
        T \begin{pmatrix}
            a_1 \\ a_4 
        \end{pmatrix} = \begin{pmatrix}
            - \d_y \hat{a}_1 \\ -\hat{a}_4 \\ - \hat{a}_1(1)
        \end{pmatrix} \quad \text{for all}~\begin{pmatrix}
            a_1 \\ a_4 
        \end{pmatrix} \in \Y_2.
    \end{equation}
    Then $T$ is invertible, with 
    \begin{equation*}
        T^{-1} \begin{pmatrix}
            f \\ h_1 \\ h_2
        \end{pmatrix} = \Re \Bigg\{ \begin{pmatrix}
            \int_y^1 f \, dy - h_2 \\ -h_1
        \end{pmatrix} e^{ikx} \Bigg\},
    \end{equation*}
    and direct computation verifies that $\OS$ is given by the composition~\eqref{eq: os_comp}.

    To show~\eqref{eq: range_E}, one inclusion $\operatorname{ran} \mathscr{E} \subset \ker \Pi_1 \mathcal{P}_{\Y} L$ follows directly, since for any $\hat{\varphi} \in \Xos$,
    \begin{equation*}
        \Pi_1 \mathcal{P}_{\Y} L \mathscr{E} \hat{\varphi} = \Pi_1 L \mathscr{E} \hat{\varphi} = (0, 0, 0, 0).
    \end{equation*}
    For the other inclusion, let $(\varphi, q, \zeta) \in \mathcal{P}_\X \X$ and suppose that $\Pi_1 \mathcal{P}_\Y L (\varphi, q, \zeta) = 0$. Then $(\varphi, q, \zeta)$ is equal to its first mode,
    \begin{equation*}
        (\varphi, q, \zeta)^T = \Re \left\{\big(\hat{\varphi}, \hat{q}, \hat{\zeta} \big)^T e^{ikx} \right\},
    \end{equation*}
    and $\hat{\varphi}$, $\hat{q}$, and $\hat{\zeta}$ satisfy
    \begin{align*}
        \left( -ik  + \Rey \PsiNusselt'(y)\right)\hat{\varphi}(y) + ik \hat{\varphi}''(y) + \hat{q}'(y) &= 0, \\
        \hat{\varphi}(1) + \PsiNusselt'(1) \hat{\zeta}(1) &= 0, \\
        -2ik \hat{\varphi}'(1) - \hat{q}(1) + (2 \cot \theta + \sigma k^2) \hat{\zeta}(1) &= 0, \\
        \hat{\varphi}'(0) &= 0.
    \end{align*}
    This implies that $\hat{\zeta}$ and $\hat{q}$ are given by~\eqref{eq: eta_ode_system} and~\eqref{eq: p_system}, $\hat{\varphi} \in \Xos$, and $(\varphi, q, \zeta) = \mathscr{E} \hat{\varphi}$, completing the proof.
\end{proof}

The following consequence of this lemma reduces the kernel hypothesis~\ref{H2} of Theorem~\ref{bif_lemma} to the Orr--Sommerfeld kernel condition~\ref{assumption: os_kernel} in Theorem~\ref{thm: general_local}.
\begin{lemma}\label{Kernels_equivalence}
    Let $k, \gamma > 0$, $\Rey \geq 0$, $\theta \in (0,\pi)$, and suppose that there exists $\hat{\varphi} \in \Xos$ such that 
    \begin{align}\label{eq: 1d_ker_ossection}
        \ker \OS(k, \Rey, \theta, \gamma) = \operatorname{span}\{\hat{\varphi}\}.
    \end{align}
    Then, there is an element $\xi_0 := \mathcal{Q} \mathscr{E} \hat{\varphi} \in \X$ such that
    \begin{equation*}
        \operatorname{span} \{\xi_0\} \subset \ker \mathscr{L} (\theta, \gamma).
    \end{equation*}
    Furthermore, if in addition $\gamma \neq 2$ and the kernel of $\OS(nk, \Rey, \theta, \gamma)$ is trivial for all $n \in \mathbb{N} \setminus \{1\}$, then this inclusion is an equality, i.e.~the kernel of $\mathscr{L}(\theta, \gamma)$ is one-dimensional.
\end{lemma}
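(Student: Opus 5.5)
The first claim follows directly from Lemma~\ref{OS_product}. Given $\hat{\varphi} \in \ker \OS$, set $\xi_0 := \mathcal{Q}\mathscr{E}\hat{\varphi}$. By~\eqref{eq: range_E}, $\Pi_1 \mathscr{L}\mathcal{Q}\mathscr{E}\hat{\varphi} = 0$. By~\eqref{eq: os_comp} and the invertibility of $T$, $\Pi_2 \mathscr{L}\mathcal{Q}\mathscr{E}\hat{\varphi} = T^{-1}\OS\hat{\varphi} = 0$. Since $\mathscr{L}\mathcal{Q}\mathscr{E}\hat{\varphi} \in \mathcal{P}_\Y\Y = \Y_1\times\Y_2$, it vanishes, so $\mathscr{L}\xi_0 = 0$. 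Moreover $\xi_0 \neq 0$, because $\mathscr{E}$ has trivial kernel and $\mathcal{Q}$ is invertible.

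For the reverse inclusion, take $u \in \ker\mathscr{L}$ and write $u = \mathcal{Q}v$, so $Lv = 0$ with $L = \mathscr{L}\mathcal{Q}$. The coefficients of $L$ depend only on $y$, so $L$ commutes with the Fourier projection onto each mode $e^{inkx}$. Hence each Fourier mode $v_n = \Re\{\hat v_n(y) e^{inkx}\}$, $n \ge 0$, lies in $\ker L$; smoothness of $v$ gives convergence of the series in a weaker norm, which suffices. I would split into three cases.

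\emph{Case $n \ge 1$.} Repeat the argument of Lemma~\ref{OS_product} with $k$ replaced by $nk$. The second, third, fifth and sixth components of $Lv_n = 0$ force $\hat\zeta_n, \hat q_n$ to be given by~\eqref{eq: eta_ode_system}--\eqref{eq: p_system} at wavenumber $nk$. The remaining components then give $\OS(nk,\Rey,\theta,\gamma)\hat\varphi_n = 0$. One subtlety is that the phase condition $\operatorname{Im}\hat\varphi(1)=0$ in $\Xos$ comes from $\eta_x(0,1)=0$, which only constrains the total sum of modes. I would therefore run the argument with $\Xos$ replaced by its analogue without this constraint: triviality for $n\ge2$ must hold there, and for $n=1$ the unconstrained kernel is $\operatorname{span}_\Real\{\hat\varphi, i\hat\varphi\}$ (the translate). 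Once the modes $n\ge2$ vanish, the phase constraint falls entirely on the first mode and selects $\hat\varphi$, given that $\zeta$'s $x$-derivative at $(0,1)$ is proportional to $\operatorname{Im}$ of $\hat\zeta(1)\propto\hat\varphi(1)$. Alternatively, one can hope the paper's convention already builds this constraint into $\Xos$ consistently; I would check this carefully. For $n\ge2$, the hypothesis gives $\hat\varphi_n = 0$, hence $\hat\zeta_n = \hat q_n = 0$.

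\emph{Case $n = 0$ (the main obstacle).} Here $\hat\varphi_0, \hat q_0, \hat\zeta_0$ are real functions of $y$ alone. From $\Delta\zeta=0$ and $\zeta(0)=0$ we get $\hat\zeta_0 = cy$. The second interior equation gives $\hat q_0' = 0$. The first gives $-\hat\varphi_0''' = 0$, so $\hat\varphi_0 = ay^2$ using $\hat\varphi_0(0)=\hat\varphi_0'(0)=0$; the bottom condition $\varphi_y=0$ is automatic. On $\Top$:
\[
ay^2|_{1} + \PsiNusselt'(1)c = a + (2-\gamma)c = 0, \qquad -2a - \PsiNusselt'''c = -2a + 2c = 0,
\]
and the last condition fixes $\hat q_0$. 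The two equations give $a = c$ and $(3-\gamma)c = 0$. I need to double-check the exact coefficients: the zero-mode top boundary condition involves $(\d_{xx}-\d_{yy})\varphi - \PsiNusselt'''\zeta = -2a+2c$, and the kinematic condition reads $\varphi + \PsiNusselt'\zeta$ with $\PsiNusselt'(1) = 1-\gamma$. With that value the system is $a + (1-\gamma)c = 0$ and $a = c$, giving $(2-\gamma)c = 0$. This matches the paper's degeneracy at $\gamma=2$ exactly. So for $\gamma \neq 2$ the zero mode vanishes, and $u = \mathcal{Q}v_1 \in \operatorname{span}\{\xi_0\}$ follows from the first-mode analysis above.
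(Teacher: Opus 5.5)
Your proposal is correct and follows the paper's proof. The first part goes through Lemma~\ref{OS_product}, and the converse splits a kernel element into Fourier modes, reduces each mode to $\OS(nk,\Rey,\theta,\gamma)$, and kills the zero mode using $\gamma\neq 2$; the paper routes the zero mode through $\ker\OS(0,\Rey,\theta,\gamma)$, which is the same computation $a+(1-\gamma)c=0$, $a=c$ that you do directly on $\mathscr{L}\mathcal{Q}$.

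Your treatment of the phase constraint is more careful than the paper's, which imposes $\Re\{ink\hat\zeta_n(1)\}=0$ separately on each mode. The step you leave implicit takes one line. By complex linearity of $\OS$ on $\extXos$, any nonzero kernel element can be rotated so that its value at $y=1$ is real, so triviality on $\Xos$ for $n\ge2$ transfers to $\extXos$. Likewise, the $n=1$ hypothesis forces the unconstrained kernel to be $\operatorname{span}_{\Complex}\{\hat\varphi\}$ with $\hat\varphi(1)\neq0$, which is exactly what you need for $\eta_x(0,1)=0$ to select $\operatorname{span}_{\Real}\{\hat\varphi\}$.
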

\begin{proof}
    If $\hat{\varphi} \in \ker \OS(k, \Rey, \theta, \gamma)$, then by Lemma~\ref{OS_product}, $\Pi_2 L \mathscr{E} \hat{\varphi} = T^{-1} \OS \hat{\varphi} = 0$, and by construction, $\Pi_1 L \mathscr{E} \hat{\varphi} = 0$. Thus, we have an element $\xi_0 \in \X$ which is in the kernel of the linearised operator, namely
    \begin{equation*}
        \xi_0 := \mathcal{Q} \mathscr{E} \hat{\varphi} \in \ker \mathscr{L}(\theta, \gamma).
    \end{equation*} 

    Conversely, let $(\psi, p, \eta) \in \ker \mathscr{L}(\theta, \gamma)$, and let $(\varphi, q, \zeta) = \mathcal{Q}^{-1} (\psi, p, \eta)$. Let $n \in \mathbb{N} \cup \{0\}$ and consider the Fourier coefficient $(\hat{\varphi}_n, \hat{q}_n, \hat{\zeta}_n)$ for the $n$th mode of $(\varphi, q, \zeta)$. This satisfies
    \begin{align*}
        &\begin{cases}
            \left( (nk)^2 + ink \Rey \PsiNusselt' \right)\hat{\varphi}_n' -  \hat{\varphi}_n''' - ink \Rey \PsiNusselt'' \hat{\varphi}_n + ink\hat{q}_n = 0, \\
            (nk)^2 \left( -ink  + \Rey \PsiNusselt'\right)\hat{\varphi}_n + ink \hat{\varphi}_n'' + \hat{q}_n' = 0,               \\
            (nk)^2 \hat{\zeta}_n - \hat{\zeta}_n'' = 0,
        \end{cases} y \in [0,1], \\
        &\begin{cases}
            \big[ \hat{\varphi}_n + \PsiNusselt' \hat{\zeta}_n \big] \big\vert_{y=1} = 0, \\
            \big[\hat{\varphi}_n'' + (nk)^2 \hat{\varphi}_n + \PsiNusselt''' \hat{\zeta}_n\big]\big\vert_{y=1} = 0,                           \\
            \big[-2ink  \hat{\varphi}_n' - \hat{q}_n + (-P_0' + \sigma (nk)^2) \hat{\zeta}_n\big]\big\vert_{y=1} = 0,                   \\
            \operatorname{Re} \big\{ink \hat{\zeta}_n(1) \big\} = 0, \\
            \hat{\varphi}_n(0) = \hat{\varphi}_n'(0) = \hat{\zeta}_n(0) = 0.
        \end{cases}
    \end{align*}    
    Precisely by the construction of $\mathscr{E}$, a short calculation yields that this $n$th Fourier mode is given by 
    \begin{equation*}
        \Re \left\{\big(\hat{\varphi}_n, \hat{q}_n, \hat{\zeta}_n \big)^T e^{inkx}\right\} = \mathscr{E}(nk, \Rey, \theta, \gamma) \hat{\varphi}_n,
    \end{equation*}
    with $\hat{\varphi}_n \in \ker \OS(nk, \Rey, \theta, \gamma)$. Considering first $n=0$, the kernel of $\OS(0, \Rey, \theta, \gamma)$ consists of all $\hat{\varphi}_0 \in \Xos$ satisfying
    \begin{equation*}
        \hat{\varphi}_0^{(4)} \equiv 0, \quad \hat{\varphi}_0'''(1) = 0, \quad \hat{\varphi}_0''(1)+ \frac{2}{1-\gamma}\hat{\varphi}_0(1) = 0.
    \end{equation*}
    Up to scaling, the unique solution in $\Xos$ to the first two conditions is $\hat{\varphi}_0 = y^2$, solving the third equation only if $\gamma = 2$. So, under the hypothesis $\gamma \neq 2$, the kernel of $\OS(0, \Rey, \theta, \gamma)$ is trivial. Combining this with~\eqref{eq: 1d_ker_ossection} and the assumed triviality of the kernel for $n = 2, 3, \ldots$, we have 
    \begin{equation*}
        \ker \OS(nk, \Rey, \theta, \gamma) = \begin{cases}
            \{0\} \qquad n \in \{0\} \cup \mathbb{N} \setminus \{1\}, \\
            \operatorname{span}\{\hat{\varphi}\} \qquad n = 1.
        \end{cases}
    \end{equation*}
    Therefore, all Fourier modes except the first mode vanish, and we have $(\psi, p, \eta) \in \operatorname{span} \{\xi_0\}$. 
\end{proof}

Another soft argument using Lemma~\ref{OS_product} reduces the transversality hypothesis~\ref{H3} of Theorem~\ref{bif_lemma} to the Orr--Sommerfeld transversality condition~\ref{assumption: os_transv} in Theorem~\ref{thm: general_local}. 
\begin{lemma}\label{transv_proj}
    In the setting of Lemma~\ref{Kernels_equivalence}, suppose also that there exists some nonzero $(\dot{\theta}, \dot{\gamma}) \in \Real^2$ and $w \in \X$ such that 
    \begin{equation}\label{eq: pde_nontransverse}
        (D_{\theta} \mathscr{L}(\theta, \gamma) \dot{\theta}) \xi_0 + (D_{\gamma} \mathscr{L}(\theta, \gamma) \dot{\gamma}) \xi_0 = \mathscr{L} w.
    \end{equation}
    Then there exists a nonzero $\hat{\psi} \in \Xos$ such that 
    \begin{equation}\label{eq: os_nontransverse}
        \big(\d_\theta \OS(k,\Rey,\theta,\gamma) \dot{\theta}\big) \hat{\varphi} + \big(\d_\gamma \OS(k,\Rey,\theta,\gamma) \dot{\gamma}\big) \hat{\varphi} = \OS(k,\Rey,\theta,\gamma) \hat{\psi}.
    \end{equation}
\end{lemma}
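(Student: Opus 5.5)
The plan is to differentiate the factorisation $\OS = T \Pi_2 \mathscr{L} \mathcal{Q} \mathscr{E}$ of Lemma~\ref{OS_product} with respect to the parameters $(\theta,\gamma)$ in the direction $(\dot\theta,\dot\gamma)$, and then read off a preimage $\hat\psi$ from the resulting identity. The maps $T$, $\Pi_1$, $\Pi_2$, $\mathcal{P}_\X$ and $\mathcal{P}_\Y$ do not depend on $(\theta,\gamma)$. The operators $\mathscr{L}(\theta,\gamma)$, $\mathcal{Q}(\theta,\gamma)$ and $\mathscr{E}(\theta,\gamma)$ depend analytically on them: for $\mathscr{E}$ this is because the ODE problems \eqref{eq: eta_ode_system}--\eqref{eq: p_system} have coefficients and boundary data analytic in $(\theta,\gamma)$, using $\PsiNusselt'(1) = 1-\gamma \neq 0$ when $\gamma\neq 1$; if $\gamma = 1$ is an issue, I would simply restrict to the range of parameters where $\mathscr{E}$ is defined. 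Write $\delta := \dot\theta\,\d_\theta + \dot\gamma\,\d_\gamma$ and $L = \mathscr{L}\mathcal{Q}$.

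\textbf{Step 1 (differentiate both identities).} Applying $\delta$ to $\OS \hat\varphi = T\Pi_2 \mathscr{L}\mathcal{Q}\mathscr{E}\hat\varphi$ gives
\begin{equation*}
(\delta \OS)\hat\varphi = T\Pi_2\big[(\delta\mathscr{L})\xi_0 + \mathscr{L}(\delta\mathcal{Q})\mathscr{E}\hat\varphi + \mathscr{L}\mathcal{Q}(\delta\mathscr{E})\hat\varphi\big].
\end{equation*}
By \eqref{eq: pde_nontransverse}, $(\delta\mathscr{L})\xi_0 = \mathscr{L} w$. Hence the bracket equals $L v$ with
\begin{equation*}
v := \mathcal{Q}^{-1} w + \mathcal{Q}^{-1}(\delta\mathcal{Q})\mathscr{E}\hat\varphi + (\delta\mathscr{E})\hat\varphi \in \X.
\end{equation*}
Likewise, by \eqref{eq: range_E} we have $\Pi_1 L\mathscr{E} \equiv 0$ for all parameters. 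Differentiating this and using the same substitution yields $\Pi_1 L v = 0$.

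\textbf{Step 2 (project to the first mode).} The image of $\mathscr{E}$ lies in $\mathcal{P}_\X\X$, and the same is true of $(\delta\mathscr{E})\hat\varphi$ and of $\mathcal{Q}^{-1}(\delta\mathcal{Q})\mathscr{E}\hat\varphi$, since $\mathcal{Q}$ only multiplies by functions of $y$. Because $\mathscr{L}$ and $\mathcal{Q}$ commute with the Fourier projections, $\mathcal{P}_\Y L = L\mathcal{P}_\X$. Replacing $v$ by $\tilde v := \mathcal{P}_\X v$ therefore leaves $\Pi_2 L v$ unchanged, since $\Pi_2$ already acts on $\mathcal{P}_\Y\Y$. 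It also gives $\Pi_1 L\tilde v = 0$, so $\tilde v \in \ker\Pi_1 L\cap \mathcal{P}_\X\X$. One point to check here is that $\tilde v\in\X$, and in particular that the phase condition survives. The $w$-term is in $\X$, and the other two terms have $\varphi$-components that still satisfy the defining conditions of $\Xos$ (for instance $\delta\mathscr{E}$ only changes the $\hat q,\hat\zeta$ components).

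\textbf{Step 3 (conclude).} By \eqref{eq: range_E}, $\tilde v = \mathscr{E}\hat\psi$ for some $\hat\psi\in\Xos$. Then
\begin{equation*}
\OS\hat\psi = T\Pi_2 L\mathscr{E}\hat\psi = T\Pi_2 L\tilde v = (\delta\OS)\hat\varphi,
\end{equation*}
which is \eqref{eq: os_nontransverse}. If $\hat\psi$ happens to be zero, I replace it by $\hat\psi+\hat\varphi$, which is nonzero and satisfies the same equation since $\OS\hat\varphi = 0$.

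The main obstacle is the bookkeeping in Steps 1--2. One must justify differentiating $\mathscr{E}$ and $\mathcal{Q}$ in the parameters, keep track of which terms lie in the first Fourier mode, and confirm that the projected $\tilde v$ remains in $\X$ so that \eqref{eq: range_E} applies. The algebra itself is formal once these points are settled.
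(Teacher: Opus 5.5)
Your proposal follows the paper's proof essentially step for step. You differentiate $\OS = T\Pi_2\mathscr{L}\mathcal{Q}\mathscr{E}$ and absorb $(\delta\mathscr{L})\xi_0$ into $\mathscr{L}w$ to write $(\delta\OS)\hat\varphi = T\Pi_2 Lv$. You then pass to $\mathcal{P}_\X v$ using $\mathcal{P}_\Y L = L\mathcal{P}_\X$, get $\Pi_1 L\mathcal{P}_\X v = 0$ by differentiating $\Pi_1 L\mathscr{E}\equiv 0$, and invoke \eqref{eq: range_E}. Replacing $\hat\psi = 0$ by $\hat\varphi$ to secure a nonzero $\hat\psi$ is a harmless addition the paper does not make.

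\textbf{The gap.} Your Step~2 check that $\tilde v = \mathcal{P}_\X v$ satisfies the phase condition does not hold as written. From $w\in\X$ you cannot conclude $\mathcal{P}_\X\mathcal{Q}^{-1}w\in\X$. The constraint $\eta_x(0,1)=0$ is one condition on the sum over all Fourier modes, and the first mode alone can violate it. For example, take $\eta = a\sin(kx)\sinh(ky) + b\sin(2kx)\sinh(2ky)$ with $a\neq0$ and $a\sinh k + 2b\sinh 2k = 0$. If the $\zeta$-component of $\tilde v$ has $\operatorname{Im}\hat\zeta(1)\neq 0$, the argument behind \eqref{eq: range_E} only gives $\hat\psi\in\extXos$ with $\operatorname{Im}\hat\psi(1)\neq0$, not $\hat\psi\in\Xos$. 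The paper applies \eqref{eq: range_E} to $\mathcal{P}_\X\tilde\varphi$ without comment, so it passes over the same point; you were right to flag it.

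Your term-by-term check of the $\Xos$ conditions is also unnecessary, and not true as stated. The $\varphi$-component of $\mathcal{Q}^{-1}(\delta\mathcal{Q})\mathscr{E}\hat\varphi$ is $-\dot\gamma\zeta$, whose $y$-derivative at the bed need not vanish. The condition $\varphi_y|_{\Bottom}=0$ is instead recovered from $\Pi_1 L\tilde v = 0$.

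\textbf{The repair.} Run Steps~2--3 without the phase constraint, extending $\mathscr{E}$ complex-linearly to $\extXos$. The proofs of \eqref{eq: os_comp} and \eqref{eq: range_E} go through verbatim and give $\hat\psi\in\extXos$ with $\OS\hat\psi = (\delta\OS)\hat\varphi$. Next, $\hat\varphi(1)$ is real and nonzero: if $\hat\varphi(1)=0$, then $i\hat\varphi\in\Xos$ would be a second, $\Real$-independent element of $\ker\OS$. Hence $\hat\psi - \frac{\operatorname{Im}\hat\psi(1)}{\hat\varphi(1)}\, i\hat\varphi$ lies in $\Xos$ and solves the same equation, since $\OS(i\hat\varphi) = i\,\OS\hat\varphi = 0$.

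Alternatively, use the extra hypotheses $\gamma\neq2$ and non-resonance from Lemma~\ref{Kernels_equivalence}. Since $\mathscr{L}w$ is a pure first mode, every Fourier mode $n\neq1$ of $w$ lies in the kernel of $\mathscr{L}$. By the mode-by-mode argument of that lemma, each such mode vanishes; triviality of $\ker\OS(nk,\dots)$ on $\Xos$ implies triviality on $\extXos$ by complex linearity. Then $w=\mathcal{P}_\X w$, and the phase condition does survive.
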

\begin{proof}
    Within this proof we will suppress the arguments of operators such as $\mathscr{L}(\theta, \gamma)$, and will use the notation $\lambda = (\theta, \gamma)$. Differentiating the composition formula~\eqref{eq: os_comp} using that $T$ defined in~\eqref{eq: invertible_T} is parameter-independent gives
    \begin{equation}\label{eq: diff_os}
        (D_\lambda \OS \dot{\lambda}) \hat{\varphi} = T \Pi_2 (D_\lambda (\mathscr{L} \mathcal{Q} \mathscr{E}) \dot{\lambda}) \hat{\varphi}.
    \end{equation}
    To compute the right-hand side of~\eqref{eq: diff_os}, we differentiate $\mathscr{L} \mathcal{Q} \mathscr{E}$ and use $\xi_0 = \mathcal{Q} \mathscr{E} \hat{\varphi}$ and~\eqref{eq: pde_nontransverse} in the following calculation,
    \begin{align*}
        (D_{\lambda} (\mathscr{L} \mathcal{Q} \mathscr{E}) \dot{\lambda}) \hat{\varphi} &= (D_{\lambda} \mathscr{L} \dot{\lambda}) \xi_0 + \mathscr{L} (D_{\lambda} \mathcal{Q} \dot{\lambda}) \mathscr{E} \hat{\varphi} + \mathscr{L} \mathcal{Q} (D_{\lambda} \mathscr{E} \dot{\lambda}) \hat{\varphi} \\
        &= \mathscr{L} \big[w + (D_{\lambda} \mathcal{Q} \dot{\lambda}) \mathscr{E} \hat{\varphi} + \mathcal{Q} (D_{\lambda} \mathscr{E} \dot{\lambda}) \hat{\varphi}\big].
    \end{align*}
    Recalling $L = \mathscr{L} \mathcal{Q}$, by the invertibility of $\mathcal{Q}$ there is then some $\tilde{\varphi} \in \X$ such that
    \begin{equation}\label{eq: diff_LE}
        (D_{\lambda} (L \mathscr{E}) \dot{\lambda}) \hat{\varphi} = L \tilde{\varphi}.
    \end{equation}
    Using only that $\mathcal{P}_\Y L = L \mathcal{P}_\X$ and $\mathcal{P}_\X \mathscr{E} = \mathscr{E}$, we find 
    \begin{equation*}
        \mathcal{P}_{\Y} D_{\lambda} (L \mathscr{E}) = D_{\lambda} (L \mathscr{E}),
    \end{equation*}    
    so applying $\mathcal{P}_\Y$ to~\eqref{eq: diff_LE} gives
    \begin{equation}\label{eq: diff_LE2}
        (D_{\lambda} (L \mathscr{E}) \dot{\lambda}) \hat{\varphi} = L \mathcal{P}_\X \tilde{\varphi}.
    \end{equation}
    Now by~\eqref{eq: diff_LE} and the identity $\Pi_1 L \mathscr{E} \equiv 0$ from~\eqref{eq: range_E} in Lemma~\ref{OS_product},
    \begin{equation*}
        \Pi_1 L \mathcal{P}_\X \tilde{\varphi} = \Pi_1 (D_{\lambda} (L \mathscr{E}) \dot{\lambda}) \mathcal{P}_\X \hat{\varphi} =  (D_{\lambda} (\Pi_1 L \mathscr{E}) \dot{\lambda}) \mathcal{P}_\X \hat{\varphi} = 0,
    \end{equation*}
    so the other inclusion in~\eqref{eq: range_E} implies the existence of some $\hat{\psi} \in \Xos$ such that $\mathcal{P}_\X \tilde{\varphi} = \mathscr{E} \hat{\psi}$. Thus,~\eqref{eq: diff_LE2} implies
    \begin{equation*}
        (D_{\lambda} (L \mathscr{E}) \dot{\lambda}) \hat{\varphi} = L \mathscr{E} \hat{\psi}.
    \end{equation*}
    Inserting this in~\eqref{eq: diff_os} and using the decomposition of $\OS$, we have
    \begin{equation*}
        (D_\lambda \OS \dot{\lambda}) \hat{\varphi} = T \Pi_2 L \mathscr{E} \hat{\psi} = \OS \hat{\psi}. \qedhere
    \end{equation*}    
\end{proof}

\subsection{Transversality via formal adjoint of Orr--Sommerfeld}\label{section: adjoint}
We can further reduce the Orr--Sommerfeld transversality hypothesis~\ref{assumption: os_transv} to the non-singularity of a certain matrix. This matrix involves elements of the kernel of a formal adjoint of $\OS$ as well as derivatives of $\OS$ with respect to the parameters $\theta, \gamma$. 

We will introduce the formal adjoint over an extended space $\extXos \supset \Xos$ which does not include the phase constraint $\operatorname{Im} \hat{\varphi}(1) = 0$, that is
\begin{equation*}
    \extXos := \left\{ \hat{\varphi} \in C^{4, \alpha} \left([0,1]; \Complex\right) : \hat{\varphi}(0)=\hat{\varphi}'(0)=0 \right\},
\end{equation*}
so that $\extXos$ is a vector space over $\Complex$. Observe that $\OS \colon \extXos \to \Yos$ is complex linear. Then we define a bilinear map $\langle \, \cdot \, , \, \cdot \, \rangle_{\mathrm{os}} \colon \extXos \times \Yos \to \Real$ by 
\begin{equation}\label{eq: bilinear}
    \left\langle \varphi^*, (f, h_1, h_2)^T \right\rangle_{\mathrm{os}} 
    := \Re \left\{\int_0^1 \overline{\varphi^*(y)} f(y) dy + \overline{(\varphi^*)'(1)} h_1 - \overline{\varphi^*(1)} h_2 \right\},
\end{equation}
for all $\varphi^* \in \extXos$ and $(f, h_1, h_2) \in \Yos$, with $\overline{z}$ denoting the complex conjugate of $z \in \Complex$. 

The following lemma establishes that the formal `adjoint' operator $\OS^* \colon \extXos \to \Yos$ of $\OS$ with respect to $\langle \cdot, \cdot \rangle_{\mathrm{os}}$ is given by
\begin{equation}\label{eq: os_adjoint}
    \OS^*(k, \Rey, \theta, \gamma) \varphi^* :=
    \begin{pmatrix}
        (\varphi^*)^{(4)} - \left( 2k^2 - ik \Rey \PsiNusselt'\right) (\varphi^*)'' + 2ik \Rey \PsiNusselt'' (\varphi^*)' + \left( k^4 - ik^3 \Rey \PsiNusselt'\right) \varphi^* \\
        \left[(\varphi^*)''+ k^2 \varphi^*\right] \big\vert_{y=1} \\
        \left[(\varphi^*)''' - \left(2 (\PsiNusselt')^{-1} + 3k^2 - ik \Rey \PsiNusselt'\right) (\varphi^*)' - ik (\PsiNusselt')^{-1} \left(2 \cot \theta + \sigma k^2 \right) \varphi^*\right] \big\vert_{y=1}
    \end{pmatrix},
\end{equation}
and standard functional analysis results then yield a Fredholm alternative for $\OS$ and $\OS^*$. 
\begin{lemma}\label{adj_fred_alt}
    Let $\OS^* \colon \extXos \to \Yos$ be given by~\eqref{eq: os_adjoint} and $\langle \cdot, \cdot \rangle_{\mathrm{os}} \colon \extXos \times \Yos \to \Real$ be given by~\eqref{eq: bilinear}. Then $\OS^*$ is formally `adjoint' to $\OS$ viewed as a map $\extXos \to \Yos$ in the sense that
    \begin{equation}\label{eq: adjointness}
        \langle \varphi_2, \OS \varphi_1\rangle_{\mathrm{os}} = \langle \varphi_1, \OS^* \varphi_2 \rangle_{\mathrm{os}} \quad \text{for all }\varphi_1, \varphi_2 \in \extXos.
    \end{equation}
    Furthermore, for every $g \in \Yos$, $g$ is in the range of $\OS$ if and only if $\langle \varphi_2, g \rangle_{\mathrm{os}} = 0$ for all $\varphi_2 \in \ker \OS^*$.
\end{lemma}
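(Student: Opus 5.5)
The proof has two parts: the adjointness identity~\eqref{eq: adjointness}, which we obtain by integration by parts, and the Fredholm alternative, which we obtain by a dimension count.

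\textbf{Adjointness.} Fix $\varphi_1,\varphi_2\in\extXos$ and write $\OS\varphi_1=(f,h_1,h_2)$.

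We integrate $\int_0^1\overline{\varphi_2}\,f\,dy$ by parts term by term. The term $\overline{\varphi_2}\varphi_1^{(4)}$ is integrated by parts four times. The term $\overline{\varphi_2}(2k^2+ik\Rey\PsiNusselt')\varphi_1''$ is integrated by parts twice, and each time the derivatives fall on the coefficient $\PsiNusselt'$. This produces $2ik\Rey\PsiNusselt''(\overline{\varphi_2})'$ and $ik\Rey\PsiNusselt'''\,\overline{\varphi_2}$. The latter cancels the explicit $ik\Rey\PsiNusselt'''\varphi_1$ term. The zeroth-order terms are conjugated, so that $ik$ becomes $-ik$ on the adjoint side, consistent with~\eqref{eq: os_adjoint}.

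All boundary terms at $y=0$ vanish, because $\varphi_j(0)=\varphi_j'(0)=0$ for both functions. The remaining boundary terms at $y=1$ are bilinear in $(\varphi_1,\varphi_1',\varphi_1'',\varphi_1''')(1)$ and $(\varphi_2,\varphi_2',\varphi_2'',\varphi_2''')(1)$.

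We then add $\overline{\varphi_2'(1)}h_1-\overline{\varphi_2(1)}h_2$ and subtract the boundary part of $\langle\varphi_1,\OS^*\varphi_2\rangle_{\mathrm{os}}$. Checking that the $y=1$ terms match amounts to a $4\times4$ coefficient comparison, using the real part and the fact that $\PsiNusselt'(1)$ is real. The coefficients $2(\PsiNusselt')^{-1}$ and $ik(\PsiNusselt')^{-1}(2\cot\theta+\sigma k^2)$ are exactly what make the $\varphi\varphi'$ and $\varphi\varphi$ cross terms agree. This bookkeeping is the only real computation in the adjointness part.

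\textbf{Fredholm alternative.} The direction ``$g\in\operatorname{ran}\OS$ implies $\langle\varphi_2,g\rangle_{\mathrm{os}}=0$ for all $\varphi_2\in\ker\OS^*$'' follows immediately from~\eqref{eq: adjointness}. We work on the complex space $\extXos$, noting that for a complex subspace, orthogonality with respect to $\Re$ of the sesquilinear form is equivalent to full complex orthogonality, since one may test against $i\varphi_2$.

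For the converse, we use the structure of the ODE. The map $\OS\colon\extXos\to\Yos$ is Fredholm of index $0$ over $\Complex$: the space of solutions of the homogeneous fourth-order equation satisfying the two conditions at $y=0$ is two-dimensional, the variation of parameters solves the inhomogeneous equation, and two boundary conditions are imposed at $y=1$. Hence $\operatorname{ran}\OS$ is closed with complex codimension equal to $\dim_\Complex\ker\OS$. The same reasoning shows that $\OS^*$ has the same structure, and that $\dim\ker\OS^*=\dim\ker\OS$, because both equal the corank of the corresponding $2\times2$ matrix of boundary values at $y=1$. The corresponding boundary matrices are related by the adjointness.

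To conclude, we show that the annihilator $(\operatorname{ran}\OS)^\perp$ with respect to $\langle\cdot,\cdot\rangle_{\mathrm{os}}$, restricted to elements coming from $\extXos$, contains $\ker\OS^*$ and has the right dimension. Concretely, suppose $g$ annihilates $\ker\OS^*$ but $g\notin\operatorname{ran}\OS$. The dimension count then gives a nonzero functional vanishing on $\operatorname{ran}\OS$. Using the ODE solution formula, any functional of the form $\langle\varphi_2,\cdot\rangle_{\mathrm{os}}$ that vanishes on $\operatorname{ran}\OS$ forces $\OS^*\varphi_2=0$: test first against interior data to get the differential equation, then against boundary data to get the boundary conditions. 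It remains to show that all annihilating functionals arise in this way, which we do by exhibiting $\dim\ker\OS$ linearly independent ones via the pairing with $\ker\OS^*$.

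The main obstacle is this nondegeneracy: showing that the pairing between $\ker\OS^*$ and a complement of $\operatorname{ran}\OS$ is nondegenerate. We would handle it by reducing to the $2\times2$ boundary matrix at $y=1$ and its adjoint (conjugate transpose) relation.
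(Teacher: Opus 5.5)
Your integration by parts for~\eqref{eq: adjointness} is the same as the paper's and is fine. In the $y=1$ bookkeeping you will also need $\PsiNusselt''(1)=0$, which removes the $\overline{\varphi_2(1)}\varphi_1(1)$ term coming from the second-order part. The direction $g\in\operatorname{ran}\OS\Rightarrow g\perp\ker\OS^*$ is also fine.

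The gap is in the converse. It rests entirely on $\dim\ker\OS^*\ge\dim\ker\OS$, i.e.\ on $\ker\OS^*$ supplying $\operatorname{codim}\operatorname{ran}\OS$ independent annihilators. You only assert this (``the boundary matrices are related by the adjointness'') and then flag it as the main obstacle. The route you propose does not work as stated. Let $M$ and $N$ be the $2\times2$ matrices of the boundary functionals at $y=1$ of $\OS$ and $\OS^*$, evaluated on bases of solutions with $\varphi(0)=\varphi'(0)=0$. These are not conjugate transposes of each other. Green's formula only gives $V^*M=N^*W$, where $V,W$ are the $2\times2$ matrices of the data $(\varphi'(1),-\varphi(1))$ on the two solution spaces. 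These are invertible only if no nonzero such solution also satisfies $\varphi(1)=\varphi'(1)=0$. Nothing in your argument excludes that, and without it no rank equality follows. (The classical ODE proof works with the full $4\times4$ boundary data, where the Lagrange form is nondegenerate.) Your other step, testing against interior and then boundary data, only shows that the annihilators of $\operatorname{ran}\OS$ coming from $\extXos$ lie in $\ker\OS^*$. It does not count them.

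The missing idea is to run the annihilator count in both directions, which needs no ODE linear algebra at all.
\begin{itemize}
\item Testing with $(\varphi,0,0)$ shows that $\varphi\mapsto\langle\varphi,\cdot\rangle_{\mathrm{os}}$ is injective on $\extXos$. Hence for any finite-dimensional $Z\subset\extXos$, the common null space $Z^\perp\subset\Yos$ of these functionals has real codimension exactly $\dim Z$.
\item By~\eqref{eq: adjointness}, $\operatorname{ran}\OS\subset(\ker\OS^*)^\perp$ and $\operatorname{ran}\OS^*\subset(\ker\OS)^\perp$.
\item Index zero of both operators then gives $\dim\ker\OS^*\le\operatorname{codim}\operatorname{ran}\OS=\dim\ker\OS$ and $\dim\ker\OS\le\operatorname{codim}\operatorname{ran}\OS^*=\dim\ker\OS^*$.
\item So $\operatorname{ran}\OS\subset(\ker\OS^*)^\perp$ with equal finite codimension, which forces equality.
\end{itemize}
This is exactly what the paper obtains by citing \cite[Theorem~5.G]{zeidler2012applied}, after checking that $\OS$ and $\OS^*$ have index zero and that $\langle\cdot,\cdot\rangle_{\mathrm{os}}$ is nondegenerate in each slot.
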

\begin{proof}
    Let $(f, h_1, h_2) \in \operatorname{ran} \OS$, i.e.~there is a $\varphi_1 \in \extXos$ such that
    \begin{equation*}
        \OS(k, \Rey, \theta, \gamma) \varphi_1 = (f, h_1, h_2)^T.
    \end{equation*}
    Then we compute, first integrating by parts, and then taking complex conjugates,
    \begin{align*}
        \langle \varphi_2, \OS \varphi_1 \rangle_{\mathrm{os}}    
        &= \Re \left\{\int_0^1 \overline{\varphi_2}f dy + \overline{(\varphi^*)'(1)} h_1 - \overline{\varphi^*(1)} h_2 \right\}\\ 
        &= \Re \Bigg\{\int_0^1 \overline{\varphi_2} \left[\varphi_1^{(4)} - \left( 2k^2 + ik \Rey \PsiNusselt'\right) \varphi_1'' +  \left( ik \Rey \PsiNusselt''' + k^4 + ik^3 \Rey \PsiNusselt'\right) \varphi_1 \right] dy \\ 
        &\qquad + \overline{(\varphi_2)'(1)} \left[ \Big(k^2 + \frac{2}{\PsiNusselt'}\Big) \varphi_1(1) + \varphi_1''(1)  \right] \\ 
        &\qquad - \overline{\varphi_2(1)} \left[ \varphi_1'''(1) - \left( 3  k^2 + ik \Rey \PsiNusselt'\right) \varphi_1'(1) + \frac{ik}{\PsiNusselt'} \left(2  \cot \theta + \sigma k^2 \right) \varphi_1(1) \right] \Bigg\}\\
        &= \Re \Bigg\{\int_0^1 \varphi_1 \overline{\left[(\varphi_2)^{(4)} - \left( 2 k^2 - ik \Rey \PsiNusselt'\right) \varphi_2'' +  2ik \Rey \PsiNusselt'' \varphi_2' + \left(k^4 - ik^3 \Rey \PsiNusselt'\right) \varphi_2\right]} dy \\
        &\qquad + \varphi_1'(1) \overline{\left[ \varphi_2''(1) + k^2 \varphi_2(1) \right]} \\ &\qquad - \varphi_1(1) \overline{\left[ \varphi_2'''(1) - \left(2(\PsiNusselt')^{-1} + 3 k^2 - ik \Rey \PsiNusselt'\right) \varphi_2'(1) - ik(\PsiNusselt')^{-1} \left(2 \cot \theta + \sigma k^2 \right) \varphi_2(1) \right]} \Bigg\} \\
        &= \langle \varphi_1, \OS^* \varphi_2 \rangle_{\mathrm{os}},
    \end{align*}
    establishing~\eqref{eq: adjointness}.
    
    Now each of $\OS$ and $\OS^*$ is a Fredholm operator $\extXos \to \Yos$, due to being a linear ODE operator with smooth coefficients and a nonzero constant leading-order coefficient. Furthermore, each has Fredholm index zero, since the ODE is fourth order with two independent boundary conditions in addition to the two implicit homogeneous conditions in the definition of $\extXos$.

    For the second claim of the lemma, by Theorem 5.G in~\cite{zeidler2012applied} it suffices to show that $\langle \varphi, (f,h_1,h_2)^T \rangle_{\mathrm{os}}$ vanishing for all $(f,h_1,h_2)^T \in \Yos$ would imply $\varphi = 0$, and conversely that if $\langle \varphi, (f,h_1,h_2)^T \rangle_{\mathrm{os}} = 0$ for all $\varphi \in \Xos$ then $(f, h_1, h_2) = 0$. Suppose first that $\langle \varphi, (f,h_1,h_2)^T \rangle_{\mathrm{os}} = 0$ for all $(f,h_1,h_2)^T \in \Yos$. Then picking $(\varphi,0,0)^T \in \Yos$, we have $\int_0^1 |\varphi|^2 dy = 0$, so $\varphi\equiv 0$. Conversely, suppose $\langle \varphi, (f,h_1,h_2)^T \rangle_{\mathrm{os}} = 0$ for all $\varphi\in \extXos$. We have 
    \begin{multline}
        \left\langle \varphi, (f, h_1, h_2)^T \right\rangle_{\mathrm{os}} 
        = \int_0^1 \left(\Re \varphi\Re f + \operatorname{Im}\varphi\operatorname{Im}f\right) dy + \left[\Re \varphi'(1) \Re h_1 + \operatorname{Im} \varphi'(1) \operatorname{Im} h_1\right] \\ - \left[\Re \varphi(1) \Re h_2 + \operatorname{Im} \varphi(1) \operatorname{Im} h_2\right].
    \end{multline}
    Taking $\varphi\in C^\infty_c\left([0,1]; \Real\right)$ compactly supported in $[0,1]$, the fundamental lemma of the calculus of variations implies that $\Re f = 0$ and $\operatorname{Im} f = 0$ using $\varphi$ and $i\varphi$ respectively, so we obtain $f \equiv 0$. Considering instead $\varphi \in \extXos$ satisfying $\varphi(1) = 0$ and $\varphi'(1) \neq 0$, such as~$\varphi=y^2(y-1)$, we obtain $h_1 = 0$, by using $\varphi$ and $i \varphi$ to show that $\operatorname{Im} h_1 = 0$ and $\Re h_1 = 0$. Similarly, taking $\varphi \in \extXos$ such that $\varphi'(1) = 0$ but $\varphi(1) \neq 0$, e.g.~$\varphi=2y^3-3y^2$, we find that $h_2 = 0$, and the proof is complete.
\end{proof}

An immediate corollary of this Fredholm alternative is an equivalent condition for the hypothesis~\ref{assumption: os_transv} of Theorem~\ref{thm: general_local}:
\begin{cor}\label{cor: transv_det}
    Suppose that there exists $\hat{\varphi} \in \Xos$ such that $\ker \OS(k, \Rey, \theta, \gamma) = \operatorname{span} \{\hat{\varphi}\}$, viewing $\OS$ as a map $\Xos \to \Yos$. Then there exists a nonzero $\hat{\varphi}^* \in \Xos$ such that $\OS^*(k, \Rey, \theta, \gamma) \hat{\varphi}^* = 0$, and the condition~\eqref{eq: os_nontransverse} implies 
    \begin{equation}\label{eq: sing_det}
        \det \begin{pmatrix}
            \left\langle \hat{\varphi}^*, \d_\theta \OS (k, \Rey, \theta, \gamma) \hat{\varphi} \right\rangle_{\mathrm{os}} & \left\langle \hat{\varphi}^*, \d_\gamma \OS (k, \Rey, \theta, \gamma) \hat{\varphi} \right\rangle_{\mathrm{os}} \\
            \left\langle i\hat{\varphi}^*, \d_\theta \OS (k, \Rey, \theta, \gamma) \hat{\varphi} \right\rangle_{\mathrm{os}} & \left\langle i\hat{\varphi}^*, \d_\gamma \OS (k, \Rey, \theta, \gamma) \hat{\varphi} \right\rangle_{\mathrm{os}}
        \end{pmatrix} = 0.
    \end{equation}
\end{cor}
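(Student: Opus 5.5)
The argument has two steps: first produce a nonzero element of $\ker\OS^*$ from the one-dimensional kernel of $\OS$ by an index count, then test the non-transversality relation~\eqref{eq: os_nontransverse} against $\hat{\varphi}^*$ and $i\hat{\varphi}^*$ to get a real $2\times 2$ linear system whose nontrivial solution forces~\eqref{eq: sing_det}.

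\emph{Existence of $\hat{\varphi}^*$.} Regard $\OS$ as a complex-linear map $\extXos\to\Yos$; by Lemma~\ref{adj_fred_alt} it is Fredholm of index zero over $\Complex$. Its complex kernel is nontrivial, since it contains $\hat{\varphi}$. By the Fredholm alternative in Lemma~\ref{adj_fred_alt}, the range of $\OS$ is exactly the annihilator of $\ker\OS^*$ under $\langle\cdot,\cdot\rangle_{\mathrm{os}}$. If $\ker\OS^*$ were trivial, $\OS$ would be onto, and index zero would force its kernel to be trivial, which is a contradiction. So there is a nonzero $\hat{\varphi}^*\in\extXos$ with $\OS^*\hat{\varphi}^*=0$. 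The statement asks for $\hat{\varphi}^*\in\Xos$, which additionally requires $\operatorname{Im}\hat{\varphi}^*(1)=0$. Since $\OS^*$ is complex linear, we can multiply $\hat{\varphi}^*$ by a unimodular constant $e^{i\beta}$ to arrange this; this works whether $\hat{\varphi}^*(1)$ is zero or not.

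\emph{The determinant.} Suppose~\eqref{eq: os_nontransverse} holds for some nonzero $(\dot{\theta},\dot{\gamma})$ and some $\hat{\psi}\in\Xos$. Pair both sides with $\hat{\varphi}^*$ and with $i\hat{\varphi}^*$; both lie in $\ker\OS^*$ because $\OS^*$ is complex linear. By the adjointness identity~\eqref{eq: adjointness},
\[
\langle\hat{\varphi}^*,\OS\hat{\psi}\rangle_{\mathrm{os}}=\langle\hat{\psi},\OS^*\hat{\varphi}^*\rangle_{\mathrm{os}}=0,
\]
and the same holds for $i\hat{\varphi}^*$. Since $\langle\cdot,\cdot\rangle_{\mathrm{os}}$ is real bilinear, this yields
\[
\dot{\theta}\,\langle \hat{\varphi}^*,\d_\theta\OS\hat{\varphi}\rangle_{\mathrm{os}}+\dot{\gamma}\,\langle\hat{\varphi}^*,\d_\gamma\OS\hat{\varphi}\rangle_{\mathrm{os}}=0,
\]
together with the analogous equation with $\hat{\varphi}^*$ replaced by $i\hat{\varphi}^*$. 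The matrix in~\eqref{eq: sing_det} therefore has the nonzero vector $(\dot{\theta},\dot{\gamma})^T$ in its kernel, so its determinant vanishes.

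The main subtlety is in the first step, in being careful about real versus complex structures. The counting argument has to be done over $\Complex$ on $\extXos$, where $\OS$ is complex linear and has index zero, rather than on $\Xos$. Once $\hat{\varphi}^*$ is found there, it is moved into $\Xos$ by rotating its phase.
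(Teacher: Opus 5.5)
Your proof is correct and follows essentially the same route as the paper. The Fredholm alternative of Lemma~\ref{adj_fred_alt} together with index zero gives a nonzero element of the adjoint kernel, complex linearity lets you rotate its phase so that $\operatorname{Im}\hat{\varphi}^*(1)=0$, and pairing the non-transversality relation with $\hat{\varphi}^*$ and $i\hat{\varphi}^*$ gives a real $2\times 2$ system with the nonzero solution $(\dot{\theta},\dot{\gamma})$. The only difference is economy: the paper identifies the adjoint kernel exactly as the complex span of $\hat{\varphi}^*$, whereas you rightly note that the stated implication needs only nontriviality of that kernel and the adjointness identity.
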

\begin{proof}
    The Fredholm alternative of Lemma~\ref{adj_fred_alt} implies that the kernel of $\OS^* \colon \extXos \to \Yos$ is two-dimensional. By the definition of $\Xos \subset \extXos$ and the complex linearity of $\OS^*$, there is an element $\hat{\varphi}^* \in \Xos$ such that $\ker \OS^* = \operatorname{span} \{\hat{\varphi}^*, i\hat{\varphi}^*\}$. Then again by this Fredholm alternative,~\eqref{eq: os_nontransverse} implies~\eqref{eq: sing_det}.
\end{proof}

\section{Uniform estimates}\label{section: uniform}
This section establishes some useful a priori estimates with uniform control in terms of the constant $\delta > 0$ in the inequalities
\begin{equation} \label{eq: uniform_inequalities}
    \inf_{\substack{z_1, z_2 \in \Top \\ z_1 \neq z_2}} \frac{\lvert f(z_1) - f(z_2) \rvert}{\lvert z_1 - z_2 \rvert} > \delta, \quad \sup_\Top \lvert \nabla \eta \rvert < \frac{1}{\delta}, \quad \inf_\Top \lvert \psi_y \rvert + \sigma > \delta.
\end{equation}
This set of inequalities is a strengthened version of~\eqref{eq: hypothesis_for_schauder}. The main tools are linear Schauder estimates of the type shown in~\eqref{eq: system_schauder} of Proposition~\ref{Schauder}.

\subsection{Estimate for the conformal map}
We open this section with a basic uniform estimate for the conformal map $f$:
\begin{lemma}\label{uniform_dist_dU}
    Let $f=(\xi, \eta)$ satisfy~\eqref{eq: not_touching_bed} and suppose there exists $\delta > 0$ such that the first inequality in~\eqref{eq: uniform_inequalities} holds. Then, there exists $C > 0$ dependent only on $\delta$ such that $f$ satisfies 
    \begin{equation}\label{eq: f_uniform}
        \inf_{\substack{z_1, z_2 \in \Rect \\ z_1 \neq z_2}} \frac{\lvert f(z_1) - f(z_2) \rvert}{\lvert z_1 - z_2 \rvert} > C(\delta), \quad \text{and} \quad \inf_\Top \eta > \delta.
    \end{equation}
    In particular, $\operatorname{dist} ((u,\theta, \gamma), \d \U) > C(\delta)$.
\end{lemma}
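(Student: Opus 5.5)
The plan is to obtain both bounds in \eqref{eq: f_uniform} from the chord-arc condition on $\Top$. The tools are the harmonicity of $\eta$, the Cauchy--Riemann relations \eqref{eq: cauchy_riemann}, and a Schwarz reflection of $f$ across $\Bottom$. The distance estimate will then follow by comparing the defining inequalities of $\U$ in \eqref{eq: open_set} with their quantitative versions.

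\emph{Step 1: reflection and periodicity.} Since $\eta = 0$ on $\Bottom$, I would extend $f$ holomorphically to $\Torus_k \times (-1,1)$ by $f(x,-y) = \overline{f(x,y)}$, so that $\eta$ is odd in $y$ and $\xi$ is even. Periodicity in the conformal variable gives $\xi(x+2\pi/k,y) = \xi(x,y) + 2\pi/k$. Hence the $x$-average of $\xi_x = \eta_y$ equals $1$ on every horizontal line, and integrating in $y$ shows that the mean of $\eta(\cdot,1)$ over $\Torus_k$ is exactly $1$. The reflected boundary $\Torus_k \times \{-1\}$ is mapped to the mirror image of $\Surf$. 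Using \eqref{eq: not_touching_bed}, the chord-arc hypothesis on $\Top$ should then propagate to a chord-arc bound on the whole boundary of the doubled strip, provided pairs of points on opposite sides are controlled, since their images are separated by at least $2\min(\eta(x_1,1),\eta(x_2,1))$ in the $Y$-direction.

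\emph{Step 2: the bound $\inf_\Top \eta > \delta$.} This is the step I expect to be the main obstacle. The first idea is to argue by contradiction. If $\eta(x_0,1)$ is small, the surface point $f(x_0,1)$ lies close to the bed. Then, because the mean of $\eta$ on $\Top$ is $1$ and the Lipschitz constant of $f$ on $\Top$ is $\geq\delta$, the surface has to rise by an amount of order one while the conformal parameter moves a controlled distance. Comparing $f(x_0,1)$ with its mirror image $\overline{f(x_0,1)}$, which is the image of $(x_0,-1)$ under the reflected map, gives $2\eta(x_0,1) = \lvert f(x_0,1) - f(x_0,-1)\rvert$, while the conformal distance between these points is $2$. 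So a chord-arc inequality across the doubled strip, with constant $\delta$, yields $\eta(x_0,1) \geq \delta$ directly.

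The crux is therefore to justify the chord-arc inequality between $\Top$ and its reflection using only the hypothesis on $\Top$. I would try the argument principle or winding numbers of the Jordan curve $f(\Top)$ relative to its reflection. If this fails, the fallback is to treat $\inf_\Top \eta > \delta$ as part of what the (possibly lifted) chord-arc condition encodes and to record this dependence explicitly.

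\emph{Step 3: interior bi-Lipschitz bound.} Once the whole boundary of the doubled strip is mapped by a quantitative chord-arc map, I would apply a quantitative version of the Darboux--Picard argument used in Lemma~\ref{lemma: conformal}. Concretely, the boundary correspondence $f\vert_{\d}$ is bi-Lipschitz with constants depending on $\delta$, and $\sup_\Top \lvert \nabla \eta\rvert < 1/\delta$. By the maximum principle applied to $\lvert f'\rvert$ and to $1/\lvert f'\rvert$ (the latter holomorphic because $f' \neq 0$ by injectivity, as in \eqref{eq: infsupRect}), $\lvert f'\rvert$ is bounded above and below on $\overline{\Rect}$. For two points $z_1, z_2 \in \Rect$, I would then split into two cases: nearby points, where the lower bound on $\lvert f'\rvert$ and a local inverse-function argument give $\lvert f(z_1)-f(z_2)\rvert \gtrsim \lvert z_1 - z_2\rvert$; and distant points, handled through a compactness or contradiction argument combined with injectivity and the boundary chord-arc bound.

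\emph{Step 4: distance to $\d\U$.} For $(\tilde u, \tilde\theta, \tilde\gamma)$ within distance $\epsilon$ of $(u,\theta,\gamma)$ in $\X\times\Real^2$, the $C^1(\overline{\Rect})$ norm of $\tilde f - f$ is $O(\epsilon)$ by Schauder theory for the harmonic conjugate. Hence the chord-arc infimum and $\inf_\Top \eta$ change by at most $O(\epsilon)$. The stagnation condition $\inf_\Top\lvert\nabla\psi\rvert+\sigma$ is handled the same way using the third inequality in \eqref{eq: uniform_inequalities}. The parameter constraints $\theta \in (0,\pi)$ and $\gamma > 0$ are taken as given, or else controlled separately. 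Choosing $\epsilon = C(\delta)$ small keeps all quantities positive, which gives $\operatorname{dist}((u,\theta,\gamma),\d\U) > C(\delta)$.
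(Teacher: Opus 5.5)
Your Step 2, which you correctly identify as the heart of the lemma, is not actually carried out. The chord-arc inequality between $\Top$ and its mirror image with constant $\delta$, evaluated at $x_1=x_2=x_0$, is literally $2\eta(x_0,1)\ge 2\delta$. Reducing the height bound to it therefore proves nothing. The fallback of regarding $\inf_\Top\eta>\delta$ as ``encoded'' in the chord-arc condition assumes the conclusion, since the hypothesis only compares pairs of points on $\Top$ itself.

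The missing idea is short and uses ingredients you already have in Step 3.
\begin{itemize}
\item Letting $z_2\to z_1$ in the chord-arc bound gives $\lvert f_x\rvert=\lvert\nabla\eta\rvert\ge\delta$ on $\Top$.
\item The reflected map is injective on $\Torus_k\times(-1,1)$, so $f'$ does not vanish there. Since $\lvert f'\rvert$ is even in $y$, the minimum principle then gives $\lvert\nabla\eta\rvert\ge\delta$ on all of $\overline{\Rect}$.
\item Let $(X_0,Y_0)$ be a lowest point of $\Surf$, so $Y_0=\inf_\Top\eta$. The vertical segment from $(X_0,0)\in\Bed$ to $(X_0,Y_0)$ lies in $\overline\Omega$, because every surface point has height at least $Y_0$.
\item The preimage of this segment under $f$ joins $\Bottom$ to $\Top$, so it has length at least $1$. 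Its length is also at most $Y_0\sup_\Omega\lvert D(f^{-1})\rvert=Y_0/\inf_{\Rect}\lvert\nabla\eta\rvert\le Y_0/\delta$. Hence $\inf_\Top\eta\ge\delta$.
\end{itemize}
This is the paper's argument. It needs no chord-arc control across the doubled strip, no winding numbers, and no information about the mean of $\eta$.

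There are also errors elsewhere. In Step 1, the relation $\xi(x+2\pi/k,y)=\xi(x,y)+2\pi/k$, and hence the claim that $\eta(\cdot,1)$ has mean exactly $1$, is false in this framework. The space $\X$ imposes no such normalisation, and the conformal period $2\pi/k$ generally differs from the physical period $2\pi/K$ (compare Lemma~\ref{lemma: V_H1}). In fact the mean of $\eta(\cdot,1)$ is $k/K$, so your abandoned heuristic in Step 2 rested on a wrong premise.

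In Step 3 you use $\sup_\Top\lvert\nabla\eta\rvert<1/\delta$, which is not a hypothesis of the lemma; only the first inequality in \eqref{eq: uniform_inequalities} is assumed. Fortunately, only the lower bound on $\lvert f'\rvert$ matters for the lower bi-Lipschitz estimate. Your compactness or contradiction argument for distant points would give a constant depending only on $\delta$ only if run uniformly over the whole admissible class, for instance via normal families, which you have not set up. The paper itself merely cites Lemma~\ref{lemma: conformal} for this part, so the substantive content of the lemma is the height bound you left unproved.
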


\begin{proof}
    The first inequality of~\eqref{eq: f_uniform} holds by Lemma~\ref{lemma: conformal}. To show that $\inf_\Top \eta > \delta$, let $(X_0, Y_0)$ be the point of minimum height of the surface $\Surf$, i.e.~$Y_0 = \inf_\Top \eta$. Then, let the points $(x_0, 0) \in \Bottom$ and $(x_1, 1) \in \Top$ map to $(X_0, 0) \in \Bed$ and $(X_0, Y_0) \in \Surf$ respectively, i.e.~$f(x_0, 0) = (X_0, 0)$ and~$f(x_1, 1) = (X_0, Y_0)$.  Since $f$ is injective on $\overline{\Rect}$ by Lemma~\ref{lemma: conformal}, 
    \begin{equation*}
        (x_1 - x_0, 1) = f^{-1}(X_0, Y_0) - f^{-1}(X_0, 0),
    \end{equation*}
    and so we obtain
    \begin{equation*}
        1 \leq \lvert f^{-1}(X_0, Y_0) - f^{-1}(X_0, 0) \rvert \leq \sup_\Omega \lvert D(f^{-1}) \rvert \cdot \lvert Y_0 \rvert.
    \end{equation*}
    From this, using~\eqref{eq: uniform_inequalities},
    \begin{equation*}
        \inf_\Top \eta \geq \frac{1}{\sup_\Omega \lvert D(f^{-1}) \rvert} = \inf_\Rect \lvert Df \rvert = \inf_\Rect \lvert \nabla \eta \rvert = \inf_\Top \lvert \nabla \eta \rvert > \delta. \qedhere
    \end{equation*}
\end{proof}

\subsection{A priori estimate}
This subsection is devoted to establishing the following a priori estimate on an arbitrary solution $u$ in $\U$. 
\begin{prop}\label{uniform_prop}
    Let $\delta > 0$ and suppose $(u, \theta, \gamma) \in \F^{-1}(0)$, where $u = (\psi, p, \eta)$ satisfies~\eqref{eq: uniform_inequalities}. Then, $u \in C^{5, \alpha}(\Rect) \times C^{3, \alpha}(\Rect) \times C^{5, \alpha}(\Rect)$, and
    \begin{equation}\label{eq: best_est}
        \lVert \psi \rVert_{C^{5, \alpha}(\Rect)} + \lVert p \rVert_{C^{3, \alpha}(\Rect)} + \lVert \eta \rVert_{C^{5, \alpha}(\Rect)} \leq C\Big(\delta, \lvert \cot \theta \rvert, \sup_\Top \lvert D^2\eta \rvert, \sup_\Rect \lvert \nabla \psi \rvert, \sup_\Top \lvert D^2 \psi \rvert \Big).
    \end{equation}
\end{prop}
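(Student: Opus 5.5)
The proof is a bootstrap built from the linear Schauder estimate of Proposition~\ref{Schauder}, applied to the difference-quotient (or tangential-derivative) system. The starting point is the low-regularity control already contained in the right-hand side of~\eqref{eq: best_est}. From Lemma~\ref{uniform_dist_dU} and the first two inequalities in~\eqref{eq: uniform_inequalities} we get two-sided bounds on $\lvert \nabla \eta \rvert$ on $\overline{\Rect}$, via~\eqref{eq: infsupRect}. Since $\eta$ is harmonic with $\eta = 0$ on $\Bottom$, a bound on $\sup_\Top \lvert D^2 \eta \rvert$ gives $C^{1,\beta}$ control of the Dirichlet data $\eta|_\Top$. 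By harmonic estimates this yields $\eta \in C^{1,\beta}(\overline{\Rect})$, and then, using $D^2\eta$ on $\Top$, $C^{2,\beta}$ control for some $\beta<1$. Similarly, $\sup \lvert \nabla \psi \rvert$ and $\sup_\Top \lvert D^2 \psi \rvert$ give Lipschitz control of $\psi$ and $C^{1,1}$-type control of its trace.

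The first real step is to obtain a $C^{4,\alpha}\times C^{2,\alpha}\times C^{4,\alpha}$ bound, i.e.\ a bound on $\lVert u \rVert_\X$, in terms of these quantities. For this I would treat the nonlinear system~\eqref{eq:flattened} as a \emph{linear} elliptic system for $(\psi,p,\eta)$, with coefficients depending on $\nabla\eta$ and $\nabla\psi$, which are already controlled in low norms. In~\eqref{eq: flattened1}--\eqref{eq: flattened2} the principal part is $\lvert\nabla\eta\rvert^{-4}\nabla\eta\cdot\nabla\Delta\psi$ plus pressure terms. The Reynolds terms are quadratic, but at the level of Douglis--Nirenberg orders they are lower order, with coefficients $\tilde\nabla\psi$. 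On $\Top$, the dynamic condition has the structure of $\mathcal{B}_1$ with coefficient $\psi_y$ in front of $D^2\eta$. The complementing condition from the proof of Proposition~\ref{Schauder} (see~\eqref{eq: det_top}) holds uniformly by the third inequality of~\eqref{eq: uniform_inequalities}.

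I would then apply ADN Schauder estimates in a lower H\"older scale, with coefficients only $C^{\beta}$ or $C^{1,\beta}$. The aim is to first bound $\psi\in C^{3,\beta}$, $p\in C^{1,\beta}$, $\eta\in C^{3,\beta}$, and then climb one derivative at a time: at each stage the coefficients become one derivative smoother, so the next Schauder estimate applies. The $L^1$ term in~\eqref{eq: system_schauder} is controlled by sup norms. The pressure is fixed by the dynamic condition on $\Top$ and the interior equations, so $\sup\lvert p\rvert$ is bounded using $\psi$, $\eta$, $\sigma$, $\cot\theta$.

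The second step gains the extra derivative, yielding $C^{5,\alpha}\times C^{3,\alpha}\times C^{5,\alpha}$. Since the problem is invariant under $x$-translation, I would differentiate the equations in $x$, justified via difference quotients $\tau_h u - u$ divided by $h$. The tangential derivative $u_x$ then solves the linearised system $\F_u(u,\theta,\gamma)u_x = 0$ up to lower-order terms: the inhomogeneous data ($2$, $-2\cot\theta$, $\gamma$) are constant and drop out. Proposition~\ref{Schauder} then gives $\lVert u_x\rVert_\X \le C(\lVert u\rVert_\X,\delta)\lVert u_x\rVert_{L^1}$. Applying it to difference quotients shows $u_x\in\X$ with this bound. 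Normal derivatives of the highest order are recovered from the equations themselves: $\Delta\eta=0$, the $p_y$ equation, and the $\nabla\Delta\psi$ structure solved for $\partial_y^5\psi$ using the invertibility of $\nabla\eta$.

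The main obstacle is the first step, namely running the nonlinear ADN estimates with coefficients of low regularity while keeping the constants dependent only on the listed quantities. The surface condition couples $D^2\eta$ to $D^2\psi$ through coefficients that themselves contain $\nabla\psi$ and $\nabla\eta$ on $\Top$, and the hypotheses give exactly sup control of $D^2\eta$, $D^2\psi$ there. I would first bound $\psi$ in $C^{2,\beta}$ up to $\Top$, interpolating between the sup bounds and a Schauder estimate of the form~\eqref{eq: system_schauder}, absorbing high norms via interpolation inequalities. Only after that would I begin the bootstrap.
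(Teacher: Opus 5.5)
Your proposal has two genuine gaps. Both sit in what you call the first step, and they are exactly the two issues the paper's proof is built around.

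\textbf{Starting the bootstrap.} You cannot start the bootstrap in the H\"older scale from the quantities in \eqref{eq: best_est}. Your claim that $\sup_\Top\lvert D^2\eta\rvert$ gives $C^{2,\beta}$ control of $\eta$ through harmonic estimates is false. Odd reflection across $\Bottom$ and the maximum principle applied to $D^2\eta$ give only $\sup_\Rect\lvert D^2\eta\rvert\le\sup_\Top\lvert D^2\eta\rvert$, which is what the paper uses. A harmonic function whose second derivatives are bounded on $\Top$ need not have H\"older continuous second derivatives; lacunary Fourier series give counterexamples. Similarly, $\psi_y\vert_\Top$ is only Lipschitz. However, the lowest admissible H\"older ADN estimate for this system ($l=0$, giving $\psi,\eta\in C^{3,\beta}$ and $p\in C^{1,\beta}$) needs the coefficients of the dynamic boundary operator in $C^{1,\beta}(\Top)$, and these coefficients contain $\psi_y$ and $\nabla\eta$. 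Interpolation and absorption do not rescue this. The Schauder constant depends nonlinearly on exactly these H\"older norms of the unknown, so an inequality of the form $X\le C(X)\,(\epsilon X+C_\epsilon)$ does not close. The missing tool is the $L^r$ theory of Theorem~\ref{adn_sobolev}. Its constants depend only on $C^k$ bounds of the coefficients (and the modulus of continuity of the leading interior ones), that is, only on $\delta$, $\sup_\Top\lvert D^2\eta\rvert$, $\sup_\Rect\lvert\nabla\psi\rvert$ and $\sup_\Top\lvert D^2\psi\rvert$. The paper proves a $W^{3,r}\times W^{1,r}\times W^{3,r}$ bound and then uses $W^{3,r}\hookrightarrow C^{2,\alpha}$ for $r>2/(1-\alpha)$. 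This supplies the H\"older control of second derivatives that the Schauder bootstrap requires.

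\textbf{The pressure.} Every one of these estimates carries a remainder that contains a norm of $p$, and $p$ is not among the controlled quantities. Your assertion that $\sup\lvert p\rvert$ follows from the dynamic condition and the interior equations is not justified. The normal-stress condition bounds $p$ only on $\Top$. To move inwards you must integrate $\nabla p$, which by \eqref{eq: flattened1}--\eqref{eq: flattened2} contains $\nabla(\lvert\nabla\eta\rvert^{-2}\Delta\psi)$, i.e.\ third derivatives of $\psi$. At the $L^r$ level, where the argument has to begin, this term is of the same order as the left-hand side and cannot be absorbed. Interpolation would help only in the H\"older scale, whose constants are not yet under control. The paper closes this with a separate argument in two steps. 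First, an energy identity, obtained by testing the lab-frame momentum equation against the lab-frame velocity in the physical domain (Lemma~\ref{lemma: V_H1}), gives a bound on $\lVert\psi\rVert_{H^2(\Rect)}$ (Corollary~\ref{lemma: psi_H2}). Second, $q=p+2\eta\cot\theta$ solves a Poisson problem with Dirichlet data on $\Top$ from the normal-stress condition and Neumann data on $\Bottom$, and this yields $\lVert p\rVert_{L^2(\Rect)}$.

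Your second step takes $x$-difference quotients and applies Proposition~\ref{Schauder} to $u_x$. This is a legitimate alternative to the paper's direct use of Theorem~\ref{adn_schauder} with $l=1,2$. However, its constant depends on $\lVert u\rVert_\X$, so it inherits both gaps above.
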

Note that by Lemma~\ref{uniform_dist_dU},~\eqref{eq: uniform_inequalities} guarantees $\inf_\Rect \lvert \nabla \eta \rvert > C(\delta)$. To prove Proposition~\ref{uniform_prop}, we begin with an a priori estimate which comes from the ellipticity of $\F_u(u, \theta, \gamma)$.
\begin{lemma}\label{lemma: first_schauder}
    In the setting of Proposition~\ref{uniform_prop}, $u \in C^{5, \alpha}(\Rect) \times C^{3, \alpha}(\Rect) \times C^{5, \alpha}(\Rect)$ and
    \begin{equation}\label{eq: whole_Schauder}
        \lVert \psi \rVert_{C^{5, \alpha}(\Rect)} + \lVert p \rVert_{C^{3, \alpha}(\Rect)} + \lVert \eta \rVert_{C^{5, \alpha}(\Rect)} \leq C \left(\delta, \lvert \cot \theta \rvert, \sup_\Top \lvert D^2\eta \rvert, \sup_\Rect \lvert \nabla \psi \rvert, \sup_\Top \lvert D^2 \psi \rvert, \lVert p \rVert_{L^2(\Rect)} \right).
    \end{equation}
\end{lemma}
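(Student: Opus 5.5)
The plan is to view $u$ itself as solving a linear elliptic system obtained by freezing coefficients, and to bootstrap with Agmon--Douglis--Nirenberg Schauder estimates (Theorem~\ref{adn_schauder}) for the operator $(\mathcal{A},\mathcal{B}_1,\mathcal{B}_0)$ built in the proof of Proposition~\ref{Schauder}. The point is to use the quasilinear structure of $\F$. The top-order terms in $\F(u,\theta,\gamma)=0$ are
\[
-\lvert\nabla\eta\rvert^{-4}\nabla\eta\cdot\nabla\Delta\psi,\qquad \lvert\nabla\eta\rvert^{-2}\nabla\eta\cdot\nabla^\perp p,\qquad \text{the second-order traces on } \Top,
\]
and these are linear in $(\psi,p,\eta)$ once their coefficients, which involve only $\nabla\eta$ and $\psi_y|_\Top$, are frozen.

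\textbf{Step 1 (linear system for $u$).} Define a linear operator $\mathcal{M}_u$ with the same principal part as $(\mathcal{A},\mathcal{B}_1,\mathcal{B}_0)$, with coefficients evaluated at $u$, and move all lower-order nonlinear terms to the right-hand side. Examples are the Reynolds terms $\Rey\tilde\nabla^\perp\psi\cdot\tilde\nabla\tilde\d\psi$, the constants $2$ and $2\cot\theta$, $\gamma$, and the curvature contributions beyond the principal symbol. Then $\mathcal{M}_u u=G(u)$.

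\textbf{Step 2 (ellipticity and uniform constants).} Ellipticity and the Shapiro--Lopatinsky condition for $\mathcal{M}_u$ follow from the determinant computations \eqref{eq: char_poly}, \eqref{eq: det_top} and the bed computation. The constants there depend only on $\delta$, through \eqref{eq: uniform_inequalities}, Lemma~\ref{uniform_dist_dU} and \eqref{eq: infsupRect}. The coefficients are controlled by $\nabla\eta$, $D^2\eta|_\Top$ and $\nabla\psi$, in low Hölder norms.

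\textbf{Step 3 (bootstrap).} Start with low regularity: the coefficients are in $C^{0,\alpha}$ and $C^{1,\alpha}$ and are bounded by the listed quantities. Since $\eta$ is harmonic with Dirichlet data $0$ on $\Bottom$, $\eta$ is controlled up to the boundary by its trace on $\Top$. Then:
\begin{itemize}
\item apply the ADN Schauder estimate at a low regularity index to gain one derivative;
\item interpolate, for instance $\|\psi\|_{C^{1,\alpha}}\le\epsilon\|\psi\|_{C^{4,\alpha}}+C_\epsilon\|\psi\|_{L^\infty}$, to absorb the lower-order quantities;
\item iterate until $C^{5,\alpha}\times C^{3,\alpha}\times C^{5,\alpha}$ is reached.
\end{itemize}
Difference quotients in $x$, or analyticity of $\F$ combined with elliptic regularity theory, justify the gain of regularity from $\X$ up to the higher space.

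\textbf{Step 4 (lower-order norm).} The lower-order $L^1$ or $L^\infty$ term in the estimate is bounded as follows. For $\psi$, use $\sup\lvert\nabla\psi\rvert$ together with $\psi|_\Bottom=0$. For $\eta$, use $\sup_\Top\lvert\nabla\eta\rvert<1/\delta$ with $\eta|_\Bottom=0$. For $p$, only its gradient is controlled by the interior equations, so the $\lVert p\rVert_{L^2}$ term is retained explicitly on the right-hand side, as in \eqref{eq: whole_Schauder}.

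The main obstacle is Steps 1–2: the estimates must hold at the nonlinear level with constants depending only on the listed quantities. The dependence on $\lVert u\rVert_\X$ in Proposition~\ref{Schauder} has to be avoided, which requires carefully splitting which terms count as coefficients and which count as data.
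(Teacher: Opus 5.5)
Your Steps 1--2 agree with the paper. The system \eqref{eq: nonlinear} has the same principal parts as the linearisation, so the ellipticity and Shapiro--Lopatinsky constants depend only on $\delta$.

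The gap is in Step 3, where you assert that the frozen coefficients lie in $C^{0,\alpha}$ and $C^{1,\alpha}$ with norms bounded by the listed quantities. The rows of $\mathcal{B}_1$ coming from the dynamic condition have $m_k=-1$. So Theorem~\ref{adn_schauder}, already at the lowest level $l=0$, needs their coefficients in $C^{1,\alpha}(\Top)$, in particular $\psi_y|_\Top$, $\eta_x|_\Top$ and $\eta_y|_\Top$. That is a H\"older seminorm of $D^2\psi$ and $D^2\eta$ on $\Top$. The right-hand side of \eqref{eq: whole_Schauder} only allows $\sup_\Top\lvert D^2\psi\rvert$ and $\sup_\Top\lvert D^2\eta\rvert$, which give $C^{1}$ but not $C^{1,\alpha}$ control.

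You cannot move these terms into the data. The term with coefficient $\psi_y$ acting on $D^2\eta$ is principal, and it is what keeps the Shapiro--Lopatinsky determinant \eqref{eq: det_top} nonzero when $\sigma=0$. Interpolation does not rescue this either. These norms enter the Schauder \emph{constant} through the coefficient bound $K$, whose dependence is not explicit. An inequality such as $[D^2\psi]_{C^{0,\alpha}(\Top)}\le \epsilon\lVert\psi\rVert_{C^{4,\alpha}}+C_\epsilon \sup_\Top\lvert D^2\psi\rvert$ therefore gives a circular estimate, not an absorbable one. Interpolation only helps for quantities entering (sub)linearly through the data. As written, your argument proves \eqref{eq: whole_Schauder} only with $C^{1,\alpha}$ or $C^{2,\alpha}$-type norms on the right, not with the stated sup-norms. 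This is exactly the ``dependence on $\lVert u\rVert_\X$'' obstacle you flag at the end, and careful splitting of coefficients and data does not remove it.

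The missing ingredient, which is how the paper closes the argument, is the $L^r$ theory.
\begin{itemize}
\item Theorem~\ref{adn_sobolev} at $l=0$ requires only a modulus of continuity for the interior leading coefficients and $C^{1}$ boundary coefficients.
\item These are supplied exactly by $\delta$, $\sup_\Top\lvert D^2\eta\rvert$ (through the maximum principle for the harmonic $\eta$), $\sup_\Rect\lvert\nabla\psi\rvert$ and $\sup_\Top\lvert D^2\psi\rvert$.
\item The $D^2\psi$ terms in the interior data are absorbed by $L^r$ interpolation, and $\lVert p\rVert_{L^r}$ is reduced to $\lVert p\rVert_{L^2}$ by Ehrling's lemma.
\end{itemize}
This gives a $W^{3,r}\times W^{1,r}\times W^{3,r}$ bound. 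For $r>2/(1-\alpha)$, Sobolev embedding then gives $C^{2,\alpha}(\overline{\Rect})$ control of $\psi$ and $\eta$. These are precisely the low H\"older norms on which the Schauder bootstrap \eqref{eq: gen_schauder}, run from $l=0$ to $l=2$, depends.

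Separately, your difference-quotient/analyticity remark in Step 3 is unnecessary. The regularity part of Theorem~\ref{adn_schauder} already provides the gain of regularity.
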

\begin{proof}
    Here, $(\psi, p, \eta) \in \U$ satisfies~\eqref{eq:flattened}, where $\tilde{\d}_Y (\lvert \nabla \eta \rvert^{-2} \Delta \psi)$ and $\tilde{\d}_X (\lvert \nabla \eta \rvert^{-2} \Delta \psi)$ in~\eqref{eq: flattened1} and~\eqref{eq: flattened2} are given by 
    \begin{align*}
        \tilde{\d}_Y \left(\lvert \nabla \eta \rvert^{-2} \Delta \psi\right) &= \frac{1}{\lvert \nabla \eta \rvert^4} \nabla \eta \cdot \nabla \Delta \psi - \frac{2 \Delta \psi}{\lvert \nabla \eta \rvert^6} \big[(\eta_x^2 - \eta_y^2) \eta_{xx} + 2 \eta_x \eta_y \eta_{xy} \big], \\
        \tilde{\d}_X \left(\lvert \nabla \eta \rvert^{-2} \Delta \psi\right) &= \frac{1}{\lvert \nabla \eta \rvert^4} \nabla \eta \cdot \nabla^{\perp} \Delta \psi + \frac{2 \Delta \psi}{\lvert \nabla \eta \rvert^6} \big[ 2\eta_x \eta_y \eta_{xx} + (\eta_x^2 - \eta_y^2) \eta_{xy} \big].
    \end{align*}
    Considering~\eqref{eq: flattened_dyn}, we have
    \begin{equation*}
        \tilde{\mathcal{D}} \psi = \begin{pmatrix}
            2 \tilde{\d}_{XY} \psi & \tilde{\d}_{YY} \psi - \tilde{\d}_{XX} \psi \\
            \tilde{\d}_{YY} \psi - \tilde{\d}_{XX} \psi & - 2 \tilde{\d}_{XY} \psi
        \end{pmatrix},
    \end{equation*}
    where, by straighforward computation,
    \begin{align*}
        \tilde{\d}_{YY} \psi - \tilde{\d}_{XX} \psi &= \frac{1}{\lvert \nabla \eta \rvert^4} \big[(\eta_y^2 - \eta_x^2) \psi_{yy} + 4 \eta_x \eta_y \psi_{xy} \big] + \frac{2 \psi_y}{\lvert \nabla \eta \rvert^6} \big[(\eta_x^2 - 3 \eta_y^2) \eta_x \eta_{xy} + (\eta_y^2 - 3 \eta_x^2) \eta_y \eta_{xx} \big], \\
        \tilde{\d}_{XY} \psi &= \frac{1}{\lvert \nabla \eta \rvert^4}\big[(\eta_y^2 - \eta_x^2) \psi_{xy} - \eta_x \eta_y \psi_{yy} \big] + \frac{2 \psi_y}{\lvert \nabla \eta \rvert^6} \big[(\eta_x^2 - 3 \eta_y^2) \eta_x \eta_{xx} - (\eta_y^2 - 3 \eta_x^2) \eta_y \eta_{xy} \big],
    \end{align*}
    so~\eqref{eq: flattened_dyn} on $\Top$ simplifies to
    \begin{align}
        - \psi_{yy} + \frac{2 \psi_y}{\lvert \nabla \eta \rvert^2}\big(\eta_x \eta_{xy} - \eta_y \eta_{xx}\big) &= 0, \\
        2 \psi_{xy} + \lvert \nabla \eta \rvert^2 p + \frac{\sigma}{\lvert \nabla \eta \rvert}\big(\eta_y \eta_{xx} - \eta_x \eta_{xy}\big) - \frac{2 \psi_y}{\lvert \nabla \eta \rvert^2} \big(\eta_y \eta_{xy} + \eta_x \eta_{xx}\big) &= 0. \label{eq: dyn_pressure}
    \end{align}

    Thus,~\eqref{eq:flattened} is equivalent to 
    \begin{equation}\label{eq: nonlinear}
        \begin{cases}
            -\dfrac{1}{\lvert \nabla \eta \rvert^4} \nabla \eta \cdot \nabla \Delta \psi + \tilde{\d}_X p = 2 - \dfrac{2}{\lvert \nabla \eta \rvert^6} \big[(\eta_x^2 - \eta_y^2) \eta_{xx} + 2 \eta_x \eta_y \eta_{xy} \big] \Delta \psi + \Rey \tilde{\nabla}^{\perp} \psi \cdot \tilde{\nabla} \tilde{\d}_Y \psi & \text{in } \Rect,\\
            \dfrac{1}{\lvert \nabla \eta \rvert^4} \nabla \eta \cdot \nabla^{\perp} \Delta \psi + \tilde{\d}_Y p = - 2 \cot \theta - \dfrac{2}{\lvert \nabla \eta \rvert^6} \big[ 2\eta_x \eta_y \eta_{xx} + (\eta_x^2 - \eta_y^2) \eta_{xy} \big] \Delta \psi - \Rey \tilde{\nabla}^{\perp} \psi \cdot \tilde{\nabla} \tilde{\d}_X \psi & \text{in } \Rect,\\
            \Delta \eta = 0 & \text{in } \Rect,\\
            \psi = \tfrac{2}{3} - \gamma & \text{on } \Top,\\
            - \psi_{yy} + \dfrac{2 \psi_y}{\lvert \nabla \eta \rvert^2} \big(\eta_x \eta_{xy} - \eta_y \eta_{xx}\big) = 0 & \text{on } \Top,\\
            2 \psi_{xy} + \lvert \nabla \eta \rvert^2 p + \dfrac{\sigma}{\lvert \nabla \eta \rvert}\big(\eta_y \eta_{xx} - \eta_x \eta_{xy}\big) - \dfrac{2 \psi_y}{\lvert \nabla \eta \rvert^2} \big(\eta_y \eta_{xy} + \eta_x \eta_{xx}\big) = 0 & \text{on } \Top,\\
            \psi = 0 & \text{on } \Bottom,\\
            \psi_y + \gamma \eta_y = 0 & \text{on } \Bottom,\\
            \eta = 0 & \text{on } \Bottom.
        \end{cases}
    \end{equation}

    The left-hand side of~\eqref{eq: nonlinear} has precisely the same principal parts~\eqref{eq: principalA},~\eqref{eq: principal_B1} and~\eqref{eq: principal_B0} as the linearisation~\eqref{eq: gen_lin} around $u=(\psi, p, \eta)$ applied to $\dot{u} = u$. Therefore, by~\eqref{eq: uniform_inequalities},~\eqref{eq: uniform_inequalities} and Proposition~\ref{Schauder}, this system admits the Schauder estimate~\eqref{eq: system_schauder}, which reads
    \begin{equation}\label{eq: first_Sch}
        \lVert u \rVert_{\X} \leq C \cdot (\lvert \cot \theta \rvert + \lvert \tfrac{2}{3} - \gamma \rvert + \lVert u \rVert_{L^1(\Rect) \times L^1(\Rect) \times L^1(\Rect)}),
    \end{equation}
    where $C$ depends on $\delta$, $\lVert \nabla \eta \rVert_{C^{1,\alpha}(\Rect)}$,  $\lVert D^2 \eta \rVert_{C^{1,\alpha}(\Rect)}$, $\lVert \nabla \psi \rVert_{C^{1,\alpha}(\Rect)}$, $\lVert \nabla \psi \rVert_{C^{2,\alpha}(\Top)}$ and $\lvert \gamma \rvert$.

    Note that from the fourth equation in~\eqref{eq: nonlinear},
    \begin{equation}\label{eq: gamma_est}
        \lvert \gamma \rvert \leq C \sup_\Top \lvert \psi \rvert \leq C \sup_\Rect \lvert \nabla \psi \rvert.
    \end{equation}
    Since $\eta$ vanishes on $\Bottom$, for any $k \in \mathbb{N} \cup \{0\}$ and $\beta \in (0,1)$ elliptic Schauder estimates yield that
    \begin{equation*}
        \lVert \eta \rVert_{C^{k,\beta}(\Rect)} \leq \lVert \eta \rVert_{C^{k,\beta}(\Top)}.
    \end{equation*}
    Therefore,~\eqref{eq: first_Sch} becomes
    \begin{equation}\label{eq: first_schauder}
        \lVert \psi \rVert_{C^{4, \alpha}(\Rect)} + \lVert p \rVert_{C^{2, \alpha}(\Rect)} + \lVert \eta \rVert_{C^{4, \alpha}(\Rect)} \leq C(\delta, \lvert \cot \theta \rvert, \lVert \psi \rVert_{C^{2,\alpha}(\Rect)}, \lVert \psi \rVert_{C^{3,\alpha}(\Top)}, \lVert \eta \rVert_{C^{3,\alpha}(\Top)},\lVert p \rVert_{L^1(\Rect)}).
    \end{equation}
    By Theorem~\ref{adn_schauder}, we can generalise~\eqref{eq: first_schauder} to obtain that for any integer $l \geq 0$, $u \in C^{3+l, \alpha}(\Rect) \times C^{1+l, \alpha}(\Rect) \times C^{3+l, \alpha}(\Rect)$ with 
    \begin{equation}\label{eq: gen_schauder}
        \lVert \psi \rVert_{C^{3+l, \alpha}(\Rect)} + \lVert p \rVert_{C^{1+l, \alpha}(\Rect)} + \lVert \eta \rVert_{C^{3+l, \alpha}(\Rect)} \leq C(\delta, \lvert \cot \theta \rvert, \lVert \psi \rVert_{C^{1+l,\alpha}(\Rect)}, \lVert \psi \rVert_{C^{2+l,\alpha}(\Top)}, \lVert \eta \rVert_{C^{2+l,\alpha}(\Top)}, \lVert p \rVert_{L^1(\Rect)}).
    \end{equation}
    In particular, bootstrapping~\eqref{eq: gen_schauder} from $l= 0$ to $l=2$ yields $u \in C^{5, \alpha}(\Rect) \times C^{3, \alpha}(\Rect) \times C^{5, \alpha}(\Rect)$ and
    \begin{equation}\label{eq: 5to3_schauder}
        \lVert \psi \rVert_{C^{5, \alpha}(\Rect)} + \lVert p \rVert_{C^{3, \alpha}(\Rect)} + \lVert \eta \rVert_{C^{5, \alpha}(\Rect)} \leq C(\delta, \lvert \cot \theta \rvert, \lVert \psi \rVert_{C^{1,\alpha}(\Rect)}, \lVert \psi \rVert_{C^{2,\alpha}(\Top)}, \lVert \eta \rVert_{C^{2,\alpha}(\Top)}, \lVert p \rVert_{L^1(\Rect)}).
    \end{equation}
    Now to estimate the right-hand side, Theorem~\ref{adn_sobolev} gives the following Sobolev estimate for the system~\eqref{eq: nonlinear} for $r > 2(1-\alpha)^{-1}$,
    \begin{equation}\label{eq: weak_elliptic}
        \lVert \psi \rVert_{W^{3, r}(\Rect)} + \lVert p \rVert_{W^{1, r}(\Rect)} + \lVert \eta \rVert_{W^{3, r}(\Rect)} \leq C(\delta, \lvert \cot \theta \rvert, \lVert \eta \rVert_{C^{2}(R)}, \lVert \psi \rVert_{C^1(\Rect)}, \sup_\Top \lvert D^2 \psi \rvert, \lVert p \rVert_{L^r(\Rect)}).
    \end{equation}
    Applying the maximum principle to the harmonic function $\eta$ and using the boundary condition $\eta \vert_\Bottom = 0$, it is straightforward to check that 
    \begin{equation*}
        \lVert \eta \rVert_{C^2(\Rect)} \leq C(\delta, \sup_\Top \lvert D^2\eta \rvert).
    \end{equation*}
    It is similarly easy to check by integrating from the bed that $\lVert \psi \rVert_{C^1(\Rect)} \leq \sup_\Rect \lvert \nabla \psi \rvert$. Interpolating using Ehrling's lemma, we can replace $\lVert p \rVert_{L^r(\Rect)}$ by $\lVert p \rVert_{L^2(\Rect)}$ in~\eqref{eq: weak_elliptic}, obtaining
    \begin{equation}\label{eq: weak_elliptic2}
        \lVert \psi \rVert_{W^{3, r}(\Rect)} + \lVert p \rVert_{W^{1, r}(\Rect)} + \lVert \eta \rVert_{W^{3, r}(\Rect)} \leq C\Big(\delta, \lvert \cot \theta \rvert, \sup_\Top \lvert D^2\eta \rvert, \sup_\Rect \lvert \nabla \psi \rvert, \sup_\Top \lvert D^2 \psi \rvert, \lVert p \rVert_{L^2(\Rect)}\Big).
    \end{equation}
    The result now follows by combining~\eqref{eq: 5to3_schauder} and~\eqref{eq: weak_elliptic2} and a Sobolev embedding.
\end{proof}

The remainder of the results in this subsection handle the pressure term in~\eqref{eq: whole_Schauder}. We start with an energy estimate for the velocity in the physical domain $\Omega$.  
\begin{lemma}\label{lemma: V_H1}
    In the setting of Proposition~\ref{uniform_prop}, $V \colon \Omega \to \Real^2$ defined by~\eqref{eq:stream_function} and~\eqref{eq: sf_p_cov} satisfies
    \begin{equation}\label{eq: V_H1}
        \lVert V \rVert_{H^1(\Omega)} \leq \lVert V \rVert_{L^2(\Omega)} + \sup_{\Surf} \lvert H \rvert^{1/2} \lVert V \rVert_{L^2(\Surf)} + C(\lvert \gamma \rvert, \lvert \Omega \rvert).
    \end{equation}
\end{lemma}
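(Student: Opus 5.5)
We prove \eqref{eq: V_H1} with the standard energy identity for the steady Navier--Stokes system in the physical domain: test the momentum equation against a suitable velocity field and integrate by parts. The no-slip condition \eqref{eq: noslip} is inhomogeneous, so we first subtract a lifting. We set $W := V + \gamma e_X$, the velocity in the lab frame. Then $W = 0$ on $\Bed$, $\nabla \cdot W = 0$, and on $\Surf$ we have $W \cdot \nu = \gamma e_X \cdot \nu$. Since $\lVert V \rVert_{H^1} \le \lVert W \rVert_{H^1} + \gamma \lvert \Omega \rvert^{1/2}$ (the $H^1$ norm taken over one period cell, with $\lvert \Omega \rvert$ the area of that cell), it suffices to bound $\lVert \nabla W \rVert_{L^2(\Omega)} = \lVert \nabla V \rVert_{L^2(\Omega)}$.

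The key steps are as follows. First, multiply the momentum equation of \eqref{eq:dimensionless}, written in divergence form as $\Rey (V\cdot\nabla)V - \nabla \cdot \mathbb{T} = 2e_X - 2\cot\theta\, e_Y$ with $\mathbb{T} := \D V - P I$, by $V$ and integrate over a period cell. Periodicity kills the lateral contributions. Second, for the convective term, use $\nabla\cdot V = 0$ to write $(V\cdot\nabla)V\cdot V = \tfrac12 \nabla\cdot(\lvert V\rvert^2 V)$. This gives boundary terms $\tfrac12\int \lvert V\rvert^2 V\cdot n$, which vanish on $\Surf$ by \eqref{eq: kinematic} and on $\Bed$ because $V\cdot e_Y = 0$ there. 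So inertia drops out, and this is why no dependence on $\Rey$ appears. Third, integrating the stress term by parts yields $\tfrac12\int_\Omega \lvert \D V\rvert^2$ plus boundary terms. On $\Surf$ the dynamic condition gives $\mathbb{T}\nu = \sigma H \nu$, so that contribution is $\int_\Surf \sigma H\, V\cdot\nu = 0$, again by \eqref{eq: kinematic}.

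The pressure and stress on $\Bed$ are the delicate point. There $V = -\gamma e_X$, so we pick up $-\gamma\int_\Bed e_X\cdot \mathbb{T}e_Y$, which involves the unknown wall shear stress. To avoid this, we test instead with $W$, which vanishes on $\Bed$. Then the convective term becomes $\Rey\int (V\cdot\nabla)V\cdot W = \Rey\int (V\cdot\nabla)V\cdot V + \gamma\Rey\int (V\cdot\nabla) V_1$, and the last integral is a boundary flux that vanishes because $V\cdot n=0$ on all of $\partial\Omega$. On $\Surf$, the term $\int_\Surf \mathbb{T}\nu\cdot W = \gamma\int_\Surf \sigma H\, \nu_1$. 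Here $\nu_1$ integrates to zero over a period, and in any case this term is bounded by $C(\gamma,\lvert\Omega\rvert)$ together with the curvature factor. This is the step I expect to be the main obstacle: organising the boundary terms so that only $\sup_\Surf \lvert H\rvert$ times boundary norms of $V$ remain. If the surface-tension term is not directly controlled, I would fall back on bounding it by $\sup\lvert H\rvert \int_\Surf \lvert V\rvert$ and using Young's inequality to reach the form $\sup_\Surf \lvert H\rvert^{1/2}\lVert V\rVert_{L^2(\Surf)}$ appearing in \eqref{eq: V_H1}.

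Finally, we handle the body force and close the estimate. The body force term is at most $C(1+\lvert\cot\theta\rvert)\lVert W\rVert_{L^1}$. Alternatively, since gravity is the gradient $\nabla(2X - 2\cot\theta\, Y)$, it can be absorbed into the pressure and integrated against $W\cdot\nu$ on the boundary. Either way it is controlled by $\lVert V\rVert_{L^2(\Omega)}$ and $C(\lvert\gamma\rvert,\lvert\Omega\rvert)$. Korn's inequality for fields vanishing on $\Bed$ then gives $\lVert\nabla W\rVert_{L^2}^2 \le C\lVert \D W\rVert_{L^2}^2 = C\lVert \D V\rVert_{L^2}^2$. We need the Korn constant to be uniform, which requires a check. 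If that fails, we can use the identity $\int\lvert\D W\rvert^2 = 2\int\lvert\nabla W\rvert^2$, which holds for divergence-free $W$ up to boundary curvature terms of the form $\int_\Surf H\lvert W\rvert^2$; these produce exactly the $\sup\lvert H\rvert^{1/2}\lVert V\rVert_{L^2(\Surf)}$ term. Taking square roots and using Young's inequality yields \eqref{eq: V_H1}.
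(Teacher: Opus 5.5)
Your strategy is the paper's. You test the momentum equation against the lab-frame velocity $W=V+\gamma e_X$ (the paper's $U$), which vanishes on $\Bed$. You kill the inertial term using $\nabla\cdot V=0$ and $V\cdot n=0$ on $\partial\Omega$, integrate the stress term by parts, and then pass from $\mathbb{D}V$ to $DV$. However, three steps as written do not give the stated bound.

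\textbf{Surface-tension term.} This term is $\sigma\gamma\int_\Surf H\nu_1\,ds$. The fact that $\int_\Surf\nu_1\,ds=0$ says nothing about it, since $H$ is not constant. It vanishes because of the Frenet--Serret formula $H\nu=\pm\,d\tau/ds$, which makes the integrand the arclength derivative of the periodic function $e_X\cdot\tau$. Your fallback does not work. The term contains no $V$, so no use of Young's inequality can turn it into $\sup_\Surf\lvert H\rvert^{1/2}\lVert V\rVert_{L^2(\Surf)}$. The crude bound $\sigma\gamma\sup_\Surf\lvert H\rvert\,\lvert\Surf\rvert$ is not of the allowed form.

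\textbf{Body force.} Bounding it by $C(1+\lvert\cot\theta\rvert)\lVert W\rVert_{L^1}$ puts $\cot\theta$ into the constant, which $C(\lvert\gamma\rvert,\lvert\Omega\rvert)$ excludes. The missing observation is that this term can be computed exactly.
\begin{itemize}
\item For the $\cot\theta$ part, $\int_\Omega W_2\,dA=-\int_\Omega\Psi_X\,dA$. After the periodic lateral contributions cancel, this reduces to a multiple of $\int_\Surf\nu_1\,ds=0$, because $\Psi$ is constant on $\Surf$ and zero on $\Bed$.
\item For the other part, the mass flux gives $\int_\Omega W_1\,dA=\tfrac{2\pi}{K}(\tfrac23-\gamma)+\gamma\lvert\Omega\rvert$.
\end{itemize}
Together these yield the exact identity $\tfrac12\lVert\mathbb{D}V\rVert_{L^2(\Omega)}^2=\tfrac{8\pi}{3K}+2\gamma(\lvert\Omega\rvert-\tfrac{2\pi}{K})$, with no $\theta$-dependence. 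Your gravitational-potential alternative can be made to work. However, $2X$ is not periodic, so you must keep the lateral boundary terms of the period cell, and these supply exactly the flux contribution.

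\textbf{Passing from $\mathbb{D}V$ to $DV$.} Korn's inequality has a geometry-dependent constant, so it does not give the stated form. You genuinely need the identity $\tfrac12\int_\Omega\lvert\mathbb{D}V\rvert^2\,dA=\int_\Omega\lvert DV\rvert^2\,dA+\int_\Surf H\lvert V\rvert^2\,ds$. Apply it to $V$, which is tangent to all of $\partial\Omega$ and constant along $\Bed$, so the bed contributes nothing. Do not apply it to $W$: since $W\cdot\nu=\gamma\nu_1\neq0$ on $\Surf$, the boundary term $\int_\Surf (W\cdot\nabla W)\cdot\nu\,ds$ then involves normal derivatives of $V$ rather than just $H\lvert V\rvert^2$.
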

\begin{proof}
    Using~\eqref{eq:stream_function},~\eqref{eq: image_of_f} and~\eqref{eq: sf_p_cov} there exist $\Omega \in C^{4,\alpha}$, $V \in C^{3,\alpha}(\Omega)$ and $P \in C^{2,\alpha}(\Omega)$ satisfying~\eqref{eq:dimensionless}. For the following computation, it is more convenient to use the lab velocity $U = V + \gamma e_X$ instead of the travelling velocity $V$. The system solved by $(U,P)$ in $\Omega$ is
    \begin{subequations}
        \begin{align}
            \Rey ((U - \gamma e_X) \cdot \nabla) U - \Delta U + \nabla P &= 2 e_X - 2 \cot \theta e_Y &&\text{in } \Omega, \label{eq: lab_ns}\\
            \nabla \cdot U &= 0 &&\text{in } \Omega, \label{eq: lab_incomp}\\
            U \cdot \nu &= \gamma e_X \cdot \nu &&\text{on } \Surf, \label{eq: lab_kin}\\
            [(P + \sigma H)I - \D U] \nu &= 0   &&\text{on } \Surf, \label{eq: lab_dyn}\\
            U &= 0 &&\text{on } \Bed. \label{eq: lab_noslip}    
        \end{align}
    \end{subequations}
    We integrate~\eqref{eq: lab_ns} against $U$ over a fundamental period of $\Omega$,
    \begin{equation}\label{eq: integrate_U_NS}
        \int_\Omega U \cdot \big[\Rey \big((U - \gamma e_X) \cdot \nabla \big) U - \Delta U + \nabla P \big] \, dA = \int_\Omega 2 U_1 + 2 \cot \theta U_2 \, dA.
    \end{equation}
    Here, $dA = dXdY$ is an area element for $\Omega$. Writing $V = U - \gamma e_X$ for convenience, the first term on the left-hand side of~\eqref{eq: integrate_U_NS} is
    \begin{align*}
        \int_\Omega U \cdot \big(V \cdot \nabla \big) U \, dA = \frac{1}{2} \int_\Omega \big(V \cdot \nabla \big) \lvert U \rvert^2 \, dA =- \frac{1}{2} \int_\Omega \lvert U \rvert^2 \nabla \cdot V \, dA + \int_{\d \Omega} \lvert U \rvert^2 V \cdot \nu\, ds,
    \end{align*}
    which vanishes due to~\eqref{eq: lab_incomp},~\eqref{eq: lab_kin} and~\eqref{eq: lab_noslip}. Using the vector identities $\Delta U = \nabla \cdot \mathbb{D} U - \nabla (\nabla \cdot U)$, $\lvert \mathbb{D}U \rvert^2 = 2 DU : \mathbb{D} U$ and~\eqref{eq: lab_incomp}, the second and third terms of~\eqref{eq: integrate_U_NS} are given by 
    \begin{equation}\label{eq: integrate_U_stress}
        \int_\Omega U \cdot (\nabla P - \Delta U) \, dA = \int_\Omega U \cdot (\nabla P - \nabla \cdot \mathbb{D} U) \, dA = \dfrac{1}{2} \int_\Omega \lvert \mathbb{D} U \rvert^2 \, dA + \int_{\Surf} U \cdot ((P - \mathbb{D} U) \nu) \, ds.
    \end{equation}
    To compute the final integral in~\eqref{eq: integrate_U_stress} over $\Surf$, we apply~\eqref{eq: lab_dyn},~\eqref{eq: lab_kin} and the Frenet--Serret formula $H\nu = {d \tau}/{ds}$, where $\tau$ is the unit tangent to $S$ and $s$ is the arc-length parameter for $\Surf$,
    \begin{equation*}
        U \cdot ((P - \mathbb{D} U) \nu) = \sigma H U \cdot \nu = \sigma \gamma e_X \cdot \nu = e_X \cdot \frac{d \tau}{ds}.
    \end{equation*}
    This term therefore vanishes when integrated over $\Surf$ by periodicity,
    \begin{equation*}
        \int_{\Surf} e_X \cdot \frac{d \tau}{ds} \, ds = \int_{\Surf} \frac{d}{ds}(e_X \cdot \tau) \, ds = 0,
    \end{equation*}
    so~\eqref{eq: integrate_U_stress} reduces to
    \begin{equation}\label{eq: int_U_stress}
        \int_\Omega U \cdot (\nabla P - \Delta U) \, dA = \dfrac{1}{2} \int_\Omega \lvert \mathbb{D} U \rvert^2 \, dA.
    \end{equation}
    Meanwhile, the first term on the right-hand side of~\eqref{eq: integrate_U_NS} is
    \begin{equation}\label{eq: int_U1}
        \int_\Omega U_1 \, dA = \int_\Omega V_1 \, dA + \gamma \lvert \Omega \rvert = \frac{2 \pi}{K} \Big(\frac{2}{3} - \gamma \Big) + \gamma \lvert \Omega \rvert = \frac{4 \pi}{3 K} + \gamma \Big(\lvert \Omega \rvert - \frac{2 \pi}{K}\Big),
    \end{equation}
    where $K$ is the horizontal wavenumber of the wave profile of $\Surf$. For the final term in~\eqref{eq: integrate_U_NS}, we let $\Psi$ be defined by~\eqref{eq:stream_function} and compute
    \begin{equation*}
        \int_\Omega U_2 \, dA = - \int_\Omega \Psi_X \, dA = - \int_\Omega \nabla \cdot (\Psi e_X) \, dA = \int_{\d \Omega} \Psi (e_X \cdot \nu) \, ds = \Big(\dfrac{2}{3} - \gamma\Big) \int_{\Surf} e_X \cdot \nu \, ds,
    \end{equation*}
    which vanishes since 
    \begin{equation*}
        \int_{\Surf} e_X \cdot \nu \, ds = \int_\Omega \nabla \cdot e_X \, dA = 0.
    \end{equation*}
    Thus,~\eqref{eq: int_U_stress} and~\eqref{eq: int_U1} give the only nonzero terms in~\eqref{eq: integrate_U_NS}, and we have an $L^2$ energy identity for $V$,
    \begin{equation*}
        \dfrac{1}{2} \lVert \mathbb{D} V \rVert_{L^2(\Omega)}^2 = \dfrac{1}{2} \int_\Omega \lvert \mathbb{D} U \rvert^2 \, dA = \frac{8 \pi}{3 K} + 2 \gamma \Big(\lvert \Omega \rvert - \frac{2 \pi}{K}\Big).
    \end{equation*}

    Since $V$ is divergence-free with no-penetration boundary conditions on $\Surf$ and $\Bed$, it is easy to check that
    \begin{equation*}
        \frac{1}{2} \int_\Omega \lvert \mathbb{D}V \rvert^2 \, dA = \int_\Omega \lvert DV \rvert^2 \, dA + \int_\Surf H \lvert V \rvert^2 \; ds.
    \end{equation*}
    Consequently, we have an $L^2$ estimate for the gradient of $V$,
    \begin{equation*}
        \lVert DV \rVert_{L^2(\Omega)}^2 \leq \lVert \mathbb{D}V \rVert_{L^2(\Omega)}^2 + \sup_{\Surf} \lvert H \rvert \lVert V \rVert_{L^2(\d \Omega)}^2
        \leq  \frac{16 \pi}{3 K} + 4 \gamma \Big(\lvert \Omega \rvert - \frac{2 \pi}{K}\Big) + \sup_{\Surf} \lvert H \rvert \lVert V \rVert_{L^2(\Surf)}^2,
    \end{equation*}
    which implies~\eqref{eq: V_H1}.
\end{proof}
This $H^1$ estimate on $V$ yields an $H^2$ estimate for $\psi$ in $\Rect$ by the following corollary.
\begin{cor}\label{lemma: psi_H2}
    In the setting of Proposition~\ref{uniform_prop},
    \begin{equation*}
        \lVert \psi \rVert_{H^2(\Rect)} \leq C \Big(\delta, \sup_\Top \lvert D^2\eta \rvert, \sup_\Rect \lvert \nabla \psi \rvert \Big).
    \end{equation*}
\end{cor}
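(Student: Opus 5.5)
The plan is to transport the $H^1(\Omega)$ bound on $V$ from Lemma~\ref{lemma: V_H1} back to the rectangle via the conformal map $f$, and then to use the relation between $V$ and $\nabla\psi$. Since $\Psi_X = -V_2$ and $\Psi_Y = V_1$, we have $\lvert \nabla \Psi \rvert = \lvert V \rvert$ and $\lvert D^2 \Psi \rvert \lesssim \lvert DV \rvert$. Hence $\lVert \Psi \rVert_{H^2(\Omega)}$, minus the $L^2$ part of $\Psi$ itself, is controlled by $\lVert V \rVert_{H^1(\Omega)}$. The zeroth-order part is harmless: $\Psi = 0$ on $\Bed$, so integrating from the bed bounds $\lvert \Psi \rvert$, or we simply use $\lVert \psi \rVert_{C^1(\Rect)} \le C\sup_\Rect\lvert\nabla\psi\rvert$ as in the proof of Lemma~\ref{lemma: first_schauder}.

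The first step is to bound each term on the right of~\eqref{eq: V_H1} by the stated quantities.

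\begin{itemize}
\item $\lVert V\rVert_{L^2(\Omega)}^2 = \int_\Rect \lvert\nabla\psi\rvert^2 \, dx\,dy$, because the Jacobian $\lvert\nabla\eta\rvert^2$ cancels with $\lvert\tilde\nabla\psi\rvert^2 = \lvert\nabla\eta\rvert^{-2}\lvert\nabla\psi\rvert^2$. This is at most $C\sup_\Rect\lvert\nabla\psi\rvert^2$.
\item Similarly, $\lVert V\rVert_{L^2(\Surf)}^2 = \int_\Top \lvert\nabla\psi\rvert^2\lvert\nabla\eta\rvert^{-1}\,dx$, which is bounded using $\inf_\Top\lvert\nabla\eta\rvert > C(\delta)$ from Lemma~\ref{uniform_dist_dU}.
\item For the curvature, $H = (\eta_y\eta_{xx} - \eta_x\eta_{xy})/\lvert\nabla\eta\rvert^3$, so $\sup_\Surf\lvert H\rvert \le C(\delta)\sup_\Top\lvert D^2\eta\rvert$.
\item $\lvert\gamma\rvert \le C\sup_\Rect\lvert\nabla\psi\rvert$ by~\eqref{eq: gamma_est}.
\item $\lvert\Omega\rvert = \int_\Rect \lvert\nabla\eta\rvert^2 \le C(\delta)$, since $\sup_\Rect\lvert\nabla\eta\rvert \le \sup_\Top\lvert\nabla\eta\rvert < 1/\delta$ by the maximum principle. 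The wavenumber $K$ is also controlled, since the physical period is $\int_\Torus \xi_x\,dx$ and is comparable to $2\pi/k$.
\end{itemize}

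The second step converts second derivatives. We have $\psi_x = \Psi_X\xi_x + \Psi_Y\eta_x$, and similarly for $\psi_y$. Differentiating again, $D^2\psi$ is a combination of $D^2\Psi\circ f$ times $(Df)^2$, plus $\nabla\Psi\circ f$ times $D^2 f$. The first part has $L^2(\Rect)$ norm at most $C(\delta)\lVert DV\rVert_{L^2(\Omega)}$, using the change of variables with Jacobian bounded below by $C(\delta)$. The second part is bounded by $\sup\lvert\nabla\psi\rvert\,\lVert D^2\eta\rVert_{L^2(\Rect)}$. Here $D^2 f$ is expressed through $D^2\eta$ via the Cauchy--Riemann equations~\eqref{eq: cauchy_riemann}.

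The main obstacle is controlling $D^2\eta$ in the interior of $\Rect$ when only $\sup_\Top\lvert D^2\eta\rvert$ is given. I would invoke the maximum-principle estimate already quoted in the proof of Lemma~\ref{lemma: first_schauder}, namely $\lVert\eta\rVert_{C^2(\Rect)} \le C(\delta,\sup_\Top\lvert D^2\eta\rvert)$. This rests on the fact that the derivatives of the harmonic function $\eta$ are harmonic: $\eta_{xx}$ vanishes on $\Bottom$, and $\eta_{xy}$ can be handled by reflection across $\Bottom$, where $\eta$ is odd. Combining the two steps gives the claimed $H^2(\Rect)$ bound.
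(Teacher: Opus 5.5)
Your proposal is correct and follows essentially the same route as the paper. Both arguments bound the right-hand side of Lemma~\ref{lemma: V_H1} using $|\Omega|\le C(\delta)$, $\sup_\Surf|H|\le C(\delta)\sup_\Top|D^2\eta|$ and \eqref{eq: gamma_est}, then pull $D^2\Psi$ back to $\Rect$ by the chain rule with $\lVert\eta\rVert_{C^2(\Rect)}$ controlled by the maximum principle, and close the lower-order terms by integrating from the bed (Poincar\'e). Your direct bound of the trace term $\lVert V\rVert_{L^2(\Surf)}$ by $\sup_\Top|\nabla\psi|$, and your handling of the hidden dependence on the physical wavenumber $K$ in Lemma~\ref{lemma: V_H1}, just make explicit steps the paper leaves implicit.
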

\begin{proof}
    By~\eqref{eq:stream_function} and~\eqref{eq: sf_p_cov}, suppressing dependence on $(x,y)$ we have
    \begin{equation*}
        \psi_x = \eta_x V_1(\xi, \eta) - \eta_y V_2(\xi, \eta), \quad \text{and} \quad
        \psi_y = \eta_y V_1(\xi, \eta) - \eta_x V_2(\xi, \eta).
    \end{equation*}
    Expressing second-order derivatives of $\psi$ similarly, we obtain
    \begin{equation}\label{eq: D2psi_L2}
        \lVert D^2 \psi \rVert_{L^2(\Rect)} \leq C (\delta, \lVert \eta \rVert_{C^2(\Rect)}) \lVert V \rVert_{H^1(\Omega)},
    \end{equation}
    where $\delta$ bounds the Jacobian in the change of variables. Now, Lemma~\ref{lemma: V_H1} implies
    \begin{equation*}
        \lVert V \rVert_{H^1(\Omega)} \leq C \Big(\delta, \lvert \gamma \rvert, \sup_\Top \lvert D^2\eta \rvert, \lVert \nabla \psi \rVert_{L^2(\Rect)} \Big),
    \end{equation*}
    after using that $\lvert \Omega \rvert \leq C(\delta)$ and 
    \begin{equation*}
        \sup_\Surf \lvert H \rvert \leq C(\delta) \sup_\Top \lvert D^2\eta \rvert.
    \end{equation*}
    Therefore,~\eqref{eq: D2psi_L2} becomes
    \begin{equation*}
        \lVert D^2 \psi \rVert_{L^2(\Rect)} \leq C \Big(\delta, \lvert \gamma \rvert, \sup_\Top \lvert D^2\eta \rvert, \lVert \nabla \psi \rVert_{L^2(\Rect)} \Big).
    \end{equation*}
    Together with the Poincar\'e inequality $\lVert \psi \rVert_{H^2(\Rect)} \leq C \lVert \nabla \psi \rVert_{L^2(\Rect)} + \lVert D^2 \psi \rVert_{L^2(\Rect)}$, this implies
    \begin{equation*}
        \lVert \psi \rVert_{H^2(\Rect)} \leq C (\delta, \lvert \gamma \rvert, \sup_\Top \lvert D^2\eta \rvert, \lVert \nabla \psi \rVert_{L^2(\Rect)}).
    \end{equation*}
    Finally, we can remove the dependence on $\gamma$ by~\eqref{eq: gamma_est} to obtain the result.
\end{proof}

With this estimate, we can finally control $\lVert p \rVert_{L^2(\Rect)}$ and establish Proposition~\ref{uniform_prop}.
\begin{proof}[Proof of Proposition~\ref{uniform_prop}]
    It suffices to show that $p$ satisfies
    \begin{equation}\label{eq: p_L2}
        \lVert p \rVert_{L^2(\Rect)} \leq C \Big(\delta, \sup_\Top \lvert D^2\eta \rvert, \sup_\Top \lvert D^2 \psi \rvert, \lVert \psi \rVert_{H^2(\Rect)} \Big),
    \end{equation}
    as Corollary~\ref{lemma: psi_H2} then implies
    \begin{equation*}
        \lVert p \rVert_{L^2(\Rect)} \leq C \Big(\delta, \sup_\Top \lvert D^2\eta \rvert, \sup_\Rect \lvert \nabla \psi \rvert, \sup_\Top \lvert D^2 \psi \rvert \Big),
    \end{equation*}
    which gives the result~\eqref{eq: best_est}. 
    
    From~\eqref{eq: flattened1},~\eqref{eq: flattened2} and the component~\eqref{eq: dyn_pressure} of~\eqref{eq: flattened_dyn}, $q := p + 2 \eta \cot \theta \in C^{2,\alpha}(\Rect)$ solves
    \begin{equation}\label{eq: p_poisson}
        \begin{cases}
            \Delta q = f, & \text{in } \Rect, \\
            q = g, & \text{on } T, \\
            q_y = \d_x h, & \text{on } \Bottom,
        \end{cases}
    \end{equation}
    with data $f, g, h$ given by
    \begin{align*}
        f &:=  2 \Rey \lvert \nabla \eta \rvert^2 \big[(\tilde{\d}_{XY} \psi)^2 - \tilde{\d}_{XX} \psi \tilde{\d}_{YY} \psi \big],\\
        g &:= -\dfrac{2}{\lvert \nabla \eta \rvert^2} \psi_{xy} - \dfrac{\sigma}{\lvert \nabla \eta \rvert^2}\big(\eta_y \eta_{xx} - \eta_x \eta_{xy}\big) + \dfrac{2 \psi_y}{\lvert \nabla \eta \rvert^4} \big(\eta_y \eta_{xy} + \eta_x \eta_{xx}\big) + 2 \eta \cot \theta, \\
        h &:= - \dfrac{1}{\eta_y^2} \psi_{yy}.
    \end{align*}
    This Poisson problem~\eqref{eq: p_poisson} admits a basic elliptic estimate for $q$, 
    \begin{equation}\label{eq: shifted_p_L2}
        \lVert q \rVert_{L^2(\Rect)} \leq C \big(\lVert f \rVert_{H^{-2}(\Rect)} + \lVert g \rVert_{H^{-1/2}(\Top)} + \lVert h \rVert_{H^{-1/2}(\Bottom)} \big).
    \end{equation}
    Here,
    \begin{align*}
        \lVert f \rVert_{H^{-2}(\Rect)} \leq \lVert f \rVert_{L^1(\Rect)} \leq C \big\lVert \lvert \nabla \eta \rvert^2 \big[(\tilde{\d}_{XY} \psi)^2 - \tilde{\d}_{XX} \psi \tilde{\d}_{YY} \psi \big] \big\rVert_{L^1(\Rect)} \leq C \Big(\lvert \gamma \rvert, \sup_\Top \lvert D^2\eta \rvert, \lVert \psi \rVert_{H^2(\Rect)}^2 \Big),
    \end{align*}
    and the boundary terms are controlled by
    \begin{align*}
        \lVert g \rVert_{H^{-1/2}(\Top)} &\leq C \Big(\delta, \lvert \cot \theta \rvert, \sup_\Top \lvert D^2\eta \rvert, \sup_\Top \lvert D^2 \psi \rvert \Big), \\
        \lVert h \rVert_{H^{-1/2}(\Bottom)} &\leq C \Big(\delta, \sup_\Top \lvert D^2 \psi \rvert \Big).
    \end{align*}
    Therefore,~\eqref{eq: shifted_p_L2} gives
    \begin{align*}
        \lVert q \rVert_{L^2(\Rect)} \leq  C \Big(\lvert \gamma \rvert, \sup_\Top \lvert D^2\eta \rvert, \lVert \psi \rVert_{H^2(\Rect)}^2 \Big) + C \Big(\delta, \lvert \cot \theta \rvert, \sup_\Top \lvert D^2\eta \rvert, \sup_\Top \lvert D^2 \psi \rvert \Big) + C \Big(\delta, \sup_\Top \lvert D^2 \psi \rvert \Big),
    \end{align*}
    from which we obtain~\eqref{eq: p_L2}.
\end{proof}

\section{Proof of main results}\label{section: proof_main}
\subsection{General existence theory}
We are now ready to prove a more detailed version of Theorem~\ref{thm: general_local}. Recall again the definition of $\F \colon \U \to \Y$ in~\eqref{eq: F} between spaces defined in~\eqref{eq: spaces}.
\begin{theorem}[Local bifurcation]\label{thm: gen_local}
    Fix $k > 0$ and $\Rey \geq 0$, and suppose that for some $\theta_* \in (0,\pi/2]$, $\gamma_* \in (0,\infty)\setminus\{2\}$, hypotheses~\ref{assumption: os_kernel} and~\ref{assumption: os_transv} of Theorem~\ref{thm: general_local} hold. 
    Then there exist $\eps > 0$ and a curve
    \begin{equation*}
        \mathscr{C}_{\mathrm{loc}} := \left\{\left(U(s), \Theta(s), \Gamma(s)\right) : |s| < \eps \right\} \subset \F^{-1}(0),
    \end{equation*}
    with the following properties:
    \begin{enumerate}[label=\rm(\alph*)]
        \item\label{gen_local_existence} The functions $U \colon (-\eps, \eps) \to \U$, $\Theta \colon (-\eps, \eps) \to (0, \pi)$ and $\Gamma \colon (-\eps, \eps) \to (0, \infty)$ are real-analytic, with asymptotic expansions in $s$,
        \begin{align}
            U(s) &= u_0(\theta_*, \gamma_*) + s \xi_0 + O(s^2), \label{eq: U_exp} \\
            (\Theta(s), \Gamma(s)) &= (\theta_*, \gamma_*) + O(s^2), \label{eq: Lambda_exp}
        \end{align}
        where $\xi_0 = \mathcal{Q} \mathscr{E} \hat{\varphi}_*$ for $\mathcal{Q}$ defined in~\eqref{eq: cov_matrix} and $\mathscr{E}$ defined in~\eqref{eq: os_extension};
        \item\label{gen_local_uniqueness} There exists an open set $\mathcal{N} \subset \U$ such that
        \begin{equation*}
            \mathscr{C}_{\mathrm{loc}} = \{(u, \theta, \gamma) \in \mathcal{N} : \F(u, \theta, \gamma) = 0, u \neq u_0(\theta, \gamma)\}.
        \end{equation*}
    \end{enumerate}
\end{theorem}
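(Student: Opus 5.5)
The plan is to apply Hale's theorem (Theorem~\ref{bif_lemma}) with $n=2$ and $\lambda=(\theta,\gamma)$. The first step is to shift to the trivial branch: set $\G(v,\theta,\gamma):=\F(u_0(\theta,\gamma)+v,\theta,\gamma)$ on the open set $\{(v,\theta,\gamma):(u_0(\theta,\gamma)+v,\theta,\gamma)\in\U\}$. This is analytic because $u_0$ depends analytically (polynomially in $\gamma$, through $\cot\theta$ in $\theta$) on the parameters. We need $u_0(\theta,\gamma)\in\X$; this holds because $\eta_0=y$ gives $\eta_x(0,1)=0$, and $\PsiNusselt$ vanishes at $y=0$. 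Since $u_0$ is the Nusselt solution, $\G(0,\lambda)=0$, which is hypothesis~\ref{H1}. Moreover $\G_v(0,\lambda)=\mathscr{L}(\theta,\gamma)$.

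The next step checks hypothesis~\ref{H2}. By Proposition~\ref{Index_zero}, $\mathscr{L}(\theta_*,\gamma_*)$ is Fredholm of index $-1$. Assumption~\ref{assumption: os_kernel} together with $\gamma_*\neq 2$ lets Lemma~\ref{Kernels_equivalence} give $\ker\mathscr{L}=\operatorname{span}\{\xi_0\}$ with $\xi_0=\mathcal{Q}\mathscr{E}\hat\varphi_*$. Here $\xi_0\ne0$ because $\mathscr{E}$ is injective and $\mathcal{Q}$ is invertible. Index $-1$ then forces $\operatorname{codim}\operatorname{ran}\mathscr{L}=2=n$.

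The third step checks hypothesis~\ref{H3}. Since $\G_{v\lambda}(0,\lambda_*)[\xi_0,\mu]$ equals $(D_\theta\mathscr{L}\dot\theta)\xi_0+(D_\gamma\mathscr{L}\dot\gamma)\xi_0$ with $\mu=(\dot\theta,\dot\gamma)$, we argue by contradiction. If this lay in $\operatorname{ran}\mathscr{L}$ for some nonzero $\mu$, then Lemma~\ref{transv_proj} would produce $\hat\psi\in\Xos$ with $(\d_\theta\OS\dot\theta)\hat\varphi_*+(\d_\gamma\OS\dot\gamma)\hat\varphi_*=\OS\hat\psi$, contradicting assumption~\ref{assumption: os_transv}.

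Theorem~\ref{bif_lemma} then yields analytic $V(s)$ and $\Lambda(s)$ with $V(0)=0$, $V'(0)=\xi_0$, $\Lambda(0)=\lambda_*$. Setting $U(s)=u_0(\Lambda(s))+V(s)$ and $(\Theta,\Gamma)=\Lambda$ gives a curve in $\F^{-1}(0)$. For $s$ small, $(\Theta,\Gamma)$ lies in $(0,\pi)\times(0,\infty)$ and $U(s)\in\U$ because $\U$ is open. The uniqueness statement~\eqref{eq: bif_uniqueness}, translated back through the shift, gives part~\ref{gen_local_uniqueness} with $\mathcal{N}$ equal to the image of Hale's neighbourhood. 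This image is open because the shift $(v,\lambda)\mapsto(u_0(\lambda)+v,\lambda)$ is a homeomorphism.

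The main obstacle is the expansions in~\ref{gen_local_existence}. We have $U(s)=u_0(\Lambda(s))+s\xi_0+O(s^2)$, so $U(s)=u_0(\theta_*,\gamma_*)+s\xi_0+O(s^2)$ requires $\Lambda'(0)=0$, which Hale's statement does not assert. To prove it, differentiate $\G(V(s),\Lambda(s))=0$ twice at $s=0$. This gives
\[
\mathscr{L}V''(0)+2\,\G_{v\lambda}[\xi_0,\Lambda'(0)]+\G_{vv}[\xi_0,\xi_0]=0 .
\]
The obstruction is the term $\G_{vv}[\xi_0,\xi_0]$. The quadratic nonlinearity acting on the first Fourier mode produces only Fourier modes $0$ and $2$, and $\mathcal{P}_\Y$ commutes with $\mathscr{L}$. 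Applying $\mathcal{P}_\Y$ (the projection onto mode $1$) to the identity therefore kills $\G_{vv}[\xi_0,\xi_0]$, while $\G_{v\lambda}[\xi_0,\cdot]$ is preserved because it is already in mode $1$. This leaves $\G_{v\lambda}[\xi_0,\Lambda'(0)]\in\operatorname{ran}\mathscr{L}$, so hypothesis~\ref{H3} forces $\Lambda'(0)=0$.

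One point must be checked here: the parameter derivative of $\G_v$ along $u_0(\lambda)$ includes the term $\F_{uu}[\xi_0,\d_\lambda u_0]$. Since $\d_\lambda u_0$ is $x$-independent, this term is still in mode $1$. This is exactly the operator $D_\lambda\mathscr{L}$, consistent with the hypothesis already verified. If the mode-counting argument proved delicate, the fallback is to use the form of the reduced bifurcation equation in Hale's proof, whose $\lambda$-linear term is $\G_{v\lambda}[\xi_0,\cdot]$ and whose $O(s)$ term vanishes by the same Fourier argument.
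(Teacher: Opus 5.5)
Your proposal is correct and follows the paper's proof essentially step for step. It uses the same shifted map $\mathcal{G}(w,\lambda)=\F(u_0(\lambda)+w,\lambda)$, verifies \ref{H1}--\ref{H3} through the same results (Proposition~\ref{Index_zero}, Lemma~\ref{Kernels_equivalence} and Lemma~\ref{transv_proj}), and obtains $\Lambda'(0)=0$ the same way, by differentiating the curve equation twice at $s=0$ and invoking transversality.

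The only difference is how you eliminate $\mathcal{G}_{ww}(0,\lambda_0)[\xi_0,\xi_0]$. The paper uses a translation $\tau$ in $x$ with $\tau\xi_0=-\xi_0$ and subtracts the twice-differentiated identities for $W(s)$ and $\tau W(s)$, which needs no information about the Fourier content of the quadratic term. Your projection onto the first mode instead relies on the extra (correct) observation that $\F_{uu}$ at the $x$-independent state $u_0$ sends mode-one data into modes $0$ and $2$; both arguments come from the same translation equivariance.
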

\begin{proof}
    We run the argument of Theorem~\ref{bif_lemma} with two parameters, i.e.~$n=2$, for the function $\mathcal{G} \colon \mathscr{V} \to \Y$ defined by 
    \begin{equation}\label{eq: perturbed_function}
        \mathcal{G}(w, \lambda) := \F(u_0(\lambda) + w, \lambda).
    \end{equation}
    Here $\lambda := (\theta, \gamma) \in (0, \pi) \times \Real$, $u_0(\lambda)$ is the Nusselt shear solution defined in~\eqref{eq: u0}, and $\mathscr{V}$ is obtained from $\U$ in the obvious way. Clearly for all $\lambda \in (0, \pi) \times (0, \infty)$,
    \begin{equation}\label{eq: trivial_line}
        \mathcal{G}(0, \lambda) = \F(u_0(\lambda), \lambda) = 0,
    \end{equation}
    satisfying the first hypothesis~\ref{H1} of Theorem~\ref{bif_lemma}.

    Proposition~\ref{Index_zero} implies that
    \begin{equation*}
        \mathcal{G}_w(0, \lambda) = \F_u(u_0(\lambda), \lambda) = \mathscr{L} (\lambda)
    \end{equation*}
    is a Fredholm operator with index $-1$ for any $\lambda \in (0, \pi) \times (0, \infty)$. Letting $\lambda_0 := (\theta_*, \gamma_*)$, by Lemma~\ref{Kernels_equivalence}, assumption~\ref{assumption: os_kernel} implies that
    \begin{equation*}
        \ker \mathcal{G}_w(0, \lambda_0) = \ker \mathscr{L}(\lambda_0) = \operatorname{span} \{\xi_0\},
    \end{equation*}
    so hypothesis~\ref{H2} of Theorem~\ref{bif_lemma} also holds. 
    
    By Lemma~\ref{transv_proj}, assumption~\ref{assumption: os_transv} implies that 
    \begin{equation*}
        (D_{\theta} \mathscr{L}(\lambda_0) \dot{\theta}) \xi_0 + (D_{\gamma} \mathscr{L}(\lambda_0) \dot{\gamma}) \xi_0 \notin \operatorname{ran} \mathscr{L}(\lambda_0)
    \end{equation*}
    for any nonzero $(\dot{\theta}, \dot{\gamma}) \in \Real^2$.  Now $D_{\theta} \mathscr{L}(\lambda_0) = D_{w \theta} \mathcal{G}(0, \lambda_0)$ and $D_{\gamma} \mathscr{L}(\lambda_0) = D_{w \gamma} \mathcal{G}(0, \lambda_0)$, so the final hypothesis~\ref{H3} of Theorem~\ref{bif_lemma} holds, namely that for any nonzero $\mu \in \Real^2$,
    \begin{equation}\label{eq: G_transverse}
        D_{w \lambda} \mathcal{G}(0, \lambda_0) [\xi_0, \mu] \notin \operatorname{ran} \mathcal{G}_w (0, \lambda_0).
    \end{equation} 

    Therefore, Theorem~\ref{bif_lemma} implies the existence of analytic functions $W \colon (-\eps, \eps) \to \U$, $\Theta \colon (-\eps, \eps) \to (0, \pi)$ and $\Gamma \colon (-\eps, \eps) \to (0, \infty)$, satisfying $W(0) = 0$, $W'(0) = \xi_0$ and $(\Theta(0), \Gamma(0)) = \lambda_0$ such that 
    \begin{equation}\label{eq: G_curve}
        \mathcal{G}(W(s), \Theta(s), \Gamma(s)) = 0.
    \end{equation}
    Setting 
    \begin{equation}\label{eq: U_def}
        U(s) := u_0(\Theta(s), \Gamma(s)) + W(s),
    \end{equation}
    we have $\F(U(s), \Theta(s), \Gamma(s)) = \mathcal{G}(W(s),\Theta(s), \Gamma(s))=0$ for all $s \in (-\eps, \eps)$.

    To establish the expansions~\eqref{eq: U_exp} and~\eqref{eq: Lambda_exp}, we show that $\Theta'(0) = \Gamma'(0) = 0$ and $U'(0) = W'(0)$. Differentiating~\eqref{eq: G_curve} twice with respect to $s$ at $s=0$ and using~\eqref{eq: trivial_line}, we obtain
    \begin{equation} \label{eq: curve''}
        \mathcal{G}_{ww}(0,\lambda_0)[\dot{w}, \dot{w}] + 2 \mathcal{G}_{w \lambda}(0,\lambda_0) [\dot{w}, \dot{\lambda}] + \mathcal{G}_w(0,\lambda_0)[\ddot{w}] = 0,
    \end{equation}
    where $\dot{w} := W'(0) = \xi_0$, $\dot{\lambda} := (\Theta'(0), \Gamma'(0))$ and $\ddot{w} := U''(0)$. Now we utilise the translational symmetry of the problem. Denoting by $\tau w$ the translation of $w$ in the $x$ direction by $\pi /(2k)$,~\eqref{eq: G_curve} implies that
    \begin{equation*}
        \mathcal{G}(\tau W(s), \Theta(s), \Gamma(s)) = 0.
    \end{equation*}
    By differentiating this equation twice as above, we obtain
    \begin{equation*}
        \mathcal{G}_{ww}(0,\lambda_0)[\tau \dot{w}, \tau \dot{w}] + 2 \mathcal{G}_{w \lambda}(0,\lambda_0) [\tau \dot{w}, \dot{\lambda}] + \mathcal{G}_w(0,\lambda_0)[\tau \ddot{w}] = 0.
    \end{equation*}
    Using $\tau \dot{w} = - \dot{w}$, this becomes
    \begin{equation}\label{eq: tr_curve''}
        \mathcal{G}_{ww}(0,\lambda_0)[\dot{w}, \dot{w}] - 2 \mathcal{G}_{w \lambda}(0,\lambda_0) [\dot{w}, \dot{\lambda}] + \mathcal{G}_w(0,\lambda_0)[\tau \ddot{w}] = 0.
    \end{equation}  
    Subtracting~\eqref{eq: tr_curve''} from~\eqref{eq: curve''} yields
    \begin{equation*}
        4 \mathcal{G}_{w \lambda}(0,\lambda_0) [\dot{w}, \dot{\lambda}] + \mathcal{G}_w(0,\lambda_0)[\ddot{w} - \tau \ddot{w}] = 0.
    \end{equation*}
    The transversality condition~\eqref{eq: G_transverse} then implies $\dot{\lambda} = (\Theta'(0), \Gamma'(0)) = 0$, verifying~\eqref{eq: Lambda_exp}. Differentiating~\eqref{eq: U_def} at $s = 0$ and using $(\Theta'(0), \Gamma'(0)) = 0$ we find that $U'(0) = W'(0) = \xi_0$ which confirms~\eqref{eq: U_exp}. 
    
    This completes the first claim~\ref{gen_local_existence}. The uniqueness claim~\ref{gen_local_uniqueness} follows from the corresponding uniqueness statement~\eqref{eq: bif_uniqueness} in Theorem~\ref{bif_lemma}.
\end{proof}

Theorem~\ref{thm: gen_local} implies Theorem~\ref{thm: general_local}, with the physical velocity $V(s)$, pressure $P(s)$ and domain $\Omega(s)$ recovered by~\eqref{eq:stream_function},~\eqref{eq: sf_p_cov} and~\eqref{eq: image_of_f}. To demonstrate the expansions in Remark~\ref{solution_expansions}, we compute the components of~\eqref{eq: U_exp} to find expansions for $\psi(s)$, $p(s)$ and $\eta(s)$ along the curve. By the definiton of $\mathcal{Q}$ and $\mathscr{E}$,
\begin{equation*}
    \xi_0 = \mathscr{Q} \mathscr{E} \hat{\varphi} = \operatorname{Re} \Bigg\{ \begin{pmatrix}
    \hat{\varphi}_* + \PsiNusselt' \hat{\zeta}_* \\ \hat{q}_* + P_0' \hat{\zeta}_* \\ \hat{\zeta}_*
    \end{pmatrix} e^{ikx} \Bigg\}.
\end{equation*}
Now, $\hat{\zeta}_*(1) \in \Real$ by the definition of $\X$, which implies $\hat{\zeta}_*(y) \in \Real$ for all $y \in [0,1]$, so
\begin{align*}
    \psi(x, y; s) &= \PsiNusselt(y) + s \PsiNusselt'(y) \hat{\zeta}_*(y) \cos kx + s \operatorname{Re} \big\{ \hat{\varphi}_*(y) e^{ikx} \big\} + O(s^2), \\
    p(x, y; s) &= P_0(y) + s P_0'(y) \hat{\zeta}_*(y) \cos kx + s \operatorname{Re} \big\{ \hat{q}_*(y) e^{ikx} \big\} + O(s^2), \\
    \eta(x, y; s) &= y + s \hat{\zeta}_*(y) \cos kx + O(s^2).
\end{align*}
Thinking of the explicit polynomial functions $\PsiNusselt$ and $P_0$ as defined on all of $\Real$, we can write these expansions as
\begin{align*}
    \psi(x, y; s) &= \PsiNusselt(\eta(x, y; s)) + s \operatorname{Re} \big\{ \hat{\varphi}_*(y) e^{ikx} \big\} + O(s^2), \\
    p(x, y; s) &= P_0(\eta(x, y; s)) + s \operatorname{Re} \big\{ \hat{q}_*(y) e^{ikx} \big\} + O(s^2).
\end{align*}
Writing $Y = \eta(x, y; s) = y + O(s)$ and $X = x + O(s)$, and scaling so that $\hat{\zeta}_*(1) = 1$, we obtain~\eqref{eq: Y_exp} and~\eqref{eq: VP_exp}.

We now state and prove the main global existence result, which is a more precise version of Theorem~\ref{thm: general_global}.
\begin{theorem}[Global bifurcation]\label{thm: gen_global}
    Let $\mathscr{C}_{\mathrm{loc}}$ be the local curve of solutions furnished by Theorem~\ref{thm: gen_local}. Then $\mathscr{C}_{\mathrm{loc}}$ is contained in a continuous curve of solutions,
    \begin{equation*}
        \mathscr{C} := \left\{\left(U(s), \Theta(s), \Gamma(s)\right) : s \in \Real \right\} \subset \U.
    \end{equation*} 
    Near any point along $\mathscr{C}$, we can re-parameterise $\mathscr{C}$ so that $(U(\cdot), \Theta(\cdot), \Gamma(\cdot))$ is real analytic. In addition, one of the following alternatives occur:
    \begin{enumerate}[label=\rm(\alph*)]
        \item \textup{(Blow-up)}\label{blowup_alt_psi} Defining
        \begin{equation*}
            \mathscr{N}(\psi, p, \eta, \theta) := \sup_{\substack{z_1, z_2 \in \Top \\ z_1 \neq z_2}} \frac{\lvert z_1 - z_2 \rvert}{\lvert f(z_1) - f(z_2) \rvert} + \sup_\Top (\lvert \nabla \eta \rvert + \lvert D^2 \eta \rvert + \lvert D^2 \psi \rvert) + \sup_\Rect \lvert \nabla \psi \rvert + \frac{1}{\inf_\Top \lvert \nabla \psi\rvert + \sigma} + \lvert \cot \theta\rvert,
        \end{equation*}
        then as $s \to \infty$ and $s \to -\infty$, 
        \begin{equation*} 
            \mathscr{N}(U(s), \Theta(s)) \longrightarrow \infty.
        \end{equation*}
        \item \textup{(Closed loop)} For some $T > 0$ we have $(U(T), \Theta(T), \Gamma(T)) = (u_0(\theta_*, \gamma_*), \theta_*, \gamma_*)$, and (possibly after re-parameterisation)
        \begin{equation*}
            (U(s+T), \Theta(s+T), \Gamma(s+T)) = (U(s), \Theta(s), \Gamma(s)),
        \end{equation*}
        for all $s > 0$.
    \end{enumerate}
\end{theorem}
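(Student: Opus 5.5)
We obtain Theorem~\ref{thm: gen_global} by applying the abstract result Theorem~\ref{theorem: gbt} to the shifted map $\mathcal{G}(w,\lambda) = \F(u_0(\lambda)+w,\lambda)$ from the proof of Theorem~\ref{thm: gen_local}, and then translating the abstract alternatives into the concrete quantity $\mathscr{N}$. The hypotheses of Theorem~\ref{bif_lemma} were already verified in that proof. The extra semi-Fredholm hypothesis holds at every point of $\U$ by Proposition~\ref{Schauder}. Indeed, at any $(u,\theta,\gamma)\in\U$ the conditions~\eqref{eq: hypothesis_for_schauder} hold for some $\delta>0$. They follow from compactness of $\Top$, injectivity of $f$, the conformality relation $|\nabla\eta|=|Df|$, and $\inf_\Top|\nabla\psi|+\sigma>0$, together with $\psi_x=0$ on $\Top$ from~\eqref{eq: flattened_kin}, which gives $|\nabla\psi|=|\psi_y|$ there. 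The Schauder estimate~\eqref{eq: system_schauder} with the compact embedding $\X\hookrightarrow L^1$ then gives closed range and finite-dimensional kernel. This yields a global continuous curve $\mathscr{C}$, locally analytically reparametrisable, along which one of the alternatives (i) blow-up of $N(s)$, (ii) loss of compactness, or (b) closed loop holds.

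The core of the argument is to show that if $\mathscr{N}(U(s_n),\Theta(s_n))$ stays bounded along some sequence $s_n\to\pm\infty$, then neither (i) nor (ii) can occur. Boundedness of $\mathscr{N}$ gives a uniform $\delta>0$ for which~\eqref{eq: uniform_inequalities} holds. Here $\inf_\Top|\nabla\eta|$ is controlled by the first term of $\mathscr{N}$ via Lemma~\ref{uniform_dist_dU}, and $\sup_\Top|\nabla\eta|$ is controlled directly. Lemma~\ref{uniform_dist_dU} then bounds $\operatorname{dist}((u,\theta,\gamma),\d\U)$ from below, apart from the $\theta$ and $\gamma$ constraints. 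The bound on $|\cot\theta|$ keeps $\theta$ away from $0$ and $\pi$, $|\gamma|$ is bounded by~\eqref{eq: gamma_est}, and we need a separate argument that $\gamma$ stays away from $0$. For that we would either use the no-slip relation $\psi_y=-\gamma\eta_y$ on $\Bottom$ together with the flux, or simply enlarge the parameter domain to allow $\gamma\in\Real$ and note that the Nusselt solution and all the estimates make sense there.

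Proposition~\ref{uniform_prop} then bounds $\|U(s_n)\|$ in the stronger space $C^{5,\alpha}\times C^{3,\alpha}\times C^{5,\alpha}$ in terms of $\mathscr{N}$. This bounds $N(s_n)$, ruling out (i) along that sequence. By Arzel\`a--Ascoli, the compact embedding into $\X$ also gives a subsequence converging in $\X$, and the distance bound keeps the limit inside $\U$, ruling out (ii). Hence, if the loop alternative fails, $\mathscr{N}\to\infty$ along every sequence $s\to\pm\infty$, which is alternative (a).

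The step we expect to be the most delicate is the closed-loop case: we must upgrade the abstract periodicity to the statement that the curve returns to $(u_0(\theta_*,\gamma_*),\theta_*,\gamma_*)$. The plan is to follow Buffoni--Toland. Periodicity makes $\mathscr{C}$ a closed loop containing $\mathscr{C}_{\mathrm{loc}}$, and the construction in Theorem~\ref{theorem: gbt} (via Appendix~\ref{section: extra_abstract_theory}) builds $\mathscr{C}$ starting from the bifurcation point $s=0$. Since $\mathscr{C}_{\mathrm{loc}}$ passes through $U(0)=u_0(\theta_*,\gamma_*)$, periodicity with period $T$ gives $U(T)=U(0)$. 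We would check that the parametrisation in the appendix indeed begins at the trivial solution rather than at an interior point of $\mathscr{C}_{\mathrm{loc}}$, reparametrising if necessary. A smaller but genuine worry is the $\gamma\to0$ boundary of $\U$ mentioned above, which we would address first by the enlargement of the parameter domain.
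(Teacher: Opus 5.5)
Your route is the paper's: apply Theorem~\ref{theorem: gbt} to $\mathcal{G}$, get semi-Fredholmness from Proposition~\ref{Schauder}, then use Proposition~\ref{uniform_prop}, Lemma~\ref{uniform_dist_dU}, \eqref{eq: gamma_est} and Arzel\`a--Ascoli to exclude loss of compactness and to convert blow-up of $N$ into blow-up of $\mathscr{N}$. However, your exclusion of alternative (ii) argues along the wrong sequence. Alternative (ii) supplies \emph{some} sequence $s_n'\to\pm\infty$ with $\sup_n N(s_n')<\infty$ and no convergent subsequence. Showing that $U(s_n)$ converges along a sequence on which $\mathscr{N}$ is bounded says nothing about $s_n'$. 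The missing step is the converse implication: $N$ bounded forces $\mathscr{N}$ bounded, i.e.\ \eqref{eq: uniform_inequalities} with a uniform $\delta$. The $\X$-norm controls all the supremum terms of $\mathscr{N}$. The lower bound on $\operatorname{dist}(\cdot,\partial\U)$ controls $\lvert\cot\theta\rvert$, the chord--arc quotient on $\Top$ and $\inf_\Top\lvert\nabla\psi\rvert+\sigma$, because a small value of any of these can be pushed to its degenerate value by a correspondingly small perturbation inside $\X$. The paper uses this silently when it writes $C(N(s_n))$. With it, Proposition~\ref{uniform_prop} and Arzel\`a--Ascoli apply to every $N$-bounded sequence, so (ii) is excluded outright, and your contrapositive then turns (i) into (a).

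Your worry about $\gamma$ is correct, and the paper's proof passes over it. Lemma~\ref{uniform_dist_dU} gives no control of the $\theta$- and $\gamma$-components of $\operatorname{dist}(\cdot,\partial\U)$. The term $\lvert\cot\theta\rvert$ in $\mathscr{N}$ handles $\theta$, but nothing in $\mathscr{N}$ excludes $\Gamma(s)\to0$, which would make $N\to\infty$ while $\mathscr{N}$ stays bounded. Your first suggestion (no-slip plus flux) gives no such bound: those relations are fully compatible with $\gamma=0$, as the Nusselt profile at $\gamma=0$ shows, and nothing in $\F$ or in the estimates of Section~\ref{section: uniform} degenerates there. Unless you can exclude $\Gamma(s)\to0$ by other means, you need one of two modifications. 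Your second fix works: replace $(0,\infty)$ by $\Real$ in \eqref{eq: open_set}, accepting that $\Gamma$ may change sign along $\mathscr{C}$. Alternatively, to keep $\mathscr{C}\subset\U$ literally, add $1/\gamma$ to $\mathscr{N}$.

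Two smaller remarks. First, $\psi_x=0$ on $\Top$ holds only on $\F^{-1}(0)$, so your argument gives semi-Fredholmness at solutions rather than at every point of $\U$; that is all the Buffoni--Toland argument actually uses. Second, the closed-loop worry is moot: $\mathscr{C}$ contains $(u_0(\theta_*,\gamma_*),\theta_*,\gamma_*)$ at some parameter value $s_0$, so $T$-periodicity returns the curve there at $s_0+T$ whatever the parametrisation.
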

\begin{proof}
    By Proposition~\ref{Schauder}, $\F_u(u,\theta, \gamma)$ is a semi-Fredholm operator with closed range and finite-dimensional kernel, for any $(u, \theta, \gamma) \in \U$, so by Theorem~\ref{theorem: gbt} we have a global continuation of the curve as claimed, with the alternatives~\ref{blowup_comp_alt}\ref{abstract_blowup_alt},~\ref{blowup_comp_alt}\ref{abstract_comp_alt} and~\ref{abstract_loop_alt} listed in that theorem. In the rest of the proof, we rule out the loss of compactness alternative~\ref{blowup_comp_alt}\ref{abstract_comp_alt} and improve the blow-up alternative~\ref{blowup_comp_alt}\ref{abstract_blowup_alt}.
    
    We eliminate~\ref{blowup_comp_alt}\ref{abstract_comp_alt} by the following argument. Without loss of generality, suppose we have a sequence $s_n \to \infty$ with $\sup_n N(s_n) \leq M$, for $N(s)$ defined as in~\eqref{eq: blowup_quantity}, for some $M > 0$. Using Proposition~\ref{uniform_prop}, $U(s_n) \in C^{5,\alpha}(\Rect) \times C^{3,\alpha}(\Rect) \times C^{5,\alpha}(\Rect)$, and
    \begin{equation*}
        \lVert U(s_n)\rVert_{C^{5,\alpha}(\Rect) \times C^{3,\alpha}(\Rect) \times C^{5,\alpha}(\Rect)} \leq C (N(s_n)) \leq C(M).
    \end{equation*}
    Then by Arzel\'a--Ascoli, we extract a convergent subsequence
    \begin{equation*}
        \left(U(s_n), \Theta(s_n), \Gamma(s_n)\right) \to (U, \Theta, \Gamma) \in C^{2, \alpha}(\Rect) \times C^{2, \alpha}(\Rect) \times C^{2, \alpha}(\Rect) \times \Real^2
    \end{equation*}
    and $U \in \X$ since $\X$ is closed in $C^{4,\alpha}(\Rect) \times C^{2,\alpha}(\Rect) \times C^{4,\alpha}(\Rect)$. Since
    \begin{equation*}
        \inf_n \operatorname{dist}\left((U(s_n),\lambda), \d \U\right) > M^{-1},
    \end{equation*}
    we have $(U, \Theta, \Gamma) \in \U$. Thus, alternative~\ref{blowup_comp_alt}\ref{abstract_comp_alt} does not occur.
    
    It remains to reduce~\ref{blowup_comp_alt}\ref{abstract_blowup_alt} of Theorem~\ref{theorem: gbt} to~\ref{blowup_alt_psi}.
    Suppose $(u,\lambda) = (\psi, p, \eta, \theta, \gamma) \in \U$ and that there exists $M > 0$ such that $\mathscr{N} (u, \theta) \leq M$. Then $(u,\lambda)$ satisfies~\eqref{eq: uniform_inequalities} with $\delta = M^{-1}$, so by Lemma~\ref{uniform_dist_dU}, $\operatorname{dist}\left((u,\lambda), \d \U\right) > C(M^{-1})$, while Proposition~\ref{uniform_prop} implies $\lVert u \rVert_\X \leq C(M)$. Noting that by~\eqref{eq: gamma_est}, $\lvert \gamma \rvert \leq C(M)$, we have
    \begin{equation*}
        \lVert u \rVert_\X + \lvert \lambda \rvert + \frac{1}{\operatorname{dist}\left((u,\lambda), \d \U\right)} \leq C(M).
    \end{equation*}
    Thus,~\ref{blowup_comp_alt}\ref{abstract_blowup_alt} of Theorem~\ref{theorem: gbt} implies $\mathscr{N}(U(s), \Theta(s)) \longrightarrow \infty$, and we are done.
\end{proof}
The blow-up alternative~\ref{blowup_alt_psi} in Theorem~\ref{thm: gen_global} is equivalent to the blow-up alternative~\ref{blowup_alternative} of Theorem~\ref{thm: general_global} by~\eqref{eq: image_of_f} and~\eqref{eq: cauchy_riemann} in the definition of $f$ and the relation of $V$ to $\psi$ and $\eta$ through~\eqref{eq:stream_function} and~\eqref{eq: sf_p_cov}

\subsection{Small wavenumber regime}
The aim of this subsection is to prove Theorem~\ref{thm: smallk} by verifying the hypotheses~\ref{assumption: os_kernel}--\ref{assumption: os_transv} of Theorem~\ref{thm: general_local}. We establish rigorous asymptotic expansions in $k$ for the kernel of the Orr--Sommerfeld operator and its adjoint, using analyticity to rule out higher-mode resonances in the kernel of $\OS$ for all but countably many $k$ in some neighbourhood of the origin. This confirms the kernel hypothesis~\ref{assumption: os_kernel}. The transversality condition~\ref{assumption: os_transv} is verified using the expansions. 

Fix a Reynolds number $\Rey \geq 0$. Direct calculation yields that at $\gamma = 2$, for any $\theta \in (0, \pi)$, the Orr--Sommerfeld operator $\OS(0, \Rey, \theta, 2)$ has a one-dimensional kernel spanned by $\hat{\varphi}_0 := y^2/2$. In the following result, we find a curve of angles and wavespeeds, parameterised by the wavenumber $k$, along which the Orr--Sommerfeld operator has a nontrivial kernel. This curve bifurcates from wavespeed $\gamma_0 = 2$ and angle $\theta_0 \in (0, \pi/2)$ defined by
\begin{equation}\label{eq: critical_pt}
    \cot \theta_0 = \frac{4}{5} \Rey,
\end{equation}
consistent with the critical Reynolds number for linear stability reported in~\cite{benjamin1957wave, yih1963stability, chang2002complex}.

\begin{lemma}\label{lemma: os_ker}
    Let $\gamma_0 = 2$ and $\theta_0$ be given by~\eqref{eq: critical_pt}. Then there exist $\eps > 0$ and analytic functions $\tilde{\theta} \colon (-\eps, \eps) \to (0,\pi)$, $\tilde{\gamma} \colon (-\eps, \eps) \to \Real$, and $\tilde{\varphi} \colon (-\eps, \eps) \to \Xos$ with the following properties:
    \begin{enumerate}[label=\rm(\alph*)]
        \item For $k \in (-\eps, \eps)$,
        \begin{equation*}
            \ker \OS(k, \Rey, \tilde{\theta}(k), \tilde{\gamma}(k)) = \operatorname{span}\{\tilde{\varphi}(k)\};
        \end{equation*}
        \item\label{smallk_expansions} The functions have the following expansions:
        \begin{align}
            \cot \tilde{\theta}(k) &= \frac{4}{5} \Rey + k^{2} \left(- \frac{\sigma}{2} - \frac{7743}{2800} \Rey + \frac{16}{1126125} \Rey^3\right)  + O(k^3), \label{eq: theta_exp}\\
            \tilde{\gamma}(k) &= 2 - 2 k^2 + O(k^3), \label{eq: gamma_exp}\\
            \tilde{\varphi}(y; k) &= \frac{y^2}{2} - \frac{1}{60} ik \Rey y^2(1-y)(2-y)^2 + O(k^2). \label{eq: kernel_exp}
        \end{align}
    \end{enumerate}
\end{lemma}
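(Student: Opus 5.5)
The plan is a Lyapunov--Schmidt reduction about the degenerate point $k=0$, $\gamma=2$, reducing $\OS\hat{\varphi}=0$ to one complex scalar equation, followed by the implicit function theorem in the two real unknowns $(\cot\theta,\gamma)$. I treat $\OS(k,\Rey,\theta,\gamma)$ as a complex-linear operator $\extXos\to\Yos$, analytic in $(k,\cot\theta,\gamma)$, including negative $k$. At $k=0$, the proof of Lemma~\ref{Kernels_equivalence} shows that for $\gamma=2$ and every $\theta$ the complex kernel is $\operatorname{span}_{\Complex}\{y^2/2\}$. The only boundary term involving $\theta$ carries a factor $ik$, so $\theta$ is invisible at $k=0$. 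Since $\OS$ has Fredholm index $0$ (Lemma~\ref{adj_fred_alt}), $\operatorname{ran}\OS(0,\Rey,\theta,2)$ has complex codimension one. By the Fredholm alternative of Lemma~\ref{adj_fred_alt} it is cut out by pairing with a kernel element $\varphi_0^*$ of $\OS^*(0,\Rey,\theta,2)$, which can be found explicitly as a low-degree polynomial.

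First, I write $\hat{\varphi}=y^2/2+w$ with $w$ in a closed complement of the kernel, normalised by $w''(0)=0$. I let $\Pi$ be the projection onto a complement of $\operatorname{span}\{g_0\}$ along the range, where $g_0\notin\operatorname{ran}\OS(0,\Rey,\theta_0,2)$. The implicit function theorem applied to $\Pi\OS(y^2/2+w)=0$ gives an analytic $w=w(k,\cot\theta,\gamma)$ with $w(0,\cdot,2)=0$. The reduced equation is the complex scalar equation
\begin{equation*}
  G(k,\cot\theta,\gamma):=\big\langle \varphi_0^*,\OS(y^2/2+w)\big\rangle_{\Complex}=0,
\end{equation*}
where $\langle\cdot,\cdot\rangle_{\Complex}$ denotes the complex sesquilinear version of the pairing~\eqref{eq: bilinear}.

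Second, I exploit the symmetry $\overline{\OS(k)\hat{\varphi}}=\OS(-k)\overline{\hat{\varphi}}$, valid because all coefficients are real apart from factors $ik$. With real $\varphi_0^*$ and a conjugation-compatible normalisation of $w$, this gives $\overline{G(k)}=G(-k)$. Hence $\operatorname{Re}G$ is even in $k$ and $\operatorname{Im}G$ is odd, so $H:=(\operatorname{Re}G,\,k^{-1}\operatorname{Im}G)$ is real analytic. At $k=0$, $\operatorname{Re}G$ depends on $\gamma$ through the $2/\PsiNusselt'(1)$ term, with nonzero $\gamma$-derivative, since $\hat{\varphi}_0''(1)+\frac{2}{1-\gamma}\hat{\varphi}_0(1)$ varies with $\gamma$. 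The quantity $\partial_k\operatorname{Im}G|_{k=0}$ is affine in $\cot\theta$ with slope proportional to $\hat{\varphi}_0(1)\varphi_0^*(1)\neq0$.

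Third, I compute the first-order correction $w_1=\partial_k w$, which solves $\hat{\varphi}^{(4)}=i\Rey\,\PsiNusselt'\hat{\varphi}_0''-i\Rey\,\PsiNusselt'''\hat{\varphi}_0$ together with the linearised boundary conditions. The resulting polynomial should be the $ik\Rey(\cdots)$ term in~\eqref{eq: kernel_exp}. Setting $\partial_k\operatorname{Im}G=0$ should then produce $\cot\theta_0=\frac45\Rey$. The Jacobian of $H$ in $(\cot\theta,\gamma)$ at $(0,\cot\theta_0,2)$ is triangular with nonzero diagonal, so the implicit function theorem yields analytic $\tilde{\theta}(k)$ and $\tilde{\gamma}(k)$. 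I then set $\tilde{\varphi}(k)=y^2/2+w$ and rotate by a complex phase so that $\operatorname{Im}\tilde{\varphi}(1)=0$, which is possible because $\tilde{\varphi}(1)\approx1/2\neq0$; this places $\tilde{\varphi}(k)$ in $\Xos$.

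Part (a) follows from upper semicontinuity of the kernel dimension, since the complex kernel is one-dimensional at $k=0$. Over $\Real$ with the phase constraint, $\ker\OS\cap\Xos=\operatorname{span}_\Real\{\tilde{\varphi}(k)\}$. The expansions in (b) come from matching coefficients at orders $k^2$ and $k^3$: this means solving two more polynomial ODE problems and imposing solvability by pairing with $\varphi_0^*$.

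I expect the main obstacle to be this order-$k^2$ bookkeeping, which requires the second-order kernel correction and the $O(k^2)$ terms in the boundary conditions. This is where the constants $-\frac{\sigma}{2}-\frac{7743}{2800}\Rey+\frac{16}{1126125}\Rey^3$ arise. I would carry this out with computer algebra on the explicit polynomial hierarchy.
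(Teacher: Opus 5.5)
Your argument is correct, but it is organised differently from the paper's. The paper does not reduce to a scalar equation with $k$ as a parameter. Instead it applies Crandall--Rabinowitz (Theorem~\ref{bif_lemma} with $n=1$) to $(\theta,k,\gamma,\hat\varphi)\mapsto(\OS(k,\Rey,\theta,\gamma)\hat\varphi,\,g(\hat\varphi)-1)$ on the real space $\Xos$. There $\theta$ is the bifurcation parameter, $(k,\gamma,\hat\varphi)$ are the unknowns, and $(0,2,y^2/2)$ is the trivial family. The derivative in $(k,\gamma,\hat\varphi)$ degenerates exactly when $4\Rey-5\cot\theta=0$, with kernel direction $\dot k=1$. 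Transversality in $\theta$ is checked by hand, and the curve is then reparametrised by $k$.

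Your condition $\partial_k\operatorname{Im}G=0$ is the same degeneracy seen through the adjoint. With $\varphi_0^*=(3y^2-y^3)/6$ one finds $\partial_kG(0,\cot\theta,2)=i\big(\tfrac13\cot\theta-\tfrac{4}{15}\Rey\big)$. No $w_1$ enters here, since $\varphi_0^*$ annihilates the range at $k=0$; you do not need to compute $w_1$ before this step. The Jacobian of $H$ has entries $\partial_{\cot\theta}\operatorname{Re}G=0$, $\partial_\gamma\operatorname{Re}G=(\varphi_0^*)'(1)=\tfrac12$ and $\partial_{\cot\theta}(k^{-1}\operatorname{Im}G)=\tfrac13$. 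It is therefore invertible, and your implicit function step is valid.

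What your route buys:
\begin{itemize}
\item It uses a plain implicit function theorem instead of a bifurcation theorem.
\item The symmetry $\overline{G(k)}=G(-k)$ makes $\tilde\theta$ and $\tilde\gamma$ even in $k$, so the remainders in \eqref{eq: theta_exp}--\eqref{eq: gamma_exp} are in fact $O(k^4)$.
\item Since the problem is linear in $\hat\varphi$, it describes all nearby parameters with nontrivial kernel: the line $\{k=0,\ \gamma=2\}$ together with your curve. This serves the same purpose as \eqref{eq: uniqueness_Gzeros} in Lemma~\ref{lemma: nonres}.
\end{itemize}
What the paper's route buys: the phase constraint and normalisation are built into the unknown, so $\tilde\varphi(k)\in\Xos$ comes out directly, and it reuses Theorem~\ref{bif_lemma}.

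Two small corrections. First, $\Pi$ should project onto the range along $\operatorname{span}\{g_0\}$, not the other way round. Choosing $g_0$ real, e.g.\ $(0,1,0)$, makes $\Pi$ commute with conjugation, as your symmetry argument requires. Second, with your normalisation $w''(0)=0$, the $O(k)$ term of $y^2/2+w$ is $i\Rey\big(\tfrac{y^5}{60}-\tfrac{y^4}{12}+\tfrac{2y^3}{15}\big)$, not the term in \eqref{eq: kernel_exp}. It becomes that term only after your phase rotation adds $-\tfrac{i\Rey}{15}ky^2$.

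The order-$k^2$ coefficients are also left as an omitted computation in the paper, so your plan is no less complete there.
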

\begin{proof}
    The Orr--Sommerfeld operator $\OS(k, \Rey, \theta, \gamma)$ is Fredholm for all $k$, with a one-dimensional kernel at $k=0$, so by continuity, the dimension of its kernel is at most one for sufficiently small $k$. 
    To find a one-dimensional subspace in the kernel of $\OS$, we will bifurcate from the line of $k=0$ solutions by a Crandall--Rabinowitz argument for an auxiliary function $G$, which adds a normalisation condition. More precisely, we define the map $G \colon (0, \pi) \times \Real^2 \times \Xos\to \Yos \times \Real$ by
    \begin{equation}\label{eq: aux_G}
        G(\theta, k, \gamma, \hat{\varphi}) := \begin{pmatrix}
            \OS(k, \Rey, \theta, \gamma) \hat{\varphi} \\ 
            g(\hat{\varphi}) -1
        \end{pmatrix},
    \end{equation}
    where 
    \begin{equation}\label{eq: normalisation}
        g(\hat{\varphi}) := \sum_{j=0}^{3} \lvert \hat{\varphi}^{(j)}(0) \rvert^2.
    \end{equation}
    Clearly there is a correspondence between the zero set of $G$ and the kernel of $\OS$.

    We verify the hypotheses of the Crandall--Rabinowitz theorem, i.e.~Theorem~\ref{bif_lemma} with $n=1$. At zero $k$, we have a point $(\theta, 0, \gamma_0, \hat{\varphi}_0)$ in the zero set of $G$ for any $\theta \in (0, \pi)$, with $\gamma_0 = 2, \hat{\varphi}_0 = \tfrac{1}{2} y^2$. Now we consider the Fr\'echet derivative of $G$ with respect to $(k, \gamma, \varphi)$. Given $(f, c_1, c_2) \in \Yos$ and $c_3 \in \Real$, suppose
    \begin{equation}\label{eq: deriv_G}
        D_{(k, \gamma,\hat{\varphi})} G \left[(\theta, 0,\gamma_0, \hat{\varphi}_0) \right] \begin{pmatrix}
            \dot{k} \\ \dot{\gamma} \\ \dot{\phi}
        \end{pmatrix} = 
        \begin{pmatrix}
            \dot{\phi}^{(4)} - 2 i \dot{k} \Rey (y-1) \\
            \dot{\phi}''(1) - 2 \dot{\phi}(1) + \dot{\gamma} \\
            \dot{\phi}'''(1) + i \dot{k} (\Rey - \cot \theta) \\
            2 \Re \dot{\phi}''(0).
        \end{pmatrix} = \begin{pmatrix}
            f \\ c_1 \\ c_2 \\ c_3
        \end{pmatrix}.
    \end{equation}
    The ODE has general solution in the space $\Xos$ given by 
    \begin{equation*}
        \dot{\phi}(y) = a_2 y^2 + a_3 y^3 + \frac{1}{12}i \dot{k} \Rey \left(\frac{1}{5} y^5 - y^4\right) + I^4 f(y),
    \end{equation*}
    where $a_2, a_3 \in \Complex$ and $If(y)$ denotes the indefinite integral
    \begin{equation*}
        If(y) := \int^y_0 f(y_1) dy_1,
    \end{equation*}
    so that~\eqref{eq: deriv_G} is equivalent to following system for $a_2, a_3$ and $\dot{k}, \dot{\gamma} \in \Real$, 
    \begin{equation}\label{eq: system_for_frechet}
        \begin{aligned}
            4 a_3 - \frac{8i \dot{k} \Rey}{15} + \dot{\gamma} &= c_1 + 2 I^4 f(1) - I^2 f(1), \\
            6 a_3 - i \dot{k} \cot \theta &= c_2 - I^4 f(1), \\
            4 \Re a_2 &= c_3 - 2I^2 f(0), \\
            \operatorname{Im} \{a_2 + a_3 \} - \frac{\dot{k} \Rey}{15} &= - \operatorname{Im} I^4 f(1).
        \end{aligned}
    \end{equation}
    By eliminating $a_2$, $\Re a_3$ and $\dot{\gamma}$, the solvability of~\eqref{eq: system_for_frechet} is equivalent to a real-valued matrix problem,
    \begin{equation}\label{eq: frechet_matrix_system}
        \begin{pmatrix}
            4 & -\frac{8 \Rey}{15}\\
            6 & -\cot \theta
        \end{pmatrix} 
        \begin{pmatrix}
            \operatorname{Im} a_3 \\ \dot{k}
        \end{pmatrix} 
        = 
        \begin{pmatrix}
            \tilde{c}_1 \\ 
            \tilde{c}_2
        \end{pmatrix},
    \end{equation}
    for some $\tilde{c}_1, \tilde{c}_2 \in \Real$. This matrix has determinant $\frac{4}{5} \left(4 \Rey - 5\cot \theta\right)$, and precisely at $\theta_0$ given by~\eqref{eq: critical_pt}, the Fr\'echet derivative has a one-dimensional kernel spanned by 
    \begin{equation}\label{eq: smallk_frechet_kernel}
        \dot{k} = 1, \quad \dot{\gamma} = 0, \quad \dot{\phi} = - \frac{1}{60} i \Rey y^2(1-y)(2-y)^2.
    \end{equation}
    
    The final condition~\ref{H3} of Theorem~\ref{bif_lemma} is equivalent to the statement
    \begin{equation*}
        \d_\theta D_{(k, \gamma,\hat{\varphi})} G \left(\theta_0, 0, \gamma_0, \hat{\varphi}_0\right) \begin{pmatrix}
            \dot{k} \\ \dot{\gamma} \\ \dot{\phi}
        \end{pmatrix} \notin \operatorname{ran} D_{(k, \gamma,\hat{\varphi})} G \left(\theta_0, 0, \gamma_0, \hat{\varphi}_0\right)
    \end{equation*}
    with this choice of $\dot{k}$, $\dot{\gamma}$, $\dot{\phi}$. We calculate
    \begin{equation*}
        \d_\theta D_{(k, \gamma,\hat{\varphi})} G \left(\theta_0, 0, \gamma_0, \hat{\varphi}_0\right) \begin{pmatrix}
            \dot{k} \\ \dot{\gamma} \\ \dot{\phi}
        \end{pmatrix} = \bigg(1 + \dfrac{16}{25}\Rey^2 \bigg) \begin{pmatrix}
            0 \\ 0 \\ i \\ 0
        \end{pmatrix}.
    \end{equation*}
    Substituting $(f, c_1, c_2, c_3) = (0, 0, i, 0)$ in~\eqref{eq: system_for_frechet} yields
    \begin{equation*}
        4 \operatorname{Im} a_3 - \frac{8 \dot{k} \Rey}{15} = 0, \quad 6 \operatorname{Im} a_3 - \frac{4 \dot{k} \Rey}{5} = 1,
    \end{equation*}
    a contradiction. Therefore, the hypothesis~\ref{H3} holds and Theorem~\ref{bif_lemma} yields an $\eps_1 > 0$ and a nontrivial curve in the zero set of $G$,
    \begin{equation*}
        \left\{\left(\tilde{\theta}(s), \tilde{k}(s), \tilde{\gamma}(s), \tilde{\varphi}(s)\right) : |s| < \eps_1 \right\}.
    \end{equation*}
    Now $\tilde{k}(0)=0$ and $\tilde{k}'(0) = 1$, so there exists $\eps > 0$, an interval $\mathcal{I} \subset (-\eps_1, \eps_1)$ and an analytic inverse function $\tilde{k}^{-1} \colon (-\eps, \eps) \to \mathcal{I}$. Thus, abusing notation, we may parametrise the curve by $k \in (-\eps, \eps)$,
    \begin{equation*}
        G\left(\tilde{\theta}(k), k, \tilde{\gamma}(k), \tilde{\varphi}(k)\right) = 0.
    \end{equation*}
    The first order terms in the expansions in~\ref{smallk_expansions} are given by~\eqref{eq: smallk_frechet_kernel}. Since the construction uses the implicit function theorem, higher order terms in $\cot \tilde{\theta}(k)$, $\tilde{\gamma}(k)$ and $\tilde{\varphi}(k)$ can be obtained in the usual way. Note that obtaining $O(k^2)$ terms in the expansions for $\cot \tilde{\theta}$ and $\tilde{\gamma}$ requires expanding equations to $O(k^3)$. We omit the details. 

    For later use, we record that again by Theorem~\ref{bif_lemma} there exists an open set $\mathscr{N} \subset (0, \pi) \times \Real^2 \times \Xos$, such that $(\theta_0, 0, \gamma_0, \hat{\varphi}_0) \in \mathscr{N}$, and for some $\eps_2 > 0$,
    \begin{equation}\label{eq: uniqueness_Gzeros}
        \left\{(\theta, k, \gamma, \hat{\varphi}) \in \mathscr{N} : G(\theta, k, \gamma, \hat{\varphi}) = 0, ~ k \neq 0 \right\} = \left\{\left(\tilde{\theta}(k), k, \tilde{\gamma}(k), \tilde{\varphi}(k)\right) : 0 < \lvert k \rvert < \eps_2 \right\}. \qedhere
    \end{equation} 
\end{proof}

This result verifies the first part~\eqref{eq: os_1dker} of~\ref{assumption: os_kernel}. We establish the second part~\eqref{eq: os_nonres}, a non-resonance condition, for sufficiently small wavenumbers apart from some countable set.
\begin{lemma}\label{lemma: nonres}
    In the setting of Lemma~\ref{lemma: os_ker}, for $k \in (0, \eps)$ outside some countable set,
    \begin{equation}\label{eq: nonres}
        \ker \OS(nk, \Rey, \tilde{\theta}(k), \tilde{\gamma}(k)) = \{0\} \quad \text{for every } n \in \mathbb{N} \setminus \{1\}.
    \end{equation}
\end{lemma}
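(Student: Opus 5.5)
We will use analyticity in $k$ to show that, for each fixed $n \geq 2$, the set of $k \in (0,\eps)$ at which $\OS(nk, \Rey, \tilde{\theta}(k), \tilde{\gamma}(k))$ has nontrivial kernel is discrete in $(0,\eps)$. A countable union of countable sets is countable, so this gives the lemma.

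\textbf{Step 1: reduction to an analytic determinant.} For fixed $n$, consider the ODE $\OS\hat{\varphi}=0$ with wavenumber $nk$ and parameters $\tilde{\theta}(k)$, $\tilde{\gamma}(k)$. Its coefficients depend analytically on $k$, since $\tilde{\theta}$ and $\tilde{\gamma}$ are analytic by Lemma~\ref{lemma: os_ker}. Let $\Phi_1(y;k)$ and $\Phi_2(y;k)$ be the solutions of the fourth-order equation with initial data $\hat{\varphi}(0)=\hat{\varphi}'(0)=0$ and $(\hat{\varphi}''(0),\hat{\varphi}'''(0))$ equal to $(1,0)$ and $(0,1)$ respectively. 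By analytic dependence of ODE solutions on parameters, both are analytic in $k$. Applying the two boundary conditions at $y=1$ gives a $2\times 2$ complex matrix $B_n(k)$, and we set $D_n(k):=\det B_n(k)$. The coefficient $1/\PsiNusselt'(1)=1/(\tfrac{2}{3}\cdot\tfrac{3}{2}-\tilde{\gamma})=1/(1-\tilde{\gamma}(k))$ is analytic near $\tilde{\gamma}=2$, so $D_n$ is real analytic on $(-\eps,\eps)$. Over $\Complex$, the kernel on $\extXos$ is nontrivial if and only if $D_n(k)=0$. The phase constraint only cuts a complex line down to a real one, so the real kernel on $\Xos$ is nontrivial exactly when $D_n(k)=0$ as well.

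\textbf{Step 2: $D_n$ is not identically zero.} This is the key step. A real analytic function that is not identically zero has isolated zeros, hence only countably many in $(0,\eps)$. The plan is to evaluate near $k=0$. At $k=0$ we have $D_n(0)=0$, because $\OS(0,\Rey,\theta,2)$ has kernel $y^2/2$; so the value at zero does not settle the question. Instead compare with the case $n=1$: there $D_1\equiv 0$ along the curve. For $n\ge2$, expand $D_n(k)$ in powers of $k$. Since the coefficients depend on $nk$ but the parameters on $k$, the leading nontrivial coefficient involves $n$ through terms like $n k$ and $n^2 k^2$, weighed against $\cot\tilde\theta(k)-\tfrac45\Rey=O(k^2)$ and $\tilde\gamma(k)-2=-2k^2+O(k^3)$.

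Concretely, I would use the Crandall--Rabinowitz computation \eqref{eq: frechet_matrix_system}. At first order, the solvability matrix for wavenumber $nk$ has entries $4$, $-\tfrac{8}{15}\Rey n$, $6$, $-n\cot\theta_0$. Since its determinant vanishes at $\theta_0$, I expect a second-order computation to be needed. I would compute $D_n$ to order $k^2$ (or $k^3$), using $\tilde\gamma(k)=2-2k^2$ together with the $\gamma$-dependence of the surface condition $\hat\varphi''(1)+(n^2k^2+2/(1-\gamma))\hat\varphi(1)$, which contributes $(n^2-1)k^2$-type terms. The expectation is that the leading coefficient is a nonzero multiple of $n^2-1$.

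\textbf{Fallback for Step 2.} A cleaner way to avoid this expansion is to use the uniqueness statement \eqref{eq: uniqueness_Gzeros}. Suppose $D_n\equiv0$. Then for small $k>0$ there is a normalised $\hat\varphi_n(k)\in\ker\OS(nk,\Rey,\tilde\theta(k),\tilde\gamma(k))$, and by compactness $\hat\varphi_n(k)\to\hat\varphi_0$ as $k\to0$. Hence $(\tilde\theta(k),nk,\tilde\gamma(k),\hat\varphi_n(k))$ lies in $\mathscr N$ and solves $G=0$ with wavenumber $nk\neq0$. Uniqueness then forces $\tilde\theta(k)=\tilde\theta(nk)$ and $\tilde\gamma(k)=\tilde\gamma(nk)$. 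But $\tilde\gamma(k)=2-2k^2+O(k^3)$ gives $\tilde\gamma(nk)-\tilde\gamma(k)=-2(n^2-1)k^2+O(k^3)\neq0$ for small $k$, a contradiction.

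This fallback argument is what I would actually write. The main obstacle is justifying that a normalised kernel element stays close to $\hat\varphi_0$, which is needed to land inside $\mathscr N$. This follows from continuity of the one-dimensional kernel of the Fredholm family near $k=0$. The normalisation by $g$ is possible because any nonzero kernel element has nonzero Cauchy data at $0$.
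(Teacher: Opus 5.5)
Your fallback argument is correct and is essentially the paper's proof. It uses the same $2\times 2$ boundary determinant built from the two fundamental solutions with $\hat\varphi(0)=\hat\varphi'(0)=0$, and shows it does not vanish identically by invoking the uniqueness statement \eqref{eq: uniqueness_Gzeros} to force $\tilde\theta(nk)=\tilde\theta(k)$ and $\tilde\gamma(nk)=\tilde\gamma(k)$.

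Your contradiction via $\tilde\gamma(nk)-\tilde\gamma(k)=-2(n^2-1)k^2+O(k^3)$ is more robust than the paper's primary appeal to \eqref{eq: theta_exp}: the $k^2$ coefficient of $\cot\tilde\theta$ vanishes at zero Reynolds number and zero surface tension, where in fact $\tilde\theta\equiv\pi/2$. Your compactness argument on normalised Cauchy data at $y=0$ is also a cleaner justification than the paper's assertion of an analytic kernel curve $u(k)$.
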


\begin{proof}
    Fix $n \in \mathbb{N} \setminus \{1\}$. At each $k \in (-\eps, \eps)$, the first component of the Orr--Sommerfeld operator,
    \begin{equation*}
        \mathscr{L}_{\mathrm{os},1} (nk, \Rey, \tilde{\theta}(k), \tilde{\gamma}(k)) \hat{\varphi} = \left(\frac{d^2}{dy^2} - (nk)^2\right)^2 \hat{\varphi} - ink \Rey \PsiNusselt' \left(\frac{d^2}{dy^2} - (nk)^2\right) \hat{\varphi} + ink \Rey \PsiNusselt'''\hat{\varphi},
    \end{equation*}
    is a linear fourth order ODE operator with polynomial coefficients and nonzero constant leading order coefficient. Therefore, its kernel in $\extXos$ is spanned (in $\mathbb{C}$) by the solutions $\hat{\varphi}_1, \hat{\varphi}_2$ with initial conditions:
    \begin{align*}
        \hat{\varphi}_1 (0;k) = \hat{\varphi}_1' (0;k) = \hat{\varphi}_1'' (0;k) = 0, ~ \hat{\varphi}_1''' (0;k) = 1; \\
        \quad \hat{\varphi}_2 (0;k) = \hat{\varphi}_2' (0;k) = \hat{\varphi}_2''' (0;k) = 0, ~ \hat{\varphi}_2'' (0;k) = 1.
    \end{align*}
    Moreover, $k \mapsto \hat{\varphi}_i^{(j)} (1;k)$ are analytic with respect to $k$. 
    
    Now for $\hat{\varphi} = a \hat{\varphi}_1 + b \hat{\varphi}_2$ to be in the kernel of $\OS (nk, \Rey, \tilde{\theta}(k), \tilde{\gamma}(k))$, it must satisfy the boundary conditions at $y=1$:
    \begin{equation*}
        \begin{aligned}
            \mathscr{L}_{\mathrm{os},2} (nk, \Rey, \tilde{\theta}(k), \tilde{\gamma}(k)) \hat{\varphi}(k) &= 0, \\
            \mathscr{L}_{\mathrm{os},3} (nk, \Rey, \tilde{\theta}(k), \tilde{\gamma}(k)) \hat{\varphi}(k) &= 0.            
        \end{aligned}
    \end{equation*}
    This leads us to define a function $\Gamma_n \colon (-\eps, \eps) \to \Real$ by
    \begin{equation*}
        \Gamma_n (k) = \det \begin{pmatrix}
            \mathscr{L}_{\mathrm{os},2} \left(nk, \Rey, \tilde{\theta}(k), \tilde{\gamma}(k)\right) \hat{\varphi}_1 (k) & \mathscr{L}_{\mathrm{os},2} \left(nk, \Rey, \tilde{\theta}(k), \tilde{\gamma}(k)\right) \hat{\varphi}_2 (k)  \\
            \mathscr{L}_{\mathrm{os},3} \left(nk, \Rey, \tilde{\theta}(k), \tilde{\gamma}(k)\right) \hat{\varphi}_1 (k) & \mathscr{L}_{\mathrm{os},3} \left(nk, \Rey, \tilde{\theta}(k), \tilde{\gamma}(k)\right) \hat{\varphi}_2 (k)
        \end{pmatrix}.
    \end{equation*}
    Arguing as in~\eqref{eq: frechet_matrix_system}, this function vanishes if and only if there is some $\hat{\varphi}$ in the kernel of $\OS (nk, \Rey, \tilde{\theta}(k), \tilde{\gamma}(k))$. Clearly $\Gamma_n(0) = 0$, and Lemma~\ref{lemma: os_ker} implies $\Gamma_1(k) = 0$ for all $k \in (-\eps, \eps)$. 

    We show that for $n \neq 1$, $\Gamma_n$ is not identically zero. Supposing for contradiction that $\Gamma_n \equiv 0$, then for every $k \in (-\eps, \eps)$ there would exist $u(k) \in \Xos$ analytic in $k$ such that $\OS (nk, \tilde{\theta}(k), \tilde{\gamma}(k)) u(k) = 0$, with $u(0) = \hat{\varphi}_0$ and $g(u(k)) = 1$. Therefore, the curve $(nk, \tilde{\theta}(k), \tilde{\gamma}(k), u(k))$ is in the zero set of $G$ defined in~\eqref{eq: aux_G}, and passes through $(0, \theta_0, \gamma_0, \hat{\varphi}_0)$. By the uniqueness statement~\eqref{eq: uniqueness_Gzeros} in the proof of Lemma~\ref{lemma: os_ker}, the curve must coincide with $(k, \tilde{\theta}(nk), \tilde{\gamma}(nk), \tilde{\varphi}(nk))$ for $k$ sufficiently small. Now by~\eqref{eq: theta_exp}, $\tilde{\theta}(nk) \neq \tilde{\theta}(k)$ (or similarly $\tilde{\gamma}(nk) \neq \tilde{\gamma}(k)$), a contradiction. 
    
    Having shown that $\Gamma_n$ is an analytic function that is not identically zero, it can have only finitely many roots in $(-\eps, \eps)$, after shrinking $\eps$. Therefore, the countable union
    \begin{equation*}
        \bigcup_{n \in \mathbb{Z} \setminus \{-1, 1\}} \Gamma_n^{-1} (0),
    \end{equation*}
    which is the set of all $k \in (-\eps, \eps)$ with resonances, is also countable. Consequently,~\eqref{eq: nonres} holds for $k \in (-\eps, \eps)$ except for a countable number of points.
\end{proof}

In order to verify the transversality hypothesis~\ref{assumption: os_transv}, we analyse the kernel of the formal adjoint $\OS^*$ defined in~\eqref{eq: os_adjoint}, and conduct an identical argument to that in Lemma~\ref{lemma: os_ker} to obtain an analytic curve in the kernel of $\OS^*$ for small wavenumbers.
\begin{lemma}
    In the setting of Lemma~\ref{lemma: os_ker}, there exists an analytic function $\tilde{\varphi}^* \colon (-\eps, \eps) \to \Xos$ such that for all $k \in (-\eps, \eps)$,
    \begin{equation*}
        \ker \OS^*(k, \Rey,\tilde{\theta}(k), \tilde{\gamma}(k)) = \operatorname{span}\{\tilde{\varphi}^*(k)\}.
    \end{equation*}
    Moreover, we have the asymptotic expansion
    \begin{equation} \label{eq: adj_exp}
        \tilde{\varphi}^*(y; k) = \frac{y^{2}}{2} - \frac{y^{3}}{6} - \dfrac{1}{840} ik \Rey y^2(1-y)(2-y)^2(1-3(1-y)^2) + O(k^2).
    \end{equation}
\end{lemma}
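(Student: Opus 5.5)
We will mirror the construction in Lemma~\ref{lemma: os_ker}, except that we do not run a second bifurcation argument: the curve $(\tilde{\theta}(k),\tilde{\gamma}(k))$ is already fixed, and we only need an analytic choice of kernel element for the adjoint.

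\emph{Step 1: the kernel of $\OS^*$ is one complex dimension.} Because $\OS$ is complex linear on $\extXos$, Lemma~\ref{lemma: os_ker} says that $\ker \OS(k,\Rey,\tilde{\theta}(k),\tilde{\gamma}(k))\subset\extXos$ is the complex line spanned by $\tilde{\varphi}(k)$. Now count dimensions:
\begin{itemize}
\item both $\OS$ and $\OS^*$ are Fredholm of index zero $\extXos\to\Yos$ (as noted in the proof of Lemma~\ref{adj_fred_alt});
\item by the Fredholm alternative of Lemma~\ref{adj_fred_alt}, $\operatorname{ran}\OS$ is the annihilator of $\ker\OS^*$ under the real pairing $\langle\cdot,\cdot\rangle_{\mathrm{os}}$, which is nondegenerate.
\end{itemize}
Hence the real dimension of $\ker\OS^*$ equals the real codimension of $\operatorname{ran}\OS$, which is $2$. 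So $\ker\OS^*$ is one complex dimension for every $|k|<\eps$, exactly as in the proof of Corollary~\ref{cor: transv_det}.

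\emph{Step 2: an analytic spanning element via the shooting determinant.} Let $\varphi_1^*(\cdot;k),\varphi_2^*(\cdot;k)$ solve the interior adjoint ODE (first component of~\eqref{eq: os_adjoint} with parameters $k,\Rey,\tilde{\theta}(k),\tilde{\gamma}(k)$) with initial data
\[
\varphi_1^*(0)=(\varphi_1^*)'(0)=(\varphi_1^*)''(0)=0,\ (\varphi_1^*)'''(0)=1,\qquad \varphi_2^*(0)=(\varphi_2^*)'(0)=(\varphi_2^*)'''(0)=0,\ (\varphi_2^*)''(0)=1 .
\]
The ODE has coefficients analytic in $k$, so these are analytic in $k$ into $C^{4,\alpha}$.
\begin{itemize}
\item Apply the two boundary operators at $y=1$ to obtain an analytic $2\times2$ complex matrix $B(k)$.
\item By Step 1, $\det B(k)\equiv 0$ and $B(k)\neq 0$.
\item At $k=0$ we have $\tilde{\gamma}(0)=2$, so $\PsiNusselt'(1)=-1$. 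The adjoint problem reduces to $\phi^{(4)}=0$, $\phi''(1)=0$, $\phi'''(1)+2\phi'(1)=0$. Writing $\phi=ay^2+by^3$ gives $a=-3b$, so the kernel is spanned by $\varphi_0^*=y^2/2-y^3/6$. In particular some row of $B(0)$, say $(b_{11},b_{12})$, is nonzero, and this persists for small $k$.
\item Set $\Phi(k)=-b_{12}(k)\varphi_1^*+b_{11}(k)\varphi_2^*$. This is analytic, nonzero, and lies in $\ker\OS^*$.
\end{itemize}

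\emph{Step 3: normalisation into $\Xos$.} Since $\varphi_0^*(1)=1/3\neq0$, we have $\Phi(1;k)\neq0$ for small $k$ (after shrinking $\eps$). Define
\[
\tilde{\varphi}^*(k):=\frac{\Phi(k)}{3\,\Phi(1;k)} .
\]
This is analytic, satisfies $\tilde{\varphi}^*(1;k)=1/3\in\Real$ so that it lies in $\Xos$, equals $\varphi_0^*$ at $k=0$, and spans the kernel over $\Complex$.

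\emph{Step 4: the $O(k)$ term in~\eqref{eq: adj_exp}.} This is the computational heart of the lemma, and we expect it to be the main obstacle. Write $\tilde{\varphi}^*=\varphi_0^*+k\varphi_1+O(k^2)$ and differentiate $\OS^*\tilde{\varphi}^*=0$ in $k$ at $k=0$. By~\eqref{eq: theta_exp}--\eqref{eq: gamma_exp}, the $k$-derivatives of $\cot\tilde{\theta}$ and $\tilde{\gamma}$ vanish at $0$, so only the explicit $k$-dependence of $\OS^*$ contributes. The resulting problem for $\varphi_1$ is:
\begin{itemize}
\item in the interior, $\varphi_1^{(4)}=-i\Rey\bigl(\PsiNusselt'(\varphi_0^*)''+2\PsiNusselt''(\varphi_0^*)'\bigr)$, a polynomial right-hand side;
\item at $y=1$, $\varphi_1''(1)=0$;
\item at $y=1$, $\varphi_1'''(1)+2\varphi_1'(1)=-i\Rey\,(\varphi_0^*)'(1)+2i\cot\theta_0\,\varphi_0^*(1)$, with $\cot\theta_0=\tfrac45\Rey$;
\item $\varphi_1(0)=\varphi_1'(0)=0$ and $\varphi_1(1)=0$ from the normalisation.
\end{itemize}
Integrating four times gives a quintic-plus-$a y^2+b y^3$ ansatz, and the three boundary conditions at $y=1$ fix $a,b$. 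Solvability is automatic because $\varphi_1=\partial_k\tilde{\varphi}^*(0)$ exists by Step 3. One must take care with the constant terms in $\PsiNusselt'$ at $\gamma=2$. The expected outcome is the stated polynomial $-\frac{i\Rey}{840}y^2(1-y)(2-y)^2(1-3(1-y)^2)$, which satisfies the constraint $\varphi_1(1)=0$ visibly.
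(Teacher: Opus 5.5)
Steps 1--3 are correct and take a different route from the paper. The paper runs a second Crandall--Rabinowitz argument for the augmented map $H(\theta,k,\gamma,\hat\varphi)=\bigl(\OS^*(k,\Rey,\theta,\gamma)\hat\varphi,\,g(\hat\varphi)-2\bigr)$ at $(\theta_0,0,2,\hat\varphi_0^*)$ and reparametrises the resulting curve by $k$. It then takes the $(\theta,\gamma)$-components of that curve to be $(\tilde\theta(k),\tilde\gamma(k))$. That identification needs an extra argument the paper leaves implicit: the Fredholm alternative to pass nontriviality of $\ker\OS^*$ to $\ker\OS$, plus local uniqueness of the $G$-curve from Lemma~\ref{lemma: os_ker}.

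You instead stay on the curve already constructed. You get $\dim_{\mathbb{C}}\ker\OS^*=1$ from index zero and the nondegenerate pairing, and you build an analytic spanning vector from the cofactors of a nonvanishing row of the shooting matrix $B(k)$. This is more elementary and sidesteps the identification issue. The paper's route additionally gives local uniqueness for the zero set of $H$ and fits the implicit-function machinery for higher-order terms. Your normalisation $\tilde\varphi^*(1;k)=\tfrac13$ differs from the paper's $g=2$, but both force the same $O(k)$ coefficient, so the choice is harmless.

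Step 4, however, has a sign error that makes it fail as written. At $y=1$ with $\PsiNusselt'(1)=-1$:
\begin{itemize}
\item the term $ik\Rey\PsiNusselt'(\varphi^*)'$ in the third component of $\OS^*$ contributes $-ik\Rey(\varphi^*)'(1)$;
\item the term $-ik(\PsiNusselt')^{-1}2\cot\theta\,\varphi^*$ contributes $+2ik\cot\theta\,\varphi^*(1)$.
\end{itemize}
So the first-order condition is
\[
\varphi_1'''(1)+2\varphi_1'(1)=i\Rey(\varphi_0^*)'(1)-2i\cot\theta_0\,\varphi_0^*(1)=-\tfrac{i\Rey}{30},
\]
not $+\tfrac{i\Rey}{30}$. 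With your sign the overdetermined system is inconsistent for $\Rey>0$. To see this, write $\varphi_1=ay^2+by^3+i\Rey\bigl(\tfrac{y^4}{12}-\tfrac{y^5}{15}+\tfrac{y^6}{40}-\tfrac{y^7}{280}\bigr)$. Then $\varphi_1''(1)=0$ gives $2a+6b=-\tfrac{4i\Rey}{15}$. This already forces $\varphi_1'''(1)+2\varphi_1'(1)=4a+12b+\tfrac{i\Rey}{2}=-\tfrac{i\Rey}{30}$. Indeed $\cot\theta_0=\tfrac45\Rey$ is exactly the value that makes the correctly signed condition consistent.

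With the sign fixed, add your normalisation $\varphi_1(1)=0$, i.e.\ $a+b=-\tfrac{4i\Rey}{105}$. This gives $a=\tfrac{i\Rey}{105}$ and $b=-\tfrac{i\Rey}{21}$. One then checks
\[
-\tfrac{1}{840}\,y^2(1-y)(2-y)^2\bigl(1-3(1-y)^2\bigr)=\tfrac{y^2}{105}-\tfrac{y^3}{21}+\tfrac{y^4}{12}-\tfrac{y^5}{15}+\tfrac{y^6}{40}-\tfrac{y^7}{280},
\]
so the stated expansion does hold. You should carry out this computation explicitly rather than asserting the expected outcome, since with the sign as you wrote it the calculation does not close.
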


\begin{proof}
    We know that at each $k$, by the Fredholm alternative in Lemma~\ref{adj_fred_alt}, there is a $\hat{\varphi}^* \in \Xos$ such that $\OS^*$ has a two-dimensional kernel $\operatorname{span}\{\hat{\varphi}^*, i\hat{\varphi}^*\} \subset \extXos$. However, we need to show that this $\hat{\varphi}^*$ depends analytically on $k$. To this end, we prove an analogue to Lemma~\ref{lemma: os_ker} for the kernel of $\OS^*$. 
    
    Similarly to the proof of Lemma~\ref{lemma: os_ker}, we define a map $H \colon (0, \pi) \times \Real \times \Real \times \Xos\to \Yos \times \Real$ by
    \begin{equation*}
        H(\theta, k, \gamma, \hat{\varphi}) := \begin{pmatrix}
            \OS^*(k, \Rey, \theta, \gamma) \hat{\varphi} \\ 
            g(\hat{\varphi})- 2
        \end{pmatrix},
    \end{equation*}
    for $g$ as defined in~\eqref{eq: normalisation}. Here the normalisation constant $2$ is chosen merely for convenience, to avoid square roots in~\eqref{eq: adj_exp}. There is a one-to-one correspondence between the zero set of $H$ and the kernel of $\OS^*$ in $\Xos$.
    At $k=0$, we have a point $(\theta, 0, \gamma_0, \hat{\varphi}_0)$ in the zero set of $H$, for any $\theta \in (0, \pi)$, with $\gamma_0 = 2$ and 
    \begin{equation*}
        \hat{\varphi}^*_0 := \frac{1}{6}(3y^2-y^3).
    \end{equation*}
    We will run again run Theorem~\ref{bif_lemma} with $n=1$ to find solutions of $H=0$ in a neighbourhood of each such point $(\theta_0, 0, \gamma_0, \hat{\varphi}^*_0)$. Suppose for $(f, c_1, c_2) \in \Yos$ and $c_3 \in \Real$,
    \begin{equation*}
        \d_{(k, \gamma,\hat{\varphi})} H \left[(\theta, 0,\gamma_0, \hat{\varphi}^*_0) \right] \begin{pmatrix}
            \dot{k} \\ \dot{\gamma} \\ \dot{\phi}
        \end{pmatrix} = 
        \begin{pmatrix}
            \dot{\phi}^{(4)} - i \dot{k} \Rey (2-8y+9y^2-3y^3) \\
            \dot{\phi}''(1) \\
            \dot{\phi}'''(1) + 2 \dot{\phi}'(1) + \frac{1}{30} i \dot{k} \Rey - \dot{\gamma} \\
            2 \Re \{\dot{\phi}''(0) - \dot{\phi}'''(0)\}
        \end{pmatrix} = \begin{pmatrix}
            f \\ c_1 \\ c_2 \\ c_3
        \end{pmatrix}.
    \end{equation*}
    The ODE has general solution in the space $\extXos$ given by
    \begin{equation*}
        \dot{\phi}(y) = a_2 y^2 + a_3 y^3 - i \dot{k} \Rey \left(\frac{y^{7}}{280} - \frac{y^{6}}{40} + \frac{y^{5}}{15} - \frac{y^{4}}{12}\right) + I^4 f (y),
    \end{equation*}
    with $a_2, a_3 \in \Complex$ and $\dot{k}, \dot{\gamma} \in \Real$ satisfying
    \begin{equation}\label{eq: adj_frechet_system}
            \begin{aligned}
            2 a_2 + 6 a_3 + \frac{4}{15} i \dot{k} \Rey &= c_1 - I^2 f(1), \\
            4 a_2 + 12 a_3 + \frac{8}{15} i \dot{k} \Rey - \dot{\gamma} &= c_2 + I f(1) - 2I^2 f(1), \\
            4 \Re a_2 - 12 \Re a_3 &= c_3 + \Re I f(0) - \Re I^2 f(0), \\
            \operatorname{Im} a_2 + \operatorname{Im} a_3 + \frac{4}{105} i \dot{k} \Rey &= -If(1).            
        \end{aligned}
    \end{equation}
    After solving for $\Re a_2$, $\Re a_3$ and $\dot{\gamma}$, the system is equivalent to the real-valued matrix problem
    \begin{equation*}
        \begin{pmatrix}
            2 & 6 & \frac{4 \Rey}{15} \\
            4 & 12 & \frac{8 \Rey}{15} \\
            1 & 1 & \frac{4 \Rey}{105}
        \end{pmatrix} 
        \begin{pmatrix}
            \operatorname{Im} a_2 \\ \operatorname{Im} a_3 \\ \dot{k}
        \end{pmatrix} 
        = 
        \begin{pmatrix}
            \tilde{c}_1 \\ 
            \tilde{c}_2 \\
            \tilde{c}_3
        \end{pmatrix},
    \end{equation*}
    for some $\tilde{c}_1, \tilde{c}_2, \tilde{c}_3 \in \Real$, which is clearly degenerate. The Fr\'echet derivative has one-dimensional kernel spanned by
    \begin{equation}\label{eq: adj_frechet_ker}
        \dot{k} = 1, \quad \dot{\gamma} = 0, \quad \dot{\phi} = - \dfrac{1}{840} k \Rey y^2(1-y)(2-y)^2(1-3(1-y)^2).
    \end{equation}
    The transversality condition~\ref{H3} of Theorem~\ref{bif_lemma} is equivalent to the statement
    \begin{equation*}
        \d_\theta D_{(k, \gamma,\hat{\varphi})} H \left(\theta_0, 0, \gamma_0, \hat{\varphi}_0\right) \begin{pmatrix}
            \dot{k} \\ \dot{\gamma} \\ \dot{\phi}
        \end{pmatrix}\notin \operatorname{ran} D_{(k, \gamma,\hat{\varphi})} H \left(\theta_0, 0, \gamma_0, \hat{\varphi}_0\right)
    \end{equation*}
    with this choice of $\dot{k}$, $\dot{\gamma}$, $\dot{\phi}$. Now 
    \begin{equation*}
        \d_\theta D_{(k, \gamma,\hat{\varphi})} H \left(\theta_0, 0, \gamma_0, \hat{\varphi}_0\right) \begin{pmatrix}
            \dot{k} \\ \dot{\gamma} \\ \dot{\phi}
        \end{pmatrix} = \dfrac{2}{3}\left(1+ \dfrac{16}{25} \Rey^2\right) \begin{pmatrix}
            0 \\ 0 \\ i \\ 0
        \end{pmatrix}.
    \end{equation*}
    Substituting $(f, c_1, c_2, c_3) = (0, 0, i, 0)$ in~\eqref{eq: adj_frechet_system} yields $\gamma = -i$, a contradiction. Therefore, by Theorem~\ref{bif_lemma}, there exists $\eps > 0$ and a nontrivial curve in the zero set of $H$,
    \begin{equation*}
        \left\{\left(\tilde{\theta}(k), k, \tilde{\gamma}(k), \tilde{\varphi}^*(k)\right) : |k| < \eps \right\},
    \end{equation*}
    where we have parameterised the curve by $k$ as in Lemma~\ref{lemma: os_ker}. The expansion~\eqref{eq: adj_exp} is given by~\eqref{eq: adj_frechet_ker}.

    Furthermore, there exists a neighbourhood $\mathscr{N} \subset (0, \pi) \times \Real^2 \times \Xos$ of $(\theta_0, 0, \gamma_0, \hat{\varphi}^*_0)$ and $\eps_1 > 0$ such that
    \begin{equation*}
        \Big\{(\theta, k, \gamma, \hat{\varphi}) \in \mathscr{N} : H(\theta, k, \gamma, \hat{\varphi}) = 0, ~ k \neq 0 \Big\} = \left\{\big(\tilde{\theta}(k), k, \tilde{\gamma}(k), \tilde{\varphi}^*(k)\big) : 0 <|k| < \eps_1 \right\}. \qedhere
    \end{equation*}
\end{proof}

Gathering these results, we prove Theorem~\ref{thm: smallk}. The only hypothesis remaining is the Orr--Sommerfeld transversality condition~\ref{assumption: os_transv}, which we verify using the asymptotic expansions~\eqref{eq: theta_exp},~\eqref{eq: gamma_exp},~\eqref{eq: kernel_exp} and~\eqref{eq: adj_exp}.
\begin{proof}[Proof of Theorem~\ref{thm: smallk}]
    Lemmas~\ref{lemma: os_ker} and~\ref{lemma: nonres} establish~\ref{assumption: os_kernel} in Theorem~\ref{thm: general_local}. To verify~\ref{assumption: os_transv}, we use Corollary~\ref{cor: transv_det}, and so we compute 
    \begin{equation*}
        \d_{\gamma} \OS\big(k, \Rey, \tilde{\theta}(k), \tilde{\gamma}(k)\big) \tilde{\varphi}(y; k) = \begin{pmatrix}
            ik \Rey \left(\tilde{\varphi}''(y;k) - k^2 \tilde{\varphi}(y;k) \right) \\
            2 \left(1-\tilde{\gamma}(k)\right)^{-2} \tilde{\varphi}(1; k) \\
            ik \left[ \Rey \tilde{\varphi}'(1;k) - \left(1-\tilde{\gamma}(k)\right)^{-2} (2\cot \tilde{\theta}(k) + \sigma k^2) \tilde{\varphi}(1;k)\right]
        \end{pmatrix},
    \end{equation*}
    which gives
    \begin{multline}\label{eq: smallk_matrix_entry1}
        \left\langle \tilde{\varphi}^*(k), \d_{\gamma} \OS \big(k, \Rey, \tilde{\theta}(k), \tilde{\gamma}(k)\big) \tilde{\varphi}(k) \right\rangle_{\mathrm{os}} = 
        \Re \bigg\{ ik \Rey\int_0^1 \overline{\tilde{\varphi}^*} \left(\tilde{\varphi}'' - k^2 \tilde{\varphi} \right) dy + 2 \overline{(\tilde{\varphi}^*)'(1)} \tilde{\varphi}(1)\left(1-\tilde{\gamma}(k)\right)^{-2} \\ - ik\overline{\varphi^*(1)} \left[\Rey \tilde{\varphi}'(1) - 2 \left(1-\tilde{\gamma}(k)\right)^{-2} \cot \tilde{\theta}(k) \tilde{\varphi}(1) \right] \bigg\}.
    \end{multline}

    Now from~\eqref{eq: gamma_exp}, we have
    \begin{equation*}
        \left(1-\tilde{\gamma}(k)\right)^{-2} = 1 + 4k^2 + O(k^4),
    \end{equation*}
    and by~\eqref{eq: kernel_exp}, $\tilde{\varphi}(1;k) = \frac{1}{2} + O(k)$. Note also that by~\eqref{eq: adj_exp}, $\tilde{\varphi}^*(1;k) = \frac{1}{3} + O(k)$ and $(\tilde{\varphi}^*)'(1;k) = \frac{1}{2} + O(k)$. Using these facts, we can simplify~\eqref{eq: smallk_matrix_entry1},
    \begin{equation*}
        \big\langle \tilde{\varphi}^*(k), \d_{\gamma} \OS \big(k, \Rey, \tilde{\theta}(k), \tilde{\gamma}(k)\big) \tilde{\varphi}(k) \big\rangle_{\mathrm{os}} = \Re \Big\{ 2 \overline{(\tilde{\varphi}^*)'(1;k)} \tilde{\varphi}(1;k)\left(1-\tilde{\gamma}(k)\right)^{-2} + O(k) \Big\}= \tfrac{1}{2} + O(k),
    \end{equation*}
    and clearly 
    \begin{equation*}
        \big\langle i\tilde{\varphi}^*(k), \d_{\gamma} \OS \big(k, \Rey, \tilde{\theta}(k), \tilde{\gamma}(k)\big) \tilde{\varphi}(k) \big\rangle_{\mathrm{os}} = \Re \Big\{ 2i \big(\tfrac{1}{2} + O(k)\big)\big(\tfrac{1}{2} + O(k)\big) + O(k) \Big\} = O(k).
    \end{equation*}
    Also, 
    \begin{equation*}
        \d_{\theta} \OS\big(k, \Rey, \tilde{\theta}(k), \tilde{\gamma}(k)\big) \tilde{\varphi}(y; k) = \begin{pmatrix}
            0 \\ 
            0 \\
            -\dfrac{2ik}{(1-\tilde{\gamma}(k)) \sin^2 \tilde{\theta}(k)} \tilde{\varphi}(1;k)
        \end{pmatrix},
    \end{equation*}
    where this third component can be expanded in $k$,
    \begin{equation*}
        \dfrac{2ik}{(\tilde{\gamma}(k)-1) \sin^2 \tilde{\theta}(k)} \tilde{\varphi}(1;k) = 2ik\big(1+2k^2\big)\bigg(1+\frac{16 \Rey^2}{25}+O(k^2)\bigg)\big(\tfrac{1}{2}+O(k)\big) = ik\left(1+\frac{16 \Rey^2}{25}\right) + O(k^2).
    \end{equation*}
    Thus, 
    \begin{equation*}
        \big\langle \tilde{\varphi}^*(k), \d_{\theta} \OS \big(k, \Rey, \tilde{\theta}(k), \tilde{\gamma}(k)\big) \tilde{\varphi}(k) \big\rangle_{\mathrm{os}} = O(k^2),
    \end{equation*}
    and 
    \begin{align*}
        \big\langle i\tilde{\varphi}^*(k), \d_{\theta} \OS \big(k, \Rey, \tilde{\theta}(k), \tilde{\gamma}(k)\big) \tilde{\varphi}(k) \big\rangle_{\mathrm{os}} &= \Re \bigg\{ -ik \overline{\left[i\varphi^*(1;k)\right]}\bigg(1+\frac{16 \Rey^2}{25}\bigg) + O(k^2)\bigg\} \\
        &= -\frac{1}{3} k \left(1+\frac{16 \Rey^2}{25}\right) + O(k^2).
    \end{align*}
    Therefore, we obtain that 
    \begin{align*}
        \det \begin{pmatrix}
            \langle \tilde{\varphi}^*, \d_{\gamma} \OS \tilde{\varphi} \rangle_{\mathrm{os}} 
            & \langle i\tilde{\varphi}^*, \d_{\gamma} \OS \tilde{\varphi} \rangle_{\mathrm{os}} \\
            \langle \tilde{\varphi}^*, \d_{\theta} \OS \tilde{\varphi} \rangle_{\mathrm{os}} 
            & \langle i\tilde{\varphi}^*, \d_{\theta} \OS \tilde{\varphi} \rangle_{\mathrm{os}} \end{pmatrix} &= \det \begin{pmatrix}
            \dfrac{1}{2} + O(k^2) &  O(k) \\
            O(k^2) & - \dfrac{k}{3}  \left(1 + \dfrac{16}{25} \Rey^2 \right) + O(k^2)
        \end{pmatrix} \\ &= - \dfrac{1}{6} k \left(1 + \dfrac{16}{25} \Rey^2 \right) + O(k^2),
    \end{align*}
    which is nonzero for small $k$, so by Corollary~\ref{cor: transv_det}, hypothesis~\ref{assumption: os_transv} holds.
\end{proof}

\subsection{Low Reynolds number regime}
We now consider the low Reynolds number regime, fixing an arbitrary wavenumber $k > 0$. Firstly in the vanishing Reynolds number limit, known as the `Stokes problem', we can satisfy the hypotheses of Theorem~\ref{thm: general_local}. 
\begin{lemma}\label{lemma: zeroR}
    Let $\Rey = 0$ and $\sigma = 0$. Then, defining
    \begin{equation}\label{eq: zeroR_disp}
        \theta_0 = \frac{\pi}{2}, \quad \gamma_0 = 1+ \frac{1}{k^2 + \cosh^2 k},
    \end{equation}
    the parameters $\theta_* = \theta_0$ and $\gamma_* = \gamma_0$ satisfy~\ref{assumption: os_kernel} and~\ref{assumption: os_transv} of Theorem~\ref{thm: general_local}, with $\ker \OS(k, 0, \theta_*, \gamma_*) = \operatorname{span} \{\hat{\varphi}_0\}$, where
    \begin{equation}\label{eq: zeroR_ker}
        \hat{\varphi}_0(y) = (y-1) \cosh k \sinh ky + ky \cosh (k(1-y)).
    \end{equation}
\end{lemma}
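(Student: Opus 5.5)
At $\Rey=0$, $\sigma=0$, $\theta=\pi/2$ (so $\cot\theta=0$) the Orr--Sommerfeld problem decouples into something explicit, and the plan is to compute everything by hand. With $\Rey=0$ the interior equation is $(\d_y^2-k^2)^2\hat\varphi=0$. Its general solution satisfying $\hat\varphi(0)=\hat\varphi'(0)=0$ is a two-parameter family $a\,\hat\varphi_A+b\,\hat\varphi_B$, built from $\sinh ky$, $y\cosh ky$, $\cosh ky$, $y\sinh ky$. At $y=1$ the two boundary conditions become
\[
\hat\varphi''(1)+\Big(k^2+\tfrac{2}{1-\gamma}\Big)\hat\varphi(1)=0,\qquad \hat\varphi'''(1)-3k^2\hat\varphi'(1)=0,
\]
using $\PsiNusselt'(1)=1-\gamma$. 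Since $\cot\theta=0$ and $\sigma=0$, the term $ik(\PsiNusselt')^{-1}(2\cot\theta+\sigma k^2)$ drops out. The system is then real, and the $2\times2$ determinant in $(a,b)$ is a real function of $\gamma$.

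First I would impose the second (shear) condition. This fixes the ratio $a:b$ and should yield exactly $\hat\varphi_0$ from \eqref{eq: zeroR_ker}, which one checks directly. Then I would substitute into the first condition and solve for $\gamma$, which should give $\frac{1}{\gamma-1}=k^2+\cosh^2k$, i.e.\ $\gamma_0$ as in \eqref{eq: zeroR_disp}. Here $\gamma_0\in(1,2)$, so $\gamma_0\ne 2$. A real solution $\hat\varphi_0$ has $\operatorname{Im}\hat\varphi_0(1)=0$, so $\hat\varphi_0\in\Xos$. One-dimensionality of the kernel in $\Xos$ over $\Real$ follows because the complex kernel in $\extXos$ is $\operatorname{span}_\Complex\{\hat\varphi_0\}$, with $\hat\varphi_0(1)=k\neq0$ (assuming the coefficient matrix has rank one, not zero). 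The constraint $\operatorname{Im}\hat\varphi(1)=0$ then cuts this down to real multiples.

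Non-resonance \eqref{eq: os_nonres} at $nk$ reduces to checking that $\gamma_0$ does not solve the $nk$ dispersion relation. The shear condition at $nk$ again forces the analogous $\hat\varphi$, and the pressure condition gives the relation $\frac{1}{\gamma-1}=(nk)^2+\cosh^2(nk)$. Since $k\mapsto k^2+\cosh^2k$ is strictly increasing on $(0,\infty)$, there are no resonances for $n\ge2$. The same monotonic function also handles the rank-one claim.

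For transversality I would use Corollary~\ref{cor: transv_det}. I would compute the adjoint kernel $\hat\varphi_0^*$ of $\OS^*$ from \eqref{eq: os_adjoint}, which at $\Rey=0$ is again $(\d_y^2-k^2)^2$ with boundary conditions
\[
(\varphi^*)''+k^2\varphi^*=0,\qquad (\varphi^*)'''-\Big(\tfrac{2}{1-\gamma}+3k^2\Big)(\varphi^*)'=0 \quad\text{at } y=1.
\]
This gives a real $\hat\varphi^*_0$. Next, $\d_\gamma\OS\hat\varphi_0=(0,\,2(1-\gamma_0)^{-2}\hat\varphi_0(1),\,0)$ is real. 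The derivative $\d_\theta\OS\hat\varphi_0=(0,0,-2ik(1-\gamma_0)^{-1}\sin^{-2}\theta_0\,\hat\varphi_0(1))$ is purely imaginary; note that $\d_\theta\OS$ and $\d_\gamma\OS$ also contain $\Rey$-terms, which vanish here.

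Pairing via \eqref{eq: bilinear}, the matrix in \eqref{eq: sing_det} becomes diagonal. The diagonal entries are
\[
\langle\hat\varphi^*_0,\d_\gamma\OS\hat\varphi_0\rangle=2(1-\gamma_0)^{-2}(\hat\varphi^*_0)'(1)\hat\varphi_0(1),\qquad \langle i\hat\varphi^*_0,\d_\theta\OS\hat\varphi_0\rangle=\pm 2k(1-\gamma_0)^{-1}\hat\varphi_0^*(1)\hat\varphi_0(1),
\]
and the off-diagonal entries vanish because they are real parts of purely imaginary quantities. Therefore transversality reduces to $\hat\varphi_0(1)=k\neq0$, $\hat\varphi_0^*(1)\ne0$ and $(\hat\varphi_0^*)'(1)\neq0$.

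The main obstacle is checking these explicit nonvanishing conditions for the adjoint eigenfunction for all $k>0$. I would find $\hat\varphi_0^*$ in closed form, the analogue of \eqref{eq: zeroR_ker}. If it is awkward, I would instead use a sign or monotonicity argument with hyperbolic-function inequalities, noting that the boundary conditions force $\varphi^*(1)$ and $(\varphi^*)'(1)$ to vanish together only in a degenerate case, which would contradict the two-dimensional adjoint kernel.
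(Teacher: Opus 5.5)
Your proposal is correct and follows essentially the same route as the paper's proof. Both solve $(\partial_y^2-k^2)^2\hat\varphi=0$ explicitly, read off $\gamma_0$ from the boundary conditions, get non-resonance from strict monotonicity of $k^2+\cosh^2k$, and obtain transversality via Corollary~\ref{cor: transv_det} with a diagonal $2\times2$ matrix. The paper keeps $\theta,\sigma$ general in the determinant before specialising, and your ``shear''/``pressure'' labels for the two conditions at $y=1$ are swapped, but neither point matters. For the step you flag as the main obstacle, use the closed form rather than your fallback: the paper takes $\hat\varphi_0^*(y)=(\cosh k+k\sinh k)(\sinh ky-ky\cosh ky)+k^2y\cosh k\sinh ky$, which gives $\hat\varphi_0^*(1)=\tfrac12(\sinh 2k-2k)$ and $(\hat\varphi_0^*)'(1)=k^3$, both nonzero for $k>0$. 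Your fallback would only show that $\hat\varphi_0^*(1)$ and $(\hat\varphi_0^*)'(1)$ do not vanish \emph{simultaneously}, whereas the determinant requires each to be nonzero.
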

\begin{proof}
    To analyse the kernel of the Stokes operator, let $\hat{\varphi} \in \Xos$ be such that 
    \begin{subequations}
        \begin{align}
            \left(\dfrac{d^2}{dy^2} - k^2\right)^2 \hat{\varphi} &= 0 \label{eq: stokes_ode}\\
            \hat{\varphi}''(1) + \left(\dfrac{2}{1-\gamma} + k^2 \right) \hat{\varphi} (1) &= 0 \label{eq: stokes_bc1}\\
            \hat{\varphi}'''(1) - 3 k^2 \hat{\varphi}'(1) + \dfrac{ik}{1-\gamma}\alpha \hat{\varphi}(1) &= 0. \label{eq: stokes_bc2}
        \end{align}
    \end{subequations}
    where $\alpha := 2 \cot \theta + \sigma k^2$. Here, we do not enforce $\sigma = 0$ yet; this will be required at a later step in the proof. Solutions to~\eqref{eq: stokes_ode} in $\Xos$ are of the form 
    \begin{equation*}
        \hat{\varphi}(y) = A(\sinh(ky) - ky \cosh(ky)) + By \sinh(ky),
    \end{equation*}
   where $A, B \in \Complex$, and using~\eqref{eq: stokes_bc1} and~\eqref{eq: stokes_bc2}, we obtain a complex-valued matrix problem for $A$ and $B$,
    \begin{equation*}
        \begin{pmatrix}
            -2k^3\cosh k - \dfrac{2(k \cosh k - \sinh k)}{1-\gamma}  & 2k(k \sinh k + \cosh k) + \dfrac{2 \sinh k}{1-\gamma} \\
            2k^3 (k \sinh k - \cosh k) - \dfrac{i \alpha k (k \cosh k - \sinh k)}{1-\gamma} & -2k^3 \cosh k + \dfrac{i \alpha k \sinh k}{1-\gamma},
        \end{pmatrix} \begin{pmatrix}
            A \\ B
        \end{pmatrix} = \begin{pmatrix}
            0 \\ 0
        \end{pmatrix}.
    \end{equation*}
    The kernel of $\OS(k, 0, \theta, \gamma)$ is nontrivial if and only if the determinant of this matrix vanishes,
    \begin{equation*}
        4\left(\frac{1}{1-\gamma} + k^2+\cosh^2 k\right) + i \frac{(2k-\sinh 2k) (2\cot \theta + \sigma k^2)}{1-\gamma} = 0,
    \end{equation*}
    which demands $\sigma = 0$, $\theta = \theta_0$ and $\gamma = \gamma_0(k)$ defined in~\eqref{eq: zeroR_disp}, with kernel spanned by $\hat{\varphi}_0$ given by~\eqref{eq: zeroR_ker}. This implies~\eqref{eq: os_1dker}, and the nonresonance condition~\eqref{eq: os_nonres} follows from the monotonicity of $\gamma_0(k)$ for $k$ positive, so~\ref{assumption: os_kernel} holds at $\theta_* = \theta_0$ and $\gamma_* = \gamma_0(k)$.

    To verify transversality~\ref{assumption: os_transv}, we will use Corollary~\ref{cor: transv_det}. We first compute the kernel of the formally adjoint operator $\OS^* \colon \Xos \to \Yos$ defined in~\eqref{eq: os_adjoint}, at $\Rey = \sigma = 0$, $\theta = \theta_0$, $\gamma = \gamma_0(k)$. Explicitly, this operator is
    \begin{equation*}
        \OS^*(k, 0, \theta_0, \gamma_0(k)) \hat{\varphi}^* :=
        \begin{pmatrix}
            \left(\dfrac{d^2}{dy^2} - k^2\right)^2 \hat{\varphi}^* \\
            (\hat{\varphi}^*)''(1) + k^2 \hat{\varphi}^*(1) \\
            (\hat{\varphi}^*)'''(1) +\left(2 \cosh^2(k) - k^2\right) (\hat{\varphi}^*)'(1)
        \end{pmatrix},
    \end{equation*}
    and has kernel spanned by 
    \begin{equation*}
        \hat{\varphi}_0^*(y) = (\cosh k + k \sinh k) (\sinh (ky) - ky \cosh (ky)) + k^2 y \cosh k \sinh (ky). 
    \end{equation*}

    We then compute derivatives of the Orr--Sommerfeld operator which appear in~\eqref{eq: sing_det}, using that $\hat{\varphi}_0(1) = k$,
    \begin{align*}
        \d_\gamma \OS \Big(k, 0, \theta_0, \gamma_0(k)\Big) \hat{\varphi}_0 &= \begin{pmatrix}
            0 \\ \dfrac{2}{(1-\gamma_0(k))^2} \hat{\varphi}_0(1) \\ 0
        \end{pmatrix} = \begin{pmatrix}
            0 \\ 2k (k^2 + \cosh^2 k)^2  \\ 0
        \end{pmatrix}, \\
        \d_\theta \OS \Big(k, 0, \theta_0, \gamma_0(k)\Big) \hat{\varphi}_0 &= \begin{pmatrix}
            0 \\ 0 \\ -\dfrac{2ik}{1-\gamma_0(k)} \hat{\varphi}_0(1)
        \end{pmatrix} = \begin{pmatrix}
            0 \\ 0 \\ 2ik^2 (k^2 + \cosh^2 k)
        \end{pmatrix}.
    \end{align*}
    Using that $\hat{\varphi}_0^*(1) = (\sinh(2k) - 2k)/2$ and $(\hat{\varphi}_0^*)'(1) = k^3$, we then obtain
    \begin{align*}
        \left\langle \hat{\varphi}_0^*, \d_\gamma \OS \big(k, 0, \theta_0, \gamma_0(k)\big) \hat{\varphi}_0 \right\rangle_{\mathrm{os}} &= \Re \left\{ 2k (k^2 + \cosh^2 k)^2 \overline{(\hat{\varphi}_0^*)'(1)} \right\}
        = 2k^4 (k^2 + \cosh^2 k)^2, \\
        \left\langle i\hat{\varphi}_0^*, \d_\gamma \OS \big(k, 0, \theta_0, \gamma_0(k)\big) \hat{\varphi}_0 \right\rangle_{\mathrm{os}} &= 0, \\
        \left\langle i\hat{\varphi}_0^*, \d_\theta \OS \big(k, 0, \theta_0, \gamma_0(k)\big) \hat{\varphi}_0 \right\rangle_{\mathrm{os}} &= \Re \left\{ -2ik^2 (k^2 + \cosh^2 k) \overline{i\hat{\varphi}_0^*(1)}\right\} = k^2 (k^2 + \cosh^2 k)(2k - \sinh(2k)), \\
        \left\langle \hat{\varphi}_0^*, \d_\theta \OS \big(k, 0, \theta_0, \gamma_0(k)\big) \hat{\varphi}_0 \right\rangle_{\mathrm{os}} &= 0.
    \end{align*}
    Thus, dropping parameter dependences,
    \begin{equation*}
        \det \begin{pmatrix}
            \left\langle \hat{\varphi}_0^*, \d_\gamma \OS\hat{\varphi}_0 \right\rangle_{\mathrm{os}} & \left\langle i\hat{\varphi}_0^*, \d_\gamma \OS \hat{\varphi}_0 \right\rangle_{\mathrm{os}} \\
            \left\langle \hat{\varphi}_0^*, \d_\theta \OS \hat{\varphi}_0 \right\rangle_{\mathrm{os}} & \left\langle i\hat{\varphi}_0^*, \d_\theta \OS \hat{\varphi}_0 \right\rangle_{\mathrm{os}}
        \end{pmatrix} = 2 k^6 (k^2 + \cosh^2 k)^3 (2k - \sinh(2k)),
    \end{equation*}
    which does not vanish for $k > 0$, establishing~\ref{assumption: os_transv}.
\end{proof}

Since the limit $\Rey = 0$ is non-degenerate for the nonlinear operator $\F$, we can easily obtain a similar result for sufficiently small $\Rey$ with $\tilde{\theta}$, $\tilde{\gamma}$ appropriate functions of $\Rey$. This follows from a soft argument Lemma~\ref{lemma: extra_param}, which shows that the hypotheses of Theorem~\ref{bif_lemma} are robust under certain perturbations. In particular, no asymptotic expansions are required.

\begin{proof}[Proof of Theorem~\ref{thm: lowR}]
    Examining the proof of Theorem~\ref{thm: gen_local}, we extend the function $\mathcal{G} \colon \mathcal{V} \to \Y$ defined in~\eqref{eq: perturbed_function} to $\tilde{\mathcal{G}} \colon \mathcal{V} \times \Real \to \Y$. Given $\Rey \in \Real$, $\tilde{\mathcal{G}}(0, \lambda, \Rey) = 0$ for all $\lambda \in (0, \pi) \times (0, \infty)$, i.e.~hypothesis~\ref{H1} of Theorem~\ref{bif_lemma} holds. Lemma~\ref{lemma: zeroR} verifies that the hypotheses~\ref{assumption: os_kernel} and~\ref{assumption: os_transv} of Theorem~\ref{thm: general_local} hold at parameters $(\theta_0, \gamma_0)$ given in~\eqref{eq: zeroR_disp} at $\Rey = 0$, i.e.~$\tilde{\mathcal{G}}(\, \cdot \,, \, \cdot \,; 0)$ satisfies hypotheses~\ref{H2} and~\ref{H3} of Theorem~\ref{bif_lemma}. 
    
    Therefore, by Lemma~\ref{lemma: extra_param}, there exist $\eps > 0$ and analytic functions $\tilde{\theta} \colon (-\eps, \eps) \to \Real$, $\tilde{\gamma} \colon (-\eps, \eps) \to \Real$, and $\tilde{\xi} \colon (-\eps, \eps) \to \U$ satisfying
    \begin{equation*}
        \tilde{\theta}(0) = \theta_0, \quad \tilde{\gamma}(0) = \gamma_0, \quad \tilde{\xi}(0) = \mathcal{Q} \mathscr{E} \hat{\varphi}_0,
    \end{equation*}
    for $\theta_0$ and $\gamma_0$ defined in~\eqref{eq: zeroR_disp} and $\hat{\varphi}_0$ defined in~\eqref{eq: zeroR_ker}. Furthermore, given $\Rey \in [0, \eps)$, $\tilde{\mathcal{G}}_w(0, \tilde{\theta}(\Rey),\tilde{\gamma}(\Rey), \Rey)$ has a one-dimensional kernel spanned by $\tilde{\xi}(\Rey)$ and range of codimension $2$, and for any nonzero $\mu \in \Real^2$, 
    \begin{equation*}
        D_{w \lambda}\tilde{\mathcal{G}}(0, \tilde{\theta}(\Rey), \tilde{\gamma}(\Rey), \Rey) [\tilde{\xi}(\Rey), \mu] \notin \operatorname{ran} \tilde{\mathcal{G}}_w (0, \tilde{\theta}(\Rey), \tilde{\gamma}(\Rey), \Rey).
    \end{equation*}
    Thus, the result of Theorem~\ref{thm: gen_local} holds at this $\Rey \in [0, \eps)$, completing the proof. 
\end{proof}

\section*{Acknowledgements}
The authors would like to thank Noah Stevenson for suggesting the argument in Lemma~\ref{lemma: nonres} and Jeyabal Sivaloganathan for helping us with a step in Lemma~\ref{lemma: V_H1}. Daniel Abraham was supported fully by a PhD studentship, ``Additional Funding Programme for Mathematical Sciences'', awarded by the Heilbronn Institute for Mathematical Research, grant number EP/V521917/1.

\appendix
\section{Proofs of abstract bifurcation results}\label{section: extra_abstract_theory}
Here we discuss the minor adaptations of the proof of~\cite[Theorem 9.1.1]{buffoni2003analytic} required to prove Theorem~\ref{theorem: gbt}. The key distinction is the auxiliary lemma below which is an analogue of Proposition 8.3.4 in~\cite{buffoni2003analytic}.

First we introduce the notation for the Lyapunov--Schmidt reduction for the problem
\begin{equation} \label{eq: abstract_Fzero}
    \F(u, \lambda) = 0,
\end{equation}
for an analytic map $\F \map{\X \times \Real^n}{\Y}$ between Banach spaces $\X$ and $\Y$ satisfying the assumptions of Theorem~\ref{bif_lemma}. Let $\X_1 = \operatorname{span}\{u_0\}$ and $\Y_1 = \operatorname{ran} \F_u (0, \lambda_0)$, and let $\X_0$ and $\Y_0$ be complements of $\X_1$ and $\Y_1$ in $\X$ and $\Y$ respectively, so that 
\begin{equation*}
    \X = \X_0 \oplus \X_1, \quad \Y = \Y_0 \oplus \Y_1.
\end{equation*}
Note that $\Y_0 \cong \Real^n$. Define $\F_0 := \Pi_{\Y_0} \F$, $\F_1 := \Pi_{\Y_1} \F$ as the projections of the image of $\F$ onto $\Y_0$ and $\Y_1$ respectively. 

Then,~\eqref{eq: abstract_Fzero} is equivalent to some $x_0 \in \X_0$, $x_1 \in \X_1$ and $\lambda \in \Real^n$ satisfying 
\begin{equation}\label{eq: F_decomp}
    \begin{cases}
        \F_0(x_0, x_1, \lambda) = 0, \\
        \F_1(x_0, x_1, \lambda) = 0.
    \end{cases}
\end{equation}
Now by construction, $D_{x_0} \F_1 (0, 0, \lambda_0) \colon \X_0 \to \Y_1$ is invertible, so by the implicit function theorem, for every $(x_1, \lambda) \in U_1$ in a neighbourhood $U_1 \subset \X_1 \times \Real^n$ of $(0, \lambda_0)$ there is a unique $\phi = \phi(x_1, \lambda) \in \X_0$ such that
\begin{equation}\label{eq: lyap_proj_ran}
    \F_1 (\phi(x_1, \lambda), x_1, \lambda) = 0.
\end{equation}
Define $\psi \colon U_1 \to \X$ and $f \colon \X_1 \times \Real^n \to \Y_0$ by 
\begin{equation}\label{eq: lyapunov}
    \psi(x_1, \lambda) := \phi(x_1, \lambda) + x_1, \qquad f(x_1, \lambda) := \F_0 (\psi(x_1, \lambda), \lambda).
\end{equation}
Then in $U_1$,~\eqref{eq: F_decomp} is equivalent to $f(x_1, \lambda) = 0$.

\begin{lemma}\label{lemma: pre_gbt}
    In the setting of Theorem~\ref{bif_lemma} and with the notation above, $D_{(x,\lambda_1, \ldots, \lambda_{n-1})} \F \colon \X \times \Real^{n-1} \to \Y$ has Fredholm index zero, and we have
    \begin{enumerate}[label=\rm(\alph*)]
        \item\label{pregbt_dimker} For $(x_1, \lambda)$ in a neighbourhood $U_0$ of $(0, \lambda_0)$,
        \begin{equation*}
            \dim \ker D_{(u,\lambda_1, \ldots, \lambda_{n-1})} \F (\psi(x_1, \lambda), \lambda) = \dim \ker D_{(u,\lambda_1, \ldots, \lambda_{n-1})} f (x_1, \lambda).
        \end{equation*}
        \item\label{pregbt_invertible} For $0 < |s| < \eps$, $D_{(x,\lambda_1, \ldots, \lambda_{n-1})} \F (U(s), \Lambda(s))$ is invertible if and only if $\Lambda'_n(s) \neq 0$.
    \end{enumerate}
\end{lemma}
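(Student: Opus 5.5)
The plan is to reduce everything to the finite-dimensional Lyapunov--Schmidt reduction $f\colon \X_1\times\Real^n \to \Y_0 \cong \Real^n$ defined in \eqref{eq: lyapunov}. The index claim comes first. By \ref{H2}, $\F_u(0,\lambda_0)$ is Fredholm of index $1-n$. Adjoining $n-1$ parameter directions adds an $(n-1)$-dimensional domain, so $D_{(u,\lambda_1,\dots,\lambda_{n-1})}\F(0,\lambda_0)$ is a finite-rank perturbation of $(\dot u,\dot\mu)\mapsto \F_u\dot u$ on $\X\times\Real^{n-1}$. That map is Fredholm with index $(1-n)+(n-1)=0$. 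Since the Fredholm index is locally constant, the index is $0$ near $(0,\lambda_0)$, and wherever the operator is semi-Fredholm along the curve by continuity.

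For \ref{pregbt_dimker}, fix $(x_1,\lambda)\in U_0$ and put $u=\psi(x_1,\lambda)$. Shrinking $U_0$, $D_{x_0}\F_1(\phi,x_1,\lambda)\colon\X_0\to\Y_1$ remains invertible. Write $\dot u=\dot x_0+\dot x_1$ and $\dot\mu=(\dot\lambda_1,\dots,\dot\lambda_{n-1})$ (with $\dot\lambda_n=0$).

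The equation $D\F[\dot u,\dot\mu]=0$ splits into a $\Y_1$-component and a $\Y_0$-component. The $\Y_1$-component is solved uniquely for $\dot x_0$ in terms of $(\dot x_1,\dot\mu)$. Differentiating \eqref{eq: lyap_proj_ran} shows this solution is exactly $\dot x_0=D\phi(x_1,\lambda)[\dot x_1,\dot\mu]$. Substituting into the $\Y_0$-component gives $D_{(x_1,\lambda_1,\dots,\lambda_{n-1})}f(x_1,\lambda)[\dot x_1,\dot\mu]=0$, by the chain rule applied to $f=\F_0\circ\psi$. Hence the map $(\dot x_1,\dot\mu)\mapsto (D\phi[\dot x_1,\dot\mu]+\dot x_1,\dot\mu)$ is a linear bijection between the two kernels. (The statement's $D_{(u,\dots)}f$ is read as the derivative with respect to $(x_1,\lambda_1,\dots,\lambda_{n-1})$.)

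For \ref{pregbt_invertible}, the index-zero property means invertibility of $D_{(u,\lambda_1,\dots,\lambda_{n-1})}\F(U(s),\Lambda(s))$ is equivalent to trivial kernel. By \ref{pregbt_dimker}, this is equivalent to the $n\times n$ matrix $D_{(x_1,\lambda_1,\dots,\lambda_{n-1})}f$ being nonsingular at the point $(x_1(s),\Lambda(s))$ of the local curve. The local curve is exactly the zero set of $f$ near $(0,\lambda_0)$, minus the trivial line, by \eqref{eq: bif_uniqueness}. For $s\neq0$ I expect it to be a regular curve of $f^{-1}(0)$, so the full $n\times(n+1)$ Jacobian $Df$ has rank $n$ with one-dimensional kernel spanned by the tangent $(x_1'(s),\Lambda'(s))$. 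Removing the $\lambda_n$-column leaves a singular square matrix exactly when this kernel lies in $\{\dot\lambda_n=0\}$, i.e.\ when $\Lambda_n'(s)=0$.

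The main obstacle is justifying that $Df$ has full rank $n$ at points $s\neq0$ near $0$. Here I would write $x_1=t u_0$ and factor $f(tu_0,\lambda)=t\,g(t,\lambda)$, using $f(0,\lambda)=0$ from \ref{H1}. Then $D_\lambda g(0,\lambda_0)$ is invertible, since it is the matrix of $\Pi_{\Y_0}\F_{u\lambda}[u_0,\cdot]$, which \ref{H3} makes nonsingular. The nontrivial branch is then $g=0$, solved as $\lambda=\Lambda(t)$ by the implicit function theorem. For $t\neq0$ and $g=0$ we get $D_\lambda f=t\,D_\lambda g$, which is invertible. This gives rank $n$, and the tangent direction then has the form stated.
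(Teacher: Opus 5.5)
Your proposal is correct and follows the same route as the paper. Part (a) goes through the bijection $(\dot x_1,\dot\mu)\mapsto(D\phi[\dot x_1,\dot\mu]+\dot x_1,\dot\mu)$ between the two kernels, obtained by differentiating \eqref{eq: lyap_proj_ran}. Part (b) uses the identity $f_{x_1}u_0+f_\lambda\Lambda'(s)=0$ on the curve together with invertibility of $f_\lambda(su_0,\Lambda(s))$; your ``rank $n$, kernel spanned by the tangent'' formulation is equivalent to the paper's linear-independence argument.

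Your factorisation $f(tu_0,\lambda)=t\,g(t,\lambda)$ is a worthwhile addition. The paper attributes invertibility of $f_\lambda$ on the curve directly to \ref{H3}, even at $s=0$, but in fact $f_\lambda(0,\lambda_0)=0$. Dividing by $t$ as you do is exactly what makes that step valid for $0<\lvert s\rvert<\varepsilon$.
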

\begin{proof}
    Let $w \in \X_0$, $v \in \X_1$ and $\mu \in \Real^{n-1}$. Now $(w+v,\mu) \in \ker D_{(u,\lambda_1, \ldots, \lambda_{n-1})} \F (\psi(x_1, \lambda), \lambda)$ iff and only if
    \begin{subequations}\label{eq: proj_diff}
        \begin{align}
            \Pi_{\Y_0} \F_u (\psi(x_1, \lambda), \lambda) (w + v) + \Pi_{\Y_0} \F_{(\lambda_1, \ldots, \lambda_{n-1})} (\psi(x_1, \lambda), \lambda) \mu = 0, \label{eq: proj_diff1}\\
            \Pi_{\Y_1}  \F_u (\psi(x_1, \lambda), \lambda) (w + v) + \Pi_{\Y_1} \F_{(\lambda_1, \ldots, \lambda_{n-1})} (\psi(x_1, \lambda), \lambda) \mu = 0. \label{eq: proj_diff2}
        \end{align}
    \end{subequations}
    Now, differentiating~\eqref{eq: lyap_proj_ran} and using~\eqref{eq: lyapunov},
    \begin{align*}
       0 &= D_{(x_1, \lambda)} \Pi_{\Y_1} \F_u (\psi(x_1, \lambda), \lambda) [v, \mu] \\
       &=\Pi_{\Y_1} \F_u (\psi(x_1, \lambda), \lambda) \left(\phi_{x_1}(x_1, \lambda) v + v + \phi_{(\lambda_1, \ldots, \lambda_{n-1})}(x_1, \lambda) \mu\right) + \Pi_{\Y_1} \F_{(\lambda_1, \ldots, \lambda_{n-1})} (\psi(x_1, \lambda), \lambda) \mu.
    \end{align*}
    Therefore,~\eqref{eq: proj_diff2} implies
    \begin{equation*}
       \Pi_{\Y_1} \F_u (\psi(x_1, \lambda), \lambda) \left(w - \phi_{x_1}(x_1, \lambda) v - \phi_{(\lambda_1, \ldots, \lambda_{n-1})}(x_1, \lambda) \mu\right) = 0,
    \end{equation*}
    noting that 
    \begin{equation*}
        w - \phi_{x_1}(x_1, \lambda) v - \phi_{(\lambda_1, \ldots, \lambda_{n-1})}(x_1, \lambda) \mu \in \X_0.
    \end{equation*}
    Now since $\Pi_{\Y_1} \F_u (0, \lambda_0) = D_{x_0} \F_1 (0, 0, \lambda_0) \colon \X_0 \to \Y_1$ is invertible, $\Pi_{\Y_1} \F_u (\psi(x_1, \lambda), \lambda) \vert_{\X_0}$ is also invertible $\X_0 \to \Y_1$ for $(\psi(x_1, \lambda), \lambda) \in U_0 \subset U_1$ where $U_0$ a sufficiently small neighbourhood of $(0,\lambda_0)$. This implies that
    \begin{equation}
       w = \phi_{x_1} (x_1, \lambda) v + \phi_{(\lambda_1, \ldots, \lambda_{n-1})}(x_1, \lambda) \mu. \label{eq: w}
    \end{equation} 
    Also,~\eqref{eq: w} implies~\eqref{eq: proj_diff2} holds for any $v \in \X_1$, $\mu \in \Real^{n-1}$.
    Thus,~\eqref{eq: proj_diff} is equivalent to
    \begin{equation*}
        \Pi_{\Y_0} \F_u (\psi(x_1, \lambda), \lambda) (\phi_{x_1} (x_1, \lambda) v + v + \phi_{(\lambda_1, \ldots, \lambda_{n-1})}(x_1, \lambda) \mu) + \Pi_{\Y_0} \F_{(\lambda_1, \ldots, \lambda_{n-1})} (\psi(x_1, \lambda), \lambda) = 0,
    \end{equation*}
    which in turn is equivalent to 
    \begin{equation*}
        f_{x_1} (x_1, \lambda) v + f_{(\lambda_1, \ldots, \lambda_{n-1})} (x_1, \lambda) \mu = 0.
    \end{equation*}
    This establishes~\ref{pregbt_dimker}. 
    
    Now by dimension counting, $D_{(x_1, \lambda_1, \ldots, \lambda_{n-1})} f$ has Fredholm index zero. For a point $(U(s), \Lambda(s)) \in \mathscr{C}_{\mathrm{loc}}$ with $s \in (-\eps, \eps)$, one can check that $U(s) = \psi(su_0, \Lambda(s))$, so $\F(U(s), \Lambda(s)) = 0$ is equivalent to 
    \begin{equation*}
        f(su_0, \Lambda(s)) = 0.
    \end{equation*}
    By differentiation with respect to $s$, we observe that
    \begin{align} \label{eq: ls_chain_rule}
        0 = f_{x_1}(s u_0, \Lambda(s))u_0 + f_{\lambda}(s u_0, \Lambda(s)) \Lambda'(s). 
    \end{align}
    By the hypothesis~\ref{H3} in Theorem~\ref{bif_lemma}, for $s \in (-\eps, \eps)$ with $\eps > 0$ sufficiently small, 
    \begin{equation*}
        f_{\lambda}(s u_0, \Lambda(s)) \colon \Real^n \to \Y_0
    \end{equation*}
    is invertible.
    In particular, the derivatives $f_{\lambda_i}(s u_0, \Lambda(s))$ are linearly independent and span $\Y_0$.
    
    Now suppose $\Lambda'_n(s) = 0$. Then~\eqref{eq: ls_chain_rule} implies
    \begin{equation*}
        \left(u_0, \Lambda'_1(s), \ldots \Lambda'_{n-1}(s)\right) \in \ker D_{(x_1, \lambda_1, \ldots, \lambda_{n-1})} f(s u_0, \Lambda(s)).
    \end{equation*}
    If on the other hand, $\Lambda'_n(s) \neq 0$, suppose that $v = c u_0 \in \X_1$ and $\mu \in \Real^{n-1}$ are such that $(v, \mu) \in \ker D_{(x_1, \lambda_1, \ldots, \lambda_{n-1})} f(s u_0, \Lambda(s))$. Then, by~\eqref{eq: ls_chain_rule},
    \begin{equation*}
        \sum_{i=1}^{n-1} f_{\lambda_i}(s u_0, \Lambda(s))(c \Lambda'_i(s) - \mu_i) + f_{\lambda_n}(s u_0, \Lambda(s)) c \Lambda'_n(s) = 0,
    \end{equation*}
    and since the $f_{\lambda_i}(s u_0, \Lambda(s))$ are linearly independent in $\Y_0$, we must have $c\Lambda'_n (s) = 0$. But then $\Lambda'_n(s) \neq 0$ implies $c=0$, which in turn implies 
    \begin{equation*}
        f_{(\lambda_1, \ldots, \lambda_{n-1})}(s u_0, \Lambda(s))\mu = 0,
    \end{equation*}
    and so by the linear independence of these vectors, $\mu = 0$. Thus, $D_{(x, \lambda_1, \ldots, \lambda_n)} \F (U(s), \Lambda(s))$ has trivial kernel and is therefore invertible, establishing~\ref{pregbt_invertible}.
\end{proof}
\begin{proof}[Proof of Theorem~\ref{theorem: gbt}]
    The proof is almost identical to that of~\cite[Theorem 9.1.1]{buffoni2003analytic}, except for extending the space $X = \X \times \Real^{n-1}$ and redefining the set of non-singular solutions of $\F(u, \lambda) = 0$ to be
    \begin{equation*}
        \mathscr{R} = \big\{(u, \lambda) \in S \colon \ker \d_{(u,\lambda_1, \ldots, \lambda_{n-1})} \F (u, \lambda) = \{0\} \big\}.
    \end{equation*} 
    Now $\Lambda'_n$ is nonzero on $(-\eps, 0) \cup (0, \eps)$ for $\eps > 0$ sufficiently small, so by Lemma~\ref{lemma: pre_gbt}, we can pick $\eps > 0$ such that $\big\{ (U(s), \Lambda(s)) : s \in (0, \eps) \big\} \subset \mathscr{R}$. From then, the proof is the same as in~\cite{buffoni2003analytic}.
\end{proof}

The following result shows that the hypotheses of Theorem~\ref{bif_lemma} are robust under perturbation with respect to an additional parameter.
\begin{lemma}\label{lemma: extra_param}
    Let $\F \colon \U \times \Real \to \Y$ be analytic, with $\U \subset \X \times \Real^n$ open, $n \in \mathbb{N}$ and $\X$ and $\Y$ Banach spaces. Suppose the following hold:
    \begin{enumerate}[label=\rm(\roman*)]
        \item For all $\mu$, $\F(\, \cdot \,, \, \cdot \,; \mu)$ satisfies hypothesis~\ref{H1} of Theorem~\ref{bif_lemma};
        \item\label{extra_param0_transv} $\F(\, \cdot \,, \, \cdot \,; 0)$ satisfies hypothesis~\ref{H2} and~\ref{H3} of Theorem~\ref{bif_lemma}.
    \end{enumerate}
    Then there exist $\eps > 0$ and analytic functions $\tilde{\lambda} \colon (-\eps, \eps) \to \Real^n$, $\tilde{\xi} \colon (-\eps, \eps) \to \U$ such that $\tilde{\lambda}(0) = \lambda_0$, $\tilde{\xi}(0) = \xi_0$, and for $\lvert \mu \rvert < \eps$,
    \begin{enumerate}[label=\rm(\alph*)]
        \item The kernel of $\F_u (0, \tilde{\lambda}(\mu); \mu)$ is spanned by $\tilde{\xi}(\mu)$, and the range of $\F_u (0, \tilde{\lambda}(\mu); \mu)$ has codimension $n$.\label{extra_param_ker}
        \item For any nonzero $\dot{\lambda} \in \Real^n$, 
        \begin{equation*}
            D_{u \lambda}\F(0, \tilde{\lambda}(\mu); \mu) [\tilde{\xi}(\mu), \dot{\lambda}] \notin \operatorname{ran} \F_u(0, \tilde{\lambda}(\mu); \mu).
        \end{equation*}\label{extra_param_transv}
    \end{enumerate}
\end{lemma}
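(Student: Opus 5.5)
Write $L_\mu(\lambda) := \F_u(0,\lambda;\mu)$ and set $L_0 := L_0(\lambda_0)$. The idea is to do a Lyapunov--Schmidt reduction of $L_\mu(\lambda)$ around $L_0$ that is uniform in the extra parameter $\mu$. This reduces the problem to an $n\times 1$ matrix equation. The implicit function theorem applied to that equation produces $\tilde\lambda(\mu)$ and $\tilde\xi(\mu)$.

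\textbf{Setting up the reduction.} Following the notation of Appendix~\ref{section: extra_abstract_theory}, fix a splitting $\X=\X_0\oplus\operatorname{span}\{\xi_0\}$ and $\Y=\operatorname{ran}L_0\oplus\Y_0$ with $\dim\Y_0=n$. Let $\Pi$ be the projection onto $\operatorname{ran}L_0$ along $\Y_0$. The restriction $\Pi L_0|_{\X_0}\colon\X_0\to\operatorname{ran}L_0$ is invertible. By analyticity in $(\lambda,\mu)$, $\Pi L_\mu(\lambda)|_{\X_0}$ stays invertible for $(\lambda,\mu)$ near $(\lambda_0,0)$. Given such $(\lambda,\mu)$, every $v$ in the kernel of $L_\mu(\lambda)$ with $\xi_0$-component $c$ therefore has the form $v=c(\xi_0+w(\lambda,\mu))$. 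Here $w\in\X_0$ is the unique analytic solution of $\Pi L_\mu(\lambda)(\xi_0+w)=0$, and $w(\lambda_0,0)=0$. Define the analytic reduced map
\[
\Phi(\lambda,\mu):=(I-\Pi)L_\mu(\lambda)\big(\xi_0+w(\lambda,\mu)\big)\in\Y_0\cong\Real^n .
\]
Then $\ker L_\mu(\lambda)\ne\{0\}$ if and only if $\Phi(\lambda,\mu)=0$. When this holds, the kernel is exactly $\operatorname{span}\{\xi_0+w\}$, so it is one-dimensional.

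\textbf{Solving for $\tilde\lambda$.} We have $\Phi(\lambda_0,0)=0$. Next, differentiate in $\lambda$ at $(\lambda_0,0)$. The terms involving $\partial_\lambda w$ lie in $\operatorname{ran}L_0$, which $I-\Pi$ kills. This gives $D_\lambda\Phi(\lambda_0,0)\dot\lambda=(I-\Pi)D_{u\lambda}\F(0,\lambda_0;0)[\xi_0,\dot\lambda]$. Hypothesis~\ref{H3} at $\mu=0$ says that this is nonzero for every $\dot\lambda\ne0$. So $D_\lambda\Phi(\lambda_0,0)\colon\Real^n\to\Y_0$ is an injective linear map between $n$-dimensional spaces, hence invertible. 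The analytic implicit function theorem then gives an analytic $\tilde\lambda(\mu)$ with $\Phi(\tilde\lambda(\mu),\mu)=0$ and $\tilde\lambda(0)=\lambda_0$. We set $\tilde\xi(\mu):=\xi_0+w(\tilde\lambda(\mu),\mu)$, which satisfies $\tilde\xi(0)=\xi_0$.

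\textbf{Codimension and transversality.} The Fredholm index is locally constant. It equals $1-n$ at $\mu=0$, so it is still $1-n$ nearby. Since the kernel is one-dimensional, the range has codimension $n$, which proves~\ref{extra_param_ker}. For~\ref{extra_param_transv}, repeat the computation above at $(\tilde\lambda(\mu),\mu)$. By~\ref{extra_param_ker} and the invertibility of $\Pi L_\mu|_{\X_0}$, the range of $L_\mu(\tilde\lambda(\mu))$ is a complement of $\Y_0$, and $I-\Pi$ restricted to $\Y_0$ is an isomorphism. Hence an element $y$ lies in $\operatorname{ran}L_\mu$ if and only if a suitable projection of $y$ onto $\Y_0$ vanishes. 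The map
\[
\dot\lambda\mapsto D_{u\lambda}\F(0,\tilde\lambda(\mu);\mu)[\tilde\xi(\mu),\dot\lambda]\ \text{ modulo }\operatorname{ran}L_\mu(\tilde\lambda(\mu))
\]
is therefore an $n\times n$ matrix depending continuously on $\mu$. At $\mu=0$ it is invertible by~\ref{H3}, so it remains invertible for $|\mu|<\eps$, after shrinking $\eps$.

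The main obstacle is making this last step rigorous: the quotient $\Y/\operatorname{ran}L_\mu$ varies with $\mu$. I would handle it by writing it concretely through the fixed complement $\Y_0$. Any $y$ decomposes as $y=L_\mu x+z$ with $z\in\Y_0$, where $x$ and $z$ depend analytically on $\mu$. This is possible because the map $(x,z)\mapsto L_\mu x+z$ from $\X_0\times\Y_0$ to $\Y$ is invertible near $\mu=0$. The $z$-component then gives the continuous $n\times n$ matrix needed for the perturbation argument.
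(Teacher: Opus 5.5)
Your proposal is correct, but it takes a different route from the paper.

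\textbf{The paper's route.} The paper uses a bordered operator. It picks $l\in\X^*$ with $l\xi_0=1$ and sets $\mathscr{H}(\xi,\lambda,\mu)=(\F_u(0,\lambda;\mu)\xi,\ l\xi-1)$. It observes that $\mathscr{H}_{(\xi,\lambda)}(\xi_0,\lambda_0,0)\colon\X\times\Real^n\to\Y\times\Real$ has index $(1-n)+n-1=0$, and trivial kernel by \ref{H3} and the normalisation. It then applies the implicit function theorem directly in infinite dimensions. The kernel being one-dimensional comes from upper semicontinuity of $\dim\ker$, and codimension $n$ from stability of the index. Part (b) is settled by the one-line remark that transversality is an open condition.

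\textbf{Your route.} You carry out a Lyapunov--Schmidt (Schur complement) reduction of the linear operator. This reduces the problem to the finite-dimensional equation $\Phi(\lambda,\mu)=0$ in $\Y_0\cong\Real^n$, whose $\lambda$-derivative at $(\lambda_0,0)$ is exactly the map in \ref{H3}.

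\textbf{How they relate.} The two arguments are closely related. If $l$ is chosen to vanish on $\X_0$, then solving $\mathscr{H}=0$ is precisely solving $\xi=\xi_0+w(\lambda,\mu)$ together with $\Phi(\lambda,\mu)=0$.

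\textbf{What your version buys.}
\begin{itemize}
\item The reduction shows directly that near $(\lambda_0,0)$ the kernel is nontrivial if and only if $\Phi=0$, and is then spanned by $\xi_0+w$. No semicontinuity argument is needed.
\item The fixed complement $\Y_0$, with $(x,z)\mapsto L_\mu x+z$ invertible on $\X_0\times\Y_0$, could give codimension $n$ without index stability. At $\lambda=\tilde\lambda(\mu)$ you have $L_\mu\xi_0=-L_\mu w\in L_\mu(\X_0)$, hence $\operatorname{ran}L_\mu=L_\mu(\X_0)$, which is a complement of $\Y_0$.
\item Most usefully, it gives a genuine proof that transversality persists. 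The paper only asserts openness, even though $\operatorname{ran}\F_u(0,\tilde\lambda(\mu);\mu)$ moves with $\mu$. That is exactly the issue your fixed-complement device resolves.
\end{itemize}

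\textbf{What the paper's version buys.} It is shorter and avoids choosing complements.
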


\begin{proof}
    There exists some $l \in \X^*$, where $\X^*$ is the dual space of $\X$, such that $l \xi_0 = 1$. Define $\mathscr{H} \colon \X \times \Real^n \times \Real \to \Y \times \Real$ by 
    \begin{equation*}
        \mathscr{H}(\xi, \lambda, \mu) := \begin{pmatrix}
            \F_u (0, \lambda; \mu) \xi \\ l \xi - 1
        \end{pmatrix}.
    \end{equation*}
    Then $\mathscr{H}_{(\xi, \lambda)}(\xi_0, \lambda_0, 0) \colon \X \times \Real^n \to \Y \times \Real$ is a Fredholm operator of zero index. Consider $(\dot{\xi}, \dot{\lambda}) \in \X \times \Real^n$ in its kernel, i.e.~suppose
    \begin{equation*}
        \mathscr{H}_{(\xi, \lambda)}(\xi_0, \lambda_0, 0) \begin{pmatrix}
        \dot{\xi} \\ \dot{\lambda}
        \end{pmatrix} = \begin{pmatrix}
            \F_u (0, \lambda_0; 0) \dot{\xi}  + D_{u \lambda} \F (0, \lambda_0; 0) [\xi_0, \dot{\lambda}]\\ l \dot{\xi}
        \end{pmatrix} = \begin{pmatrix}
            0 \\ 0
        \end{pmatrix}.
    \end{equation*}
    By~\ref{extra_param0_transv}, $\dot{\lambda} = 0$, and so $\dot{\xi} = c \xi_0$ for some $c \in \Real$. But $0 = l \dot{\xi} = c l \xi_0 = c$, so $\mathscr{H}_{(\xi, \lambda)}(\xi_0, \lambda_0, 0)$ has trivial kernel and is therefore invertible. By the implicit function theorem, then, we obtain functions $\lambda(\mu)$ and $\xi(\mu)$ for $\lvert \mu \rvert < \eps$ such that $\xi(\mu) \in \ker \F_u(0, \lambda(\mu); \mu)$, and by continuity, $\dim \ker \F_u(0, \lambda(\mu); \mu) \leq 1$ for $\mu$ sufficiently small, so $\xi(\mu)$ spans $\ker \F_u(0, \lambda(\mu); \mu)$. Since its Fredholm index is $1-n$, the codimension of the range is $n$, so~\ref{extra_param_ker} holds. Finally, the transversality condition~\ref{extra_param0_transv} is an open condition, so~\ref{extra_param_transv} holds.
\end{proof}

\section{Elliptic systems theory}\label{section: ADN_theory}
Below, we briefly review some results for elliptic systems of Agmon--Douglis--Nirenberg type~\cite{agmon1964estimates}; for more details see~\cite{wloka1995boundary, volpert2011elliptic}. Let $\Omega \subset \Real^n$ be a bounded domain whose regularity will be specified later, and consider the linear differential system
\begin{equation}\label{eq: interior}
    \mathcal{A}(x, \d) u(x) = f(x), \qquad \mathcal{A}_{ij}(x, \d) = \sum_{\lvert \rho \rvert \leq \alpha_{ij}} a_{ij, \rho}(x) \d^\rho
\end{equation}
for $x \in \Omega$, with continuous coefficients $a_{ij, \rho}$ whose precise regularity will be specified later. Let $\alpha_{ij}$ denote the order of $\mathcal{A}_{ij}$, setting $\alpha_{ij} = - \infty$ if $\mathcal{A}_{ij} \equiv 0$.

We introduce some further notation to define proper ellipticity. Define the \emph{total order} $m$ for the operator $\mathscr{A}$ by
\begin{equation*}
    m :=\max_{q \in S_p} \sum_{j=1}^{p} \alpha_{j q(j)}.
\end{equation*}
Fix $x \in \overline{\Omega}$, and let $\mathscr{A}(x,\xi) = {[\mathscr{A}_{ij}]}_{p \times p}$ represent the Fourier transform of $\mathcal{A}$, formally replacing derivatives $\d_i$ with $i \xi_i$. Denoting by $\pi_q$ projection onto homogeneous polynomials of order $q$, we define the \emph{characteristic polynomial} of $\mathscr{A}(x, \dot)$ as
\begin{equation*}
    \chi(\xi) := \pi_m \det \left(\mathscr{A}(x,\xi)\right),
\end{equation*}
i.e.~the sum of the terms in the determinant of $\mathscr{A}$ of order $m$. 

For any matrix ${\left(\alpha_{ij}\right)}_{p \times p}$ with elements in $\mathbb{Z} \cup \{-\infty\}$, there exist so-called \emph{Douglis--Nirenberg numbers} $s_1, \ldots, s_p, ~ t_1, \ldots, t_p$, which are integers satisfying
\begin{equation}\label{eq: DN_numbers}
    \sum_{i=1}^{p}s_i+t_i = m, \qquad s_i + t_j \geqslant \alpha_{ij} \quad \text{for all } i,j \in \{1, \ldots, p\}.
\end{equation}
Note that these integers are not unique: for any constant integer $c$, if $s_i,~t_j$ are Douglis--Nirenberg numbers, then so are $s_i + c, ~ t_j - c$. We therefore without loss of generality assume $s_i \leq 0$ and $t_j \geq 0$. Then we define the associated \emph{principal part} $\pi \mathscr{A}$ of $\mathscr{A}$ by
\begin{equation*}
    \pi \mathscr{A}(x, \xi) := {\left(\pi_{s_i + t_j} \mathscr{A}_{ij}(x,\xi)\right)}_{p \times p},
\end{equation*}
i.e.~the matrix whose coefficients are each the sum of the terms of order $s_i + t_j$ in $\mathscr{A}_{ij}$.

Now we say that $\mathcal{A}$ is \emph{elliptic in the sense of Douglis--Nirenberg} at $x \in \overline{\Omega}$ if
\begin{equation*}
    \chi(\xi) \neq 0 \quad \text{for all } \xi \in \Real^n \setminus \{0\},
\end{equation*}
and this ellipticity is \emph{uniform} on $\overline{\Omega}$ if there exists $\Lambda > 0$ such that for all $x \in \overline{\Omega}$ and $\xi \in \Real^n$ with $\lvert \xi \rvert = 1$, 
\begin{equation}\label{eq: unif_elliptic}
    \dfrac{1}{\Lambda} \leq  \lvert \chi(\xi) \rvert \leq \Lambda.
\end{equation}

Let $\mathcal{A}$ be uniformly elliptic and consider $z \in \d \Omega$, with outward-pointing unit normal $e_n$ without loss of generality. We call $\mathcal{A}$ \emph{proper} at $z \in \d \Omega$ if for all nonzero $\xi' \in \Real^{n-1}$, the polynomial
\begin{equation*}
    \chi\left((\xi', \lambda)\right) = \det \left(\pi \mathscr{A} \left(z, (\xi', \lambda)\right)\right)
\end{equation*}
has as many roots $r$ in the upper half-plane $\{\operatorname{Im} \lambda > 0 \}$ as in the lower half-plane. For such an operator, we may factorise the polynomial 
\begin{equation*}
    \det \left(\pi \mathscr{A} \left(z, (\xi', \lambda)\right)\right) = \rho_+ (\lambda) \rho_- (\lambda),
\end{equation*}
such that $\rho_+$ contains all the roots of $\det \pi \mathscr{A}(\lambda)$ inside the positive upper half-plane $\{\operatorname{Im} \lambda > 0\}$. We say that $\mathcal{A}$ is \emph{properly elliptic on $\overline{\Omega}$} if it is uniformly elliptic on $\overline{\Omega}$ and is proper on $\d \Omega$.

Given such a properly elliptic operator $\mathcal{A}$, suppose the following boundary conditions hold for all $z \in \Gamma$, where $\Gamma \subset \d \Omega$ is a portion of the boundary:
\begin{equation}\label{eq: boundary}
    \mathcal{B}(z, \d) u(z) = g(z), \quad \mathcal{B}_{kj}(z, \d) = \sum_{\lvert \sigma \rvert \leq \beta_{kj}} b_{kj, \sigma}(z) \d^\sigma,
\end{equation}
for some $g \colon \Gamma \to \Real^r$. Here, the coefficients $b_{kj, \rho}$ are again continuous with regularity yet to be specified, and $\beta_{kj}$ denotes the order of $\mathcal{B}_{kj}$. For our purposes it suffices to consider the special case where $\Gamma$ is a hyperplane with outward-pointing normal vector $e_n$.

Let 
\begin{equation*}
    m_k := \max_j (\beta_{kj} - t_j), \qquad k = 1, \ldots, r,
\end{equation*}
so that $\beta_{kj} \leq m_k + t_j$. 

Fixing $z \in \Gamma$, denote by $\mathscr{B}(z,\xi) = {[\mathscr{B}_{kj}]}_{r \times p}$ the Fourier transform of $\mathcal{B}(z, \d)$. Then, the \emph{principal part} of $\mathscr{B}$ is
\begin{equation*}
    \pi \mathscr{B}(z, \xi) := {\left(\pi_{m_k + t_j} \mathscr{B}_{kj}(z,\xi)\right)}_{r \times p}.
\end{equation*}
Finally denote by $\adj M = \det M {(M)}^{-1}$ the adjugate of a matrix $M$. We say that $(\mathcal{A},\mathcal{B})$ satisfies the \emph{Shapiro--Lopatinsky condition} at $z \in \Gamma$ if, for all $\xi' \in \Real^{n-1}$ with $\lvert \xi' \rvert = 1$, the rows of the matrix $\pi \mathscr{B} \adj(\pi \mathscr{A}) (z, (\xi', \lambda))$ are linearly independent modulo $\rho_+(\lambda)$. This is equivalent to a related larger matrix with entries in $\Complex$ having full rank, which can be quantified by looking at minor determinants. We say that the Shapiro--Lopatinsky condition holds on $\Gamma$ \emph{uniformly} if the sum of the absolute values of all such minor determinants is bounded below by some constant $\lambda>0$, independent of $z \in \Gamma$ and $\xi'$. Note that this condition is also referred to as the `covering' or `complementing' condition.

With these definitions, we state the following a priori estimates for elliptic boundary value problems satisfying the Shapiro--Lopatinsky condition, firstly for problems in H\"older spaces and then for problems in Sobolev spaces.

\begin{theorem}[\cite{agmon1964estimates}, Theorem 9.3]\label{adn_schauder}
    Let $(\mathcal{A},\mathcal{B})$ be an operator system satisfying~\eqref{eq: interior} in $U \subset \Omega$ and~\eqref{eq: boundary} on $\d \U \cap \d \Omega \subset \Gamma$, and fix $\alpha \in (0,1)$ and an integer $l \geq l_0 := \max(0, m_k)$. Suppose $\Omega$ is of class $C^{l + \max(-s_i, t_j, -m_k), \alpha}$ and that $f_i, a_{ij, \rho} \in C^{l-s_i, \alpha}(\overline{\Omega})$ and $g_k, b_{kj, \sigma} \in C^{l-m_k, \alpha}(\Gamma)$, with
    \begin{equation*}
        \lVert a_{ij, \rho} \rVert_{C^{l-s_i, \alpha}(\Omega)} \leq K, \quad \lVert b_{kj, \rho} \rVert_{C^{l-m_k, \alpha}(\Omega)} \leq K
    \end{equation*}
    for some constant $K > 0$. Suppose further that $\mathcal{A}$ is properly elliptic on $\overline{\Omega}$, satisfying~\eqref{eq: unif_elliptic} with ellipticity constant $\Lambda > 0$, and that $(\mathcal{A},\mathcal{B})$ satisfies the Shapiro--Lopatinsky condition uniformly on $\Gamma$ with constant $\lambda > 0$. Let $U \subset \Omega$ be such that $ \d U \cap \d \Omega \subset \Gamma$. If $u_j \in C^{l_0 + t_j, \alpha}(U)$, then $u_j \in C^{l + t_j, \alpha}(U)$ and
    \begin{equation*}
        \lVert u_j \rVert_{C^{l+t_j, \alpha}(U)} \leq C \left(\sum_{i=1}^{p} \lVert f_i \rVert_{C^{l-s_i, \alpha}(\Omega)} + \sum_{k=1}^{r} \lVert g_k \rVert_{C^{l-m_k, \alpha}\left(\Gamma\right)} + \sum_{j=1}^{p} \lVert u_j \rVert_{C^0(\Omega)} \right),
    \end{equation*}
    for some constant $C$ depending on $n, p, K, \Lambda, \lambda, l, \alpha, U, \Omega$.
\end{theorem}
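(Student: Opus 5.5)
This is the classical Agmon--Douglis--Nirenberg Schauder estimate (Theorem 9.3 of~\cite{agmon1964estimates}), and I would prove it by the standard three-stage scheme: constant coefficients on a half-space, then freezing coefficients, then a partition of unity.

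\emph{Stage 1: constant coefficients.} First treat the model problem on the half-space $\{x_n<0\}$, where $\mathcal{A}$ and $\mathcal{B}$ are replaced by their principal parts frozen at a point $z\in\Gamma$. Proper ellipticity lets us factor $\det\pi\mathscr{A}=\rho_+\rho_-$. The Shapiro--Lopatinsky condition, that the rows of $\pi\mathscr{B}\adj(\pi\mathscr{A})$ are independent modulo $\rho_+$, is exactly what is needed to build Poisson kernels. These are explicit solution operators of the form $u_j=\sum_k K_{jk}*g_k$, with kernels homogeneous of the correct degrees, obtained by partial Fourier transform in $x'$ and contour integrals in $\lambda$ around the roots of $\rho_+$. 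The interior data $f$ are handled first by a fundamental-solution (Newtonian-type) potential built from $\adj(\pi\mathscr{A})/\det\pi\mathscr{A}$. One then subtracts its boundary traces.

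Both kinds of kernel are Calder\'on--Zygmund type. The Hölder estimates $\lVert u_j\rVert_{C^{l+t_j,\alpha}}\lesssim \sum\lVert f_i\rVert_{C^{l-s_i,\alpha}}+\sum\lVert g_k\rVert_{C^{l-m_k,\alpha}}$ then follow from the classical Hölder continuity of singular integrals and Poisson-type integrals. The constants depend only on the ellipticity constant $\Lambda$ and the Shapiro--Lopatinsky constant $\lambda$, because the kernels depend continuously on the coefficient matrices. I expect this stage to be the main obstacle. It requires constructing the kernels and proving their homogeneity and decay estimates uniformly in the frozen coefficients, in the Douglis--Nirenberg weighted setting with the differing orders $s_i$, $t_j$, $m_k$.

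\emph{Stage 2: variable coefficients.} Fix $z\in\Gamma$ and a small ball $B_r(z)$. Write $\mathcal{A}=\mathcal{A}^0(z,\d)+(\mathcal{A}-\mathcal{A}^0)$, where $\mathcal{A}^0(z,\d)$ is the frozen principal part, and split $\mathcal{B}$ in the same way. Apply Stage 1 to a cutoff $\zeta u$ supported in $B_r(z)$. The principal perturbation terms are small in the relevant Hölder norms for small $r$ by continuity of the coefficients, so they can be absorbed into the left-hand side. The lower-order terms and the commutators with $\zeta$ involve lower derivatives of $u$. The same argument with the whole-space fundamental solution gives interior estimates. The boundary of $\Omega$ is flattened using its $C^{l+\max(-s_i,t_j,-m_k),\alpha}$ regularity, which preserves the hypotheses. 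In our application $\Gamma$ is already flat.

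\emph{Stage 3: patching and regularity.} Cover $\overline U$ by finitely many such balls, sum the local estimates, and use interpolation inequalities, $\lVert v\rVert_{C^{k,\alpha}}\le\epsilon\lVert v\rVert_{C^{k+1,\alpha}}+C_\epsilon\lVert v\rVert_{C^0}$, to reduce all lower-order norms to $\sum_j\lVert u_j\rVert_{C^0}$. This gives the estimate at the base level $l_0$. For the improvement from $C^{l_0+t_j,\alpha}$ to $C^{l+t_j,\alpha}$, I would use difference quotients in the tangential directions. The difference quotients satisfy the same system with data controlled in $C^{l-1-s_i,\alpha}$, so the base estimate bounds them uniformly. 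Normal derivatives are then recovered algebraically from the equations, since $\det\pi\mathscr{A}\neq0$ in the $e_n$ direction. Induction on $l$ completes the proof.
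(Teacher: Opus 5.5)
The paper gives no proof of this statement; it quotes \cite{agmon1964estimates} (Theorem 9.3) in the appendix for the reader's convenience. Your outline is the standard Agmon--Douglis--Nirenberg argument behind that theorem, so it agrees with the cited source: half-space Poisson kernels for frozen principal parts built from the Shapiro--Lopatinsky condition, then coefficient freezing with localisation and interpolation down to $\sum_j \lVert u_j \rVert_{C^0}$, then tangential difference quotients with normal derivatives recovered from the invertibility of $\pi\mathscr{A}(x,e_n)$.
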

\begin{theorem}[\cite{agmon1964estimates}, Theorem 10.6]\label{adn_sobolev}
    Let $(\mathcal{A},\mathcal{B})$ be an operator system satisfying~\eqref{eq: interior} in $U \subset \Omega$ and~\eqref{eq: boundary} on $\d \U \cap \d \Omega \subset \Gamma$, and fix $q \geq 1$ and an integer $l \geq l_1 := \max(0, m_k + 1)$. Suppose the domain $\Omega$ is of class $C^{l + \max(-s_i, t_j, -m_k)}$ and $a_{ij, \rho} \in C^{l-s_i}(\overline{\Omega})$ and $b_{kj, \sigma} \in C^{l-m_k}(\Gamma)$, with 
    \begin{equation*}
        \lVert a_{ij, \rho} \rVert_{C^{l-s_i}(\Omega)} \leq K, \quad \lVert b_{kj, \rho} \rVert_{C^{l-m_k}(\Omega)} \leq K
    \end{equation*}
    for some $K > 0$. Suppose that $\mathcal{A}$ is properly elliptic on $\overline{\Omega}$, satisfying~\eqref{eq: unif_elliptic} with ellipticity constant $\Lambda > 0$, and that $(\mathcal{A},\mathcal{B})$ satisfies the Shapiro--Lopatinsky condition uniformly on $\Gamma$ with constant $\lambda > 0$. 
    Then,
    \begin{equation*}
        \lVert u_j \rVert_{W^{l+t_j, q}(U)} \leq C \left(\sum_{i=1}^{p} \lVert f_i \rVert_{W^{l-s_i, q}(\Omega)} + \sum_{k=1}^{r} \lVert g_k \rVert_{W^{l-m_k-1/q, q}\left(\Gamma\right)} + \sum_{j=1}^{p} \lVert u_j \rVert_{L^1(\Omega)} \right),
    \end{equation*}
    for some constant $C$ depending on $n, p, K, \Lambda, \lambda, l, q, U, \Omega$ and the modulus of continuity in the leading coefficients of $\mathcal{A}_{ij}$.
\end{theorem}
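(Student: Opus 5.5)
The statement to prove is Theorem~\ref{adn_sobolev}, the Sobolev ($W^{l,q}$) a priori estimate of Agmon--Douglis--Nirenberg. I will not reprove it from scratch. I will reduce it to the half-space model problem with constant coefficients and follow the scheme of~\cite[Sections 10]{agmon1964estimates}, using the Calder\'on--Zygmund theory of singular integrals in place of the H\"older potential estimates that underlie Theorem~\ref{adn_schauder}.

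The first step is the constant-coefficient half-space problem. Take $(\pi\mathscr{A},\pi\mathscr{B})$ frozen at a boundary point $z$, posed on $\Real^n_+$. Since $\mathcal{A}$ is properly elliptic and the Shapiro--Lopatinsky condition holds, the Poisson kernels for this problem can be written down explicitly. One solves the ODE in the normal variable $\lambda$ after a tangential Fourier transform; the matrix of boundary data is invertible modulo $\rho_+$, and this invertibility is exactly what produces the kernels. The resulting kernels are homogeneous of the correct degrees. Their derivatives of order $l+t_j$ are Calder\'on--Zygmund operators acting on $f_i$ (through derivatives of order $l-s_i$) and on extensions of $g_k$ (through derivatives of order $l-m_k$). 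Boundedness of these operators on $L^q$ gives the model estimate, with the trace norms $W^{l-m_k-1/q,q}(\Gamma)$ appearing through extension to the half-space. The uniform constants $\Lambda$ and $\lambda$ control the kernels uniformly in $z$.

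The second step is localisation and freezing. I take a partition of unity subordinate to small balls. In interior balls I use the full-space fundamental solution; in boundary balls I use the half-space problem of the first step. The coefficient perturbation $(\mathcal{A}-\pi\mathcal{A}(z))\zeta u$ is small in the top-order norm when the ball radius is small compared with the modulus of continuity of the leading coefficients, which explains the dependence of the constant on that modulus. Lower-order terms and the commutators with cutoffs involve fewer derivatives of $u$; they are bounded by $C\sum_j\lVert u_j\rVert_{W^{l+t_j-1,q}}$. The smallness lets me absorb the top-order perturbation into the left-hand side. The hypothesis $l\ge l_1=\max(0,m_k+1)$ ensures the boundary data have nonnegative smoothness $l-m_k-1/q$, so the trace spaces make sense. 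Summing over the partition gives the estimate with the lower-order norm on the right.

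The final step, which I expect to be the main obstacle, is replacing $\sum_j\lVert u_j\rVert_{W^{l+t_j-1,q}}$ by $\lVert u_j\rVert_{L^1}$. For this I interpolate (Gagliardo--Nirenberg/Ehrling): $\lVert u\rVert_{W^{l+t_j-1,q}}\le\varepsilon\lVert u\rVert_{W^{l+t_j,q}}+C_\varepsilon\lVert u\rVert_{L^1}$, and then absorb the first term. The difficulty is that the $s_i,t_j$ differ between components. The cutoff-commutator terms are therefore of mixed orders, and I need to check carefully that each one is genuinely of lower order relative to the weights $t_j$ before the absorption goes through. If $l+t_j=0$ for some $j$, that component needs no interpolation, since its $L^q$ norm is itself the quantity being estimated.
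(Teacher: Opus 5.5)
The paper gives no proof of this statement; it only cites Agmon--Douglis--Nirenberg. Your outline is essentially their argument: the constant-coefficient half-space problem solved by Poisson kernels with Calder\'on--Zygmund bounds, then freezing coefficients and localising, absorbing, and interpolating.

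\textbf{Gap at $q=1$.} There is a genuine gap at an endpoint the statement allows, namely $q=1$. Your first step relies on $L^q$-boundedness of Calder\'on--Zygmund operators, which fails on $L^1$, since they are only of weak type $(1,1)$. The conclusion itself is also false there. Take $\mathcal{A}=\Delta$ with a Dirichlet condition, so $s_1=0$, $t_1=2$, $m_1=-2$, and $l=0$ is admissible. Take $u\in C^\infty_c(U)$ with $\overline{U}\subset\Omega$, so $g=0$. The claimed bound $\lVert u\rVert_{W^{2,1}(U)}\le C(\lVert\Delta u\rVert_{L^1(\Omega)}+\lVert u\rVert_{L^1(\Omega)})$ is tested on the rescalings $u(x_0+(x-x_0)/\varepsilon)$. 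These multiply the second-order terms by $\varepsilon^{n-2}$ but $\lVert u\rVert_{L^1}$ by $\varepsilon^{n}$. Letting $\varepsilon\to0$ would give $\lVert D^2u\rVert_{L^1}\le C\lVert\Delta u\rVert_{L^1}$ for all test functions. That is false: it is Ornstein's non-inequality, equivalently the unboundedness of $R_iR_j$ on $L^1$. So your proof, and the theorem, must be stated for $1<q<\infty$, as in ADN. This is all the paper needs, since it applies the estimate only with $r>2(1-\alpha)^{-1}$.

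\textbf{Your last step.} The ``mixed orders'' you flag as the main obstacle are not one. With Douglis--Nirenberg weights, $[\mathcal{A}_{ij},\zeta]$ has order at most $s_i+t_j-1$, so it maps $W^{l+t_j-1,q}$ into $W^{l-s_i,q}$. Likewise $[\mathcal{B}_{kj},\zeta]$ has order at most $m_k+t_j-1$, so its trace lies in $W^{l-m_k-1/q,q}(\Gamma)$. Every term loses exactly one derivative in its own weighted slot, which is precisely what the weights are for. The points that do need care are these:
\begin{enumerate}
\item Absorbing $\varepsilon\lVert u_j\rVert_{W^{l+t_j,q}}$ presupposes this norm is finite. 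Either assume $u_j\in W^{l+t_j,q}$ or add a difference-quotient regularity argument.
\item The left-hand side lives on $U$, while the cut-off and interpolation terms live on a larger set. A single absorption therefore does not close. You need nested domains with a standard iteration lemma, or weighted seminorms, before interpolating down to $\lVert u_j\rVert_{L^1(\Omega)}$.
\end{enumerate}
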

\begin{remark}
    The `remainder' terms $\lVert u_j \rVert_{C^{0}(\Omega)}$ in Theorem~\ref{adn_schauder} may be replaced with $\lVert u_j \rVert_{L^1(\Omega)}$ or a higher $L^q$ norm, using Theorem~\ref{adn_sobolev}.
\end{remark}
\label{section: appendix}

\printbibliography

\end{document}